\theoremstyle{definition} 
 \newtheorem{definition}{Definition}[section]
 \newtheorem{remark}[definition]{Remark}
\theoremstyle{plain}      
 \newtheorem{proposition}[definition]{Proposition}
 \newtheorem{theorem}[definition]{Theorem}
 \newtheorem{corollary}[definition]{Corollary}
 \newtheorem{lemma}[definition]{Lemma}
\newtheorem*{conjecture}{Conjecture}
\newtheorem{oqu}{Open question}
\definecolor{darkred}{rgb}{0.9,0.,.2}
\definecolor{darkblue}{rgb}{0.,0.,.6}
\definecolor{darkgreen}{rgb}{0.,.6,0.1}
\newcommand{\N}{\mathbb{N}}
\newcommand{\Z}{\mathbb{Z}}
\newcommand{\R}{\mathbb{R}}
\newcommand{\C}{\mathcal{C}} 
\newcommand{\A}{\mathbb{A}}
\renewcommand{\ss}{\mathrm{SL_{d+1}(\mathbb{R})}}
\newcommand{\SO}{\mathrm{SO_{d,1}(\mathbb{R})}}
\newcommand*{\so}[1]{\mathrm{SO}_{#1,1}(\mathbb{R})}
\newcommand*{\s}[1]{\mathrm{SL}_{#1}(\mathbb{R})}
\newcommand{\PG}{\mathrm{PGL_{d+1}(\mathbb{R})}}
\newcommand{\LG}{\Lambda_{\Gamma}}
\newcommand{\G}{\Gamma}
\newcommand{\g}{\gamma}
\renewcommand{\C}{\mathcal{C}}
\newcommand{\U}{\mathcal{U}}
\newcommand{\E}{\mathcal{E}}
\newcommand{\GG}{\mathcal{G}}
\renewcommand{\AA}{\mathcal{A}}
\renewcommand{\O}{\Omega}
\newcommand{\dO}{\partial \Omega}
\renewcommand{\d}{d_{\Omega}}
\renewcommand{\S}{\mathbb{S}}
\renewcommand{\A}{\mathbb{A}}
\newcommand{\PP}{\mathbb{P}}
\newcommand{\Quo}{\Omega/\!\raisebox{-.90ex}{\ensuremath{\,\Gamma}}}
\newcommand*{\Quotient}[2]{\ensuremath{#1/\!\raisebox{-.90ex}{\,\ensuremath{#2}}}}
\newcommand{\Aut}{\mathrm{Aut}}
\newcommand{\Hom}{\mathrm{Hom}}
\newcommand{\Stab}{\mathrm{Stab}}
\newcommand{\Isom}{\mathrm{Isom}}
\begin{document}

\title{Around groups in Hilbert Geometry}

\author{Ludovic Marquis 
}

\address{
Institut de Recherche Math\'ematique de Rennes\\
email:\,\tt{ludovic.marquis@univ-rennes1.fr}
}

\maketitle





\par{
In this chapter, we survey groups of projective transformation acting on a Hilbert geometry. 
}
\\
\par{
Hilbert geometries were introduced by Hilbert as examples of geodesic metric spaces where straight lines are geodesics. We will completely forget this story. We will take Hilbert geometry as a very simple recipe for metric spaces with several different flavours.
}
\\
\par{
Hilbert geometries are Finsler manifold, so it is not easy to say that they are non-positively curved, but we hope that at the end of this text the reader will have noticed some flavours of non-positively curved manifolds and will start to see them as ``damaged non-positively curved manifold''.
}
\\
\par{
The most interesting examples of Hilbert geometries in the context of geometric group theory are called, following Vey in \cite{MR0283720}, \emph{divisible convex sets}\index{divisible convex}\index{convex!divisible}. These are those properly convex open subsets $\O$ of the real projective space $\PP^d=\PP^d(\R)$ such that there exists a discrete subgroup $\G$ of the group $\PG$ of projective transformation which preserves $\O$ and such that the quotient $\Quo$ is compact. In 2006, Benoist wrote a survey \cite{MR2464391} of divisible convex sets and in 2010, Quint wrote a survey \cite{MR2648674} of the work of Benoist on divisible convex sets.
}
\\
\par{
Thus, we will not concentrate on divisible convex sets since the survey of Benoist does this job very well, even if we consider divisible \emph{and} quasi-divisible convex sets\index{quasi-divisible convex}\index{convex!quasi-divisible}\footnote{Those for which the quotient $\Quo$ is of finite volume rather than compact.} as the most important class of convex sets. We want to describe the groups that appear in Hilbert geometry without restriction and also how groups can be used for the purpose of Hilbert geometry.
}
\\
\par{
The first two parts of this survey are very elementary. Their goal is to make the reader familiar with the possible automorphisms of a convex set from a matrix point of view and a dynamical point of view.
}
\\
\par{
The third part presents existence results on convex sets with a ``large'' group of symmetries. This part presents the examples that motivated this study. The reader in quest of motivation should skip Part 2 to go straight to the third part.
}
\\
\par{
The remaining seven parts are roughly independent. They support the claim that geometric group theory mixed with Hilbert geometry meet (and need) at least: \emph{differential geometry, convex affine geometry, real algebraic group theory, metric geometry, moduli spaces, hyperbolic geometry, symmetric spaces, Hadamard manifolds and geometry on manifolds}. We have tried to give sketchy proofs for these parts. We do not report on the links with \emph{Coxeter group theory} and \emph{partial differential equations}.
}

\tableofcontents

\subsection*{Context}\label{ref_volume}

\par{
We will work with the real projective space $\PP^d(\R)=\PP^d$ of dimension $d$ (i.e the space of lines of the real vector space $\R^{d+1}$). An \emph{affine chart}\index{affine chart} of $\PP^d$ is the complement of a projective hyperplane. Every affine chart carries a natural structure of an affine space. A \emph{convex}\index{convex} subset of the projective space $\PP^d$ is a subset of $\PP^d$ which is either $\PP^d$ or is included in an affine chart $\A$ and is convex in this affine chart in the usual sense.
}
\\
\par{
A convex subset $C$ of $\PP^d$ is \emph{properly convex}\index{properly convex}\index{convex!properly convex} when there exists an affine chart such that $C$ is bounded in it or, equivalently, when $C$ does not contain any affine line. Our playground will be a \emph{properly convex open set}, also called a \emph{convex body}\index{convex!body}. We will always denote a properly convex open subset of $\PP^d$ by the letter $\O$ with a subscript if necessary.
}
\\
\par{
On a properly convex open set, one can define the Hilbert distance \index{Hilbert distance}\index{distance!Hilbert}. Given $x \neq y \in \O$, let $p,q$ be the intersection points of $(xy)$ with the boundary $\partial \O$ of $\O$ in such a way that $x$ is between $p$ and $y$ and $y$ between $x$ and $q$ (see Figure \ref{disttt}). We set:$$
\begin{array}{ccc}
d_{\O}(x,y) = \displaystyle\frac{1}{2}\ln \big([p:x:y:q]\big) = \displaystyle\frac{1}{2}\ln \bigg(\frac{|py|\cdot |
qx|}{|px| \cdot |qy|} \bigg) & \textrm{and} &
d_{\O}(x,x)=0,
\end{array}
$$
\begin{center}
\begin{figure}[h!]
\centering
\includegraphics[width=7cm]{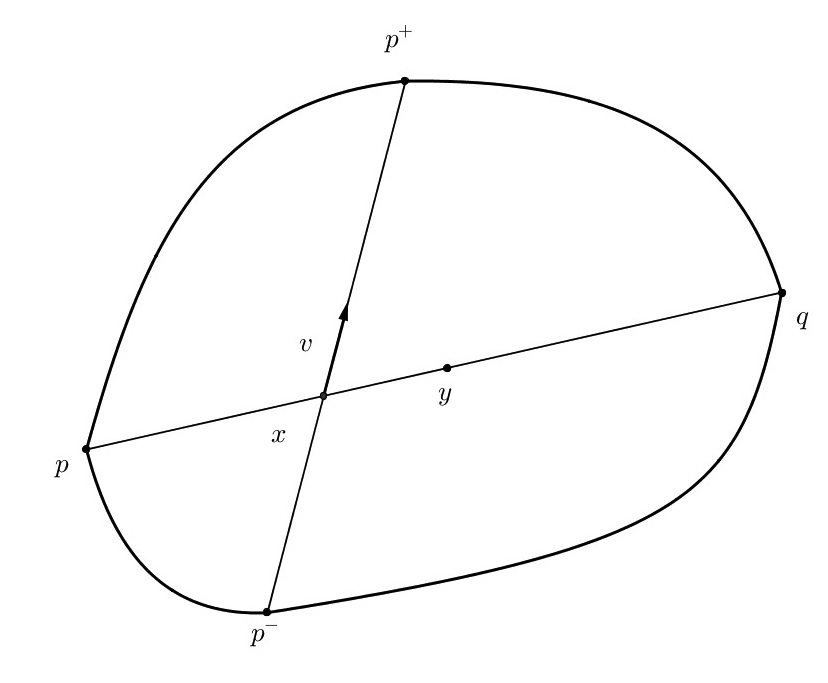}
\caption{Hilbert distance\label{disttt}}
\end{figure}
\end{center}
where the quantity $[p:x:y:q]$ is the cross-ratio of the four points $p,x,y,q$,
and $| \cdot |$ is any Euclidean norm on any affine chart $\A$ containing the closure\footnote{In fact, one should remark that if we allow the symbol $\infty$ in our computation, then we can define $d_{\O}$ on any affine chart containing $\O$.} $\overline{\O}$ of $\O$.

The cross-ratio is a projective notion. Thus it is clear that $d_{\O}$ does not depend on $\A$ or on the choice of the Euclidean norm on $\A$. We also note that the distance $d_{\O}$ is invariant by the group of projective automorphisms preserving $\O$.
}
\\
\par{
The metric space $(\O,d_{\O})$ is geodesic\footnote{A metric space is \emph{geodesic}\index{geodesic metric space}\index{metric space!geodesic} when one can find a geodesic joining any two points.}, proper\footnote{A metric space is \emph{proper}\index{proper metric space}\index{metric space!proper} when the closed balls are compact, and so a proper space is complete.} and the topology induced by the Hilbert distance and the projective space coincide. But this space is not uniquely geodesic\footnote{A geodesic metric space is \emph{uniquely geodesic}\index{uniquely geodesic metric space}\index{metric space!uniquely geodesic} when the geodesic between any two points is unique.} in general (see Proposition \ref{seg_geo}).
}
\\
\par{
This distance $d_{\O}$  is called the Hilbert distance and has the good taste of coming from a Finsler metric on $\O$ defined by a very simple formula. Let $x$ be a point in $\O$ and $v$ a vector in the tangent space $T_x \O$ of $\O$ at $x$; the quantity  $\left. \frac{d}{dt}\right| _{t=0} d_{\O}(x,x+tv)$ is homogeneous of degree one in $v$, therefore it defines a Finsler metric $F(x,v)$ on $\O$. Moreover, if we choose an affine chart $\A$ containing $\O$ and any Euclidean norm $|\cdot|$ on $\A$, we get (see Figure \ref{disttt}):
}

$$F_{\O}(x,v) = \left. \frac{d}{dt}\right| _{t=0} d_{\O}(x,x+tv) = \frac{|v|}{2}\Bigg(\frac{1}{|xp^-|} + \frac{1}{| xp^+|} \Bigg)$$

\par{
The Finsler structure gives rise to a measure $\mu_{\O}$ on $\O$ which is absolutely continuous with respect to the Lebesgue measure, called the \emph{Busemann volume}\index{Busemann volume}\index{volume!Busemann}. To define it, choose an affine chart $\A$ containing $\O$, a Euclidean norm $|\cdot|$ on $\A$ and let $\mathrm{Leb}$ be the Lebesgue measure on $\A$ normalised by the fact that the volume of the unit cube is 1. The Busemann volume\footnote{There is a second volume which is often used in Finsler geometry, the so-called \emph{Holmes-Thompson volume} $\mu_{HT}$, defined by the following formula:
$$
\mu_{HT}(\mathcal{A})=\int_{\mathcal{A}} \frac{\mathrm{Leb}(B^{\mathrm{CoTang}}_x(1))}{\omega_d}d \mathrm{Leb}(x)
$$
\noindent where $B^{\mathrm{CoTang}}_x(1))$ is the unit ball of the cotangent space of $\O$ at $x$ for the dual norm induced by $(T_x\O,F_{\O}(x,\cdot))$.
} of a Borel set $\mathcal{A} \subset \O$ is then defined by the following formula:
}
$$
\mu_{\O}(\mathcal{A})=\int_{\mathcal{A}} \frac{\omega_d}{\mathrm{Leb}(B^{\mathrm{Tang}}_x(1))}d \mathrm{Leb}(x)
$$
\par{
\noindent where $\omega_d$ is the Lebesgue volume of the unit ball of $\A$ for the metric $|\cdot|$ and $B^{\mathrm{Tang}}_x(1)$ is the unit ball of the tangent space $T_x \O$ of $\O$ at $x$ for the metric $F_{\O}(x,\cdot)$.
}
\\
\par{
A charming fact about Hilbert geometries is that they are comparable between themselves. Indeed, if $\O_1 \subset \O_2$ then we can compare the distance, the balls, etc... of those two properly convex open sets. The moral statement is the following:
}

\emph{
The Hilbert distance is decreasing with $\O$. The ball (in $\O$ and in the tangent space) are increasing with $\O$. The Busemann volume is decreasing with $\O$.
}

The precise statement is the following:

\begin{proposition}\label{compa}
Let $\O_1,\O_2$ be two properly open convex subset such that $\O_1 \subset \O_2$, then:
\begin{itemize}
\item For every $x,y \in \O_1$, $d_{\O_1}(x,y) \leqslant d_{\O_2}(x,y)$.
\item For every $x \in \O_1$, for every $v \in T_x \O_1=T_x \O_2$, $F_{\O_1}(x,v) \leqslant F_{\O_2}(x,v) $. 
\item For any Borel set $\AA$ of $\O_1$, $\mu_{\O_2}(\AA) \leqslant \mu_{\O_1}(\AA)$.
\end{itemize}
\end{proposition}

\par{
Precisely, for the Busemann volume: if $\O_1 \subset \O_2$, then for any Borel set $\AA$ of $\O_1$, we have $\mu_{\O_2}(\AA) \leqslant \mu_{\O_1}(\AA)$.
}
\\
\par{
We will see that regularity conditions of $\dO$ have a crucial impact on the geometry of $(\O,d_{\O})$. A properly convex open set is \emph{strictly convex}\index{strictly convex}\index{convex!strictly} if there does not exist any non-trivial segment in his boundary $\dO$. A properly convex open set has \emph{$\C^1$ boundary}\index{convex! with $\C^1$ boundary} if the hypersurface $\dO$ is a submanifold of class $\C^1$; since $\O$ is convex this means that at every point $p\in \dO$, there is a unique supporting hyperplane\footnote{A hyperplane $H$ is a \emph{supporting hyperplane}\index{supporting hyperplane}\index{hyperplane!supporting} at $p\in \dO$ when $H \cap \O= \varnothing$ and $p\in H$.} at $p$ for $\O$. We will see that these two properties are dual to each other (see Part \ref{duality1}). A properly convex set which verifies both properties is called \emph{round}\index{round}\index{convex!round}. Round convex sets have a hyperbolic behaviour. Non-round convex sets have common properties with ``the geometry of a normed plane''. An analogy should be made with symmetric spaces. We recall that rank-one symmetric spaces are Gromov-hyperbolic, and that higher rank symmetric space contain Euclidean planes.
}

\paragraph*{Acknowledgements}

\par{
The author thanks the anonymous referee for his useful comments and Athanase Papadopoulos for his careful and patience review. The author also wants to thanks Constantin Vernicos and Micka\"el Crampon for all the discussion they have around Hilbert geometry. This project of book is supported by the ANR Finsler.
}

\section{Generalities on the group $\Aut(\O)$}

The main goal of this chapter is to study the group $$\mathrm{Coll}^{\pm}(\O)= \Big\lbrace \g \in \PG \,|\, \g(\O) = \O \Big\rbrace$$ where $\O$ is a properly convex open set of $\PP^d$. The following fact is very basic and also very useful.

\begin{proposition}
The action of the group $\mathrm{Coll}^{\pm}(\O)$ on $\O$ is by isometries for the Hilbert distance. Consequently, the action of $\mathrm{Coll}^{\pm}(\O)$ on $\O$ is proper, so $\mathrm{Coll}^{\pm}(\O)$ is a closed subgroup of $\PG$ and so is a Lie group.
\end{proposition}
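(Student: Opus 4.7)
I would split the statement into three assertions and handle them in sequence.

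\emph{Isometric action.} Let $\gamma \in \mathrm{Coll}^{\pm}(\O)$ and $x,y \in \O$. As a projective transformation, $\gamma$ preserves the cross-ratio, and since $\gamma(\O)=\O$ it also preserves $\dO$. The line $(xy)$ is sent to the line $(\gamma x,\gamma y)$, so the unordered pair $\{p,q\}$ of intersection points of $(xy)$ with $\dO$ maps to the analogous pair for $(\gamma x,\gamma y)$. The definition of $d_{\O}$ then yields $d_{\O}(\gamma x,\gamma y)=d_{\O}(x,y)$.

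\emph{Properness of the action.} I would show that for every compact $K \subset \O$ the set $C_K=\{\gamma \in \mathrm{Coll}^{\pm}(\O) : \gamma K \cap K \neq \varnothing\}$ is relatively compact in $\PG$. Given a sequence $(\gamma_n) \subset C_K$ and a basepoint $x_0 \in K$, the isometry property forces $d_{\O}(\gamma_n x_0,x_0) \leqslant 2\,\mathrm{diam}(K)$, so $(\gamma_n x_0)$ lies in a closed Hilbert ball, which is compact in $\O$ because $(\O,d_{\O})$ is a proper metric space. Fixing a projective frame $(v_0,\ldots,v_{d+1})$ inside a small Hilbert ball about $x_0$, the same bound applies to each $v_i$, and after extracting a subsequence $\gamma_n v_i \to w_i \in \O$. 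Since a projective transformation is determined by its action on a projective frame, convergence of $\gamma_n$ in $\PG$ follows as soon as the limits $w_0,\ldots,w_{d+1}$ themselves form a projective frame. I expect this non-degeneracy to be the main obstacle, and I would settle it by a Busemann volume argument: each $\gamma_n$ preserves $\mu_{\O}$, so the simplex $\Delta$ spanned by $v_0,\ldots,v_d$ and its image $\gamma_n \Delta$ have the same positive volume; if $w_0,\ldots,w_d$ collapsed into a hyperplane, the Hausdorff limit of the $\gamma_n \Delta$ would be a degenerate simplex of zero Busemann measure, contradicting the continuity of $\mu_{\O}$ on Hausdorff-convergent compact subsets of $\O$.

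\emph{Closedness and Lie group structure.} Once properness is available, closedness of $\mathrm{Coll}^{\pm}(\O)$ in $\PG$ follows cleanly: if $\gamma_n \in \mathrm{Coll}^{\pm}(\O)$ converges to $\gamma$ in $\PG$, continuity gives $\gamma(\overline{\O}) \subset \overline{\O}$ and similarly $\gamma^{-1}(\overline{\O}) \subset \overline{\O}$, hence $\gamma(\overline{\O}) = \overline{\O}$ and, passing to interiors, $\gamma(\O) = \O$. Cartan's closed subgroup theorem then identifies $\mathrm{Coll}^{\pm}(\O)$ as a Lie subgroup of $\PG$.
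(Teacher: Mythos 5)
Your argument is correct, and it is worth noting at the outset that the paper offers no proof of this proposition at all: it is stated as a "very basic" fact and left to the reader, so there is no official route to compare against. Your decomposition (isometry, then properness via orbits of a projective frame, then closedness and Cartan) is one of the standard ways to fill this in. Two small points deserve care. In the isometry step, the restriction of $\gamma$ to the chord $[p,q]$ may reverse the order of the four points, so you should observe that the cross-ratio symmetry $[q:y:x:p]=[p:x:y:q]$ (or the absolute value in the logarithm) absorbs this. In the properness step, to conclude that the limits $w_0,\dots,w_{d+1}$ form a projective frame you need every $(d+1)$-element subset to be in general position, not only $w_0,\dots,w_d$; your volume argument applies verbatim to each such subset, so this is only an omission of wording. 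The Busemann-volume non-degeneracy argument itself is sound: $\mu_{\O}$ is $\Aut(\O)$-invariant because the Finsler metric is, convex hulls of points of $\O$ are preserved, and upper semicontinuity of $\mu_{\O}$ along Hausdorff-convergent compacta in $\O$ rules out collapse onto a hyperplane. For comparison, the more common shortcuts in the literature reach the same conclusion either via the van Dantzig--van der Waerden theorem (the isometry group of a proper metric space acts properly) or by noting that the stabilizer of a point preserves a John ellipsoid and is therefore conjugate into a compact orthogonal group; your volume argument is more self-contained and uses only objects already introduced in the text.
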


\par{
One may want to introduce the group $\mathrm{Isom}(\O)$ of isometries of $(\O,d_{\O})$ for the Hilbert metric. This group is rather mysterious. We postpone some remarks on the general knowledge on this group to the end of this chapter.
}
\\
\par{
Before going to a ``matrix study'' of the elements of $\mathrm{Coll}^{\pm}(\O)$, let us make a remark which allows us to see $\mathrm{Coll}^{\pm}(\O)$ as a subgroup of the group $\mathrm{SL}^{\pm}_{d+1}(\R)$ of linear transformations of $\R^{d+1}$ with determinant $1$ or $-1$. 
}
\\
\par{
We denote by $\PP:\R^{d+1}\smallsetminus \{ 0 \} \rightarrow \PP^d$ the canonical projection. Let $\O$ be a properly convex open subset of $\PP^d$. \emph{The cone above $\O$} \index{cone!above $\O$} is one of the two connected components $\C_{\O}$ of $\PP^{-1}(\O)$\footnote{This is a little abusive since there are two such cones, but nothing depends on this choice.}. We introduce the group $$\Aut^{\pm}(\O) = \Big\lbrace \g \in \mathrm{SL}^{\pm}_{d+1}(\R) \,|\, \g(\C_{\O})= \C_{\O} \Big\rbrace$$ rather than $\mathrm{Coll}^{\pm}(\O)$. The canonical morphism $\pi:\mathrm{SL}^{\pm}_{d+1}(\R) \rightarrow \PG$ is onto with kernel equal to $\{\pm 1\}$. But, the restriction of $\pi:\Aut^{\pm}(\O) \rightarrow \mathrm{Coll}^{\pm}(\O)$ is an isomorphism since elements of $\Aut^{\pm}(\O)$ preserve $\C_{\O}$.
}
\\
\par{
With this in mind, we will now concentrate on $\Aut(\O)$ which is the group of linear automorphism of $\R^{d+1}$ preserving $\C_{\O}$ and having determinant one:
}
$$\Aut(\O) = \Big\lbrace \g \in \ss \,|\, \g(\C_{\O})= \C_{\O} \Big\rbrace.$$
\par{
Sometimes it is very useful to look at the two-fold cover $\S^d$ of $\PP^d$. A projective way to define $\S^d$ is to consider it as the space of half-lines of $\R^{d+1}$.  The group of projective automorphism of $\S^d$ is the group $ \mathrm{SL}^{\pm}_{d+1}(\R)$. One advantage of $\S^d$ is that the definition of convexity is neater in $\S^d$ than in $\PP^d$.
}

\section{The automorphisms of a properly convex open set}

\subsection{The first lemma}

\begin{lemma}\label{lem_comp}
Every compact subgroup of $\Aut(\O)$ fixes a point of $\O$.
\end{lemma}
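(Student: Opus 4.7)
The plan is to use an averaging argument at the level of the linear cone $\C_{\O}$ rather than working inside the Finsler space $(\O, d_{\O})$ directly. The cone viewpoint is convenient because $\C_{\O}$ is a (linearly) convex subset of $\R^{d+1}$, and convex combinations are immediate, whereas $(\O,d_{\O})$ is not known to be CAT($0$) (or even uniquely geodesic), so a Cartan-style circumcenter argument is not available.

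Concretely, let $K \subset \Aut(\O)$ be a compact subgroup. Since $\Aut(\O)$ is a closed subgroup of $\s{d+1}$ (by the previous proposition), $K$ is a compact Lie group; let $\mu$ be its normalised Haar measure. Choose any base vector $v_0 \in \C_{\O}$ and set
$$
\bar v \;=\; \int_{K} g \cdot v_0 \; d\mu(g) \;\in\; \R^{d+1}.
$$
By the invariance of $\mu$ under left translation, $h \cdot \bar v = \bar v$ for every $h \in K$. It therefore suffices to show that $\bar v$ belongs to the open cone $\C_{\O}$: then $[\bar v] \in \O$ is a point fixed by every element of $K$, and projecting via $\pi : \Aut^{\pm}(\O) \to \mathrm{Coll}^{\pm}(\O)$ gives the conclusion.

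To prove $\bar v \in \C_{\O}$, I would argue as follows. The orbit $K \cdot v_0$ is a compact subset of $\C_{\O}$ because $K$ is compact and the action is continuous. In finite dimension, the convex hull $\mathrm{Conv}(K \cdot v_0)$ of a compact set is again compact; since $\C_{\O}$ is convex and contains $K \cdot v_0$, this hull is contained in $\C_{\O}$. The vector $\bar v$ is a limit of Riemann sums of the form $\sum_i \mu(A_i)\, g_i \cdot v_0$ with $g_i \in K$, each of which is a convex combination of points of $K \cdot v_0$, so $\bar v$ lies in $\mathrm{Conv}(K \cdot v_0) \subset \C_{\O}$. In particular $\bar v \neq 0$.

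The step I expect to be the main subtlety is exactly this last point: ensuring that averaging does not push $\bar v$ to the boundary of $\overline{\C_{\O}}$ or even to the origin. The crux is the observation that the convex hull of a \emph{compact} subset of $\C_{\O}$ stays inside the open cone (not merely inside $\overline{\C_{\O}}$); this is where proper convexity of $\O$ is implicitly used, via the fact that $\C_{\O}$ is a genuine convex open cone in $\R^{d+1}$. Once this is secured, the rest of the argument is a formal consequence of Haar-measure invariance.
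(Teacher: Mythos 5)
Your proof is correct, but it follows a genuinely different route from the paper's. The paper deduces the lemma from Lemma \ref{center}: using the Vinberg hypersurface (Theorem \ref{surf_vinberg}) it lifts a bounded subset $B\subset\O$ to the cone, takes the Lebesgue center of mass of $\overline{\mathrm{Conv}(D(B))}$, and projects back; applied to an orbit $K\cdot x$ this yields the fixed point. You instead average the single vector $v_0$ over the Haar measure of $K$ directly in $\C_{\O}$. Your key step is sound: $K\cdot v_0$ is compact, its convex hull is compact (Carath\'eodory) and contained in the convex open cone $\C_{\O}$ simply because a convex set contains the convex hull of any of its subsets, so $\bar v\in\C_{\O}$ and in particular $\bar v\neq 0$; linearity of the action lets the group element pass through the integral, and left invariance of Haar measure gives $h\bar v=\bar v$. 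What each approach buys: yours is more elementary and self-contained (no Vinberg hypersurface needed, only Haar measure and finite-dimensional convexity), but it is tied to compactness of the group, since Haar averaging is what produces the invariant vector. The paper's construction is heavier but strictly more general: it assigns to \emph{every} bounded subset $B$ of $\O$ a canonical point fixed by \emph{every} automorphism preserving $B$, whether or not the stabiliser of $B$ is compact, and this equivariant ``center of mass'' is reused elsewhere in the text (e.g.\ in the proof of Proposition \ref{classi_matrix} and in the duality discussion). So your argument proves the lemma as stated, while the paper's argument proves the stronger statement it actually needs downstream.
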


\begin{proof}
The proof of the lemma relies on the construction of a ``center of mass'' for every bounded subset of $\O$. This construction relies on the Vinberg convex hypersurface, see Lemma \ref{center}.
\end{proof}

\subsection{The matrix point of view}
\par{
Let $\g$ be an element of $\s{d+1}$. We denote by $\lambda_1(\g) \geqslant \lambda_2(\g) \geqslant \cdots \geqslant \lambda_{d+1}(\g)$ the moduli of the eigenvalues of $\g$ listed in decreasing order with multiplicity. The \emph{spectral radius}\index{spectral radius} $\rho_{\g}^+$ is by definition $\lambda_1(\g)$. We also define $\rho_{\g}^- =\lambda_{d+1}(\g) = \big(\rho_{\g^{-1}}^+\big)^{-1}$.
}
\\
\par{
An element $\g$ is \emph{semi-proximal}\index{semi-proximal}\index{proximal!semi} if $\rho_{\g}^+$ or $-\rho_{\g}^+$ is an eigenvalue of $\g$. An element $\g$ is \emph{positively semi-proximal}\index{positively semi-proximal} if $\rho_{\g}^+$ is an eigenvalue of $\g$.
}
\\
\par{
An element $\g$ is \emph{proximal}\index{proximal} if $\lambda_1(\g) > \lambda_2(\g)$. This implies that $\g$ is semi-proximal. An element $\g$ is \emph{positively proximal}\index{positively proximal}\index{proximal!positively} if $\g$ is positively semi-proximal and proximal. 
}
\\
\par{
Since, $\det(\g)  =1$, we remark that if $\rho_{\g}^+=1$ then $\rho_{\g}^-=1$ also, and $\lambda_i(\g)=1$ for all $i=1, \dots , d+1$.
}
\\
\par{
An element $\g$ is \emph{bi-``something''}\index{bi-} if $\g$ and $\g^{-1}$ are ``something''. The main use will be for \emph{biproximal}\index{biproximal}\index{proximal!bi} elements and \emph{positively biproximal}\index{positively biproximal} elements.
}
\\
\par{
Let $k=\R$ or $\mathbb{C}$. An element $\g$ is \emph{$k$-semi-simple}\index{semi-simple!$k$-semi-simple} if in a suitable  $k$-basis $\g$ is diagonal.
An element $\g$ is \emph{$\S^1$-semi-simple}\index{semi-simple!$\S^1$-semi-simple}\footnote{Usually, we say that $\g$ is hyperbolic when $\g$ is $\R$-semi-simple, and we say that $\g$ is elliptic when $\g$ is $\S^1$-semi-simple, but we shall not use this terminology because it will create a conflict with the next subsection.} if it is $\mathbb{C}$-semi-simple and all its eigenvalue are on the unit circle.
}
\\
\par{
An element $\g$ is \emph{unipotent}\index{unipotent} if $(\g-1)^{d+1}=0$. The \emph{power}\index{power} of a unipotent element is the smallest integer $k$ such that $(\g-1)^{k}=0$.
}
\\
\par{
For every element $\g \in \s{d+1}$ there exists a unique triple consisting of an $\R$-semi-simple element $\g_h$, an $\S^1$-semi-simple element $\g_e$ and a unipotent element $\g_u$ such that $\g= \g_h\g_e\g_u$ and these three elements commute with each other (see for example \textsection 4.3 of the book \cite{MR2158954} of Witte Morris or \cite{MR1064110} Chapter 3, Part 2).
}
\\
\par{
Let $\g \in \s{d+1}$ and $\lambda$ be an eigenvalue of $\g$. The \emph{power of $\lambda$}\index{power} is the size of the maximal Jordan block of $\g$ with eigenvalue $\lambda$. In another terms, the power of $\lambda$ is the multiplicity of $\lambda$ in the minimal polynomial of $\g$. Of course, when $\g$ is unipotent the power of 1 is exactly what we previously called the power of $\g$.
}

\begin{proposition}\label{classi_matrix}
Suppose the element $\g\in \ss$ preserves a properly convex open set $\O$. Then $\g$ is positively bi-semi-proximal. Moreover, if $\rho_{\g}^+ =1$ then its power is odd. Finally, the projective traces of the eigenspaces $\ker(\g - \rho_{\g}^+)$ and $\ker(\g - \rho_{\g}^-)$ meet the convex set $\overline{\O}$.
\end{proposition}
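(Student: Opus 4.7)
The plan is to combine a Brouwer/Krein--Rutman argument for the cone with a Jordan-chain analysis to rule out even powers.

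First I would establish positive semi-proximality. The map $\bar\g:\overline\O\to\overline\O$ is a continuous self-map of a compact convex subset of $\PP^d$, homeomorphic to a closed ball; Brouwer's fixed point theorem produces a fixed point $[v_+]\in\overline\O$, and lifting to the cone gives $\g v_+=\mu v_+$ with $v_+\in\overline{\C_\O}$ and $\mu>0$ (the sign being forced by $\g(\C_\O)=\C_\O$). To identify $\mu$ with $\rho_\g^+$, pick $x_0\in\C_\O$ and $\ph_0$ in the interior of the dual cone $\C_\O^*$, so that $f(n):=\ph_0(\g^n x_0)$ is strictly positive on $\Z$. Expanding $f(n)=\sum_\lambda p_\lambda(n)\lambda^n$ from the Jordan form of $\g$ and normalising by $n^m(\rho_\g^+)^n$, where $m+1$ is the maximum Jordan block size among eigenvalues of modulus $\rho_\g^+$, the quotient becomes, up to $o(1)$, an almost-periodic trigonometric polynomial in $n$; since it is forced to be eventually positive, its Ces\`aro average must be positive, and this is impossible unless $\rho_\g^+$ itself appears with a maximal block, i.e.\ is an eigenvalue of $\g$ with a Jordan block of the maximal size among those of modulus $\rho_\g^+$. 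The same reasoning applied to $\g^{-1}$, which also preserves $\C_\O$, yields positive bi-semi-proximality and a $\rho_\g^-$-eigenvector in $\overline{\C_\O}$; since the eigenvectors so produced live in $\overline{\C_\O}$ by construction, the projective traces of $\ker(\g-\rho_\g^\pm)$ meet $\overline\O$.

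For the odd-power assertion, set $\rho=\rho_\g^+$ and let $k$ be its power. Pick a Jordan chain $v_1,\dots,v_k$ with $\g v_1=\rho v_1$ and $\g v_{j+1}=\rho v_{j+1}+v_j$, normalised so that $v_1\in\overline{\C_\O}$. The identity $\g^n v_j=\sum_{i=0}^{j-1}\binom{n}{i}\rho^{n-i}v_{j-i}$ gives, for $x\in\C_\O$, the leading term $\binom{n}{k-1}\rho^{n-k+1}v_k^*(x)\,v_1$ as $n\to+\infty$, where $v_k^*$ is the dual basis form. Dividing by the positive scalar $n^{k-1}\rho^n$, the limit $v_k^*(x)\,v_1$ lies in $\overline{\C_\O}$: if $v_k^*$ changed sign on $\C_\O$ we would obtain both $v_1$ and $-v_1$ in $\overline{\C_\O}$, contradicting proper convexity. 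Hence $v_k^*>0$ on $\C_\O$. To pin down the parity of $k$, I would examine the dynamics inside the $\g$-invariant generalised eigenspace $V_\rho^{\mathrm{gen}}$: replacing $n$ by $-n$ in the same formula produces a $v_1$-coefficient in $\g^{-n}v_k$ with sign $(-1)^{k-1}$, and a comparison with the cone positivity satisfied by the perturbations $x_t=x_0+tv_k\in\C_\O$ (whose backward $\g$-iterates remain in $\C_\O$, and for which the difference $\g^{-n}x_t-\g^{-n}x_0=t\g^{-n}v_k$ stays in $V_\rho^{\mathrm{gen}}$, so the large lower-modulus contributions of $x_0$ cancel) forces $(-1)^{k-1}=+1$, so $k$ is odd. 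The statement for $\rho_\g^-$ then follows by applying the whole argument to $\g^{-1}$.

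The main obstacle is the odd-power step. The Brouwer/Krein--Rutman part is fairly standard, but the parity of $k$ requires disentangling the forward and backward dynamics of $\g$ on $\C_\O$, which in general are governed by different Jordan blocks of $\g$ (those of $\rho_\g^+$ and of $1/\rho_\g^-$), so one cannot compare $\g^n x$ with $\g^{-n}x$ directly on a single orbit. The trick is to restrict attention to $V_\rho^{\mathrm{gen}}$, where the backward iteration is again dominated by the $\rho$-block, and to exploit the alternating sign of $\binom{-n}{k-1}=(-1)^{k-1}\binom{n+k-2}{k-1}$ to detect the parity of $k$.
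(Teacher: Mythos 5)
Your first paragraph is essentially correct and takes a genuinely different route from the text: where the text passes to the face $\overline{\O}\cap\PP(V)$ spanned by the accumulation set of an orbit and then applies the fixed-point lemma to the relatively compact group generated by $\g'/\rho$, you analyse the positive function $n\mapsto\ph_0(\g^nx_0)$ by a Ces\`aro/equidistribution argument. That is closer to Benoist's original proof and yields the extra information (a maximal Jordan block sits over $\rho_{\g}^+$ among the eigenvalues of modulus $\rho_{\g}^+$) that you need later. Two small repairs: the Brouwer fixed point produces an eigenvector in $\overline{\C_{\O}}$ for \emph{some} positive eigenvalue $\mu$, and nothing identifies that $\mu$ with $\rho_{\g}^+$; to put a $\rho_{\g}^+$-eigenvector in $\overline{\C_{\O}}$ you should Ces\`aro-average the orbit itself, $\frac1N\sum_{n\leqslant N}\g^nx_0/(n^m(\rho_{\g}^+)^n)$. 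Likewise, if another eigenvalue of modulus $\rho_{\g}^+$ carries a block of the same size $k$, the quotient $\g^nx/(n^{k-1}\rho^n)$ does not converge, so ``the limit $v_k^*(x)v_1$'' must also be a Ces\`aro limit. With these fixes the positivity of $v_k^*$ and the assertion that $\PP(\ker(\g-\rho_{\g}^{\pm}))$ meets $\overline{\O}$ are correctly obtained.

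The parity step has a genuine gap, located exactly at the word ``forces''. The difference $\g^{-n}x_t-\g^{-n}x_0=t\g^{-n}v_k$ does stay in $V_\rho^{\mathrm{gen}}$ and does point asymptotically in the direction $(-1)^{k-1}\mathrm{sign}(t)\,v_1$, but this contradicts nothing: both $\g^{-n}x_0$ and $\g^{-n}x_t$ lie in $\C_{\O}$ automatically, they escape along the $\rho_{\g}^-$-directions at the rate $(1/\rho_{\g}^-)^n$, and this strictly dominates the size $n^{k-1}(\rho_{\g}^+)^{-n}$ of their difference unless all eigenvalues of $\g$ have the same modulus. Subtracting from a point deep inside a cone a vector exponentially small compared to that point never leaves the cone, so no sign is forced. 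Moreover no argument can close this gap, because the parity claim fails for quasi-hyperbolic elements: the element $\g=\mathrm{diag}(J,1/4)\in\mathrm{SL}_3(\R)$ with $J=\left(\begin{smallmatrix}2&1\\0&2\end{smallmatrix}\right)$ preserves the properly convex open set $\{(x,y):\ y>0,\ x>\frac{y\ln y}{6\ln 2}\}$ of the affine chart $z=1$ (the boundary curve is invariant, the set is convex since $y\mapsto y\ln y$ is, and it is contained in a quadrant, hence properly convex), yet $\rho_{\g}^+=2$ has power $2$. The parity constraint is genuine only when the backward dynamics is governed by the same Jordan block as the forward one, that is when $\rho_{\g}^+=\rho_{\g}^-=1$ (for instance $\g$ unipotent): there $\lim_{n\to\pm\infty}[\g^nx]=[(\pm1)^{k-1}(\g-1)^{k-1}x]$ in $\S^d$ and sharpness of $\C_{\O}$ forces $k$ odd. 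This is the only case in which the text's own sketch (note its $\min\{l\,|\,(\g-\rho)^l=0\}$, which presupposes that $\rho$ is the only modulus) is meaningful, and the only case used later, in the classification of parabolics. Your diagnosis that forward and backward dynamics see different blocks is exactly right; the correct conclusion is that the parity statement must be restricted to the equal-modulus case, not that a perturbation inside $V_\rho^{\mathrm{gen}}$ rescues it.
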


\begin{proof}
\par{
First, we have to prove that $\g$ is positively semi-proximal. This is exactly the content of Lemma 3.2 of \cite{MR2195260}. We give a rough proof.
}
\par{
Consider $V$ the subspace of $\R^{d+1}$ whose complexification $V_{\mathbb{C}}$ is the sum of the eigenspaces corresponding to the eigenvalues of modulus $\rho=\rho_{\g}^+$. If $x \in \O$, then a computation using the Jordan form of $\g$ shows that any accumulation point of the sequence $\g^n(x)$ belongs to $\overline{\O} \cap \PP(V)$.
}
\par{
Consider the relative interior $\O'$ of $\overline{\O} \cap \PP(V)$ which is a properly convex open subset of a vector space $V'$ and consider the restriction $\g'$ of $\g$ to $V'$. The element $\g'$ is semi-simple and all its eigenvalues have the same modulus $\rho$. Therefore, the element $\g'' = \g'/\rho$ is $\S^1$-semi-simple and preserves $\O'$. Since $\g''$ is $\S^1$-semi-simple, the group generated by $\g''$ is relatively compact. Lemma \ref{lem_comp} shows that $1$ is an eigenvalue of $\g''$ and that $\PP(\ker(\g''-1))$ meet $\O'$.
}
\\
\par{
The ``bi'' statement is obvious. Finally, we have to show that the Jordan multiplicity of $\rho$ in $\g$ is odd, when $\rho=1$. This is almost contained in Lemma 2.3 of \cite{MR2218481}. One has to study the action of $\Aut(\O)$ on $\S^d$, the 2-fold covering of $\PP^d$. Denote by $k$ the integer $\min \{ l \, |\, (\g - \rho)^l=0 \}$ and take any point $x \in \O$ which is not in $\ker(\g - \rho)^{k-1}$. A computation using a Jordan form of $\g$ shows that $\underset{n \to +\infty}{\lim} \g^n (x)  = - \underset{n \to -\infty}{\lim}\g^n (x)$ if $k$ is even, and $\underset{n \to +\infty}{\lim} \g^n (x)  = \underset{n \to -\infty}{\lim} \g^n (x)$ if $k$ is odd. Therefore $\g$ preserves a properly convex open set if and only if $k$ is odd.
}
\end{proof}


\subsection{The dynamical point of view}

\begin{definition}
Let $\g \in \Aut(\O)$. The \emph{translation length} of $\g$ on $\O$ is the quantity $\tau_{\O}(\g) = \underset{x \in \O}{\inf} \, d_{\O}(x,\g(x))$. We will say that $\g$ is:
\begin{enumerate}
\item \emph{elliptic}\index{elliptic} if $\tau_{\O}(\g) =0$ and the infimum is achieved;
\item \emph{parabolic}\index{parabolic} if $\tau_{\O}(\g) =0$ and the infimum is not achieved;
\item \emph{hyperbolic}\index{hyperbolic} if $\tau_{\O}(\g) > 0$ and the infimum is achieved;
\item \emph{quasi-hyperbolic}\index{quasi-hyperbolic} if $\tau_{\O}(\g) > 0$ and the infimum is not achieved.
\end{enumerate}
\end{definition}

A complete classification of automorphisms of properly convex open sets has been given in dimension 2 in \cite{MR1293655} and \cite{Marquis:2009kq}. There is a classification under the hypothesis that the convex set is round in \cite{Crampon:2012fk}. Finally, one can find a classification in the general context in \cite{Cooper:2011fk}. Our exposition is inspired by the last reference.

\subsubsection*{The faces of a properly convex open set.}

\par{
We present the notion of faces of a convex body. For more details and a different point of view, the reader can consult the book \cite{MR683612} at Section 1.5.
}
\\
\par{
Let $\O$ be a properly convex open subset of $\PP^d$. We introduce the following equivalence relation on $\overline{\O}$: $x \sim y$ when the segment $[x,y]$ can be extended beyond $x$ and $y$. The equivalence classes of $\sim$ are called \emph{open faces of $\overline{\O}$}\index{face!open face}; the closure of an open face is a \emph{face of $\overline{\O}$}\index{face}. The \emph{support}\index{support} of a face or of an open face is the smallest projective space containing it. The \emph{dimension of a face}\index{face!dimension} is the dimension of its support.}
\\
\par{
The behaviour of open faces and faces with respect to closure and relative interior is very nice thanks to convexity. More precisely, the interior of a face $F$ in its support (i.e its relative interior) is equal to the unique open face $f$ such that $\overline{f}= F$. Finally, one should remark that if $f$ is an open face of $\overline{\O}$ then $f$ is a properly convex open set in its support.
}
\\
\par{
The notion of face is important to describe the dynamics, more precisely to describe the set of attractive and repulsive fixed points of an automorphism. We give a family of examples. Consider the convex subset $\O$ of $\R^d$ formed by the points with strictly positive coordinates. This is a properly convex open set of $\PP^d$. The diagonal matrix $\g\in \ss$ given by $(\lambda_1,...,\lambda_{d+1})$ with $\lambda_i >0$, $\Pi \lambda_i =1$ and $\lambda_1 \leqslant \lambda_2 \leqslant \cdots \leqslant \lambda_{d+1}$ preserves $\O$. If $\lambda_1 > \lambda_2$ and $\lambda_d > \lambda_{d+1}$, then $\g$ has one attractive fixed point and one repulsive fixed point corresponding to the eigenlines of $\lambda_1$ and $\lambda_{d+1}$. Now, if $\lambda_1=\lambda_2 > \lambda_3$ and $\lambda_d > \lambda_{d+1}$, then $\g$ has a set of attractive fixed points and one repulsive fixed point. More precisely, the set of attractive fixed points of $\g$ is a segment included in $\dO$; it is even a face of $\O$. We can build a lot of examples with various sizes of attractive or repulsive fixed points with this convex $\O$. Each time the attractive set and the repulsive set will be a faces of $\O$. This is a general fact.
}
\subsubsection*{Horospheres in Hilbert geometry}

\paragraph*{The round case.}

Horospheres are very important metric objects in the study of the geometry of metric spaces. But horospheres are not easy to define in a general Hilbert geometry. We begin by a definition in the context of round Hilbert geometry.

Let $\O$ be a round convex subset of $\PP^d$. First, we define the \emph{Busemann function}\index{Busemann function} at a point $p \in \dO$. Let $x,y \in \O$. We set

$$\beta_{p}(x,y) = \underset{z \to p}{\lim} \,\,  d_{\O}(y,z)-d_{\O}(x,z).$$

To make the computation, compute first in any affine chart containing $\overline{\O}$ and then send $p$ to infinity. Take the supporting hyperplane $H$ of $\O$ at $p$ and do your computation in the affine chart $\A = \PP^d \smallsetminus H$. You get:

$$\beta_{p}(x,y) = \frac{1}{2} \ln \Bigg(\frac{|x-q_x|}{|y-q_y|} \Bigg)$$

\noindent where $q_x$ (resp. $q_y$) is the point $]p,x) \cap \dO$ (resp. $]p,y) \cap \dO$).

We recall that the \emph{horosphere}\index{horosphere} passing trough $x$ is the set of points $y$ of $\O$ such that $\beta_{p}(x,y)=0$. Therefore, in the affine chart $\A$, the horospheres are just the translates of the set $\dO \smallsetminus \{ p \}$ in the direction given by the line $p \in \PP^d$ which are in $\O$ (see Figure \ref{horo_fig}).

\begin{center}
\begin{figure}[h!]
  \centering
\includegraphics[width=5cm]{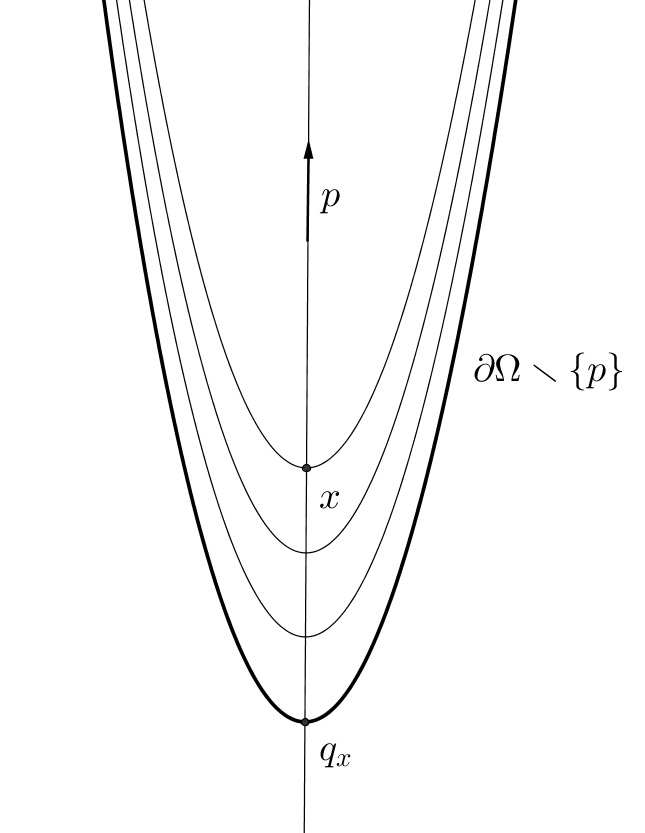}\
\caption{Algebraic horosphere for the ellipsoid \label{horo_fig}}
\end{figure}
\end{center}

Hence, we showed that in a round convex set the horospheres are round convex sets and have the same regularity than $\dO$.

\paragraph*{The general case: algebraic horospheres}

Walsh gives a complete description of the horofunction boundary of a Hilbert geometry in \cite{MR2456635}. We will not go that far in the study of horospheres, we will content ourselves with simpler objects.

The convergence of Busemann functions is no more true if the convex set is not round. Cooper, Long and Tillman introduced an alternative definition in \cite{Cooper:2011fk}. This time a horosphere will not be defined at a point but at a point $p$ together with a supporting hyperplane $H$ of $\O$ at $p$.

The \emph{algebraic horospheres based at $(p,H)$}\index{algebraic horosphere}\index{horosphere!algebraic horosphere} of a properly convex open set are the translates of $\dO \smallsetminus H$ in the affine chart $\PP^d \smallsetminus H$ in the direction\footnote{We insist on the fact that $p$ is a point of $\PP^d$ which is not in $\A$; therefore $p$ is also a direction in $\A$.} $p \in \PP^d$ which are in $\O$ (see Figure \ref{horo_tri}).

\begin{center}
\begin{figure}[h!]
  \centering
\includegraphics[width=5cm]{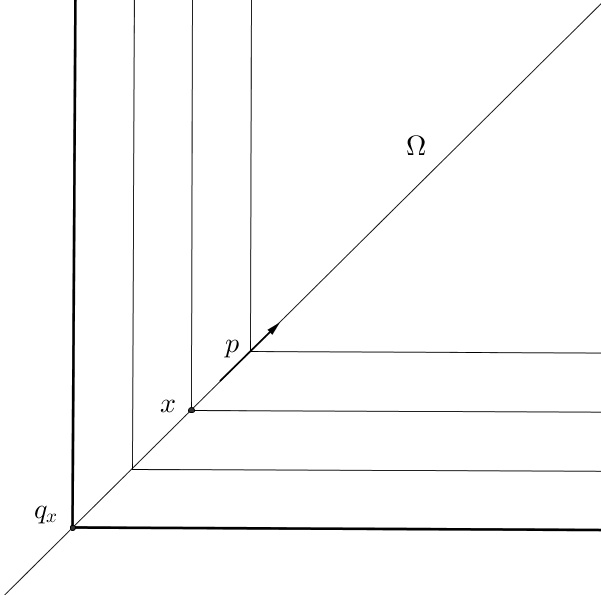}
\caption{Algebraic horosphere for the triangle \label{horo_tri}}
\end{figure}
\end{center}

Hence, we get two transverse foliations of $\O$, the one given by the straight line ending at $p$ and the one given by the algebraic horosphere based at $(p,H)$.

We need one more notion before stating and showing the classification.

\subsubsection*{Duality}\label{duality1}
\par{
We present the notion of duality for convex bodies. For more details and a different point of view, the reader can consult the books \cite{MR683612,MR1216521}.
}
\\
\par{
A convex cone of a real vector space is said to be \emph{sharp}\index{sharp}\index{convex cone!sharp} when it does not contain any affine line. Hence, there is a correspondence between open sharp convex cones of $\R^{d+1}$ and properly convex open subsets of $\PP^d$ via the natural projection.
}
\\
\par{
If $\C$ is a convex cone in a vector space $E$, then we define $\C^*=\{f \in E^* \,|\, \forall x \in \overline{\C}\smallsetminus \{ 0\}, \, f(x) > 0 \}$. We first remark that $\C^*$ is a convex open cone of $E^*$. Secondly, $\C^*$ is non-empty if and only if $\C$ is sharp and $\C^*$ is sharp if and only if $\C$ has non-empty interior. This operation defines an involution between the set of all sharp convex cones of $E$ and the  set of all sharp convex cones of $E^*$.
}
\\
\par{
If $\O$ is a properly convex open subset of $\PP(E)$ then we define \emph{the dual $\O^*$ of $\O$}\index{dual} by $\O^* = \PP(\C^*_{\O})$. This defines an involution between the set of all properly convex open subsets of $\PP(E)$ and the  set of all properly convex open subsets of $\PP(E^*)$.
}
\\
\par{
One can remark that $\O^*$ is a space of hyperplanes of $\PP^d$, hence $\O^*$ is also a space of affine charts since there is a correspondence between hyperplanes and affine charts. Namely, $\O^*$ can be identified with the space of affine charts $\A$ containing the closure $\overline{\O}$ of $\O$ or with the space of hyperplanes $H$ such that $H \cap \overline{\O}=\varnothing$. 
}
\\
\par{
An important proposition about duality for convex subsets is the following:
}
\begin{proposition}\label{prop_dual_classi_0}
Let $\O$ be a properly convex open set. Then $\O$ is strictly convex if and only if $\dO^*$ is $\C^1$.
\end{proposition}
\par{
We can explain this proposition in dimension 2. If $\dO$ is not $\C^1$ at a point $p \in \dO$ then the space of lines containing $p$ and not intersecting $\O$ is a segment in the boundary of $\O^*$, showing that $\O^*$ is not strictly convex.
}
\\
\par{
We consider the following set:
}
$$X^{\bullet}=\{(\O,x) \,|\, \O \textrm{ is a properly convex open set of } \PP^d \textrm{ and } x \in \O\}$$

\noindent endowed with the Hausdorff topology. The group $\PG$ acts naturally on $X^{\bullet}$. We denote by $\PP^{d *}$ the projective space $\PP((\R^{d+1})^*)$. We also consider its dual,
 
$$X^{\bullet *} = \{(\O,x) \,|\, \O \textrm{ is a properly convex open set of } \PP^{d *} \textrm{ and } x \in \O\}.$$ 

\noindent The following theorem needs some tools, so we postpone its proof to Part \ref{duality2} (Lemma \ref{centerofmass}) of this chapter. We think that this statement is natural and we hope the reader will feel the same.

\begin{lemma}\label{premier_dual}
There is a continuous bijection $^{\star}:X^{\bullet} \to X^{\bullet *}$ which associates to a pair $(\O,x)\in X^{\bullet}$ a pair $(\O^*,x^{\star}) \in  X^{\bullet *}$ where $\O^*$ is the dual of $\O$, and this map is $\PG$-equivariant.
\end{lemma}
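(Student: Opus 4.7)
The plan is to define $x^{\star}$ via Vinberg's characteristic function of the cone $\C_{\O} \subset \R^{d+1}$. Fix a Lebesgue measure $df$ on the dual space $(\R^{d+1})^*$, and for $\tilde{x} \in \C_{\O}$ set
$$\varphi_{\O}(\tilde{x}) = \int_{\C_{\O}^*} e^{-\langle f,\, \tilde{x}\rangle}\, df.$$
Because $\C_{\O}^*$ is itself sharp with non-empty interior, this integral converges and defines a smooth, positive, strictly convex function on $\C_{\O}$, homogeneous of degree $-(d+1)$.

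Define $\Psi_{\O}(\tilde{x}) = -d_{\tilde{x}}\log \varphi_{\O} \in (\R^{d+1})^*$. An explicit computation gives
$$\Psi_{\O}(\tilde{x}) = \frac{1}{\varphi_{\O}(\tilde{x})}\int_{\C_{\O}^*} f\, e^{-\langle f,\, \tilde{x}\rangle}\, df,$$
which exhibits $\Psi_{\O}(\tilde{x})$ as a weighted barycentre of points of $\C_{\O}^*$ with positive weights of total mass one; hence $\Psi_{\O}(\tilde{x}) \in \C_{\O}^*$. The homogeneity of $\varphi_{\O}$ yields $\Psi_{\O}(\lambda \tilde{x}) = \lambda^{-1} \Psi_{\O}(\tilde{x})$ for $\lambda > 0$, so $\Psi_{\O}$ descends to a well-defined map $\O \to \O^*$. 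Setting $x^{\star} := \PP(\Psi_{\O}(\tilde{x}))$ produces the candidate bijection; the center-of-mass interpretation is precisely the construction alluded to in the proof of Lemma \ref{lem_comp}.

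The $\PG$-equivariance is a direct chain-rule computation: for $g \in \ss$, the substitution $f = {}^t g^{-1} h$ in the integral, together with $|\det g|=1$, gives $\varphi_{g\O}(g\tilde{x}) = \varphi_{\O}(\tilde{x})$, from which $\Psi_{g\O}(g\tilde{x}) = {}^t g^{-1}\Psi_{\O}(\tilde{x})$, which is exactly the dual projective action. Continuity of $(\O,x) \mapsto (\O^*, x^{\star})$ follows from dominated convergence applied to the defining integrals, once one observes that Hausdorff-convergence $\O_n \to \O$ implies Hausdorff-convergence $\C_{\O_n}^*\to \C_{\O}^*$ and that the exponential decay provides a uniform integrable majorant on any compact set of parameters, giving locally uniform convergence of $\varphi_{\O_n}$ together with its first derivatives.

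The main obstacle is bijectivity. The natural candidate for the inverse is the analogous map built from $\O^*$, landing in $\O^{**} = \O$ via the canonical identification $\R^{d+1}\cong (\R^{d+1})^{**}$. Showing that the two compositions are the identity reduces to Vinberg's classical involutivity theorem for the $*$-map of a sharp cone: the strict convexity of $\log \varphi_{\O}$ makes $\Psi_{\O}: \C_{\O}\to \C_{\O}^*$ a diffeomorphism onto its image, and a symmetric argument identifies $\Psi_{\O^*}$ as its inverse. Granted this, $^{\star}$ has a continuous two-sided inverse, which establishes the lemma.
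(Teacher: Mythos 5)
Your construction is essentially the paper's own, written analytically rather than geometrically: the paper deduces Lemma \ref{premier_dual} from Lemma \ref{centerofmass}, whose proof rests on the very same characteristic function $\varphi_{\O}(\tilde x)=\int_{\C_{\O}^*}e^{-f(\tilde x)}\,df$ (the one also used for Theorem \ref{surf_vinberg}); the paper packages it as the equivariant ``centre-of-mass of a slice'' diffeomorphism $\varphi_*:\C_{\O}^*\to\C_{\O}$ and cites Vinberg and Goldman for its properties, while your $\Psi_{\O}(\tilde x)$ is precisely the centroid of the slice $\{f\in\C_{\O}^*\,|\,f(\tilde x)=d+1\}$. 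Your verifications of convergence, of the homogeneity of degree $-1$ of $\Psi_{\O}$, of the inclusion $\Psi_{\O}(\tilde x)\in\C_{\O}^*$, of the $\PG$-equivariance and of the continuity in $(\O,x)$ are correct and more detailed than what the paper records.

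The gap is in the bijectivity step. You reduce the surjectivity of $\Psi_{\O}:\C_{\O}\to\C_{\O}^*$ to a ``classical involutivity theorem'' $\Psi_{\O^*}\circ\Psi_{\O}=\mathrm{id}$, and you do not prove it: strict convexity of $\log\varphi_{\O}$ only makes $\Psi_{\O}$ an injective local diffeomorphism, and the ``symmetric argument'' you invoke is exactly the missing content. Moreover, no such general theorem is available. Using the Euler identity $D^2\log\varphi_{\O}(\tilde x)\cdot\tilde x=\Psi_{\O}(\tilde x)$, one checks that involutivity is equivalent to the constancy on $\C_{\O}$ of $\tilde x\mapsto\varphi_{\O}(\tilde x)\,\varphi_{\O^*}\big(\Psi_{\O}(\tilde x)\big)$, which holds for homogeneous cones but has no reason to hold, and does not hold, for a general sharp cone. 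What you should invoke instead is Vinberg's actual theorem that $\Psi_{\O}$ is onto $\C_{\O}^*$: for $\xi\in\C_{\O}^*$ the restriction of $\varphi_{\O}$ to the affine slice $\{\tilde x\in\C_{\O}\,|\,\xi(\tilde x)=d+1\}$ is strictly convex and proper (it tends to $+\infty$ at $\partial\C_{\O}$), hence has a unique critical point, at which $\Psi_{\O}=\xi$ by Lagrange multipliers and homogeneity. With this replacement your map is a continuous equivariant bijection, which is all that Lemma \ref{premier_dual} asserts; note only that, absent involutivity, your map need not coincide with the paper's map $[\varphi_*^{-1}]$, which is normalized by the centre-of-mass property of Lemma \ref{centerofmass}.
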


\par{
We are now able to give a dynamical description of the automorphisms.
}

\subsubsection*{The classification in the general context}

\begin{proposition}
Let $\g \in \Aut(\O)$. The following are equivalent:
\begin{enumerate}
\item $\g$ is elliptic;
\item $\g$ fixes a point of $\O$;
\item $\g$ is $\S^1$-semi-simple.
\end{enumerate}
\end{proposition}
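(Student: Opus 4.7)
The plan is to run the implications as the cycle $(1) \Leftrightarrow (2) \Leftrightarrow (3)$, where the first equivalence is essentially by definition and the second uses the properness of the $\Aut(\O)$-action together with Lemma \ref{lem_comp}.

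First I would handle $(1) \Leftrightarrow (2)$. If $\gamma$ is elliptic, the infimum $\tau_{\O}(\gamma) = 0$ is achieved at some $x \in \O$, which means $d_{\O}(x,\gamma(x)) = 0$, so $\gamma(x) = x$. Conversely, if $\gamma(x) = x$, then $\tau_{\O}(\gamma) \leqslant d_{\O}(x,\gamma(x)) = 0$ and the infimum is attained at $x$. So $(1)$ and $(2)$ are tautologically equivalent from the definition, with no content beyond unpacking.

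For $(2) \Rightarrow (3)$: since the action of $\Aut(\O)$ on $\O$ is by isometries of the proper metric space $(\O,d_{\O})$, it is proper, hence the stabilizer $\Stab_x(\Aut(\O))$ is compact. Therefore $\gamma$ lies in a compact subgroup $K$ of $\ss$. Any compact subgroup of $\ss$ preserves a positive definite inner product on $\R^{d+1}$ (obtained by averaging), so its elements are diagonalizable over $\mathbb{C}$ with eigenvalues of modulus $1$. Thus $\gamma$ is $\S^1$-semi-simple.

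For $(3) \Rightarrow (2)$: if $\gamma$ is $\S^1$-semi-simple, then in a suitable $\mathbb{C}$-basis $\gamma$ is diagonal with unit-modulus eigenvalues, so the closure $\overline{\langle \gamma \rangle}$ in $\ss$ is contained in a torus and is in particular compact. Since $\gamma \in \Aut(\O)$ and $\Aut(\O)$ is closed in $\ss$ (by the proposition at the start of the section), the closure $\overline{\langle \gamma \rangle}$ is a compact subgroup of $\Aut(\O)$. Lemma \ref{lem_comp} then gives a point $x \in \O$ fixed by $\overline{\langle \gamma \rangle}$, and in particular by $\gamma$.

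The only non-formal step is $(3) \Rightarrow (2)$, whose entire substance is outsourced to Lemma \ref{lem_comp}; once that lemma (and hence the Vinberg center-of-mass construction it rests on) is granted, the whole proposition is essentially a compilation of definitions and basic linear algebra.
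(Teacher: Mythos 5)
Your proof is correct, and its overall architecture ($1 \Leftrightarrow 2$ by definition, then $2 \Rightarrow 3$ and $3 \Rightarrow 2$ via compactness and Lemma \ref{lem_comp}) matches the paper's. The one step where you genuinely diverge is $2) \Rightarrow 3)$. The paper argues via duality: the fixed point $x$ determines a dual hyperplane $x^{\star}$ (Lemma \ref{premier_dual}), so $\g$ acts linearly on the affine chart $\PP^d \smallsetminus x^{\star}$ centered at $x$ and must preserve the John ellipsoid of $\O$ there, hence is conjugate into an orthogonal group. You instead invoke the properness of the isometric action of $\Aut(\O)$ on the proper metric space $(\O,d_{\O})$ to conclude that the stabilizer of $x$ is compact, and then average an inner product over that compact group. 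Both are legitimate; your route leans on the properness statement from Section 1 and avoids the duality machinery and the John ellipsoid, while the paper's route produces a concrete invariant ellipsoid centered at the fixed point, which is the kind of structure reused elsewhere (e.g.\ in the proof of Theorem \ref{thm_zari1}). In both treatments the only step with real content is the direction starting from $\S^1$-semi-simplicity, and there you and the paper both reduce to Lemma \ref{lem_comp}, i.e.\ to the Vinberg center-of-mass construction.
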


\begin{proof}
\par{
$1)\Leftrightarrow 2)$ This is the definition.

$2)\Rightarrow 3)$ Since $\g$ fixes a point $x$, $\g$ also fixes the hyperplane $x^{\star}$ dual to $x$ with respect to $\O$ (cf Lemma \ref{premier_dual}). Therefore, $\g$ is a linear transformation of the affine chart $\A = \PP^d \smallsetminus x^{\star}$ centered at $x$ which preserves the convex $\O$, hence $\g$ preserves the John ellipsoid\footnote{We recall that the John ellipsoid of a bounded convex $\O$ of a real vector space is the unique ellipsoid with maximal volume included in $\O$ and with the same center of mass.} of $\O$ (centered at $x$ in $\A$), so $\g$ is conjugate to an element in $\mathrm{SO}_d$. The conclusion follows.

$3)\Rightarrow 1)$ The closure $G$ of the group generated by $\g$ is compact, therefore every orbit of $\g$ is bounded, hence $\g$ is elliptic by Lemma \ref{lem_comp}.
}
\end{proof}

Given a point $p\in \dO$ and a supporting hyperplane $H$ at $p$ of $\dO$, one can define the group $\Aut(\O,H,p)$ of automorphisms of $\O$ which preserve $p$ and $H$. This group acts on the set of algebraic horosphere based at $(p,H)$ of $\O$. Since different algebraic horospheres correspond by translation in the direction $p$ in the chart $\PP^d \smallsetminus H$, we get a morphism $h:\Aut(\O,H,p) \rightarrow \R$ that measures horosphere displacement. The two following lemmas are left to the reader:

\begin{lemma}\label{dyna_horo}
Let $\g \in \Aut(\O,H,p)$. If $\rho_{\g}^+=1$ then $h(\g)=0$ and $\g$ preserves all the algebraic horospheres based at $(H,p)$ of $\O$. If $\rho_{\g}^+>1$ then $h(\g)\neq 0$.
\end{lemma}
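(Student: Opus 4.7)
The strategy is to put $\g$ in a block-triangular form adapted to $(p,H)$, compute the morphism $h$ in terms of two positive diagonal entries $\mu,\nu$, and then conclude from the spectral constraint recalled just before Proposition \ref{classi_matrix}.

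Fix a basis $(e_0,e_1,\dots,e_d)$ of $\R^{d+1}$ with $e_0$ spanning the line $p$ and $(e_0,\dots,e_{d-1})$ spanning $H$. Since $\g$ fixes the line $p$ and preserves $H$, in this basis its matrix is block upper-triangular, with corner entries $\mu$ (the scalar eigenvalue on the line $p$) and $\nu$ (the scalar eigenvalue induced on the one-dimensional quotient $\R^{d+1}/H$). Both $\mu,\nu$ are positive: choose the lift of $e_0$ so that $\R_{>0}e_0\subset\overline{\C_\O}$, then $\g(\C_\O)=\C_\O$ sends this boundary ray to itself, forcing $\mu>0$; and writing $H=\ker\ell$ with $\ell\ge 0$ on $\C_\O$, the relation $\ell\circ\g=\nu\ell$ forces $\nu>0$ since $\ell$ is strictly positive on the interior of $\C_\O$.

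Next compute the action on horospheres. In the affine chart $\A=\PP^d\smallsetminus H$ with coordinates $(y_0,y')$, $y'=(y_1,\dots,y_{d-1})$, the direction $p$ is the $y_0$-axis, and a direct expansion of the block form shows that $\g$ acts on $\A$ by
\[
(y_0,y')\longmapsto \bigl(\tfrac{\mu}{\nu}\,y_0+g(y'),\, f(y')\bigr)
\]
for some functions $g,f$ of $y'$. In particular translation by $t$ in direction $p$ is conjugated by $\g$ to translation by $(\mu/\nu)t$. Because $\g$ preserves $\dO\smallsetminus H$ setwise, the horosphere $\H_t=(\dO\smallsetminus H)+t\cdot p$ is mapped to $\H_{(\mu/\nu)t}$, and parametrizing horospheres so that the $\Aut(\O,H,p)$-action is by translations (i.e.\ logarithmically) one reads off $h(\g)=\log(\mu/\nu)$.

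The first claim is then immediate: if $\rho_\g^+=1$, the observation preceding Proposition \ref{classi_matrix} forces every eigenvalue of $\g$ to have modulus $1$. In particular $|\mu|=|\nu|=1$, and combined with $\mu,\nu>0$ this gives $\mu=\nu=1$; hence $h(\g)=0$ and $\g(\H_t)=\H_t$ for every $t$, so $\g$ preserves each algebraic horosphere. For the converse direction, assuming $\rho_\g^+>1$ one must show $\mu\neq\nu$, and this is the main obstacle: $\mu$ and $\nu$ are only two among the positive eigenvalues of $\g$, so one has to identify them with $\rho_\g^+$ and $\rho_\g^-$. The argument invokes Proposition \ref{classi_matrix} together with its dual version (via Lemma \ref{premier_dual}): $p\in\overline{\O}$ lies in the eigenspace of $\mu$, and $H$, viewed as a point of $\overline{\O^*}$, corresponds via the contragredient action to the dual eigenspace of $\nu$; in the geometrically natural situation where $p$ is the attracting fixed point of $\g$ on $\overline{\O}$ and $H$ the corresponding repelling supporting hyperplane, this forces $\mu=\rho_\g^+$ and $\nu=\rho_\g^-$. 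Since $\rho_\g^+>1$ forces $\rho_\g^-<1$ (from $\prod|\lambda_i(\g)|=1$), we conclude $\mu\neq\nu$ and thus $h(\g)\neq 0$. Pinning down this identification is precisely the step that must be handled with care, and is where any implicit genericity hypothesis on the pair $(p,H)$ is used.
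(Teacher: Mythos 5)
The paper gives no proof of this lemma (it is explicitly ``left to the reader''), so your argument has to stand on its own. The first half does: the block form adapted to the flag $p\subset H$, the positivity of the two corner eigenvalues $\mu$ (on the line $p$) and $\nu$ (on $\R^{d+1}/H$), the conjugation identity $\g T_t\g^{-1}=T_{(\mu/\nu)t}$ giving $h(\g)=\ln(\mu/\nu)$, and the observation that $\rho_{\g}^+=1$ forces all eigenvalue moduli to be $1$, hence $\mu=\nu=1$ --- this is surely the intended route, and your proof of the first assertion is complete and correct.

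The second assertion is where you have a genuine gap, and your own hedging points at it. The proposed identification $\mu=\rho_{\g}^+$, $\nu=\rho_{\g}^-$ is false in general: $\mu$ and $\nu$ are whatever eigenvalues happen to sit on the invariant line $p$ and on $\R^{d+1}/H$, and nothing in the hypotheses makes $p$ an attracting fixed point (for a hyperbolic element of the cone over an ellipsoid fixing the apex one gets $\mu=1$); there is no genericity hypothesis in the statement to fall back on. More seriously, the implication $\rho_{\g}^+>1\Rightarrow\mu\neq\nu$ cannot be extracted from the data you use, because the eigenvalues of $\g$ on $H/p$ are completely unconstrained by $\mu$ and $\nu$. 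Concretely, let $\O=\{(x,y)\in\R^2 : y>0,\ x+\ln y>0\}\subset\PP^2$ (the epigraph of $-\ln$, a properly convex open set), let $p=[1:0:0]$, let $H$ be the line at infinity, and let $\g$ be induced by the linear map $(x,y,z)\mapsto(x-z\ln\beta,\ \beta y,\ z)$ with $\beta>1$. Then $\g\in\Aut(\O,H,p)$, the algebraic horospheres based at $(p,H)$ are exactly the level sets $\{x+\ln y=t\}$, $t>0$, and $\g$ preserves each of them, so $\mu=\nu=1$ and $h(\g)=0$; yet the eigenvalue moduli of $\g$ are not all equal (they are $\beta^{-1/3},\beta^{-1/3},\beta^{2/3}$ after normalising the determinant), so $\rho_{\g}^+>1$. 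Thus no argument along your lines can close the gap: the second assertion requires an additional hypothesis on the pair $(p,H)$ (implicit in the setting of \cite{Cooper:2011fk}, and harmless in the parabolic applications where it is invoked), or else an input controlling the spectrum of $\g$ on $H/p$, which neither the statement nor your sketch supplies.
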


\begin{lemma}[McMullen, Theorem 2.1 of \cite{MR1953192}]\label{petit_chiant}
Let $\g \in \Aut(\O)$. The translation length satisfies: $$\frac{1}{2} \ln \max(\rho_{\g}^+,1/\rho_{\g}^-,\rho_{\g}^+ / \rho_{\g}^-) \leqslant \tau_{\O}(\g) \leqslant \ln \max(\rho_{\g}^+,1/\rho_{\g}^-).$$ In particular, $\rho_{\g}^+=1$ if and only if $\g$ is elliptic or parabolic.
\end{lemma}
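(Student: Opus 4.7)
The plan is to derive the two inequalities independently and then deduce the characterization of $\rho_{\g}^+=1$. A preliminary observation simplifies matters: since $\det(\g)=1$ and $\rho_{\g}^+\geqslant|\lambda_i(\g)|\geqslant\rho_{\g}^-$ force $\rho_{\g}^+\geqslant 1\geqslant\rho_{\g}^-$, one has $\max(\rho_{\g}^+,1/\rho_{\g}^-,\rho_{\g}^+/\rho_{\g}^-)=\rho_{\g}^+/\rho_{\g}^-$, so the lower bound is really $\tau_{\O}(\g)\geqslant\frac{1}{2}\ln(\rho_{\g}^+/\rho_{\g}^-)$, while the stated upper bound will follow from the sharper estimate $\tau_{\O}(\g)\leqslant\frac{1}{2}\ln(\rho_{\g}^+/\rho_{\g}^-)$ combined with the elementary fact that the mean of two non-negative numbers is at most their maximum. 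The final consequence is then automatic: $\rho_{\g}^+=1$ forces $\rho_{\g}^-=1$ (by the remark just before Proposition \ref{classi_matrix}), so both bounds vanish, giving $\tau_{\O}(\g)=0$ and hence $\g$ elliptic or parabolic; conversely $\tau_{\O}(\g)=0$ combined with the lower bound forces $\rho_{\g}^+/\rho_{\g}^-\leqslant 1$, whence $\rho_{\g}^+=1$.

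For the lower bound, I would appeal to the Birkhoff--Hilbert inequality: for any lifts $\tilde x,\tilde y\in\C_{\O}$ and any nonzero $\phi,\psi\in\overline{\C_{\O}^*}$,
$$d_{\O}([\tilde x],[\tilde y])\,\geqslant\,\tfrac{1}{2}\left|\ln\tfrac{\phi(\tilde x)\,\psi(\tilde y)}{\phi(\tilde y)\,\psi(\tilde x)}\right|,$$
which follows from the identity $d_{\O}([\tilde x],[\tilde y])=\tfrac{1}{2}\ln\bigl((\sup_{\phi}\phi(\tilde x)/\phi(\tilde y))/(\inf_{\phi}\phi(\tilde x)/\phi(\tilde y))\bigr)$ where the sup and inf run over $\overline{\C_{\O}^*}\smallsetminus\{0\}$. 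Since $\g^T$ preserves $\C_{\O}^*$ and has the same spectrum as $\g$, applying Proposition \ref{classi_matrix} to $\g^T\in\Aut(\O^*)$ yields eigenfunctionals $\phi^+,\phi^-\in\overline{\C_{\O}^*}$ with $\phi^{\pm}(\g x)=\rho_{\g}^{\pm}\phi^{\pm}(x)$ for all $x\in\R^{d+1}$. Plugging $\phi=\phi^+$, $\psi=\phi^-$ and $\tilde y=\g\tilde x$ collapses the ratio inside the logarithm to $\rho_{\g}^-/\rho_{\g}^+$, so $d_{\O}(x,\g x)\geqslant\tfrac{1}{2}\ln(\rho_{\g}^+/\rho_{\g}^-)$ for \emph{every} $x\in\O$; taking the infimum yields the desired bound on $\tau_{\O}(\g)$.

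For the upper bound, I would use Proposition \ref{classi_matrix} in its original form: choose eigenvectors $v^+,v^-$ of $\g$ for $\rho_{\g}^+,\rho_{\g}^-$ with $[v^{\pm}]\in\overline{\O}$. The projective line $\ell$ through $[v^-]$ and $[v^+]$ is $\g$-invariant, and the segment joining them lies in $\overline{\O}$ by convexity. Parametrizing $\ell$ by an affine coordinate $t$ with $[v^-]$ at $t=0$ and $[v^+]$ at $t=\infty$, the point $x_t:=[v^-+tv^+]$ is sent by $\g$ to $x_{t\,\rho_{\g}^+/\rho_{\g}^-}$; when the open segment meets $\O$, a direct cross-ratio computation with the Hilbert distance formula yields $d_{\O}(x_t,\g x_t)=\tfrac{1}{2}\ln(\rho_{\g}^+/\rho_{\g}^-)$ at once. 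When the segment lies entirely in $\partial\O$ (inside a face of $\overline{\O}$), I would approximate $x_t$ by interior points $x_\epsilon\in\O$ with $x_\epsilon\to x_t$: the line $(x_\epsilon,\g x_\epsilon)$ converges to $\ell$, its boundary intersections with $\partial\O$ converge to $[v^+]$ and $[v^-]$, and continuity of the cross-ratio gives $d_{\O}(x_\epsilon,\g x_\epsilon)\to\tfrac{1}{2}\ln(\rho_{\g}^+/\rho_{\g}^-)$, so $\tau_{\O}(\g)\leqslant\tfrac{1}{2}\ln(\rho_{\g}^+/\rho_{\g}^-)$ in either case. Since $\ln\rho_{\g}^+$ and $\ln(1/\rho_{\g}^-)$ are both non-negative, $\tfrac{1}{2}(\ln\rho_{\g}^++\ln(1/\rho_{\g}^-))\leqslant\ln\max(\rho_{\g}^+,1/\rho_{\g}^-)$, giving the stated upper bound. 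The main subtlety I anticipate is the Birkhoff--Hilbert inequality powering the lower bound (standard but not developed in the excerpt), along with the continuity argument in the degenerate-axis case of the upper bound, which is routine but requires the boundary-intersection convergence to be justified carefully.
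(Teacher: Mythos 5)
First, a point of reference: the paper gives no proof of this lemma --- it is explicitly ``left to the reader'' with a citation to McMullen --- so there is nothing in-paper to compare against, and your argument must stand on its own. Your lower bound does: the cone formula $d_{\O}(x,y)=\tfrac12\ln\bigl(\sup_{\phi}\tfrac{\phi(\tilde x)}{\phi(\tilde y)}\,/\,\inf_{\phi}\tfrac{\phi(\tilde x)}{\phi(\tilde y)}\bigr)$, with $\phi$ ranging over $\overline{\C_{\O}^*}\smallsetminus\{0\}$, together with the eigenfunctionals of the transpose supplied by Proposition \ref{classi_matrix} applied to the dual action, is exactly the right mechanism, and the only detail worth recording is that a nonzero element of $\overline{\C_{\O}^*}$ is automatically strictly positive on $\C_{\O}$, so the ratios make sense. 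The reduction of the stated bounds to $\tfrac12\ln(\rho_{\g}^+/\rho_{\g}^-)$ is also correct.

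The upper bound is where there are genuine gaps, and they are concentrated exactly in the cases your argument treats as routine. (1) When $\rho_{\g}^+=1$ and $\g$ is not elliptic, there is no axis: all eigenvalues have modulus one, $[v^+]$ and $[v^-]$ may coincide, and even when they do not, $\g$ fixes the segment pointwise, so $x_t=\g x_t$ and the line $(x_\epsilon,\g x_\epsilon)$ converges to nothing controlled. Your claim that the final assertion is ``automatic'' because ``both bounds vanish'' is therefore circular: the vanishing of the upper bound here \emph{is} the statement $\tau_{\O}(\g)=0$ for a parabolic element, which is the nontrivial content of the ``in particular''. (Compare $z\mapsto z+1$ on the half-plane: $d(x_\epsilon,\g x_\epsilon)\to\infty$ along most sequences approaching the fixed point; one must approach the fixed face in a specific way, e.g.\ along an orbit, using the polynomial growth of $\|\g^{\pm n}\|$ in the cone formula.) (2) When $\rho_{\g}^+>1$ but $]v^-,v^+[\subset\dO$, the line $\ell$ meets $\dO$ in a whole segment $[a,b]\supseteq[v^-,v^+]$, and the endpoints of the chord $(x_\epsilon,\g x_\epsilon)\cap\O$ are only known to accumulate somewhere in $[a,x_t]$ and $[\g x_t,b]$; if they accumulate strictly between $v^-$ and $x_t$ (or between $\g x_t$ and $v^+$) the limiting cross-ratio exceeds $\rho_{\g}^+/\rho_{\g}^-$ and can even be infinite. ``Continuity of the cross-ratio'' does not rule this out; you need to choose the approximating family explicitly (for instance $x_\epsilon=[(1-\epsilon)u_t+\epsilon w]$ for a fixed $w\in\C_{\O}$ and estimate $M(\g x_\epsilon/x_\epsilon)$ and $m(\g x_\epsilon/x_\epsilon)$ directly, or trap the chord endpoints with the same dual eigenfunctionals $\phi^{\pm}$ used in the lower bound). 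Note also that McMullen's stated upper bound $\ln\max(\rho_{\g}^+,1/\rho_{\g}^-)$ is weaker than the one you set out to prove precisely so that it can be obtained by norm estimates on $\g^{\pm n}$ without locating an axis; aiming for the sharp constant is what forces you into these two degenerate configurations.
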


When a subset $A$ of $\PP^d$ is included in the closure $\overline{\O}$ of a properly convex open set $\O$, we can define its \emph{convex hull in $\O$}\index{convex hull}, i.e the smallest convex set of $\O$ containing $A$ in its closure. We will denote it by $\mathrm{Conv}(A)$. An element $\g \in \Aut(\O)$ is \emph{planar}\index{planar} when $\g$ is $\R$-semi-simple and has exactly two eigenvalues. In that case, the function $d_{\O}(x,\g(x))$ is constant on $\O$ and equal to $\frac{1}{2}\ln\big(\rho_{\g}^+/\rho_{\g}^-\big)$.

\begin{proposition}\label{classi_qhyp}
Let $\g \in \Aut(\O)$. The following are equivalent:
\begin{enumerate}
\item $\g$ is hyperbolic or quasi-hyperbolic.
\item $\rho_{\g}^+ >1$.
\item There exist two disjoint faces $F^+$ and $F^-$ of $\overline{\O}$ which are fixed and the action of $\g$ on the properly convex set $\O_{axe} = Conv(F^- \cup F^+)$ is planar.
\end{enumerate}
In the last case $\tau_{\O}(\g) = \frac{1}{2} \ln (\rho_{\g}^+ /\rho_{\g}^-)$. Moreover, $\g$ is quasi-hyperbolic if and only if $\O_{axe} \subset \dO$; otherwise $\g$ is hyperbolic. 
\end{proposition}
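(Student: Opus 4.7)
The equivalence $(1)\Leftrightarrow(2)$ is immediate from Lemma \ref{petit_chiant}: that lemma identifies $\rho_{\g}^+=1$ with $\g$ being elliptic or parabolic, and the remark $\det(\g)=1 \Rightarrow \rho_{\g}^+\geqslant 1$ made earlier in the subsection completes the equivalence.

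For $(2)\Rightarrow(3)$, set $\rho^\pm=\rho_{\g}^\pm$. Proposition \ref{classi_matrix} gives that $\rho^\pm$ are positive eigenvalues of $\g$, with odd power, and that the projective traces of $\ker(\g-\rho^\pm)$ meet $\overline{\O}$. Pick $p^\pm\in\PP(\ker(\g-\rho^\pm))\cap\overline{\O}$ and let $F^\pm$ be the face of $\overline{\O}$ whose relative interior contains $p^\pm$; then $\g(F^\pm)=F^\pm$ since $\g(p^\pm)=p^\pm$ lies in the relative interior of $\g(F^\pm)$. The crucial and delicate step is to verify that $F^\pm\subset \PP(\ker(\g-\rho^\pm))$. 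For this, $\g$ restricted to the open face $f^\pm$ is a projective automorphism fixing an interior point of $f^\pm$, so it is elliptic and (projectively) $\S^1$-semi-simple; hence after rescaling the linear action on $\mathrm{supp}(F^\pm)$ by $\rho^\pm$, every eigenvalue there has modulus $1$. The oddness of the power of $\rho^\pm$ from Proposition \ref{classi_matrix}, combined with the dynamical argument in its proof applied to the sub-convex set $\overline{\O}\cap \mathrm{supp}(F^\pm)$, excludes both nontrivial Jordan blocks and complex conjugate pairs at modulus $\rho^\pm$ in this restriction, so $\g|_{\mathrm{supp}(F^\pm)}=\rho^\pm\cdot I$ and $F^\pm\subset \PP(\ker(\g-\rho^\pm))$. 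Disjointness $F^+\cap F^-=\varnothing$ follows from $\ker(\g-\rho^+)\cap\ker(\g-\rho^-)=\{0\}$. The convex hull $\O_{axe}=\mathrm{Conv}(F^+\cup F^-)$ lives in $\PP(\ker(\g-\rho^+)\oplus\ker(\g-\rho^-))$, on which $\g$ has exactly the two eigenvalues $\rho^\pm$; thus the action on $\O_{axe}$ is planar.

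For $(3)\Rightarrow(2)$ and the quantitative statements: if $\g$ is planar on $\O_{axe}$ with two distinct eigenvalues $a>b>0$, the remark that $\rho_{\g}^+=1$ forces all $\lambda_i(\g)=1$ would contradict $a>b$, so $\rho_{\g}^+>1$. Lemma \ref{petit_chiant} gives $\tau_{\O}(\g)\geqslant \tfrac{1}{2}\ln(\rho^+/\rho^-)$ since the maximum is attained by $\rho^+/\rho^-$ when $\rho^+>1>\rho^-$. On $\O_{axe}$ the planar action produces a constant displacement $\tfrac{1}{2}\ln(\rho^+/\rho^-)$; Proposition \ref{compa} applied inside $\mathrm{supp}(\O_{axe})$ gives $d_{\O}(x,\g x)\leqslant d_{\O_{axe}}(x,\g x)=\tfrac{1}{2}\ln(\rho^+/\rho^-)$ for any $x\in\O$ lying in the relative interior of $\O_{axe}$. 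If $\O_{axe}\not\subset \dO$ such an $x$ exists, so the infimum is attained and $\g$ is hyperbolic. If $\O_{axe}\subset \dO$ no such $x$ exists; nevertheless a sequence $x_n\in \O$ approaching a point of $\O_{axe}$ gives $d_{\O}(x_n,\g x_n)\to \tfrac{1}{2}\ln(\rho^+/\rho^-)$, while an attaining $x\in\O$ would, by a convex-geometric rigidity argument on the axis, be forced to lie on $\O_{axe}\subset\dO$, contradiction; hence $\g$ is quasi-hyperbolic. The main obstacle in the above plan is the sub-argument that $F^\pm\subset \PP(\ker(\g-\rho^\pm))$: it demands combining the elliptic-to-$\S^1$-semi-simple normalization with the oddness-of-power constraint of Proposition \ref{classi_matrix} and a dynamical analysis on the face-supporting subspace.
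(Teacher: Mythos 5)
Your treatment of $(1)\Leftrightarrow(2)$, of $(3)\Rightarrow(2)$ and of the displacement computation is essentially the paper's. The proof breaks at the step you yourself single out as crucial in $(2)\Rightarrow(3)$: it is \emph{not} true that the face $F^{\pm}$ of $\overline{\O}$ whose relative interior contains a chosen fixed point $p^{\pm}\in\PP(\ker(\g-\rho_{\g}^{\pm}))\cap\overline{\O}$ is contained in $\PP(\ker(\g-\rho_{\g}^{\pm}))$. Your justification conflates two different constraints: the oddness of the power of $\rho_{\g}^{+}$ in Proposition~\ref{classi_matrix} concerns only the Jordan structure of the real eigenvalue $\rho_{\g}^{+}$ itself, and your normalization argument shows only that every eigenvalue of $\g$ on $\mathrm{supp}(F^{\pm})$ has \emph{modulus} $\rho_{\g}^{\pm}$; neither excludes complex conjugate pairs $\rho_{\g}^{+}e^{\pm i\theta}$. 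Concretely, let $\O\subset\PP^{3}$ be the cone over a disc (Section~\ref{presen_conelli} with $d=2$), with cone $\C_{\O}=\C_{3}\times\R^{*}_{+}\subset\R^{3}\times\R$, and let $\g$ act by $a(R_{\theta}\oplus 1)$ on $\R^{3}$ (a rotation in $\so{2}$ rescaled by $a$) and by $b$ on $\R$, where $a^{3}b=1$, $a>1>b$ and $\theta\notin 2\pi\Z$. Then $\rho_{\g}^{+}=a$ and $\PP(\ker(\g-a))\cap\overline{\O}$ is the single point $o$ at the centre of the disc face; the face of $\overline{\O}$ containing $o$ in its relative interior is the whole closed disc, which is not contained in $\{o\}$, and the hull of that disc with the apex is all of $\O$, on which $\g$ is not planar. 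So your construction does not produce the objects demanded by $(3)$, and no other choice of $p^{+}$ is available since $o$ is the only point of $\PP(\ker(\g-a))\cap\overline{\O}$.

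The paper goes the other way round: it \emph{defines} $F^{\pm}:=\overline{\O}\cap\PP(\ker(\g-\rho_{\g}^{\pm}))$, so that $\O_{axe}$ lies in $\PP(\ker(\g-\rho_{\g}^{+})\oplus\ker(\g-\rho_{\g}^{-}))$ and the planarity of the restriction, hence $\tau_{\O}(\g)\leqslant\frac{1}{2}\ln(\rho_{\g}^{+}/\rho_{\g}^{-})$, is immediate, non-emptiness coming from Proposition~\ref{classi_matrix}; in the example above these sets are the centre of the disc and the apex, whose hull is the genuine interior axis and $\g$ is hyperbolic, as it should be. (Note that in this example $\{o\}$ is not a face of $\overline{\O}$ in the equivalence-relation sense either, so even the paper's $F^{\pm}$ must be read as $\g$-invariant convex subsets of $\dO$ rather than literal faces; the usable content of $(3)$ is the existence of the planar axis.) Two smaller gaps: your claims that $d_{\O}(x_{n},\g x_{n})\to\frac{1}{2}\ln(\rho_{\g}^{+}/\rho_{\g}^{-})$ along a sequence approaching $\O_{axe}$, and that an attaining point is ``forced'' onto $\O_{axe}$ by an unspecified rigidity argument, are both left unproved and require the cross-ratio comparison along lines converging to lines of $\O_{axe}$.
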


\begin{proof}
Lemma \ref{petit_chiant} shows that $1) \Rightarrow 2)$.

For $2) \Rightarrow 3)$ and $1)$, since $\rho_{\g}^+ >1$ we have $\rho_{\g}^- < 1$. So consider the projective spaces $E^+ = \PP(\ker(\g - \rho_{\g}^+))$ and $E^- = \PP(\ker(\g - \rho_{\g}^-))$, and let us set $F^+= \overline{\O} \cap E^+$ and $F^-= \overline{\O} \cap E^-$. Since the action of $\g$ on $\O$ is proper, the convex sets $F^+$ and $F^-$ are included in $\dO$. Moreover, a computation using the Jordan form of $\g$ shows that for any point $x \in \O$ the limit $\g^n x$ as $n$ tend to $+\infty$ (resp. $-\infty$) belongs to $F^+$ (resp. $F^-$), therefore $F^-,F^+$ are non-empty faces of $\overline{\O}$. Finally the restriction of $\g$ to the convex hull  $\O_{axe} = \mathrm{Conv}(F^- \cup F^+)$ is clearly planar, and so for every $x \in \O_{axe}$ we get $d_{\O}(x, \g(x)) =d_{\O_{axe}}(x, \g(x)) = \frac{1}{2} \ln (\rho_{\g}^+ /\rho_{\g}^-)$. Hence, $\tau_{\O}(\g)  \leqslant \frac{1}{2} \ln (\rho_{\g}^+ /\rho_{\g}^-)$ but  $\tau_{\O}(\g)  \geqslant \frac{1}{2} \ln (\rho_{\g}^+ /\rho_{\g}^-)$ thanks to Lemma \ref{petit_chiant}.

$3) \Rightarrow 2)$ Since the action of $\g$ on $\O_{axe}$ is planar we get $\rho^+_{\g} >1$.
\end{proof}

\begin{proposition}
Let $\g \in \Aut(\O)$. The following are equivalent:
\begin{enumerate}
\item $\g$ is parabolic.
\item $\rho_{\g}^+=1$ and $\g$ is not elliptic.
\item $\g_h=1$ and the power of $\g_u$ is odd and $\geqslant 3$.
\item $\g$ fixes every point of a face $F \subset \dO$ of $\overline{\O}$ and there is only one maximal face fixed by $\g$.
\item $\g$ preserves an algebraic horosphere and is not elliptic.
\end{enumerate}
\end{proposition}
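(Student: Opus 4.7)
The plan is to organize the proof into three layers: the spectral conditions $1 \Leftrightarrow 2 \Leftrightarrow 3$, the horosphere characterization $2 \Leftrightarrow 5$, and the face characterization $2 \Leftrightarrow 4$.

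The spectral layer is nearly immediate. Lemma \ref{petit_chiant} yields $1 \Leftrightarrow 2$. For $2 \Leftrightarrow 3$: the relation $\det(\g)=1$ with $\rho_\g^+ = 1$ forces every modulus $\lambda_i(\g)$ equal to $1$ (the remark right after the definition of $\rho_\g^\pm$), so $\g_h = 1$ and $\g = \g_e \g_u$. The previous proposition identifies ``elliptic'' with ``$\S^1$-semi-simple'', which under $\g_h = 1$ amounts to $\g_u = 1$; thus ``not elliptic'' means the power of $\g_u$ is $\geqslant 2$. Proposition \ref{classi_matrix} then supplies the odd-power statement: the power of $\rho_\g^+ = 1$ in $\g$ coincides with the power of $\g_u$ (since all the modulus-$1$ Jordan structure of $\g$ is carried by $\g_u$), so oddness combined with $\geqslant 2$ gives $\geqslant 3$.

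For $2 \Leftrightarrow 5$: given $2$, Proposition \ref{classi_matrix} supplies a fixed point $p \in \PP(\ker(\g - 1)) \cap \overline{\O}$; dualizing via Lemma \ref{premier_dual} and applying Proposition \ref{classi_matrix} to the $\g$-action on $\O^*$ produces a $\g$-invariant hyperplane $H$ supporting $\O$ at $p$, so $\g \in \Aut(\O, H, p)$. Lemma \ref{dyna_horo} (with $\rho_\g^+ = 1$) then gives $h(\g) = 0$, i.e. $\g$ fixes every algebraic horosphere based at $(p, H)$; non-ellipticity is inherited. Conversely, preservation of an algebraic horosphere at $(p, H)$ places $\g$ in $\Aut(\O, H, p)$, and the contrapositive of Lemma \ref{dyna_horo} forces $\rho_\g^+ = 1$, hence $2$.

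For $3 \Rightarrow 4$: let $F$ be the forward $\omega$-limit set of $\{\g^n\}$ acting on $\overline{\O}$, i.e. the closure of all accumulation points of $\{\g^n(x)\}_{n \to +\infty}$ for $x \in \O$. Expanding $\g^n = \binom{n}{p-1}\g_e^n(\g_u - 1)^{p-1} + O(n^{p-2})$ and using the oddness of $p$ (as in the proof of Proposition \ref{classi_matrix}), the forward and backward limit sets of $\g$ coincide and are contained in $\PP(\ker(\g - 1)) \cap \overline{\O} \subset \dO$, the last inclusion coming from $\g_u \neq 1$ which rules out interior fixed points. A convex-dynamics argument parallel to the one used for $F^\pm$ in Proposition \ref{classi_qhyp} shows $F$ is a face of $\overline{\O}$ on which $\g$ acts trivially. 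Uniqueness of the maximal such face comes from $\g_h = 1$: an extra maximal fixed face would require a distinct modulus-$1$ invariant direction producing a separate attractive face, again as in Proposition \ref{classi_qhyp}. The reverse implication $4 \Rightarrow 2$ uses Proposition \ref{classi_qhyp} to rule out $\rho_\g^+ > 1$ (which yields two disjoint maximal fixed faces $F^\pm$, contradicting uniqueness), and rules out ``elliptic'' because a non-identity elliptic $\g$ fixes an interior point $x \in \O$ together with a boundary face $F$, and its $\mathrm{SO}_d$-like action around $x$ produces multiple distinct maximal fixed faces in $\dO$.

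The main obstacle is the face property of $F$ in $3 \Rightarrow 4$ and the uniqueness assertion of $4$: a projective intersection $\PP(V) \cap \overline{\O}$ need not be a face in general, and one must exploit the specific Jordan structure (odd power $\geqslant 3$, equality of forward and backward limits) to extract extremality via convex dynamics; this is the same kind of analysis that underlies Proposition \ref{classi_qhyp} for hyperbolic and quasi-hyperbolic elements.
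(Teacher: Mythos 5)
Your overall architecture differs from the paper's only superficially: the paper proves the single cycle $1\Rightarrow 2\Rightarrow 3\Rightarrow 4\Rightarrow 5\Rightarrow 1$, while you prove pairwise equivalences hubbed at $2$; the working parts (Lemma \ref{petit_chiant} for $1\Leftrightarrow 2$, Proposition \ref{classi_matrix} for $2\Leftrightarrow 3$, the Jordan-form limit-set construction of the face for $3\Rightarrow 4$, Lemma \ref{dyna_horo} for the horosphere statement) are the same. The one genuinely different ingredient is how you produce the $\g$-invariant supporting hyperplane $H$ needed for statement $5$: you dualize via Lemma \ref{premier_dual} and apply Proposition \ref{classi_matrix} to the action on $\O^*$, whereas the paper extracts $H$ from the face $F$ of statement $4$. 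Your route is fine, but as written it only yields an invariant supporting hyperplane somewhere along $\dO$; to base it at $p$ you must run the fixed-point argument inside the set of supporting hyperplanes at $p$, which is a compact convex $\g$-invariant subset of $\overline{\O^*}$. That is a one-line repair.

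The genuine soft spot is the exclusion of elliptic elements in your $4\Rightarrow 2$. Test your claim on the cone over an ellipsoid of Subsection \ref{presen_conelli}: a rotation $k\in\mathrm{SO}_d$ of the ellipsoidal factor is elliptic (it fixes interior points of the cone), yet the only face of $\overline{\O}$ it fixes pointwise is the apex --- the other fixed boundary point, the center of the ellipsoidal face at infinity, lies in the relative interior of a face that is merely setwise invariant. So if ``maximal face fixed by $\g$'' is read pointwise, your mechanism (``multiple distinct maximal fixed faces'') does not fire, and condition $4$ would in fact be satisfied by this elliptic element; the implication only survives under the setwise-invariant reading, where the apex and the ellipsoidal face are two incomparable maximal invariant faces. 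You should make that reading explicit and prove that an elliptic element fixing an interior point and a boundary face always carries at least two incomparable invariant faces. To be fair, the paper's own cycle never establishes ``not elliptic'' in its step $4\Rightarrow 5$ either (it only proves $\rho_{\g}^+=1$ and the horosphere preservation), so you are filling a real hole --- but the filling needs the above repair.
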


\begin{proof}
$1) \Rightarrow 2)$ is a consequence of Lemma \ref{petit_chiant}.

$2) \Rightarrow 3)$ is a consequence of Proposition \ref{classi_matrix}.

For $3) \Rightarrow 4)$, denote by $k$ the power of $\g_u$ and consider the subspace $E= \PP(\mathrm{Im}(\g_u -1)^{k-1})$ of $\PP^d$. From the Jordan form of $\g$ we get that every point of $\O \smallsetminus \PP(\ker(\g_u -1)^{k-1})$ accumulates on a point of $F= \dO \cap E$; hence $F$ is a face of $\overline{\O}$. Moreover, every point of $F$ must be fixed by $\g$ since it is fixed by $\g_h$, and $\g$ and  $\g_h$ commutes. Finally, such an $F$ is unique since otherwise the action of $\g$ on $\mathrm{Conv}(F \cup F')$ would be planar and $\rho_{\g}^+ > 1$.

$4) \Rightarrow 5)$ Take any point $p$ in the relative interior of $F$. One can find a supporting hyperplane $H$ fixed by $\g$ that contains $F$. Since $F$ is unique we have $\rho_{\g}^+ = 1$, and one conclude with Lemma \ref{petit_chiant}.

$5) \Rightarrow 1)$ Lemma \ref{dyna_horo} shows that $\rho_{\g}^+=1$. Since $\g$ is not elliptic, Lemma \ref{petit_chiant} shows that $\g$ is parabolic. 
\end{proof}

\subsubsection*{The classification in the strictly convex case or $\C^1$ boundary case}

\begin{proposition}
Suppose the element $\g\in \ss$ preserves a properly convex open set $\O$. If $\O$ is strictly convex \textit{or} has $\C^1$ boundary then $\g$ is not quasi-hyperbolic. If $\g$ is hyperbolic then $\g$ is positively proximal and if $\g$ is parabolic then its Jordan block of maximal size is unique\footnote{We mean by this that $\g$ has only one Jordan block of size its power.}.
\end{proposition}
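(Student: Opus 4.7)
My plan is to handle the strictly convex case directly and then deduce the $\C^1$-boundary case by projective duality. By Proposition~\ref{prop_dual_classi_0}, $\dO$ is $\C^1$ if and only if $\O^*$ is strictly convex; the transpose $\g^T$ preserves $\O^*$ and has the same Jordan form as $\g$, so positive proximality and uniqueness of the maximal Jordan block transfer directly between $\g$ and $\g^T$. The dynamical type (elliptic, parabolic, hyperbolic, quasi-hyperbolic) is determined by $\g$'s Jordan form together with the boundary structure of the relevant convex set; the strictly convex form of the first assertion of the present proposition, once applied to $\g^T$ on $\O^*$, eliminates the quasi-hyperbolic alternative.

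In the strictly convex case, every face of $\overline{\O}$ is a single point. If $\g$ were quasi-hyperbolic, Proposition~\ref{classi_qhyp} would give $\O_{axe}=\mathrm{Conv}(F^+\cup F^-)=[p^-,p^+]\subset\dO$, a non-trivial boundary segment contradicting strict convexity. For the parabolic assertion, the face $F=\dO\cap\PP(\mathrm{Im}(\g_u-1)^{k-1})$ produced in the parabolic classification has dimension $m-1$, where $m$ is the number of Jordan blocks of maximal size $k$; strict convexity forces $F$ to be a point, hence $m=1$. The main work is hyperbolic $\Rightarrow$ positively proximal. Positivity is immediate from Proposition~\ref{classi_matrix}. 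For proximality I would handle each failure mode separately. If $\rho^+$ had a Jordan block of size $\geq 2$, pick a generalized eigenvector $v_1$ with $(\g-\rho^+)v_1=v_{p^+}$ and restrict $\g$ to the invariant $3$-plane $\pi'=\mathrm{span}(v_{p^+},v_{p^-},v_1)$: the $2$-dimensional properly convex open set $\O\cap\PP(\pi')$ is non-empty because the interior of $[p^-,p^+]$ lies in $\O$ by hyperbolicity, and $\g|_{\pi'}$ has Jordan power $2$ at its dominant eigenvalue $\rho^+$, contradicting the oddness conclusion of Proposition~\ref{classi_matrix}. If the modulus-$\rho^+$ eigenspaces spanned a real subspace $V$ of $\dim\geq 2$, then $\g|_V/\rho^+$ would be $\S^1$-semi-simple: either $\O\cap\PP(V)\neq\varnothing$ and Lemma~\ref{lem_comp} yields a fixed point of $\g$ in $\O$ contradicting hyperbolicity, or $\overline{\O}\cap\PP(V)\subset\dO$ and strict convexity forces it to be the single point $\{p^+\}$, which is incompatible with the linear projection onto $V$ sending the open cone $\C_\O$ to an open cone in $V$. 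An analogous projection argument rules out $\ker(\g-\rho^+)$ having dimension $\geq 2$.

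For the $\C^1$-boundary case, I apply the strictly convex conclusions to $\g^T$ acting on $\O^*$. Positive proximality and uniqueness of the maximal Jordan block are immediate once one knows $\g^T$ is hyperbolic (respectively parabolic) on $\O^*$; this in turn is read off the Jordan form of $\g^T=\g$ together with the strictly convex first assertion ruling out the quasi-hyperbolic alternative for $\g^T$ on $\O^*$. For the first assertion in the $\C^1$ case, if $\g$ were quasi-hyperbolic on $\O$ then $\O_{axe}\subset\dO$; for any $p^+\in F^+$ and $p^-\in F^-$ the segment $[p^+,p^-]$ lies in $\dO$, so by $\C^1$-ness the unique tangent hyperplanes at $p^+$ and $p^-$ coincide, producing a $\g$-invariant supporting hyperplane containing both eigenvectors $v_{p^+}$ and $v_{p^-}$. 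Combining this with the positive proximality of $\g^T$ on $\O^*$ just obtained (so that $\rho^+_\g$ and $\rho^-_\g$ are simple eigenvalues of $\g^T$), one forces the defining functional of this common tangent hyperplane to be an eigenvector of $\g^T$ with eigenvalue in $\{\rho^+_\g,\rho^-_\g\}$; comparing with the growth rate of $f(\g^n x)$ for $x\in\O$ then yields the contradiction.

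The main obstacle will be the strictly convex argument for hyperbolic $\Rightarrow$ positively proximal, whose three sub-arguments each require a separate construction. The Jordan-block case is the cleanest, reducing directly to the parity constraint of Proposition~\ref{classi_matrix} on a $3$-dimensional slice; the other two require a more delicate combination of Lemma~\ref{lem_comp} (the compact $\S^1$-semi-simple action on $V$) with a projection argument that exploits both hyperbolicity (no fixed point in $\O$) and strict convexity (no non-trivial segment in $\dO$).
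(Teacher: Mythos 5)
Your overall architecture is exactly the paper's: treat the strictly convex case directly, then pass to the $\C^1$ case by applying the strictly convex result to $^t\g$ acting on the strictly convex dual $\O^*$ (Proposition~\ref{prop_dual_classi_0}). The paper's own proof stops there and explicitly leaves the two substantive implications (hyperbolic $\Rightarrow$ proximal, parabolic $\Rightarrow$ unique maximal block) as exercises, so most of your text is filling in what the paper omits. Your quasi-hyperbolic argument, your three-dimensional slice $\pi'=\mathrm{span}(v_{p^+},v_{p^-},v_1)$ killing a Jordan block at $\rho_{\g}^+$ via the parity clause of Proposition~\ref{classi_matrix}, and your use of Lemma~\ref{lem_comp} to exclude a fixed point in $\O$ are all correct.

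Two steps do not close as written. First, in the case where several eigenvalues share the modulus $\rho_{\g}^+$, the ``linear projection onto $V$'' (the span of the eigenspaces, not the generalized eigenspaces) is in general not $\g$-equivariant, and even when it is, its image of $\C_{\O}$ has no reason to lie in $\overline{\C_{\O}}$, so the asserted incompatibility with $\overline{\O}\cap\PP(V)=\{p^+\}$ does not follow. The object that makes this work is a limit $\pi_\infty$ of $\g^{n_j}/\|\g^{n_j}\|$: it does satisfy $\pi_\infty(\overline{\C_{\O}})\subset\overline{\C_{\O}}$, its image is spanned by $\pi_\infty(\C_{\O})$, and that image has dimension $\geqslant 2$ precisely in the remaining non-proximal configurations, which produces the forbidden segment in $\dO$ (the same device justifies your unproved claim that the parabolic face $F$ has dimension $m-1$). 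Second, in the $\C^1$ quasi-hyperbolic argument the sentence ``one forces the defining functional to be an eigenvector of $\g^T$ with eigenvalue in $\{\rho^+_{\g},\rho^-_{\g}\}$'' is the entire content of that step: simplicity of $\rho^{\pm}_{\g}$ alone does not force it, and what is actually needed is the lemma that a positively biproximal hyperbolic automorphism of a strictly convex set has no fixed point in the closure other than the two eigendirections (proved by running an invariant segment from a putative third fixed point to $q^+$ and using strict convexity of $\O^*$). Once you have that, no growth-rate comparison is needed: the functional vanishes on both $v_{p^+}$ and $v_{p^-}$, whereas the unique $\rho^{\pm}$-eigenfunctionals of $^t\g$ do not, which is already the contradiction.
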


\begin{proof}
First, assume that $\O$ is strictly convex. If $\g$ is quasi-hyperbolic then by Proposition \ref{classi_qhyp}, one gets that $\O_{axe} \subset \dO$, hence $\O$ is not strictly-convex. Now, if $\g$ is hyperbolic and not proximal then it is an exercise to check that $\dO$ must contain a non-trivial segment. Analogously, if $\g$ is parabolic and its Jordan block of maximal size is not unique then $\dO$ must contain a non-trivial segment. Finally, if $\O$ has a $\C^1$ boundary then the dual $\O^*$ is strictly convex (Proposition \ref{prop_dual_classi_0}) and is preserved by $^t \g^{-1}$.
\end{proof}

\subsection{Examples}\label{examples}

\subsubsection{Ellipsoid}\label{presen_elli}

\par{
Consider the quadratic form $q(x)=x_1^2+ \cdots + x_d^2 - x_{d+1}^2$ on $\R^{d+1}$. The projective trace $\E_d$ of the cone of timelike vectors $\C_{d+1}=\{ x \in \R^{d+1} \, | \, q(x) < 0 \}$ is a properly convex open set. In a well chosen chart, $\E_d$ is a Euclidean ball. We call any image of $\E_d$ by an element of $\ss$ an \emph{ellipsoid}\index{ellipsoid}.
}
\\
\par{
The ellipsoid is the leading example of a round Hilbert geometry. The group $\Aut(\E_d)$ is the group $\SO$\footnote{More precisely it is the identity component of $\SO$.} and $(\E_d,d_{\E_d})$ is isometric to the real hyperbolic space of dimension $d$. In fact, $\E_d$ is the Beltrami-Cayley-Klein-projective model of the hyperbolic space.
}
\\
\par{
The elements of $\SO$ can be of three types: elliptic, hyperbolic or parabolic. The stabilizer of any point of $\E_d$ is a maximal compact subgroup of $\SO$, and it acts transitively on $\partial \E_d$. The stabilizer $P$ of any point $p$ of $\partial \E_d$ is an amenable subgroup of $\SO$ that acts transitively on $\E_d$. Moreover, $P$ splits as a semi-direct product of a compact group isomorphic to $\mathrm{SO}_{d-1}$ and a solvable subgroup $S$ which acts simply transitively on $\E_d$. And $S$ also splits as a semi-direct product $S= U \rtimes \R^*_+$ where $\R^*_+$ is the stabilizer of $p$ and any point $q\neq p$, $q \in \partial\E_d$. It is a group composed uniquely of hyperbolic elements, and $U$ is a unipotent subgroup composed only of parabolic elements. It is isomorphic to $\R^{d-1}$ and it acts simply transitively on each horosphere of $\E_d$. Finally, one can show that the Lie algebra $\mathfrak{u}$ of $U$ is conjugate to the following Lie algebra of $\mathfrak{sl}_{d+1}(\R)$:
}
$$
\left\{
(u_1,...,u_{d-1}) \in \R^{d-1} \, 
\left|
\left(
\begin{array}{ccccc}
0 & u_1 & \cdots & u_{d-1} & 0\\
  & 0    &    0         &   0          & u_1 \\
  &     &    \ddots         &       0      & \vdots \\
  &       &    &    0         & u_{d-1} \\
  &      &             &             & 0
\end{array}
\right)
\right.
\right\}
.$$

\par{
The case of the ellipsoid is not only the illuminating example of the world of round convex sets, its properties are also the main tool for studying groups acting on round convex sets.
}
\subsubsection{Cone over an ellipsoid.}\label{presen_conelli}

\par{
The cone of timelike vectors $\C_{d+1}= \{ x \in \R^{d+1} \, | \, q(x) < 0 \}$ is a sharp convex cone of $\R^{d+1}$, so it is also a properly convex open subset of $\PP^{d+1}$. It is not strictly convex nor with $\C^1$ boundary and its group of automorphisms is isomorphic to $\SO \times \R_+^*$. One should remark that the group $\Aut(\C_{d+1})$ is ``affine'' since every automorphism preserves the support of the ellipsoidal face of $\C_{d+1}$, hence the affine chart $\R^{d+1}$.
}
\\
\par{
This properly convex open set gives several counter-examples to statements about strictly convex open sets.
}

\subsubsection{Simplex.}\label{presen_simpl}

\par{
Consider an open simplex $S_d$ in $\R^d$. This is a polyhedron (and so a properly convex open subset of $\PP$) whose group of automorphisms is the semi-direct product of the group $D_d$ of diagonal matrices with positive entries and determinant one and the alternate group on the $d+1$ vertices of $S_d$.
}
\\
\par{
The group $D_d$ is isomorphic to $\R^d$ and it acts simply transitively on $S_d$. One should remark that $\Aut(S_d)$ does not preserves any affine chart of $\PP^d$ (i.e. it is not affine). This very simple properly convex open set is more interesting than it looks.
}
\\
\par{
One can remark that since every collineation of $\PP^d$ fixing $d+2$ points in generic position is trivial and every polyhedron of $\PP^d$ which is not a simplex has $d+2$ vertices in generic position, we get that the automorphism group of every polyhedron which is not a simplex is finite. Therefore the only polyhedron (as a properly convex open set) which can be of interest for the geometric group theorist is the simplex.
}

\subsubsection{The symmetric space of $\mathrm{SL}_m(\R)$.}

\par{
Consider the convex cone of symmetric positive definite matrices of $M_m(\R)$. It is a sharp convex cone in the vector space of symmetric matrices. We consider its trace $S_m^{++}$ on $\PP^d$ where $d=\frac{(m-1)(m+2)}{2}$. This is a properly convex open set which is not strictly convex nor with $\C^1$ boundary, if $m\geqslant 3$.
}
\\
\par{
The automorphism group of $S_m^{++}$ is $\s{m}$ via the representation $\rho_m(g) \cdot M =g M  {}^t \! g$. This representation offers plenty of examples of various conjugacy classes of matrices acting on a convex set.
}
\\
\par{
The interested reader can find a very nice description of $S_3^{++}$ and its relation to the triangle $S_2$ in \cite{MR1238518}.
}

\section{Examples of ``large'' groups acting on properly convex open subsets}

\par{
Before giving the examples we need a nice context and for that a notion of indecomposability. This is the goal of the next lines.
}
\\
\par{
Let us begin by a remark. Given a properly convex open set $\O$ of the projective space $\PP(E)$, the cone $\C_{\O}$ above $\O$ is a properly convex open set of the projective space $\PP(E\oplus \R)$. We will say that a properly convex open set $\O$ is a \emph{convex cone}\index{convex cone}\index{convex!cone} if there exists an affine chart such that $\O$ is a cone in this affine chart.
}
\\
\par{
The following definition is very natural: a sharp convex cone $\C$ of a vector space $E$ is \emph{decomposable}\index{decomposable}\index{convex cone!decomposable} if we can find a decomposition $E=E_1\oplus E_2$ of $E$ such that this decomposition induces a decomposition of $\C$ (i.e. $\C_i=E_i \cap \C$ and $\C=\C_1\times \C_2$). A sharp convex cone is \emph{indecomposable}\index{indecomposable}\index{convex cone!indecomposable} if it is not decomposable.
}
\\
\par{
We apply this definition to properly convex open sets. This need a little subtlety. A properly convex open set $\O$ is \emph{indecomposable}\index{indecomposable}\index{properly convex!indecomposable} if the cone $\C_{\O}$ above $\O$ is indecomposable and $\O$ is not a cone except if $\O$ is a segment.
}
\\
\par{
This definition suggests a definition of \emph{a product}\index{product}\index{properly convex!product} of two properly convex open sets which is not the Cartesian product. Namely, given two properly convex open sets $\O_1$ and $\O_2$ of the projective space $\PP(E_1)$ and $\PP(E_2)$, we define a new properly convex open set $\O_1 \otimes \O_2$ of the projective space $\PP(E_1 \times E_2)$ by the following formula: if $\C_i$ is the cone above $\O_i$ then $\O_1 \otimes \O_2 = \PP(\C_1 \times \C_2)$.
}
\\
\par{
It is important to note that if $\O_i$ is of dimension $d_i$ then $\O_1 \otimes \O_2$ is of dimension $d_1+d_2+1$. For example if the $\O_i$ are two segments then $\O_1 \otimes \O_2$ is a tetrahedron. Here is a more pragmatic way to see this product. Take two properly convex sets $\omega_i$ of a projective space $\PP(E)$ of disjoint support. The $\omega_i$ are not open but we assume that they are open in their supports. Then there exists an affine chart containing both $\omega_i$ and the convex hull, and such an affine chart\footnote{Of course, this does not depend of the choice of the affine chart containing the convex sets.} of $\omega_1 \cup \omega_2$ is isomorphic to $\omega_1 \otimes \omega_2$.
}
\\
\par{
Let us finish by an explanation of why the Cartesian product is not a good product for convex sets from our point of view. The Cartesian product is an affine notion and not a projective notion, namely the resulting convex set depends on the affine chart containing the convex sets. For example if the $\O_i$ are segments (which is a projective notion) then in an affine chart $\O_i$ can be a half-line or a segment. If they are both half-lines then the cartesian product is a triangle. But if they are both segments then the Cartesian product is a square which is not projectively equivalent to a triangle.
}
\\
\par{
We get that a properly convex open set is either indecomposable or a cone (which is not a segment) or it splits as a product for $\otimes$.
}
\\
\par{
One can easily remark that $\O_1 \otimes \O_2$ is never strictly convex nor with $\C^1$ boundary, that the automorphism group of  $\O_1 \otimes \O_2$ is by a canonical isomorphism the group $\Aut(\O_1) \times \Aut(\O_2) \times \R$ and that ``the group $\R$'' can be written in a good basis as a diagonal matrices group with diagonal entries: $ (\underbrace{\lambda,...,\lambda}_{d_1+1},  \underbrace{\mu,...,\mu}_{d_2+1})$ with $\lambda^{d_1+1}\mu^{d_2+1}=1$.
}

\subsection{Homogeneous properly convex open set}\label{sec_hom}

\par{
A properly convex open set $\O$ is said to be \emph{homogeneous} when its automorphism group acts transitively on it. Homogeneous convex sets have been classified in the sixties in \cite{MR0201575,MR0158414} by Vinberg. Rothaus gives an alternative construction of the list of all homogeneous convex sets in \cite{MR0202156}.
}
\\
\par{
If $\O_1$ and $\O_2$ are two homogeneous properly convex open sets, then the product $\O_1 \otimes \O_2$ is also homogeneous. In fact, one should also remark that the cone above $\O_1$  is a homogeneous properly convex open set with automorphism group $G_1 \times \R$. We will not spend time on homogeneous properly convex open sets because they do not give new examples of divisible or quasi-divisible convex. This is due to the following corollary of the classification:
}

\begin{proposition}
An homogeneous properly convex open set $\O$ is quasi-divisible if and only if it is symmetric. Moreover if $\O$ is indecomposable, it admits a compact and a finite-volume non-compact quotient.
\end{proposition}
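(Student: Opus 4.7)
The plan is to reduce both implications to Vinberg's characterization of symmetric homogeneous cones (equivalently, to the Koecher--Vinberg theorem), which identifies the symmetric ones as those whose linear automorphism group is reductive/unimodular, or equivalently as those for which the cone $\C_\O$ is self-dual.

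For the ``if'' direction I would argue as follows. Let $\O$ be indecomposable and symmetric. The cone $\C_\O$ is then an irreducible self-dual convex cone, so $\Aut(\O)$ is reductive and, after dividing by its center (which acts on $\C_\O$ by homotheties and trivially on $\O$), it yields a semisimple Lie group $\overline{G}$ without compact factor such that $\O \cong \overline{G}/K$ for a maximal compact subgroup $K$; up to a multiplicative constant the Hilbert Busemann volume is comparable to the pushforward of Haar measure on $\overline{G}$. Then, by Borel--Harish-Chandra, $\overline{G}$ admits arithmetic lattices, and by Borel's existence theorem both uniform and non-uniform arithmetic lattices exist in any semisimple Lie group of non-compact type, producing the desired compact and finite-volume non-compact quotients of $\O$.

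For the ``only if'' direction, assume that $\G \subset \Aut(\O)$ is a discrete subgroup with $\mu_\O(\Quo) < +\infty$, and set $G = \Aut(\O)$. Since $\O$ is homogeneous and $G$ acts by isometries for $d_\O$, the Busemann volume $\mu_\O$ is $G$-invariant; since the point stabilizer $K$ of some $x_0\in\O$ is compact by Lemma \ref{lem_comp}, the identification $\O = G/K$ turns $\mu_\O$ into a scalar multiple of the pushforward of a left Haar measure on $G$. Hence $\mu_\O(\Quo) < +\infty$ forces $\G$ to have finite covolume in $G$, i.e.\ to be a lattice. It is classical (Weil) that a connected Lie group admitting a lattice is unimodular; therefore $G$ is unimodular.

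The main obstacle is the remaining step: deducing from unimodularity of $\Aut(\O)$ and indecomposability of $\O$ that $\O$ is symmetric. This is precisely one of the deep consequences of Vinberg's classification of homogeneous convex cones via $T$-algebras (and, independently, of Rothaus' construction): among indecomposable homogeneous cones, the self-dual ones are exactly those whose automorphism group is reductive, while every non-self-dual example has a non-trivial solvable radical carrying a non-trivial modular character, hence a non-unimodular automorphism group. Granting this classification input, the equivalence in the proposition follows, and the existence statement in the last sentence has already been established in the ``if'' paragraph.
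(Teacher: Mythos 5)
Your proof is correct and follows essentially the same route as the paper: unimodularity of $\Aut(\O)$ forced by the existence of a lattice, Vinberg's classification to conclude symmetry, and Borel--Harish-Chandra for the converse and the existence of both types of quotients. You merely make explicit a step the paper leaves implicit (identifying the Busemann volume with the pushforward of Haar measure on $G/K$ so that finite covolume of $\G$ in $\Aut(\O)$ actually follows from finite Hilbert volume of the quotient).
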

\par{
A properly convex open set is \emph{symmetric}\index{symmetric}\index{properly convex!symmetric}\footnote{This is equivalent to the fact that : for every point $x \in \O$ there exists an \textit{isometry} $\g$ of $(\O,d_{\O})$ which fixes $x$ and whose differential at $x$ is $-Id$.} when it is homogeneous and self-dual\footnote{e.g $\O$ and $\O^*$ are isomorphic as properly convex open set.}.
}
\begin{proof}
\par{
We can assume that $\O$ is indecomposable. If $\O$ is symmetric then $G=\Aut(\O)$ is semi-simple and acts transitively and properly on $\O$. Theorem \ref{exis_lattice} shows that there exists a uniform lattice\footnote{A \emph{lattice} in a locally compact group $G$ is a discrete subgroup $\G$ such that the quotient $\Quotient{G}{\G}$ is of finite volume for the induced Haar measure. A lattice is \emph{uniform} when the quotient is compact.} $\G$ and a non-uniform lattice $\G'$ of $G$, therefore $\O$ admits a compact and a finite-volume non-compact quotient.
}
\\
\par{
If $\O$ is quasi-divisible then $G = \Aut(\O)$ admits a lattice, but a locally compact group which admits a lattice has to be unimodular (i.e its Haar measure is right-invariant and left-invariant). Therefore, the group $G$ is unimodular. The classification of Vinberg shows that the only case where $\O$ is homogeneous with $\Aut(\O)$ unimodular is when $\O$ is symmetric.
}
\end{proof}

\begin{theorem}[Borel - Harish-Chandra]\label{exis_lattice}
A semi-simple Lie group admits a uniform lattice and a non-uniform lattice.
\end{theorem}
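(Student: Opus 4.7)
The plan is to reduce the statement to the classical construction of arithmetic lattices. The main ingredients are: (a) every non-compact semi-simple real Lie group $G$ arises, up to isogeny, as the identity component of $\mathbf{G}(\R)$ for some semi-simple algebraic group $\mathbf{G}$ defined over a number field $k$; (b) the Borel--Harish-Chandra theorem proper, which says that $\mathrm{Res}_{k/\Q}\mathbf{G}(\Z)$ is a lattice in $\prod_{v\mid\infty}\mathbf{G}(k_v)$; and (c) Godement's compactness criterion, which asserts that such a lattice is cocompact if and only if $\mathbf{G}$ admits no non-trivial $k$-unipotent element.

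For the non-uniform lattice I would take $k=\Q$ and an isotropic $\Q$-form of $G$. When $G$ is $\R$-split one gets such a form for free from the Chevalley construction applied to the complexification of the Lie algebra of $G$: the resulting split $\Q$-group has obvious $\Q$-unipotent elements and one takes $\mathbf{G}(\Z)$, the model being $\sss_{d+1}(\Z)\subset \sss_{d+1}(\R)$. For general $G$ one picks a quasi-split $\Q$-form whose real points realise $G$ up to isogeny; its existence is part of the classification of $\Q$-forms of semi-simple groups. Godement's criterion then produces a non-cocompact lattice.

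For the uniform lattice I would use restriction of scalars from a totally real number field $k\neq\Q$. The goal is to build a $k$-form $\mathbf{G}$ of the complexification $G_{\mathbb{C}}$ such that at one distinguished infinite place $v_{0}$ the real group $\mathbf{G}(k_{v_{0}})$ is isogenous to $G$, while at every other infinite place $v$ the real form $\mathbf{G}(k_{v})$ is compact. Such a $k$-form exists because $G_{\mathbb{C}}$ always has a compact real form (Weyl) and Galois cohomology lets one prescribe the real form at each archimedean place independently. The Weil restriction $\mathrm{Res}_{k/\Q}\mathbf{G}$ is then a semi-simple $\Q$-group whose real points are $G\times K$ with $K$ compact; its integer points form a lattice by Borel--Harish-Chandra, and this lattice is cocompact because a $\Q$-unipotent element would have to project to a unipotent element in the compact factor $K$, which is impossible. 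Projecting to the $G$-factor yields the desired uniform lattice.

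The principal obstacle is the construction of the $k$-form used in the uniform case with the prescribed behaviour at each archimedean place: for classical groups it can be done by hand using quadratic, Hermitian or skew-Hermitian forms over totally real fields that are definite at all places but one, but for the exceptional groups it genuinely requires the Galois-cohomological classification of forms of semi-simple algebraic groups. Since the theorem is invoked here only as a black box to feed the preceding proposition on symmetric convex sets, I would present the Chevalley argument in full and merely sketch the restriction-of-scalars argument, referring the reader to Borel, Platonov--Rapinchuk or Margulis for the form-theoretic input.
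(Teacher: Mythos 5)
The paper states this result as a classical black box and offers no proof of its own, so the only thing to assess is the internal correctness of your sketch. Your uniform-lattice half is the standard and correct argument: a $k$-form over a totally real field $k\neq\mathbb{Q}$ that is compact at every archimedean place but one gives, via restriction of scalars, a $\mathbb{Q}$-anisotropic $\mathbb{Q}$-group (a non-trivial $k$-unipotent would inject as a non-trivial unipotent element of a compact factor, hence be trivial), and Godement's criterion plus projection to the non-compact factor yields a cocompact lattice. The existence of such a $k$-form with prescribed behaviour at the archimedean places is indeed the hard point (done case by case in Borel's 1963 paper, and cohomologically by Borel--Harder), and deferring it to the references is reasonable for a sketch.

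The non-uniform half has a genuine gap. You propose to realise a general $G$ as the real points of a \emph{quasi-split} $\mathbb{Q}$-form. But quasi-splitness survives base change: a Borel subgroup defined over $\mathbb{Q}$ is in particular defined over $\mathbb{R}$, so a quasi-split $\mathbb{Q}$-form has quasi-split real points, and most real forms are not quasi-split over $\mathbb{R}$. Already $\so{4}$ is not (the unique quasi-split real form of $\mathrm{SO}_5(\mathbb{C})$ is $\mathrm{SO}_{3,2}(\mathbb{R})$, since $B_2$ has no outer automorphisms), and the same goes for $\mathrm{SU}(p,q)$ with $|p-q|\geqslant 2$, for $\mathrm{Sp}(p,q)$, etc.; yet all of these admit non-uniform lattices. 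What you actually need is a $\mathbb{Q}$-form that is merely \emph{$\mathbb{Q}$-isotropic}: isotropy already forces non-trivial $\mathbb{Q}$-unipotents, and Godement then gives non-cocompactness. Producing a $\mathbb{Q}$-isotropic form with the prescribed real points is precisely the content of the non-uniform half of Borel's theorem (for $\so{d}$ take $\mathrm{SO}(q,\mathbb{Z})$ with $q$ an isotropic rational quadratic form of signature $(d,1)$; in general one argues case by case or via Galois cohomology). Note also that for this half you are forced to work over $k=\mathbb{Q}$ with no compact factors: an isotropic $k$-group has non-compact $k_v$-points at \emph{every} archimedean place, so the restriction-of-scalars trick cannot be combined with isotropy. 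Finally, a cosmetic point: as stated the theorem fails for compact semi-simple groups, which admit no non-uniform lattice at all; the statement should be read with $G$ non-compact, as it is in the application.
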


\subsection{Symmetric properly convex open set}\label{sec_sym}

\par{
Symmetric properly convex open sets have been classified by Koecher in the sixties thanks to the classification of Jordan algebras (\cite{MR1718170}, \cite{MR1446489},\cite{MR0158414}). The classification of indecomposable symmetric properly convex open sets is given by the two following theorems:
}

\begin{theorem}
Let $\O$ be a symmetric properly convex open set of $\PP^d$ which is strictly convex. Then $\O$ is an ellipsoid. In other words, $\O$ is the symmetric space associated to $\so{d}$.
\end{theorem}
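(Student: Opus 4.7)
The plan is to combine the indecomposability forced by strict convexity with the Koecher--Vinberg classification of irreducible symmetric convex cones. First, symmetry implies homogeneity: composing the symmetries at two distinct points of $\O$ produces a translation along the line between them, so $\Aut(\O)$ acts transitively on $\O$ and all of the structural machinery developed for homogeneous $\O$ applies.

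Next I would check that $\O$ is indecomposable in the sense defined in the preceding section. A product $\O_1 \otimes \O_2$ always contains segments in its boundary, as the author remarks just before Section \ref{sec_hom}, so strict convexity forbids this. A convex cone which is not a segment contains boundary segments joining the apex to every boundary point of the base, so this is also excluded. Therefore $\O$ must be indecomposable.

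Now apply the Koecher--Vinberg classification of indecomposable symmetric convex cones: up to linear isomorphism they are the Lorentz cones of arbitrary dimension, the cones of positive-definite Hermitian $n \times n$ matrices over $\R$, $\mathbb{C}$, $\mathbb{H}$ with $n \geq 3$, and the exceptional $27$-dimensional cone attached to the Albert algebra. For each of the last four families, one exhibits a segment in the boundary using the Jordan structure. In the matrix case, take two rank-one projections $A = v v^*$ and $B = w w^*$ with $v, w$ linearly independent. The subspaces $\ker A$ and $\ker B$ meet in a subspace of codimension at most two, hence non-trivially as soon as the matrix size is at least three. Then $t A + (1-t) B$ is singular for every $t \in [0,1]$, so the whole segment $[A,B]$ lies in the boundary, contradicting strict convexity. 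The exceptional Albert case is handled by an analogous idempotent argument. Only the Lorentz cone survives, and its projectivization is identified with $\E_d$ in Section \ref{presen_elli}, which is the symmetric space of $\so{d}$.

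The main obstacle is precisely the invocation of the Koecher--Vinberg classification, which is a non-trivial input from the theory of simple Jordan algebras. A more intrinsic argument would try to prove directly that a strictly convex symmetric $\O$ has real rank one, by ruling out flat $2$-planes in $\O$ coming from pairs of commuting biproximal automorphisms. This is delicate because Hilbert geometry is only Finslerian, and the identification of the Riemannian flats of the symmetric space $G/K$ with Hilbert-isometric flats in $\O$ is not transparent; taking the classification as a black box is by far the shortest route.
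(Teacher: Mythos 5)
Your argument is correct and takes essentially the same route as the paper, which states this theorem without proof as a direct consequence of the Koecher classification of symmetric convex cones via Jordan algebras --- exactly the black box you invoke. The details you supply (homogeneity via transvections, indecomposability forced by strict convexity, and the rank-one-projection segments ruling out the matrix cones and the exceptional cone) are the standard ones and are sound.
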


\par{
For $d=1$, we advise the reader that the usual name for an ellipsoid of dimension 1 is a segment.
}

\begin{theorem}
Let $\O$ be an indecomposable symmetric properly convex open subset of $\PP^d$ which is not strictly convex. Then $\O$ is the symmetric space associated to $\mathrm{SL}_m(\mathbb{K})$ where $\mathbb{K}=\R,\, \mathbb{C}, \, \mathbb{H}$ and $m \geqslant 3$ or to the exceptional Lie group $E_{6(-26)}$. 
\end{theorem}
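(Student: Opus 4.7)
The plan is to invoke the Koecher--Vinberg correspondence between symmetric cones and formally real (Euclidean) Jordan algebras, and then to read off the result from the Jordan--von Neumann--Wigner classification of simple Euclidean Jordan algebras.

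First, I would lift $\O$ to the cone $\C_{\O}$ above it. Since $\O$ is symmetric, so is $\C_{\O}$: transitivity of $\Aut(\O)$ on $\O$ lifts to transitivity of a group of linear automorphisms on $\C_{\O}$ (up to the $\R_+^*$ dilation factor), and the symmetry at a point $x\in\O$ lifts to an involutive linear automorphism of $\C_{\O}$. The Koecher--Vinberg theorem then provides a unique structure of a formally real Jordan algebra $J$ on the tangent space $T_x\O$ such that $\C_{\O}$ is the interior of the cone of squares of $J$, and such that the identifications are canonical and equivariant. Under this correspondence the decomposition $J=J_1\oplus J_2$ of Jordan algebras corresponds to the decomposition $\C_{\O}=\C_{\O_1}\times\C_{\O_2}$ of cones; therefore indecomposability of $\O$ (together with the fact that $\O$ is not a cone, since the cone case is explicitly excluded in the notion of indecomposability for non-segments) translates into $J$ being simple.

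Next I would apply the Jordan--von Neumann--Wigner classification: every simple Euclidean Jordan algebra is isomorphic to one of
\emph{(i)} the space $\mathfrak{h}_m(\mathbb{K})$ of Hermitian $m\times m$ matrices over $\mathbb{K}\in\{\R,\mathbb{C},\mathbb{H}\}$ with $m\geqslant 2$, \emph{(ii)} the exceptional Albert algebra $\mathfrak{h}_3(\mathbb{O})$, or \emph{(iii)} a spin factor $\R\oplus\R^{n-1}$ with $n\geqslant 2$. In each case the symmetric cone $\C_{\O}$ is readily identified: positive definite Hermitian matrices in cases (i) and (ii), and the Lorentz cone in case (iii). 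In case (iii) the projectivized cone is an ellipsoid, hence strictly convex, so this case is ruled out by hypothesis. The low rank matrix cases $\mathfrak{h}_2(\mathbb{K})$ are also excluded: they are isomorphic (via the standard exceptional isomorphisms in dimensions $3,4,6$) to spin factors and therefore again give ellipsoids. What remains is exactly the list announced in the theorem: $\mathfrak{h}_m(\mathbb{K})$ with $m\geqslant 3$, and $\mathfrak{h}_3(\mathbb{O})$.

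Finally I would identify the relevant groups. For $J=\mathfrak{h}_m(\mathbb{K})$ with $m\geqslant 3$, the structure group of $J$ modulo scalars acts on $\O=\PP(\C_{\O})$, and a direct matrix computation shows that this action is through the representation $g\cdot M = gMg^\ast$ of $\mathrm{SL}_m(\mathbb{K})$, so $\O$ is the Riemannian symmetric space associated to $\mathrm{SL}_m(\mathbb{K})$. For the Albert algebra the structure group modulo scalars is the real form $E_{6(-26)}$ of $E_6$, giving the remaining case. The main obstacle in this approach is the Koecher--Vinberg step itself: producing the Jordan product from the symmetric cone is not elementary and relies on the theory of self-dual homogeneous cones. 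Once this correspondence is available, the remainder of the proof is essentially a bookkeeping exercise using the Jordan algebra classification and the identification of strictly convex cases with Lorentz cones.
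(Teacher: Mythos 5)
Your proof is correct and follows exactly the route the paper itself points to: the survey states this theorem without giving a proof, attributing the classification to Koecher via Euclidean Jordan algebras (\cite{MR1718170}, \cite{MR1446489}, \cite{MR0158414}), which is precisely the Koecher--Vinberg correspondence plus the Jordan--von Neumann--Wigner list that you invoke. The one step you rightly flag as nontrivial --- passing from the paper's point-symmetry definition of \emph{symmetric} to a homogeneous self-dual cone so that Koecher--Vinberg applies --- is the substance of those references, and your elimination of the spin factors and of $\mathfrak{h}_2(\mathbb{K})$ via the low-dimensional isomorphisms with Lorentz cones correctly accounts for the strict-convexity hypothesis.
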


\par{
We can give an explicit description of these symmetric properly convex open sets. For example for the symmetric properly convex open set associated to $\mathrm{SL}_m(\R)$ is the projective trace of the cone of positive definite symmetric matrices of size $m \times m$. For the other non-exceptional ones, just take the Hermitian or quaternionic Hermitian matrices.
}
\\
\par{
We remark that such convex sets do not exist in all dimensions and that the smallest one is of dimension 5. The real ones are of dimension $D_{\R}(d)=\frac{(d-1)(d+2)}{2} = 5, 9, 14, 20, 27, 35,...$, the complex ones are of real dimension $D_{\mathbb{C}}(d)=d^2-1= 8, 15, 24, 35, 48, 63,...$, the quaternionic ones $D_{\mathbb{H}}(d)=(2d+1)(d-1)= 14, 27, 44, 65, 90, 119, ...$, the exceptional one of dimension $26$.
}

\subsection{Spherical representations of semi-simple Lie groups}\label{spherical}

In \cite{MR565090}, Vinberg characterises the representation of a semi-simple real Lie group that preserves a properly convex open set. We use the classical notation. Let $K$ be a maximal compact subgroup of $G$ and let $P$ be a minimal parabolic subgroup of $G$. The characterisation is the following:

\begin{theorem}
Let $\rho:G \rightarrow \mathrm{SL}(V)$ be an irreducible representation of a semi-simple group $G$. Then the following are equivalent:
\begin{enumerate}
\item The group $\rho(G)$ preserves a properly convex open subset of $\PP(V)$.
\item The vector space $V^K$ of vectors fixed by $K$ is not zero.
\item The group $P$ preserves a half-line.
\end{enumerate}
Furthermore, if one of the previous assertions is true, then $\dim \big(V^K \big)=1$ and there exist two properly convex open sets $\O_{min}$ and $\O_{max}$ such that for any properly convex open set $\O$ preserved by $\rho$ we have $\O_{min}\subset \O \subset \O_{max}$. Such a representation is called a \emph{spherical} representation. 
\end{theorem}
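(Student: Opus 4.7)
The plan is to prove the cyclic chain $(1)\Rightarrow(2)\Leftrightarrow(3)\Rightarrow(1)$, extracting the remaining statements $\dim V^K=1$ and the existence of $\O_{min}\subset\O_{max}$ along the way. For $(1)\Rightarrow(2)$: since $\rho(K)$ is a compact subgroup of $\Aut(\O)$, Lemma~\ref{lem_comp} yields a fixed point $[v_0]\in\O$. Any lift $v_0\in \C_\O$ then spans a half-line preserved by $\rho(K)$, on which $K$ acts through a continuous character into $\R_+^*$; this character is trivial because $K$ is compact, so $v_0\in V^K\smallsetminus\{0\}$.

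For $(2)\Leftrightarrow(3)$: fix an Iwasawa decomposition $G=KAN$ with minimal parabolic $P=MAN$, and let $v_\lambda$ be a highest-weight vector of $\rho$. Then $v_\lambda$ is $N$-fixed and $A$ acts on it by a positive character, so the line $\R v_\lambda$ is automatically $P$-stable and the half-line $\R_+ v_\lambda$ is $P$-preserved if and only if $M$ acts trivially on $v_\lambda$. The equivalence of this last condition with $V^K\neq 0$, together with $\dim V^K=1$, is the Cartan--Helgason characterisation of irreducible spherical representations: when $M$ fixes $v_\lambda$, the integral $v^K=\int_K\rho(k)v_\lambda\,dk$ is a non-zero generator of $V^K$, and the non-vanishing of this integral is the non-trivial point. \emph{This step is the main obstacle of the proof}, and it rests essentially on highest-weight theory for real semisimple Lie groups rather than on elementary arguments.

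For $(3)\Rightarrow(1)$ and the construction of $\O_{min}\subset\O_{max}$: let $v_0\in V$ satisfy $\rho(P)\cdot\R_+v_0=\R_+v_0$ and set
\[
\C_{min}\;=\;\text{convex hull of }\rho(G)\cdot\R_+v_0\;\subset\;V.
\]
This is a $G$-invariant convex cone whose linear span is a $G$-invariant subspace, hence all of $V$ by irreducibility; therefore $\C_{min}$ has non-empty interior. Applying the equivalence already proved to the contragredient $\rho^*$, which is irreducible and spherical because $(V^*)^K\simeq V^K\neq 0$ via any $K$-invariant inner product, produces $\phi_0\in V^*$ with $\rho^*(P)$-preserved half-line and, analogously, a $G$-invariant convex cone $\C^\vee\subset V^*$ with non-empty interior. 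Define
\[
\C_{max}\;=\;\{\,w\in V\mid \phi(w)>0 \text{ for all } \phi\in\overline{\C^\vee}\smallsetminus\{0\}\,\};
\]
this is an open, sharp, $G$-invariant convex cone. Normalising signs so that the matrix coefficient $g\mapsto \phi_0(\rho(g)v_0)$ is non-negative on $G$ (a positivity controlled by the zonal spherical function attached to the $K$-fixed pair $(v^K,\phi^K)$) gives $\R_+v_0\subset\overline{\C_{max}}$, and hence $\C_{min}\subset\C_{max}$ by $G$-invariance, which in particular forces $\C_{min}$ to be sharp. Projectivising yields properly convex open sets $\O_{min}\subset\O_{max}$, both $\rho(G)$-invariant; any $\rho(G)$-invariant properly convex open $\O$ lifts to a $G$-invariant sharp convex cone $\C_\O$ containing $\R_+v_0$ (by applying the first step to $\rho(K)\subset\Aut(\O)$), so $\C_{min}\subset\C_\O$, and the dual inclusion $\C_\O\subset\C_{max}$ follows by running the same argument on the dual cone of $\O$.
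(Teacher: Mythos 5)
A preliminary remark: the paper itself gives no proof of this theorem --- it is quoted from Vinberg \cite{MR565090}, and the only hint supplied is the sentence after the statement, that $\O_{min}$ is the convex hull of the limit set of $G$ and $\O_{max}$ is its dual. Your construction ($\C_{min}$ the convex cone generated by the $G$-orbit of the $P$-invariant half-line, i.e.\ the cone over the convex hull of the closed orbit $G\cdot[v_0]$, and $\C_{max}$ the dual of the analogous cone in $V^*$) is exactly that, and your overall architecture --- fixed point of $K$ via Lemma \ref{lem_comp}, Cartan--Helgason for $(2)\Leftrightarrow(3)$, duality and a positivity of matrix coefficients for the extremal convex sets --- is the standard route. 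So the plan is the right one.

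Two steps, however, are stated in a form that would fail. First, in $(2)\Leftrightarrow(3)$ you assert that the line $\R v_\lambda$ spanned by a highest-weight vector ``is automatically $P$-stable''. For a general real semi-simple $G$ this is false: $P=MAN$ preserves the highest \emph{restricted}-weight space $V^{\lambda_0}$, which need not be a line, and $M$ need not preserve any line inside it. (Take $G=\mathrm{SL}_2(\mathbb{C})$ acting on $V=\mathbb{C}^2\simeq\R^4$: here $M$ is a circle rotating $V^{\lambda_0}\simeq\R^2$ and no real line is $P$-stable --- consistently, this representation preserves no properly convex open set.) The correct reduction is: $P$ preserves a half-line $\R_+v$ iff $v$ is an $N$-fixed, $M$-fixed restricted-weight vector, and irreducibility (via $V=U(\mathfrak{n}^-)\cdot v$) then forces $v\in V^{\lambda_0}$; only after this does Cartan--Helgason apply. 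Second, in the minimality argument you justify $\R_+v_0\subset\overline{\C_{\O}}$ for an arbitrary invariant $\O$ ``by applying the first step to $\rho(K)$'', but that step only produces a $K$-fixed ray $\R_+w$ with $w\in V^K$, which is not the ray $\R_+v_0$. You still need the limit-set argument: push $w$ by $\rho(a_n)$ with $a_n\to\infty$ in the positive Weyl chamber and renormalise by $\lambda_0(a_n)$; the limit is the $V^{\lambda_0}$-component of $w$, which is a positive multiple of $v_0$ because $\int_K\phi_0(\rho(k)v_0)\,dk>0$. Without this, the inclusion $\C_{min}\subset\overline{\C_{\O}}$ is unproved. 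Relatedly, the positivity $\phi_0(\rho(g)v_0)\geqslant 0$, which carries the sharpness of $\C_{min}$, deserves an actual argument --- it follows from the Bruhat decomposition, since on the dense cell $N^-MAN$ this coefficient equals $\lambda_0(a)>0$ --- rather than an appeal to zonal spherical functions.
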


The properly convex open set $\O_{min}$ is the convex hull of the limit set of $G$ and $\O_{max}$ is the dual convex to $\O_{min}$.

\subsection{Schottky groups}\label{schotkky}

\par{
The following subsection relies on the machinery of representation of real Lie groups. It can be skipped without consequence for the understanding of the rest of the chapter.
}
\\
\par{
Let $G$ be a semi-simple real linear connected Lie group and $\rho$ an irreducible representation of $G$ on a real vector space $V$ of finite dimension.
}
\\
\par{
One can ask the following question: \emph{When does there exist a Zariski-dense subgroup $\G$ of $G$ such that $\rho(\G)$ preserves a properly convex open set of $\PP(V)$~?
}
Benoist gives an answer to this question in \cite{MR1767272}. To present this theorem we need to introduce some vocabulary.
}
\\
\par{
Let $G$ be a linear algebraic semi-simple Lie group and $\rho:G \rightarrow \mathrm{V}$ an irreducible representation. We suppose that $G,\rho$ and $V$ are defined over the field $\R$. Let $B$ be a Borel subgroup of $G$ containing a maximal torus $T$ of $G$. Using $T$ one can decompose the representation $\rho$ into weight spaces $V^{\chi}$, $\chi \in X^*(T)=\Hom(T,\mathbb{C}^*)$. Let $\Pi(\rho)$ be the \emph{set of weights of the representation $\rho$}\index{representation!weight}, i.e $\Pi(\rho)= \{ \chi \in X^*(T) \, |\, V^{\chi} \neq 0 \}$. Take any order on $X^*(T)$ such that the roots of $G$ are positive on $B$. There is a unique maximal weight $\chi_0$\footnote{called \emph{the highest weight of $\rho$.}\index{representation!highest weight}} for this order and the corresponding weight space $V^{\chi_0}$ is a line. This line is the unique line of $V$ stabilized by $B$.
}
\\
\par{
We also need some ``real'' tools. The maximal torus $T$ contains a maximal $\R$-split torus $S$ and the Borel subgroup $B$ is contained in a minimal parabolic $\R$-subgroup $P$ of $G$. Using $S$, one can decompose the representation $\rho$ into weight spaces $V^{\lambda}$, $\lambda \in X^*(S)=\Hom(S,\mathbb{R}^*)$. Let $\Pi_{\R}(\rho)$ be the \emph{set of restricted weights of the representation $\rho$}\index{representation!restricted weight}, i.e $\Pi_{\R}(\rho)= \{ \lambda \in X^*(S) \, |\, V^{\lambda} \neq 0 \}$. The order on $X^*(T)$ induces an order on $X^*(S)$ such that the roots of $G$ are positive on $P$. There is a unique maximal restricted weight $\lambda_0$\footnote{called \emph{the highest restricted weight of $\rho$.}\index{representation!highest restricted weight}} for this order and the corresponding weight space $V^{\lambda_0}$ has no reason to be a line. The representation $\rho$ is \emph{proximal}\index{proximal}\index{representation!proximal} when $\dim(V^{\lambda_0})=1$.  We denote by $P_G$ the \emph{lattice of restricted weights}\index{representation!lattice of restricted weights} associated to $P$.
}
\\
\par{We have the following equivalences due to Abels, Margulis and So{\u\i}fer in \cite{MR1348303}:
\begin{enumerate}
\item The representation $\rho$ of $G$ is proximal.
\item The group $\rho(G_{\R})$ contains a proximal element.
\item Every element in the interior of a Weyl chamber of $S$ is mapped to a proximal element.
\item $V^{\chi_0}$ is stabilized by $P$.
\end{enumerate}

Furthermore, if one of the previous assertions is true, then the subset of $G_{\R}$ of all elements $g \in G$ such that $\rho(g)$ is proximal is Zariski-dense in $G$.
}
\\
\par{
A representation $\rho$ is \emph{orthogonal}\index{representation!orthogonal} (resp. \emph{symplectic}\index{representation!symplectic}) when $G$ preserves a non-degenerate symmetric (resp. antisymmetric) bilinear form on $V$. We say that two irreducible representations $\rho,\rho'$ are \emph{equal mod 2}\index{representation!equal mod 2} when the difference of their restricted highest weight is in $2P_G$.
}

\begin{theorem}[Benoist \cite{MR1767272}]\label{schottky}
Let $G$ be a semi-simple real linear connected Lie group  with finite center and $\rho$ an irreducible representation of $G$ on a real vector space $V$ of finite dimension.
\begin{enumerate}
\item $G$ contains a Zariski-dense subgroup $\G$ which preserves a properly convex open subset of $\PP(V)$ if and only if $\rho$ is proximal and not equal mod 2 to an irreducible proximal symplectic representation.
\item Every Zariski-dense subgroup $\G'$ of $G$ contains a Zariski-dense subgroup $\G$ which preserves a properly convex open subset of $\PP(V)$ if and only if $\rho$ is proximal and equal mod 2 to an irreducible proximal orthogonal representation.
\end{enumerate}
\end{theorem}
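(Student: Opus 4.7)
The plan is to treat the two implications in each item separately, using Proposition \ref{classi_matrix} and the Abels--Margulis--Soifer theorem \cite{MR1348303} for necessity, and a Schottky/ping-pong construction for sufficiency.

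For necessity in (1), suppose $\G \leqslant G$ is Zariski-dense and $\rho(\G)$ preserves a properly convex open set $\O \subset \PP(V)$. First I would show that $\rho$ must be proximal: the set of $g \in G$ with $\rho(g)$ proximal is Zariski-open, so if it is non-empty it must intersect $\G$, but whether it is non-empty is exactly the proximality of $\rho$ in the sense of Abels--Margulis--Soifer. If $\rho$ were not proximal one shows directly that no Zariski-dense subgroup can be positively semi-proximal, contradicting Proposition \ref{classi_matrix}. Once proximality is known, Proposition \ref{classi_matrix} tells us that every $\g \in \G$ is positively bi-semi-proximal, i.e. $\rho_{\g}^{+}$ is itself an eigenvalue (not $-\rho_{\g}^+$). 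The structural step is to translate this ``sign condition for a Zariski-dense set of elements'' into a condition on $\rho$. This is where the distinction between the symplectic and orthogonal cases enters: a proximal symplectic representation forces the highest weight $\chi_0$ to satisfy $w_0\chi_0 = -\chi_0$ in $X^*(T)$, and then for a Zariski-dense set of $\g$ the dominant and antidominant eigenvalues are related by a sign that is detected by the image of $\chi_0$ in $X^*(T)/2P_G$; if $\rho$ is equal mod $2$ to such a symplectic representation, this sign obstructs positive proximality for Zariski-dense subsets.

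For sufficiency in (1), assuming $\rho$ is proximal and not symplectic mod $2$, I would build a Zariski-dense free Schottky subgroup $\G$ preserving a properly convex open set. Start from finitely many strongly proximal elements $g_1,\dots,g_k \in G$ in general position (attracting fixed points transverse to repelling hyperplanes), chosen so that they already generate a Zariski-dense subgroup -- the latter is possible by the Tits-type argument that any Zariski-dense subgroup contains a Zariski-dense free subgroup. Passing to sufficiently large powers $g_i^N$ yields a free ping-pong group $\G$. The condition ``not symplectic mod $2$'' is precisely what allows us to choose the attracting fixed points of the $g_i^N$ inside one and the same sharp convex cone of $V$, so that the convex hull in $\PP(V)$ of the orbit of these points (i.e.\ the limit set of $\G$) has non-empty interior and is properly convex, and $\G$ preserves it.

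Item (2) proceeds analogously. For the ``if'' direction one runs the same Schottky construction inside an arbitrary Zariski-dense $\G'$: Abels--Margulis--Soifer produces strongly proximal elements inside any Zariski-dense subgroup, and the orthogonality mod $2$ hypothesis replaces the mod-$2$ condition used above by a hypothesis which now holds for every Zariski-dense subgroup. For the ``only if'' direction, one observes that if $\rho$ is merely proximal and not symplectic mod $2$ but also not orthogonal mod $2$, then one can exhibit some Zariski-dense subgroups (obtained by carefully twisting a Schottky subgroup by an element whose highest weight exponent is odd in $X^*(T)/2P_G$) that contain no positively-proximal Zariski-dense subgroup, contradicting the existence of the desired $\G$.

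The main obstacle, and the deepest part of \cite{MR1767272}, is the algebraic translation between the dynamical ``positive proximality'' condition on a Zariski-dense set of elements and the weight-theoretic condition ``symplectic or orthogonal mod $2P_G$''. This requires analysing the image of the highest weight $\chi_0$ in $X^*(T)/2P_G$ and exploiting the Frobenius--Schur type dichotomy via the action of the longest Weyl element $w_0$. The Schottky construction and Abels--Margulis--Soifer genericity are nowadays standard, but this algebraic translation is the genuine content.
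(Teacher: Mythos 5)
The paper does not actually prove this theorem: it is quoted from Benoist \cite{MR1767272}, with only the remark that the subgroups he constructs are Schottky groups. So there is no in-house argument to compare yours against; what follows measures your sketch against Benoist's actual strategy. On that score your outline is faithful in structure: necessity via the positive-proximality criterion (Theorem \ref{theo_pos_prox} is the right tool here, and is sharper than Proposition \ref{classi_matrix}, since a Zariski-dense subgroup of $G$ is automatically strongly irreducible once $\rho$ is irreducible), sufficiency via a ping-pong construction with positively biproximal elements in general position whose limit set sits in a sharp convex cone, and the entire weight of the theorem carried by the translation between ``the leading eigenvalue is positive on a Zariski-dense set of elements'' and the class of the highest restricted weight modulo $2P_G$. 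You correctly identify that last translation as the genuine content, but you only name it; as written, your text is a plan rather than a proof, and the plan's hardest box is empty.

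Two concrete errors to repair. First, your justification that $\rho$ must be proximal is wrong as stated: the set $\{g\in G \mid \rho(g) \textrm{ is proximal}\}$ is \emph{not} Zariski-open, because the defining condition $\lambda_1(\rho(g))>\lambda_2(\rho(g))$ involves moduli of complex eigenvalues and is not an algebraic condition. The correct route is the Abels--Margulis--Soifer equivalences recalled in Subsection \ref{schotkky} (a Zariski-dense subgroup of a proximal group contains a proximal element --- this is a theorem, not a formal consequence of density) combined with Theorem \ref{theo_pos_prox}: a Zariski-dense $\G$ preserving a properly convex open set is positively biproximal, hence proximal, hence $\rho$ is proximal. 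Second, the identity $w_0\chi_0=-\chi_0$ only expresses self-duality of $\rho$ and holds equally in the orthogonal case, so it cannot by itself produce the symplectic obstruction; the symplectic/orthogonal dichotomy is a further Frobenius--Schur-type invariant, and what enters Benoist's criterion is the class of the highest \emph{restricted} weight $\lambda_0\in X^*(S)$ modulo $2P_G$ (you work in $X^*(T)$ throughout, which conflates the two). With those repairs your skeleton matches Benoist's proof; filling in the sign computation relating $\lambda_0 \bmod 2P_G$ to the sign of the top eigenvalue is where all the work lies, and it is not reproduced in this survey either.
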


\begin{remark}
All subgroups $\G$ constructed by Benoist for this theorem are Schottky groups, so in particular they are discrete free groups.
\end{remark}

We give three examples. The interested reader can find more examples in the article of Benoist. The canonical representation of $\ss$ on $\R^{d+1}$satisfies 1) if and only if $d \geqslant 2$ and it never satisfies 2). The canonical representation of $\mathrm{SO}_{p,q}(\R)$ on $\R^{p+q}$ satisfies 1) and 2). The canonical representation of $\mathrm{SL}_m(\mathbb{C})$ on $\mathbb{C}^m$ never satisfies 1) nor 2).

\subsection{Strictly convex divisible and quasi-divisible convex sets}

\par{
A properly convex open set is \emph{divisible}\index{divisible} (resp. \emph{quasi-divisible}\index{quasi-divisible}) if the automorphism group of $\O$ contains a discrete subgroup $\G$ such that the quotient $\Quo$ is compact (resp. of finite volume).
}
\\
\par{
Figures \ref{dessin_nie1} and \ref{dessin_nie2} were made by Xin Nie, and they show tilings of 2-dimensional open convex bodies obtained by triangular Coxeter groups. In the first example, each tile is compact. In the second example each tile is of finite volume but not compact.
}
\begin{figure}[h!]
\centering
\includegraphics[width=4cm]{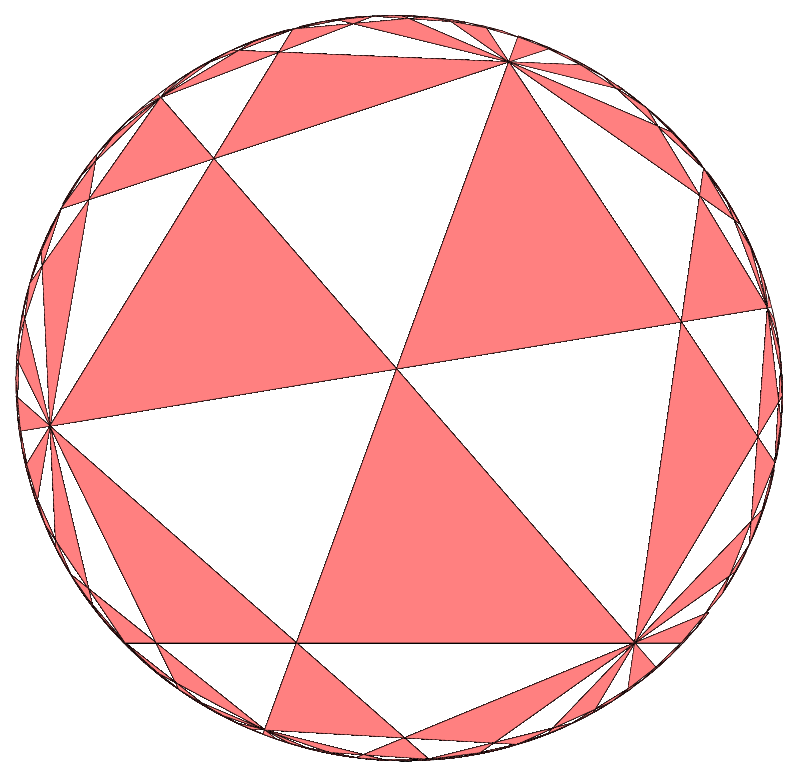}
\includegraphics[width=4cm]{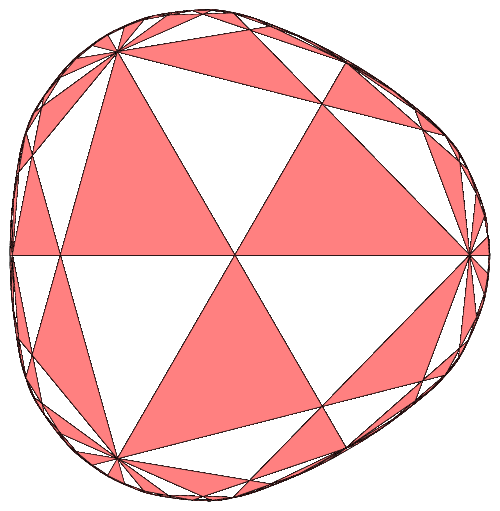}
\includegraphics[width=4cm]{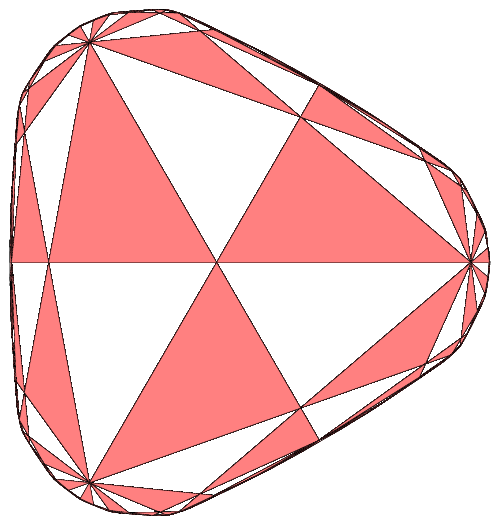}
\caption{Three divisible convex sets divided by the triangular Coxeter group $(3,3,7)$} \label{dessin_nie1}
\end{figure}
\begin{figure}[h!]
\centering
\includegraphics[width=4cm]{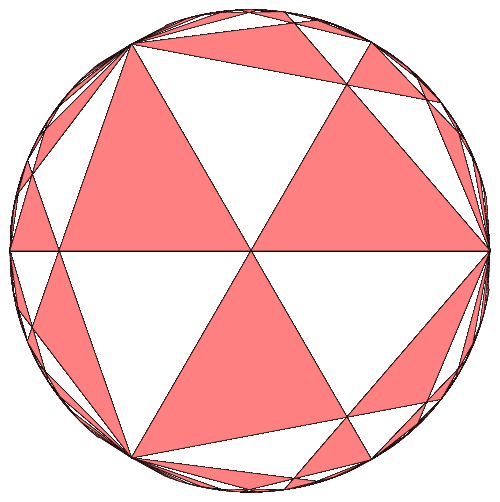}
\includegraphics[width=4cm]{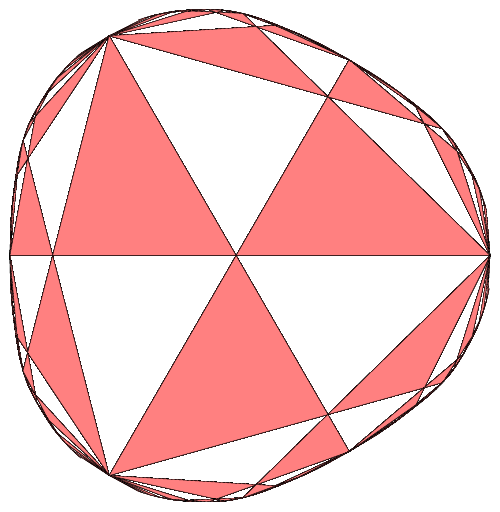}
\includegraphics[width=4cm]{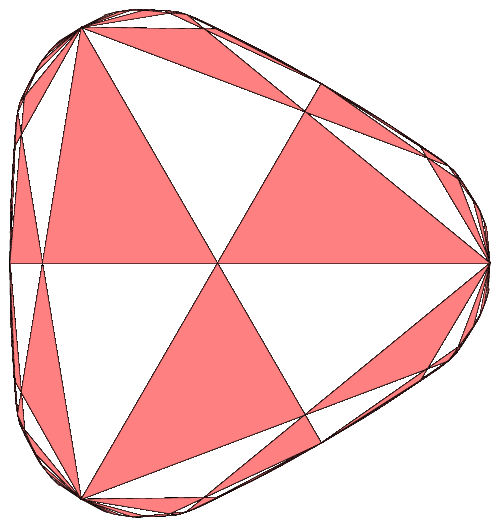}
\caption{Three quasi-divisible convex sets quasi-divided by the triangular Coxeter group $(3,3,\infty)$} \label{dessin_nie2}
\end{figure}

\subsubsection{Existence of non-trivial examples}

\begin{theorem}[Folklore]\label{exis_quasi}
In every dimension $d \geqslant 2$, there exists a divisible (resp. quasi-divisible but not divisible) convex set which is strictly convex and is not an ellipsoid.
\end{theorem}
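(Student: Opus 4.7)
The plan is to build these examples by starting from a hyperbolic manifold and deforming it inside $\s{d+1}$ by the bending construction. In each dimension $d\geqslant 2$ I would first exhibit a closed (resp. finite-volume non-compact) hyperbolic $d$-manifold $M=\QuH$ containing an embedded totally geodesic hypersurface $N\subset M$. In dimension $2$ any simple closed geodesic on a closed hyperbolic surface (resp. on a once-punctured surface) does the job; for $d\geqslant 3$ one invokes the arithmetic constructions of Millson and Gromov--Piatetski-Shapiro, which furnish both compact and cusped hyperbolic $d$-manifolds containing totally geodesic, separating, codimension-$1$ submanifolds. Seen inside $\s{d+1}$, the holonomy $\rho_0:\G\hookrightarrow \so{d}\subset \s{d+1}$ preserves the ellipsoid $\E_d$ and divides (resp. quasi-divides) it.

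Next I would apply the Johnson--Millson bending deformation along $N$: cutting $\G$ as an amalgam (or HNN extension) $\G=\G_1*_{\G_N}\G_2$ and conjugating $\G_2$ by a one-parameter family $c_t\in \Aut(\E_d)^{\circ}$ centralising $\rho_0(\G_N)$ but not $\rho_0(\G_N)$ inside $\s{d+1}$ yields an analytic family of discrete faithful representations $\rho_t:\G\rightarrow \s{d+1}$ with $\rho_0$ the original one; for $t$ small and non-zero these are Zariski-dense in $\s{d+1}$, hence not conjugate into $\so{d}$. In the cusped case one uses the adaptation of this construction to finite-volume hyperbolic manifolds (à la Marquis / Ballas) so that the peripheral subgroups are still sent to parabolic stabilisers of horospheres in some properly convex open set.

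The third step is to show that $\rho_t(\G)$ still divides (resp. quasi-divides) a properly convex open $\O_t$. Koszul's openness theorem tells us that the set of holonomies of convex projective structures on a compact manifold is open in the representation variety; in the compact case this immediately gives $\O_t$ for $|t|$ small. In the cusped case, one uses the analogous openness result of Cooper--Long--Tillmann (and Crampon--Marquis) for convex projective structures of finite volume with generalised cusps, applied to the family $\rho_t$: this produces a finite-volume quotient $\O_t/\rho_t(\G)$. Strict convexity of $\O_t$ is then obtained from the hyperbolicity of $\G$: Benoist's theorem asserts that a divisible convex set $\O$ is strictly convex if and only if the dividing group is word-hyperbolic, and the analogous criterion of Cooper--Long--Tillmann and Crampon--Marquis gives the same conclusion in the quasi-divisible setting provided each cusp group is virtually nilpotent, which is automatic here since the cusps come from the hyperbolic case. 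Finally $\O_t$ is not an ellipsoid for $t\neq 0$ because $\Aut(\E_d)=\so{d}$ whereas $\rho_t(\G)$ has been made Zariski-dense in $\s{d+1}$.

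The main obstacle in this program is Step $1$ in high dimension: producing a hyperbolic $d$-manifold (compact or of finite volume, as required) with a separating totally geodesic hypersurface that can actually be bent non-trivially. In low dimension this is elementary, but in general one genuinely needs the arithmetic machinery of Gromov--Piatetski-Shapiro and the refinements dealing with cusps, and then one has to check that the bending cocycle is not a coboundary so that the family $\rho_t$ is a non-trivial deformation; this last point follows from a standard computation of $H^{1}(\G,\mathfrak{sl}_{d+1}/\mathfrak{so}_{d,1})$ along $N$, non-vanishing of which is the Johnson--Millson input.
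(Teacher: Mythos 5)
Your construction---bending the holonomy of an arithmetic hyperbolic manifold along a totally geodesic hypersurface, Koszul's openness theorem to keep the structure convex, Benoist's theorem (Theorem \ref{thm_ghyp}) for strict convexity, and Zariski density to exclude the ellipsoid---is exactly the route the paper sketches for the divisible case, so that half is fine modulo the standard references. The genuine gap is that you never prove the ``but not divisible'' clause in the quasi-divisible case. Your argument produces a strictly convex $\O_t$ together with \emph{one} discrete group $\rho_t(\G)$ acting with finite covolume and non-compact quotient; nothing you say rules out that some \emph{other} discrete subgroup of $\Aut(\O_t)$ acts cocompactly on the same $\O_t$, i.e.\ that $\O_t$ is in fact divisible. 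The paper closes this hole with a separate ``security statement'': an indecomposable properly convex open set admitting both a compact quotient and a finite-volume non-compact quotient must be symmetric (by Theorem \ref{thm_zari1}, either $\O$ is symmetric or $\Aut(\O)$ is Zariski-dense, hence discrete, and a discrete automorphism group cannot simultaneously give a compact and a non-compact quotient); a symmetric strictly convex set is an ellipsoid, which your $\O_t$ is not. You need this, or an equivalent argument, to finish the statement as written.

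Two smaller points. First, in the finite-volume case you lean on an openness theorem \`a la Cooper--Long--Tillmann to get convexity of the deformed structure; the paper explicitly records that in \cite{Marquis:2010fk} the convexity of the bent finite-volume structure is obtained ``by hand'', precisely because no Koszul-type openness statement was available in that setting. If you want to route the argument through such a theorem you must verify its hypotheses---in particular that the bent cusp holonomies remain of the admissible type and that the deformed quotient still has finite Busemann volume---none of which is automatic. Second, your bending conjugators cannot lie in $\Aut(\E_d)^{\circ}$: since $\rho_0(\G_N)$ is Zariski-dense in a copy of $\so{d-1}$, its centraliser inside $\so{d}$ is finite, so bending there is trivial. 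The point of the Johnson--Millson construction is to take $c_t$ in the one-parameter centraliser of $\rho_0(\G_N)$ inside $\s{d+1}$, which does \emph{not} preserve $\E_d$; your later appeal to $H^{1}\big(\G,\mathfrak{sl}_{d+1}/\mathfrak{so}_{d,1}\big)$ shows you know this, but the sentence as written describes a trivial deformation.
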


Let us say a few words about the history of divisible and quasi-divisible convex sets:
\par{
The fact that the ellipsoid is divisible can be found at least in small dimensions in the work of Poincar\'e. The general case is due to Borel and Harish-Chandra (Theorem \ref{exis_lattice}). The fact that the only strictly convex open set which is homogeneous and divisible is the ellipsoid is a consequence of the work of Vinberg and Koecher as explained in Sections \ref{sec_sym} and \ref{sec_hom}.
}
\\
\par{
Kac and Vinberg showed using triangular Coxeter groups in \cite{MR0208470} that in dimension 2 there exist divisible convex sets which are not homogeneous, but they did not show that their examples were strictly convex (this is a consequence of a theorem of Benz\'ecri \cite{MR0124005} (Theorem \ref{thm_benz}) or the article \cite{MR0063115} of Kuiper). Johnson and Millson described in \cite{MR900823} a deformation\footnote{called bending and introduced by Thurston in \cite{CoursdeThurston} for quasi-fuschian groups.} of the projective structure of ``classical arithmetic'' hyperbolic manifolds. A previous theorem of Koszul in \cite{MR0239529} (Theorem \ref{thm_kosz}) showed that the deformed manifold is actually a \emph{convex} projective manifold if the deformation is small enough. Finally Benoist showed that the convex sets given by this deformation are strictly convex \cite{MR2094116} (Theorem \ref{thm_ghyp}) and that in fact the deformed structure is always convex even in the case of a big deformation \cite{MR2195260} (Theorem \ref{natur_stat}).
}
\\
\par{
In \cite{MR2740643} the author shows the theorem in dimension 2 by describing explicitly the moduli space of convex projective structures of finite volume on a surface. This explicit description is a small extension of Goldman's parametrisation in the compact case \cite{MR1053346}. In \cite{Marquis:2010fk}, the author shows the theorem in any dimension, using a ``bending construction'', but in that case, the convexity of the projective structure is obtained ``by hand'', i.e. without a theorem like Koszul theorem.
}
\\
\par{
Theorem \ref{exis_quasi} needs a security statement:
}
\begin{proposition}
If $\O$ is an indecomposable properly convex open set which has a compact and a finite-volume non-compact quotient, then $\O$ is a symmetric properly convex open set.
\end{proposition}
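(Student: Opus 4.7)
The strategy is to reduce to the preceding proposition by showing that $\O$ is homogeneous; quasi-divisibility will then force $\O$ to be symmetric. Let $\G_c, \G_f \subset \Aut(\O)$ be discrete subgroups with $\O/\G_c$ compact and $\O/\G_f$ non-compact of finite volume.

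\textbf{Producing a parabolic element.} Because $\O/\G_f$ has finite volume but is not compact, a thick-thin decomposition for Hilbert geometries (Cooper-Long-Tillmann, Crampon-Marquis) yields a cusp. The cusp stabilizer $P_f$ in $\G_f$ is an infinite discrete group acting cocompactly on an algebraic horosphere, and therefore contains a non-elliptic element of translation length zero, i.e.\ a parabolic $\g$. By the parabolic classification proposition, $\g_h = 1$ and $\g_u$ has odd power at least $3$; after passing to a power we may assume $\g_e = 1$, leaving a non-trivial unipotent $u \in \G_f \subset \Aut(\O)$.

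\textbf{Promoting to a connected subgroup.} The group $P_f$ preserves a supporting hyperplane $H$ and a boundary point $p$, and acts cocompactly on the algebraic horosphere based at $(p,H)$, which is an open convex subset of a Euclidean affine space in the chart $\PP^d \smallsetminus H$. A Bieberbach-type theorem for cusps in Hilbert geometry (Cooper-Long-Tillmann, Crampon-Marquis) gives that $P_f$ is virtually a lattice in a connected abelian unipotent subgroup $U$ of $\Aut(\O, H, p)$ acting simply transitively on the horosphere. Since $U$ preserves each horosphere based at $(p,H)$ and $\O$ is foliated by these, $U \subset \Aut(\O)$, and $\Aut(\O)^0 \supset U$ is non-trivial.

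\textbf{Conclusion via structural dichotomy.} The last step, and the principal obstacle, is to deduce homogeneity of $\O$ from non-discreteness of $\Aut(\O)$. I would invoke the structural dichotomy for indecomposable divisible convex sets, due to Benoist and resting on Vinberg's theory of homogeneous convex cones: for an indecomposable divisible $\O$ divided by $\G_c$, either $\G_c$ has finite index in $\Aut(\O)$ (so $\Aut(\O)$ is discrete), or $\Aut(\O)^0$ is a semisimple Lie group acting transitively on $\O$. Non-discreteness excludes the first alternative, so $\O$ is homogeneous, and the preceding proposition then yields that $\O$ is symmetric. This dichotomy is the real content: it is not a formal consequence of non-discreteness alone but depends on Benoist's analysis of the Zariski closure of a dividing group combined with the projective preservation of $\O$.
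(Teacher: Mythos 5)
Your final step is sound and rests on the same key input as the paper's own proof: by Benoist's Theorem \ref{thm_zari1}, an indecomposable divisible $\O$ is either homogeneous (hence symmetric, by the preceding proposition, since divisible implies quasi-divisible) or divided by a Zariski-dense group, in which case the closed subgroup $\Aut(\O)$ of $\ss$ is itself Zariski-dense and therefore discrete. So everything reduces to ruling out discreteness of $\Aut(\O)$, and here your argument has a genuine gap. Your first two steps invoke a cusp structure theory --- existence of a parabolic in the holonomy of a finite-volume non-compact end, and a Bieberbach-type promotion of the cusp group to a lattice in a connected unipotent subgroup $U$ of $\Aut(\O)$ --- that is not available for a general indecomposable properly convex open set. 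The results of Cooper--Long--Tillmann and Crampon--Marquis that you cite are proved under strict convexity or $\C^1$-boundary hypotheses (or for ends already assumed to be cusps); for a non-round $\O$ it is not established that a finite-volume non-compact quotient has an end whose holonomy contains a parabolic, and the further claim that the syndetic hull $U$ preserves $\O$ itself --- not merely each algebraic horosphere in one affine chart --- is exactly the kind of statement that requires the full generalized-cusp analysis and does not come for free. As written, the case actually at issue (a possibly non-strictly-convex indecomposable $\O$) is not covered.

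The paper excludes discreteness with no geometry of ends at all, using both quotients of the same $\O$ simultaneously --- something your steps 1--2 never do. If $\Aut(\O)$ were discrete, then $\Quotient{\O}{\Aut(\O)}$ would be compact, being a further quotient of the compact $\Quotient{\O}{\G_c}$; in particular it has finite volume. Since $\Quotient{\O}{\G_f}$ also has finite volume and covers it, the index $[\Aut(\O):\G_f]$ is finite, so $\Quotient{\O}{\G_f}$ would be a finite cover of a compact space, hence compact, contradicting the hypothesis. You should replace your first two steps by this volume/index argument; your third step can then stand as is.
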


\begin{proof}
This proposition is a consequence of Theorem \ref{thm_zari1} which states that $\O$ is symmetric or that $\Aut(\O)$ is Zariski-dense in $\ss$. Therefore, either $\O$ is symmetric or $\Aut(\O)$ is discrete, since a Zariski-dense subgroup of a quasi-simple Lie group is either discrete or dense. Now, the case where $\Aut(\O)$ is discrete is excluded since the quotient of $\O$ by $\Aut(\O)$ would have to be compact and non-compact at the same time.
\end{proof}

\subsubsection{A non-existence result of exotic examples in small dimensions}

\begin{proposition}
Every group $\G$ dividing (resp. quasi-dividing) a strictly convex properly convex open set $\O$ of dimension 2 or 3 is a uniform lattice (resp. a lattice) of $\so{2}$ or $\so{3}$.
\end{proposition}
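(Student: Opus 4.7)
The plan is to identify $\G$ with the fundamental group of a complete finite-volume hyperbolic $d$-manifold, for $d=2$ or $3$, using Gromov hyperbolicity of strictly convex Hilbert geometries and the topology / geometrization of low-dimensional aspherical manifolds.

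First I would invoke the theorems of Benoist (in the compact quotient case) and of Crampon--Marquis (in the finite-volume case) stating that when $\O$ is strictly convex and (quasi-)divided, the Hilbert metric $(\O,d_{\O})$ is Gromov hyperbolic. This makes $\G$ word hyperbolic in the divisible case, and relatively hyperbolic with virtually nilpotent cusp subgroups in the quasi-divisible case. Because $\Aut(\O) \subset \ss$ is linear, Selberg's lemma furnishes a torsion-free subgroup of finite index in $\G$; after replacing $\G$ by such a subgroup, the quotient $M=\Quo$ is a genuine aspherical $d$-manifold of finite Busemann volume, compact in the divisible case. Since being a lattice is stable under commensurability, it will suffice to treat this torsion-free case.

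For $d=2$, the surface $M$ carries a Gromov-hyperbolic Finsler metric of finite area, so it has negative Euler characteristic and is therefore homeomorphic to a closed surface of genus $\geqslant 2$, respectively to a surface of negative Euler characteristic with finitely many punctures. Its fundamental group $\G$ is then (isomorphic to) a uniform, respectively non-uniform, lattice in $\so{2}=\mathrm{PSL}_2(\R)$, concluding this case.

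For $d=3$, the manifold $M$ is a closed (or finite-volume) aspherical $3$-manifold whose fundamental group is (relatively) word hyperbolic; in particular $\G$ contains no $\Z^2$ outside the peripheral subgroups. Perelman's geometrization theorem, combined with Thurston's hyperbolization for Haken manifolds with toral boundary in the cusped case, implies that $M$ admits a complete hyperbolic structure of finite volume. Hence $\G$ embeds as a uniform, respectively non-uniform, lattice in $\so{3}=\mathrm{PSO}_{3,1}(\R)$.

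I expect the main obstacle to be the quasi-divisible case in dimension $3$: one has to check that each end of $M$ corresponds to a topological cusp with virtually abelian peripheral holonomy so that Thurston's hyperbolization with toral boundary can be applied. This requires the structure theorem for parabolic automorphisms of strictly convex sets (Cooper--Long--Tillmann, Crampon--Marquis), which controls the geometry of the cusps of $\O$ and forces each peripheral subgroup to be virtually $\Z^{d-1}=\Z^2$. With that in hand, geometrization supplies the desired hyperbolic structure and the proof is complete.
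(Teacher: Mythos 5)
Your proposal follows essentially the same route as the paper: both use the Crampon--Marquis / Cooper--Long--Tillmann tameness results to realize $\Quo$ as the interior of a compact aspherical manifold, settle dimension $2$ by negative Euler characteristic, and settle dimension $3$ by Thurston's hyperbolization of Haken manifolds in the cusped case and by Gromov-hyperbolicity (no $\Z^2$) plus Perelman's geometrization in the closed case. Your explicit invocation of Selberg's lemma and of the virtually $\Z^2$ structure of the peripheral subgroups only makes precise steps the paper leaves implicit.
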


\begin{proof}
\par{
The articles \cite{Crampon:2012fk} and \cite{Cooper:2011fk} show that such a quotient $\Quo$ is the interior of a compact manifold. Moreover, such a manifold is aspherical since its universal cover is homeomorphic to $\R^d$.
}
\\
\par{
In dimension 2, $\Quo$ is homeomorphic to a compact surface with a finite number of punctures and this surface has negative Euler characteristic since $\O$ is strictly convex; therefore $\G$ is a lattice of $\so{2}$.
}
\\
\par{
In dimension 3, we distinguish the case of a compact quotient from the case of a non-compact quotient. Suppose $\Quo$ is of finite volume and non-compact, then $\Quo$ is the interior of an aspherical atoroidal compact 3-manifold with non-empty boundary, hence $\Quo$ is a Haken manifold and Thurston's hyperbolization Theorem of Haken manifolds implies that $\G$ is a non-uniform lattice of $\so{3}$. Finally, if $\Quo$ is compact then $\Quo$ is an aspherical atoroidal compact 3-manifold and $\G$ does not contain $\Z^2$ since $\G$ is Gromov-hyperbolic (Theorem \ref{thm_ghyp}). Hence Perelman's theorem on Thurston's geometrization conjecture shows that $\G$ is a uniform lattice of $\so{3}$.
}
\end{proof}

\subsubsection{An existence result of exotic examples in higher dimensions}

\begin{theorem}[Benoist $d=4$ \cite{MR2295544}, Kapovich $d\geqslant 4$ \cite{MR2350468}]$\,$\\
In every dimension $d \geqslant 4$, there exists a strictly convex divisible convex set which is not quasi-isometric to the hyperbolic space.
\end{theorem}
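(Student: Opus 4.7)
The plan is to reduce the geometric statement to a group-theoretic one and then produce the required group via a ramified branched-cover construction. The Svarc--Milnor lemma shows that if $\Gamma\subset\Aut(\O)$ divides $\O$, then the Hilbert geometry $(\O,d_\O)$ is quasi-isometric to $\Gamma$ with any word metric; so it is enough to exhibit, for every $d\geqslant 4$, a discrete $\Gamma<\ss$ dividing a strictly convex properly convex open subset $\O$ of $\PP^d$ such that $\Gamma$ is not quasi-isometric to $\mathbb{H}^d$. The obstruction I would invoke on the quasi-isometry side is the quasi-isometric rigidity of real hyperbolic space (Sullivan--Tukia--Gabai): any finitely generated group quasi-isometric to $\mathbb{H}^d$ is virtually a uniform lattice in $\so{d}$. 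In particular, combined with Mostow rigidity, a closed aspherical $d$-manifold whose fundamental group is quasi-isometric to $\mathbb{H}^d$ must be homotopy equivalent to a closed hyperbolic manifold.

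For the source of the group $\Gamma$ I would use a Gromov--Thurston manifold $M^d$, $d\geqslant 4$: a closed aspherical $d$-manifold admitting a Riemannian metric of strictly negative sectional curvature (arbitrarily close to $-1$), yet not homotopy equivalent to any hyperbolic $d$-manifold. Such an $M$ is obtained as a ramified cover of a closed hyperbolic $d$-manifold along a codimension-two totally geodesic submanifold. Because $M$ is \text{CAT}$(-1)$, $\Gamma:=\pi_1(M)$ is Gromov-hyperbolic, and by the rigidity statement above it is not quasi-isometric to $\mathbb{H}^d$; this settles the quasi-isometry side \emph{provided} we can put a convex projective structure on $M$.

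To produce the convex projective structure I would proceed as follows. Start from the holonomy of the hyperbolic orbifold underlying the ramified cover, composed with the inclusion $\so{d}\hookrightarrow\ss$. Construct an explicit projective model of the cover by replacing the hyperbolic cone-type structure along the ramification locus by a projective one coming from a suitable deformation of the holonomy around the branch strata (a codimension-two orbi-bending); this yields a candidate representation $\rho:\Gamma\to\ss$ and a developing map onto some open $\O\subset\PP^d$. Koszul's openness theorem (Theorem \ref{thm_kosz}) guarantees that once the resulting projective structure is close enough to a convex one, it is itself convex divisible, so one only has to verify convexity of a small model. Strict convexity of $\O$ is then immediate from Benoist's theorem (Theorem \ref{thm_ghyp}): since $\Gamma$ is Gromov-hyperbolic, any properly convex $\O$ that it divides is automatically strictly convex with $\C^1$-boundary. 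Putting these pieces together, $(\O,d_\O)$ is quasi-isometric to $\Gamma$, hence not to $\mathbb{H}^d$, and strictly convex as required.

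The main obstacle is clearly the middle step: producing the projective structure on the Gromov--Thurston manifold. The hyperbolic holonomy around the codimension-two branching must be modified within $\ss$ in a way that (i) remains globally well-defined on $\pi_1(M)$, (ii) admits a developing map whose image is an open properly convex set, and (iii) lies in the open locus of convex divisible representations so that Koszul applies. Implementing (i)--(iii) is a delicate explicit computation that depends on the dihedral combinatorics of the branching; in dimension $4$ Benoist handles this via Coxeter-type arithmetic examples, while Kapovich gives a construction working in all dimensions $d\geqslant 4$.
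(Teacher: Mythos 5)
The paper itself gives no proof of this theorem: it cites Benoist and Kapovich and records in one sentence that Kapovich's route is to put convex projective structures on Gromov--Thurston manifolds. Your outline is exactly that route, and the reductions surrounding it are sound: \v{S}varc--Milnor identifies $(\O,d_{\O})$ with $\G=\pi_1(M)$ up to quasi-isometry; Tukia-type quasi-isometric rigidity of $\mathbb{H}^d$ turns ``not QI to $\mathbb{H}^d$'' into ``$\G$ is not virtually a uniform lattice of $\so{d}$''; and the implication ``$\G$ Gromov-hyperbolic $\Rightarrow \O$ strictly convex'' is indeed Theorem \ref{thm_ghyp}, which, as the survey notes elsewhere, is essentially the only known way to certify strict convexity in dimension $\geqslant 3$. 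So at the level of architecture you and the paper agree.

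Two genuine issues remain. First, the entire mathematical content of the theorem is the step you defer: producing a representation of $\pi_1$ of a Gromov--Thurston manifold into $\ss$ dividing a properly convex open set. Your appeal to Koszul's openness (Theorem \ref{thm_kosz}) does not work as stated: that theorem says small deformations of a \emph{convex} projective structure remain convex, but the Gromov--Thurston manifold $M$ carries no hyperbolic metric and hence no obvious initial convex projective structure from which to deform; only the base orbifold of the ramified cover is hyperbolic, and a branched cover of a convex projective orbifold is not automatically convex projective --- controlling the cone angle along the codimension-two locus is precisely the difficulty. Verifying convexity there is what occupies Kapovich's paper, so as a standalone argument your text has a hole exactly where the survey has a citation. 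Second, your rigidity step needs the statement that $\pi_1(M)$ is not virtually isomorphic to a uniform lattice (equivalently, by Mostow and Farrell--Jones considerations, that $M$ is not homotopy equivalent to a closed hyperbolic manifold); the property of Gromov--Thurston manifolds quoted in the survey --- that they carry no metric of constant curvature $-1$ --- is a priori weaker at the smooth level, so you should either invoke the stronger group-theoretic statement from the sources or supply the upgrade.
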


\par{
We just point out that the examples of Benoist are obtained thanks to a Coxeter group, and that Kapovich constructs a convex projective structure on some ``Gromov-Thurston manifold''. We recall that Gromov-Thurston manifolds are sequences of manifolds $M_n$ of dimension $d \geqslant 4$ such that none of them carries a Riemannian metric of constant curvature $-1$ but all of them carry a Riemannian metric of variable negative curvature such that the pinching constant converges to zero as $n$ goes to infinity (\cite{MR892185}).
}
\\
\par{
The following question is then natural and open:
}

\begin{oqu}
Does there exist in every dimension $d \geqslant 4$ a strictly convex quasi-divisible convex set which is not quasi-isometric to the hyperbolic space and which is not divisible ?
\end{oqu}

\subsection{Non strictly convex divisible and quasi-divisible convex sets}

\subsubsection{Intermission: Vey's theorem}

The following theorem describes the splitting of a divisible convex set into indecomposable pieces.

\begin{theorem}[Vey, \cite{MR0283720}]\label{theorem_vey}
Let $\G$ be a discrete subgroup of $\ss$ that divides a properly convex open set $\O$. Then:
\begin{enumerate}
\item There exists a decomposition $V_1 \oplus \cdots \oplus V_r$ of $\R^{d+1}$ such that $\G$ preserves this decomposition and for each $i=1,...,r$, the group $\G_i=\Stab_{\G}(V_i)$ acts strongly irreducibly on $V_i$.
\item The convex sets $\PP(V_i) \cap \overline{\O}$ are faces of $\O$. We denote by $\omega_1,...,\omega_r$ the corresponding open faces. We have $\O=\omega_1 \otimes \cdots \otimes \omega_r$.
\item Each $\omega_i$ is an indecomposable divisible convex set divided by $\G_i$.
\end{enumerate}
\end{theorem}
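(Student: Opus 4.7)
The plan is to use the Vinberg characteristic function of the cone $\C_\O$ as a bridge between the cocompactness of $\G$ and the algebraic decomposition of the representation $\G \to \s{d+1}$. Recall that $\phi(x) = \int_{\C_\O^*} e^{-\langle f,x \rangle}\,df$ defines a strictly log-convex function on $\C_\O$ satisfying $\phi \circ \g = \phi$ for every $\g \in \G$ (since $\det \g = 1$), with level hypersurface $\Sigma = \{\phi = 1\}$ on which $\G$ acts properly and cocompactly, because $\Quo$ is compact and $\Sigma$ is a $\G$-equivariant section of $\C_\O \to \O$.

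\emph{Semi-simplicity and coarse decomposition.} First I would establish that the $\G$-action on $\R^{d+1}$ is completely reducible. A convenient route is to average the Hessian of $\log \phi$ over a compact fundamental domain of $\G$ on $\Sigma$, producing a $\G$-invariant positive definite bilinear form on $\R^{d+1}$; such a form forces every invariant subspace to admit an invariant orthogonal complement. Once complete reducibility is available, decompose $\R^{d+1}$ into $\G'$-irreducibles for a finite-index normal subgroup $\G' \trianglelefteq \G$ that preserves every isotypic component, then group the $\G'$-irreducibles into $\G$-orbits to form $V_1 \oplus \cdots \oplus V_r$. By construction $\G$ permutes the $V_i$, and each stabilizer $\G_i = \Stab_\G(V_i)$ has finite index in $\G$ and acts strongly irreducibly on $V_i$, giving item (1).

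\emph{Faces.} The heart of the proof is showing that $F_i = \overline{\O} \cap \PP(V_i)$ is a face of $\overline{\O}$ whose projective support is exactly $\PP(V_i)$. I first show $V_i \cap \overline{\C_\O} \neq \{0\}$: fix $x \in \C_\O$, write $x = x_1 + \cdots + x_r$ with $x_j \in V_j$, and consider the $\G_i$-equivariant projection $\pi_i:\R^{d+1} \to V_i$ along $\bigoplus_{j \neq i} V_j$. This projection is open, so $\pi_i(\C_\O)$ is a non-empty open $\G_i$-invariant cone in $V_i$; rescaling suitable $\G$-orbits and using cocompactness on $\Sigma$ produces accumulation points in $\overline{\C_\O} \cap V_i \smallsetminus \{0\}$. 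Strong irreducibility of $\G_i$ on $V_i$ then forces the linear span of $V_i \cap \overline{\C_\O}$ to be the whole of $V_i$. That $F_i$ is actually a face (i.e.\ that any open segment in $\overline{\O}$ hitting a relative interior point of $F_i$ cannot be extended) is then forced by the following $\G_i$-averaging argument: a putative extension, after iterating $\G_i$, would produce an affine line contained in $\overline{\C_\O}$, contradicting the proper convexity of $\O$. Combined with strong irreducibility, this gives item (2) and the claim that $\omega_i$ is indecomposable.

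\emph{Product structure and division.} Since $\R^{d+1} = V_1 \oplus \cdots \oplus V_r$, convexity of $\overline{\C_\O}$ gives $\C_{\omega_1} \times \cdots \times \C_{\omega_r} \subset \C_\O$, hence $\omega_1 \otimes \cdots \otimes \omega_r \subset \O$; the reverse inclusion follows because both sides are properly convex open $\G$-invariant sets whose boundary faces along the $V_i$'s coincide, and a properly convex cone is determined by its face decomposition once the faces span the ambient vector space. For item (3), cocompactness of $\G$ on $\O$ together with the product structure forces, after passing to the finite-index subgroup $\G_i$, a cocompact action of $\G_i$ on $\omega_i$, so $\G_i$ divides $\omega_i$. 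The main obstacle is the face step: the equality $\dim(V_i \cap \overline{\C_\O}) = \dim V_i$ genuinely uses the cocompact action rather than mere discreteness of $\G$, since without cocompactness an invariant subspace could meet $\overline{\C_\O}$ in a strictly lower-dimensional face, destroying the product decomposition.
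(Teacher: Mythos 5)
The paper itself gives no proof of this theorem --- it is quoted from Vey's article \cite{MR0283720} --- so your argument has to stand on its own, and its central step fails. Averaging the Hessian of $\log\phi$ over a compact fundamental domain $D\subset\Sigma$ does \emph{not} produce a $\G$-invariant bilinear form: the invariance $\phi\circ\g=\phi$ gives $H_x(\g u,\g v)=H_{\g^{-1}x}(u,v)$, so the average $B=\int_D H_x\,d\mu(x)$ satisfies $B(\g u,\g v)=\int_{\g^{-1}D}H_y(u,v)\,d\mu(y)$, and $\g^{-1}D\neq D$; averaging over a fundamental domain yields invariance only for compact groups, where one averages over the whole group against Haar measure. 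Worse, the conclusion you are aiming for is impossible: a discrete subgroup of $\ss$ preserving a positive definite form lies in a conjugate of the orthogonal group and is therefore finite, whereas a group dividing a convex set of dimension $\geqslant 1$ is infinite (already $\Z^2$ dividing the triangle). Complete reducibility of the $\G$-module $\R^{d+1}$ is precisely the hard content of Vey's theorem --- it fails for general discrete subgroups, e.g.\ a unipotent $\Z$ in $\s{2}$ --- and the way cocompactness actually enters is not through an invariant metric but through Lemma \ref{lemma_vey} ($\mathrm{Conv}(\G\cdot x)=\O$ for every $x\in\O$), from which one deduces that every nonzero $\G$-invariant subspace $W$ satisfies $W=\mathrm{span}\big(W\cap\overline{\C_{\O}}\big)$, and then extracts semisimplicity, the face statement and the product structure.

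Two of your later steps also rest on principles that are false or unproven as stated. ``A properly convex cone is determined by its face decomposition once the faces span the ambient vector space'' is not true: in $\R^3$ the cone generated by the four rays $\R_+e_1$, $\R_+e_2$, $\R_+e_3$ and $\R_+(1,1,-\tfrac{1}{10})$ is sharp, admits each $\R_+e_i$ as an extreme ray and meets each coordinate axis exactly in that ray, yet it strictly contains the positive octant; projectively, this is a quadrilateral containing the triangle on the vertices $[e_1],[e_2],[e_3]$, so the reverse inclusion $\O\subset\omega_1\otimes\cdots\otimes\omega_r$ genuinely needs the group. The correct argument is again Lemma \ref{lemma_vey}: $\omega_1\otimes\cdots\otimes\omega_r$ is a nonempty open convex $\G$-invariant subset of $\O$, hence equals $\O$. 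Likewise, the assertion that a segment of $\overline{\O}$ leaving $\PP(V_i)$ would ``after iterating $\G_i$ produce an affine line in $\overline{\C_{\O}}$'' is a claim, not an argument; the standard proof that $\overline{\O}\cap\PP(V_i)$ is a face projects onto $\bigoplus_{j\neq i}V_j$ and uses sharpness of the projected cones, which itself must be established from cocompactness. These secondary gaps are repairable, but the semisimplicity step is not, and with it the whole decomposition in item (1) is unsupported.
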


\begin{corollary}
Let $\G$ be a discrete subgroup of $\ss$ that divides a properly convex open set $\O$. Then $\O$ is indecomposable if and only if $\G$ is strongly irreducible.
\end{corollary}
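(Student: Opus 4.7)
The plan is to derive both implications directly from Vey's theorem (Theorem \ref{theorem_vey}), which supplies a $\G$-invariant decomposition $\R^{d+1}=V_1\oplus\cdots\oplus V_r$ together with the product decomposition $\O=\omega_1\otimes\cdots\otimes\omega_r$ of the convex set itself, each piece $\omega_i$ being indecomposable and divided by $\G_i=\Stab_{\G}(V_i)$. The strategy is to prove two separate equivalences, both of them with the integer $r$:
\begin{itemize}
\item $\O$ is indecomposable $\iff r=1$;
\item $\G$ is strongly irreducible $\iff r=1$.
\end{itemize}
Combining these gives the stated corollary.

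For the first equivalence, suppose $r\geqslant 2$. Then $\O=\omega_1\otimes\cdots\otimes\omega_r$, which by the very definition of $\otimes$ means that the sharp cone $\C_{\O}$ above $\O$ is the Cartesian product $\C_{\omega_1}\times\cdots\times \C_{\omega_r}$ with respect to the decomposition $\R^{d+1}=V_1\oplus\cdots\oplus V_r$. This is exactly the statement that $\C_{\O}$ is decomposable as a sharp convex cone, so $\O$ is not indecomposable. Conversely, if $r=1$, then $\O=\omega_1$, and Vey's conclusion (3) asserts that $\omega_1$ is indecomposable.

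For the second equivalence, if $r=1$ then $\G=\G_1$, and Vey's theorem says precisely that $\G_1$ acts strongly irreducibly on $V_1=\R^{d+1}$. Conversely, assume $r\geqslant 2$. By construction $\G$ preserves the unordered family $\{V_1,\ldots,V_r\}$ (a priori it may permute the summands), so the finite-index subgroup $\G_0=\bigcap_{i=1}^r \Stab_{\G}(V_i)$ stabilises each $V_i$. Then $V_1$ is a proper $\G_0$-invariant subspace of $\R^{d+1}$, which contradicts strong irreducibility of $\G$. Hence $r$ must equal $1$.

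There is essentially no obstacle here beyond invoking Vey's theorem correctly; the only point needing a brief justification is that ``$\G$ preserves the decomposition'' allows a priori permutation of the $V_i$, which is why strong irreducibility has to be applied to a finite-index subgroup rather than to $\G$ itself. With that caveat handled, the two equivalences line up and the corollary follows.
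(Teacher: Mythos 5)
Your proof is correct and takes exactly the route the paper intends: the corollary is stated without proof immediately after Vey's theorem, and your two equivalences with the integer $r$ (including the careful passage to the finite-index subgroup $\bigcap_i \Stab_{\G}(V_i)$ to handle a possible permutation of the summands, which is precisely where \emph{strong} irreducibility rather than mere irreducibility is needed) are the intended derivation.
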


This theorem leads to the following open question:

\begin{oqu}
Does Vey's theorem hold if $\G$ quasi-divides $\O$ ?
\end{oqu}

\subsubsection{A non-existence theorem of non-trivial examples in small dimensions}

\begin{theorem}[Benz\'ecri (divisible) \cite{MR0124005}, Marquis. (quasi-divisible) \cite{Marquis:2009kq}]\label{thm_benz}$\,$\\
In dimension 2, the only non-strictly convex divisible properly convex open set is the triangle, and there does not exist any quasi-divisible non-divisible non-strictly convex properly convex open set of dimension 2.
\end{theorem}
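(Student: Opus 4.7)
My plan is to treat the two assertions in parallel: reduce to the indecomposable situation by Vey's decomposition (Theorem~\ref{theorem_vey}), then exclude the indecomposable case by a compactness-and-renormalisation argument of Benz\'ecri (for the divisible part), respectively the moduli-theoretic argument of \cite{Marquis:2009kq} (for the quasi-divisible part).

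\textbf{Step 1 (reduction via Vey).} Applying Theorem~\ref{theorem_vey} to $(\G,\O)$ yields a decomposition $\O=\omega_{1}\otimes\cdots\otimes\omega_{r}$ with each $\omega_{i}$ indecomposable and divided by the corresponding subgroup $\G_{i}$. The dimension relation $\sum_{i}\dim\omega_{i}+(r-1)=\dim\O=2$ leaves, in the decomposable branch $r\geqslant 2$, only $r=2$ with $\{\dim\omega_{1},\dim\omega_{2}\}=\{0,1\}$, or $r=3$ with every $\dim\omega_{i}=0$. A $0$-dimensional indecomposable properly convex open set is a point and a $1$-dimensional one is an open segment, so in every decomposable case $\O$ is the triangle $\mathrm{point}\otimes\mathrm{segment}$. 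This disposes of the decomposable branch.

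\textbf{Step 2 (indecomposable case, main step).} It remains to show that a $2$-dimensional indecomposable properly convex open set $\O$ divided by $\G$ must be strictly convex. The corollary to Vey's theorem gives strong irreducibility of $\G\subset\mathrm{SL}_{3}(\R)$. Assume for contradiction that $s=[a,b]\subset\dO$ is a non-trivial maximal open segment and pick $p$ in its relative interior together with a sequence $x_{n}\in\O$ such that $x_{n}\to p$. Cocompactness of $\G$ on $\O$ yields $\g_{n}\in\G$ with $\g_{n}x_{n}$ in a fixed compact $K\subset\O$, and Benz\'ecri's compactness theorem for the $\PG$-action on pointed properly convex sets lets one extract $(\g_{n}\O,\g_{n}x_{n})\to(\O_{\infty},x_{\infty})$ in the Hausdorff-projective topology, with $\O_{\infty}$ properly convex and $x_{\infty}\in\O_{\infty}$. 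The main obstacle is to show that this limit is incompatible with the existence of $s$: one must argue that the translates $\g_{n}(a)$ and $\g_{n}(b)$ converge to distinct points $a_{\infty},b_{\infty}\in\overline{\O_{\infty}}$ bracketing $x_{\infty}$ on a projective line, so that $\O_{\infty}$ is again non-strictly convex along a segment through $x_{\infty}$; and then, by exploiting the $\G$-orbit density of attracting fixed points of proximal elements in $\dO$ (a standard consequence of strong irreducibility and divisibility), iterate the renormalisation to produce a sequence of nested boundary segments whose common projective span cannot fit in a properly convex set --- the required contradiction. I expect this renormalisation step to be the most delicate, and it is where the explicit two-dimensional classification of \cite{MR1293655} is usually invoked.

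\textbf{Step 3 (quasi-divisible half).} For the second assertion I would rerun the same two-step reduction. A Vey-style decomposition for quasi-divisible $\G$ in dimension $2$ is available from the explicit parameterisation of the moduli space of finite-volume convex projective structures on surfaces given in \cite{Marquis:2009kq}, \cite{MR2740643}; it again leaves only the triangle or an indecomposable $\O$. The indecomposable case is ruled out by Step 2 adapted to finite-volume quotients, where the horospherical dynamics of Lemma~\ref{dyna_horo} replace cocompactness of the action in controlling cusps during the renormalisation. The triangle itself is divisible (by a cocompact lattice in its diagonal $\R^{2}$-automorphism group), hence is \emph{not} an example of a quasi-divisible non-divisible convex, and the second assertion follows.
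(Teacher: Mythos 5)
The paper itself gives no proof of this statement --- it is quoted as a result of Benz\'ecri \cite{MR0124005} and of \cite{Marquis:2009kq} --- so your attempt has to be judged on its own terms, and as it stands it is a proof plan rather than a proof. Step 1 is fine: the dimension count $\sum_i\dim\omega_i+(r-1)=2$ in Theorem~\ref{theorem_vey} does force every decomposable (or cone) case to be the triangle. But Step 2, which is the entire mathematical content of the divisible half, is not carried out: you yourself defer the key point (``the main obstacle is to show that this limit is incompatible with the existence of $s$'', ``I expect this renormalisation step to be the most delicate''). Moreover the intermediate conclusion you propose is vacuous as stated: knowing that $\g_n(a)$ and $\g_n(b)$ converge to points of $\overline{\O_{\infty}}$ bracketing the \emph{interior} point $x_{\infty}$ on a line says nothing, since every interior point of a convex set lies on a chord. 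What is needed is that the limit segment lies in $\partial\O_{\infty}$, and then a genuine further argument --- for instance the closedness of the orbit $\PG\cdot\O$ in $X$ (the proposition at the end of the Benz\'ecri section) together with a degeneration of $\O$ to a triangle along the boundary segment --- to conclude that $\O$ itself is projectively a triangle, contradicting indecomposability. The ``nested boundary segments whose common projective span cannot fit in a properly convex set'' idea is not a recognizable correct argument: in $\PP^2$ each such segment spans a single line, and nothing you have said forces an entire line into $\overline{\O}$.

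Step 3 is weaker still. The paper explicitly lists as an \emph{open question} whether Vey's theorem holds for quasi-dividing groups, so you cannot simply invoke ``a Vey-style decomposition for quasi-divisible $\G$''; and deriving it from the moduli-space parameterisation of \cite{Marquis:2009kq} is circular, since that parameterisation belongs to the same body of work that establishes the present theorem. Likewise, replacing cocompactness by ``the horospherical dynamics of Lemma~\ref{dyna_horo}'' is precisely the hard point of the finite-volume case --- one must control renormalisation sequences $\g_n x_n$ that escape into a cusp --- and it is not performed. In short, the skeleton (reduce to the indecomposable case, then exclude boundary segments by Benz\'ecri renormalisation) is the standard and correct one, but both halves of the theorem rest on steps that you have identified and then left undone.
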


\begin{corollary}
In dimension 2, every indecomposable quasi-divisible convex open set is strictly convex with $\C^1$ boundary.
\end{corollary}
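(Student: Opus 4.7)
The plan is to extract both conclusions from Theorem \ref{thm_benz}, using the definition of indecomposability for strict convexity and Proposition \ref{prop_dual_classi_0} (together with a duality statement for quasi-divisibility) for the $\C^1$ property. Throughout, $\O$ is indecomposable, quasi-divisible, and of dimension $2$.

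First I would prove strict convexity by contradiction. Assume $\O$ is not strictly convex. Theorem \ref{thm_benz} leaves exactly one possibility: $\O$ is a (projective) triangle, since the theorem explicitly rules out the existence of any non-divisible non-strictly-convex quasi-divisible properly convex open set in dimension $2$. However, a triangle is a cone in any affine chart where one of its vertices is taken as apex, and it is clearly not a segment; by the definition of indecomposability recalled just before Section \ref{sec_hom}, a cone other than a segment is decomposable. This contradicts the indecomposability of $\O$, so $\O$ must be strictly convex.

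Next I would derive the $\C^1$ property by dualizing. If $\G \leqslant \Aut(\O)$ quasi-divides $\O$, then ${}^t\G^{-1}$ sits in $\Aut(\O^*)$, and two transfer properties should be recorded: (i) $\O$ is indecomposable iff $\O^*$ is indecomposable, because the product $\otimes$ commutes with the dual operation; and (ii) $\O$ is quasi-divisible iff $\O^*$ is quasi-divisible. Granting (i) and (ii), $\O^*$ is again an indecomposable quasi-divisible properly convex open set in $\PP^{2 *}$, so the argument of the previous paragraph applied to $\O^*$ shows that $\O^*$ is strictly convex. Proposition \ref{prop_dual_classi_0} then immediately yields that $\dO$ is of class $\C^1$.

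The only non-routine ingredient is (ii), the transfer of quasi-divisibility to the dual; (i) is essentially formal once the behaviour of $\otimes$ under duality is unwound, and the analogous statement for divisibility is immediate because compactness is projectively invariant. In contrast, preservation of \emph{finite Busemann volume} under $\O \mapsto \O^*$ requires a genuine comparison of the volume densities via the $\Aut(\O)$-equivariant diffeomorphism of Lemma \ref{premier_dual}: one must check that its Jacobian is bounded (above and below) on any $\G$-fundamental domain, so that finiteness of $\mu_{\O}(\Quo)$ entails finiteness of the corresponding volume for $\O^*/{}^t\G^{-1}$. This is the one step in the argument that goes beyond a direct formal manipulation of Theorem \ref{thm_benz} and the definitions, and it is where I would expect to spend the bulk of the verification.
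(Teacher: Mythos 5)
Your derivation of strict convexity is exactly the argument the corollary is meant to encode: Theorem \ref{thm_benz} forces a non-strictly convex quasi-divisible planar convex set to be the triangle (the quasi-divisible-but-not-divisible case being empty), and the triangle is a cone which is not a segment, hence decomposable by the definition given before Section \ref{sec_hom}. That part is complete and correct.

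For the $\C^1$ half, your route (dualize, rerun the first step on $\O^*$, apply Proposition \ref{prop_dual_classi_0}) is sound in outline, and your point (i) is unproblematic (duality commutes with $\otimes$ and with cones; in any case the dual of a triangle is a triangle). The weak link is your justification of (ii). The paper itself records, in the open question following Proposition \ref{prop_dual_action}, that ``$\G$ has finite covolume on $\O$ if and only if it has finite covolume on $\O^*$'' is \emph{open} in general and is known in dimension $2$ only through \cite{Marquis:2009kq}. If a bounded-Jacobian estimate for the equivariant map of Lemma \ref{centerofmass} settled the matter, the general question would not be open: the Busemann densities of $\O$ and $\O^*$ are not known to be uniformly comparable under $x \mapsto x^{\star}$ near the boundary, and nothing in the survey supplies such a bound. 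So you should either quote the dimension-$2$ duality of finite covolume from \cite{Marquis:2009kq} as a black box, or --- cleaner, and entirely inside the survey's toolkit --- bypass (ii) altogether: once $\O$ is known to be strictly convex, the quasi-divisible analogue of Theorem \ref{thm_ghyp} (the remark following that theorem, from \cite{Cooper:2011fk}) gives that $(\O,d_{\O})$ is Gromov-hyperbolic, and Proposition \ref{prop_ghyp_facile} then yields that $\dO$ is $\C^1$ with no reference to $\O^*$ at all.
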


\subsubsection{An existence result}

\begin{theorem}[Benoist ($3\leqslant d \leqslant 7$, divisible) \cite{MR2218481}, Marquis. ($d =3$, quasi-divisible) \cite{MR2660566}]
In every dimension $3\leqslant d \leqslant 7$ (resp. $d=3$), there exists an indecomposable divisible (resp. quasi-divisible non-divisible) convex set which is not strictly-convex nor with $\C^1$ boundary.
\end{theorem}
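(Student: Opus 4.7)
The plan is to construct the required examples via Vinberg's theory of projective reflection groups, starting from a hyperbolic Coxeter polytope whose Coxeter diagram contains an \emph{affine} sub-diagram; the affine sub-diagram is the mechanism that will force segments into $\dO$.

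More concretely, for each $d\in\{3,\dots,7\}$ pick a compact hyperbolic Coxeter polytope $P\subset\mathbb{H}^d$ whose Coxeter diagram $\Gamma$ is connected and contains a sub-diagram $\Gamma_0$ of affine type (e.g.\ $\tilde A_k$ or $\tilde B_k$ for some $k\ge 1$); the existence of such $P$ in these dimensions is provided by the Lann\'er--Vinberg classification. The Tits--Vinberg construction produces a faithful representation $\ph\colon W\to\ss$ of the abstract Coxeter group $W$ generated by the reflections in the facets of $P$, together with a properly convex open set $\O_\ph$ tiled by copies of a projective polytope combinatorially equivalent to $P$. The space of admissible Cartan matrices compatible with $P$ is positive-dimensional, so one can deform $\ph$ off the Fuchsian locus $\ph(W)\subset\so{d}$; by Koszul's openness theorem (Theorem \ref{thm_kosz}) small deformations remain cocompact on a properly convex open set. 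For the quasi-divisible case in $d=3$, one starts instead with a finite-volume hyperbolic Coxeter 3-polytope having some ideal vertices (plentiful by Andreev's theorem): the ideal-vertex stabilizers become parabolic subgroups of $\ph(W)$ and give rise to cusps in the quotient.

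The failure of strict convexity and of $\C^1$-regularity both come from $\Gamma_0$. In the undeformed Fuchsian picture the reflections indexed by $\Gamma_0$ generate an affine Coxeter group acting on a horosphere; after deformation, a typical element $\g$ of this subgroup is no longer parabolic (since $\ph(W)$ acts cocompactly), yet the dominant eigenvalue $\lambda_1(\g)$ retains the multiplicity $\ge 2$ forced by the structure of the affine sub-diagram. By Proposition \ref{classi_qhyp}, the fixed face $F^+=\overline{\O_\ph}\cap\mathbb{P}(\ker(\g-\lambda_1(\g)))$ is then a non-trivial convex set of positive dimension, producing a segment in $\partial\O_\ph$. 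Dually, ${}^t\ph(\cdot)^{-1}$ preserves $\O_\ph^*$ and the same argument exhibits a segment in $\partial\O_\ph^*$; by Proposition \ref{prop_dual_classi_0}, this means $\partial\O_\ph$ is not $\C^1$ either. Indecomposability of $\O_\ph$ then follows from Vey's theorem (Theorem \ref{theorem_vey}): connectedness of $\Gamma$ translates into strong irreducibility of $\ph(W)$, ruling out any invariant direct-sum decomposition of $\mathbb{R}^{d+1}$.

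The main obstacles are twofold. First, one must verify that a generic deformation genuinely escapes the orthogonal form and produces eigenvalue multiplicities of the right kind in $\ph(W_0)$, rather than accidentally staying proximal; this requires a careful local analysis on the moduli of Cartan matrices, and it is where the dimension bound $d\le 7$ enters (beyond that, compact hyperbolic Coxeter polytopes with the requisite combinatorics simply do not exist). Second, for the quasi-divisible case in dimension~$3$, one must additionally check the finite-volume condition at each cusp, which reduces to estimating the Busemann volume of a horospherical neighborhood under the parabolic representation of the ideal-vertex stabilizer; this is the technical heart of \cite{MR2660566}.
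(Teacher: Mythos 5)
The paper gives no proof of this theorem: it only records the attributions to Benoist and to the author, and then remarks on the structure of the resulting examples in dimension $3$ (properly embedded triangles stabilized by virtually $\Z^2$ subgroups). So your proposal can only be measured against the constructions in the cited references, and there it has a fatal flaw at its very starting point. You propose to begin with a \emph{compact hyperbolic} Coxeter polytope $P\subset\mathbb{H}^d$ whose Coxeter diagram contains an affine subdiagram, and to deform its reflection group off the Fuchsian locus. But a cocompact reflection group in $\so{d}$ is a uniform lattice, hence Gromov-hyperbolic, hence contains no $\Z^2$; consequently its diagram cannot contain an affine subdiagram of rank $\geqslant 3$ (the corresponding standard subgroup would be virtually $\Z^k$ with $k\geqslant 2$), and the only ``affine'' subdiagrams available are $\infty$-labelled edges coming from ultraparallel facets, whose subgroup is merely the infinite dihedral group. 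Worse, no deformation can help: deforming the representation does not change the abstract group $W$, which remains word-hyperbolic, and Theorem \ref{thm_ghyp} then forces \emph{every} properly convex open set divided by $W$ to be strictly convex with $\C^1$ boundary. Your construction, if it produced anything, would provably produce the opposite of what is wanted. The actual constructions go the other way round: one starts from a Coxeter diagram that genuinely contains an affine subdiagram of rank $\geqslant 3$ (so that $W$ contains $\Z^2$ and is \emph{never} a cocompact hyperbolic lattice), chooses a non-symmetrizable admissible Cartan matrix so that this affine standard subgroup acts cocompactly on a properly embedded simplex whose boundary lies in $\dO$ (the Kac--Vinberg mechanism), and checks Vinberg's cocompactness criterion for the whole polytope; the bound $d\leqslant 7$ reflects the existence of such diagrams, not the (false) claim that compact hyperbolic Coxeter polytopes stop existing beyond dimension $7$ --- they are known up to dimension $8$.

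Two further points. First, your argument that the face $F^+=\overline{\O}\cap\PP(\ker(\g-\lambda_1(\g)))$ has positive dimension does not follow from Proposition \ref{classi_qhyp}: a top eigenvalue of multiplicity $\geqslant 2$ only guarantees that $F^+$ is a non-empty face, not that it contains a segment. The clean route, available inside the paper, is to observe that once $\G$ contains $\Z^2$ it is not Gromov-hyperbolic, so Theorem \ref{thm_ghyp} delivers both failures (of strict convexity and of $\C^1$-regularity) at once, with no need for the eigenvalue analysis or the duality detour through Proposition \ref{prop_dual_classi_0}. Second, the auxiliary ingredients you invoke (Vey's Theorem \ref{theorem_vey} for indecomposability via strong irreducibility, Coxeter polyhedra with ideal-type vertices and a cusp volume estimate for the quasi-divisible case in dimension $3$) are indeed the right tools and do appear in \cite{MR2218481} and \cite{MR2660566}, but they cannot rescue a construction whose seed object does not exist.
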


We stress the fact that Benoist describes very precisely the structure of every divisible convex set in dimension 3 in \cite{MR2218481}. In particular, he shows that every segment in the boundary of $\O$ is in fact in a unique triangle $T$ such that $\partial T \subset \dO$ and such that every triangle in the boundary is stabilized by a virtually $\Z^2$-subgroup of $\G$. Moreover, every $\Z^2$-subgroup of $\G$ stabilizes such a triangle. Finally the projection of $T$ on the quotient $\Quo$ is a Klein bottle or a torus giving a geometric version of the Jaco-Shalen-Johannson decomposition of the quotient.

\subsubsection{A question in higher dimensions}

\begin{oqu}
Does there exist in every dimension $d \geqslant 4$ an indecomposable divisible (resp. a quasi-divisible not divisible) convex set which is not-strictly convex ?
\end{oqu}

\section{Convex hypersurfaces}

A very important tool in the study of groups acting on Hilbert geometries is the notion of a convex hypersurface. Informally, it consists in building the analogue of the hyperboloid associated to an ellipsoid for any properly convex open set.

\subsection{The theorem}
Let $X^{\bullet}=\{(\O,x) \,|\, \O \textrm{ is a properly convex open set of } \PP^d \textrm{ and } x \in \O\}$ endowed with the Hausdorff topology. The group $\PG$ acts naturally on $X^{\bullet}$.

\begin{theorem}[Vinberg, \cite{MR0158414}]\label{surf_vinberg}
Let $\O$ be a properly convex open subset of $\PP^d$, and $\C_{\O}$ the cone above $\O$. There exists a map $D_{\O}:\O \rightarrow \C_{\O}$ which defines a strictly convex analytic embedded hypersurface of $\R^{d+1}$ which is asymptotic to $\C_{\O}$ and this map is $\Aut(\O)$-equivariant. In fact, one can define this map as $X^{\bullet} \rightarrow \R^{d+1}$ so that it becomes $\PG$-equivariant.
\end{theorem}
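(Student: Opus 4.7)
The plan is to construct $D_\O$ through the \emph{Koszul--Vinberg characteristic function}. Fix once and for all a Lebesgue measure $df$ on $(\R^{d+1})^*$ and, for $x \in \C_\O$, set
$$\phi(x) \;=\; \int_{\C_\O^*} e^{-f(x)}\, df.$$
First I would verify that this integral converges and defines a real-analytic function on $\C_\O$. Convergence uses that $\C_\O^*$ is a sharp open convex cone and that, for $x \in \C_\O$ fixed, the linear form $f \mapsto f(x)$ is positive and proper on $\overline{\C_\O^*}\smallsetminus\{0\}$; the integrand therefore has exponential decay at infinity, which also justifies differentiation under the integral and hence analyticity. A change of variables $f \mapsto f \circ g$ then yields the basic transformation rule
$$\phi(gx) \;=\; |\det g|^{-1}\,\phi(x)$$
for every $g \in \mathrm{GL}(\R^{d+1})$ preserving $\C_\O$. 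In particular $\phi$ is $\Aut(\O)$-invariant, and on rays we get the homogeneity $\phi(tx) = t^{-(d+1)}\phi(x)$; combined with the fact that $\phi(x)\to+\infty$ as $x$ approaches $\partial \C_\O$ (because some half-space of $\C_\O^*$ contributes a non-decaying exponential in the limit), this shows that $\phi$ is strictly decreasing from $+\infty$ to $0$ along each open ray in $\C_\O$.

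The analytic heart of the argument is the strict convexity of $\log \phi$. Differentiating twice under the integral gives
$$D^2(\log\phi)(x)(u,u) \;=\; \frac{\int_{\C_\O^*} f(u)^2\, e^{-f(x)}\, df}{\phi(x)} \;-\; \left(\frac{\int_{\C_\O^*} f(u)\, e^{-f(x)}\, df}{\phi(x)}\right)^{\!2},$$
which is the variance of the linear form $f \mapsto f(u)$ with respect to the probability measure $\phi(x)^{-1} e^{-f(x)}\,df$ on the open cone $\C_\O^*$. For $u \neq 0$ this variance is strictly positive, because the form $f \mapsto f(u)$ is non-constant on an open set of positive measure; hence $D^2(\log\phi)$ is positive definite on all of $\R^{d+1}$. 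This is the step I expect to be the main obstacle, in the sense that it is what both produces the strict convexity of the eventual hypersurface and fuels every later use of the Vinberg construction (for instance, the centre-of-mass recipe alluded to in Lemma \ref{lem_comp}).

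With $\phi$ in hand, define
$$\Sigma_\O \;=\; \{x \in \C_\O : \phi(x) = 1\}, \qquad D_\O([x]) \;=\; \phi(x)^{1/(d+1)}\, x,$$
where in the second formula $x \in \C_\O$ is any representative of $[x] \in \O$; the homogeneity of $\phi$ makes this depend only on the projective class. Strict monotonicity of $\phi$ along rays (from $+\infty$ to $0$) shows that $D_\O$ is a real-analytic diffeomorphism of $\O$ onto $\Sigma_\O$. Strict convexity of $\Sigma_\O$ as an embedded hypersurface of $\R^{d+1}$ follows from the previous paragraph: since $\Sigma_\O = \{\log\phi = 0\}$ and the Hessian of $\log \phi$ is positive definite on $\R^{d+1}$ (hence on the tangent hyperplane to $\Sigma_\O$), the sublevel set $\{\phi < 1\}$ is a strictly convex body with smooth boundary $\Sigma_\O$. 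The asymptotic condition $\Sigma_\O$ \emph{tends to} $\partial \C_\O$ follows from the formula $D_\O([x]) = \phi(x)^{1/(d+1)} x$ together with the blow-up of $\phi$ at $\partial \C_\O$: as $[x] \to \partial \O$, the magnitude of $D_\O([x])$ tends to infinity while its direction limits to a boundary ray of $\C_\O$.

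Finally, equivariance is built in. For $g \in \Aut(\O) \subset \ss$ one has $|\det g|=1$, so $\phi \circ g = \phi$, and since $D_\O([x])$ is characterised as the unique point on the ray $\R_{>0}\, x$ at which $\phi$ equals $1$, we obtain $D_\O(g[x]) = g\, D_\O([x])$. To upgrade to the map $X^\bullet \to \R^{d+1}$, observe that every $[g] \in \PG$ admits a unique lift $\tilde g \in \spm{d+1}$ sending the chosen cone $\C_\O$ to the chosen cone $\C_{g\O}$; the transformation rule then reads $\phi_{\C_{g\O}} \circ \tilde g = \phi_{\C_\O}$, whence $D_{g\O}(g[x]) = \tilde g\, D_\O([x])$, which is the desired $\PG$-equivariance.
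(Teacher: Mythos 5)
Your construction via the characteristic function $\varphi(x)=\int_{\C_{\O}^*}e^{-f(x)}\,df$ is exactly the one the paper uses; the paper merely states this definition and leaves the verification as an exercise, which you have carried out correctly (convergence, the transformation rule $\varphi\circ g=|\det g|^{-1}\varphi$, blow-up at $\partial\C_{\O}$, the variance argument for positive definiteness of $D^2\log\varphi$, and equivariance). No discrepancies to report.
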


A \emph{convex hypersurface of $\R^{d+1}$}\index{convex hypersurface} is an open subset of the boundary of a convex set of $\R^{d+1}$. A convex hypersurface $\Sigma$ is \emph{asymptotic}\index{convex hypersurface!asymptotic} to an open convex cone $\C$ containing $\Sigma$ if every affine half-line contained in the cone $\C$ intersects the hypersurface (see Figure \ref{asympto_fig}).

\begin{figure}[h!]
\centering
\includegraphics[width=4cm]{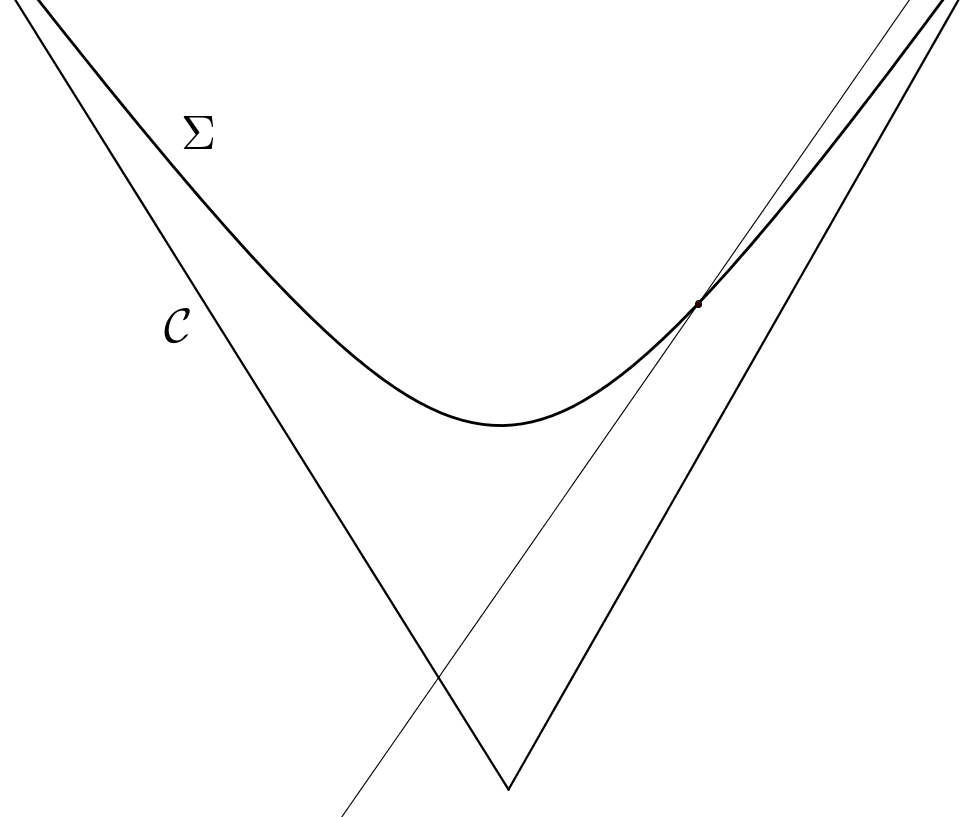}
\includegraphics[width=4cm]{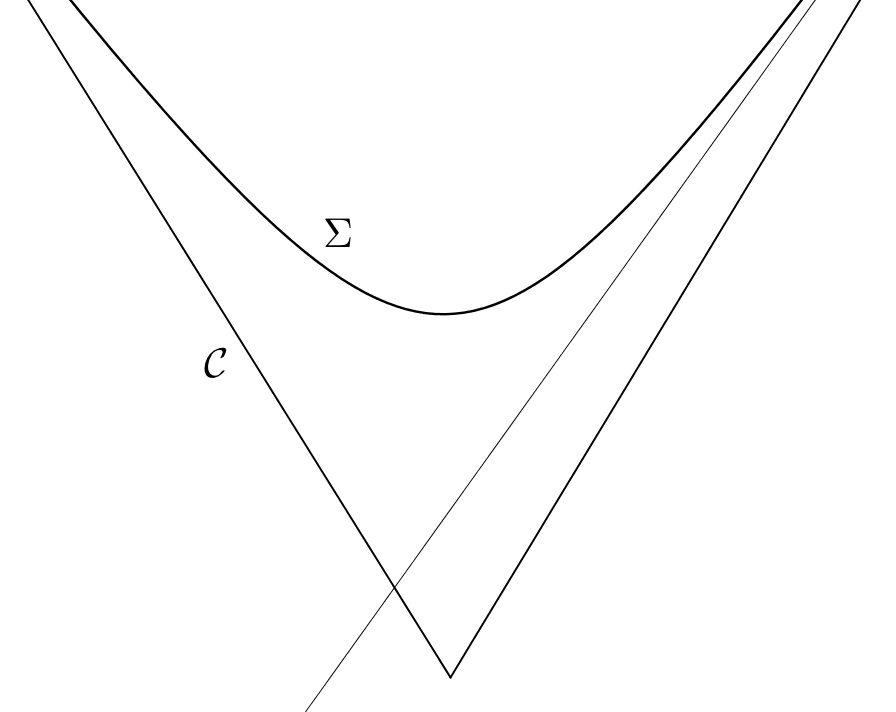}
\caption{An asymptotic and a non-asymptotic convex hypersurface} \label{asympto_fig}
\end{figure}

\begin{proof}
Consider the map $\varphi:\C_{\O} \rightarrow \R^*_+$ given by $\varphi(x)=\int_{\C_{\O}^*} e^{-f(x)}df$. We leave it as an exercise that the level set of this map gives the intended hypersurface.
\end{proof}

\subsection{Consequences}

\subsubsection*{Existence of centers of mass}

\begin{lemma}\label{center}
Let $B$ be a bounded part of a properly convex open set. There exists a point $x_B \in \mathrm{Conv}((B)$ such that for every $\g \in \Aut(\O)$, if $\g(B)=B$ then $\g(x_B)=x_B$.
\end{lemma}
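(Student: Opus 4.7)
The plan is to lift $B$ to the ambient vector space $\R^{d+1}$ via Vinberg's equivariant map $D_\O : \O \to \C_\O$ from Theorem~\ref{surf_vinberg}, to form a linear-algebraic centroid there, and then to project back to $\O$. The whole point is that inside $\R^{d+1}$ the centroid of a compact convex set is canonically defined and affinely equivariant, so the $\Aut(\O)$-equivariance of $D_\O$ automatically produces a fixed point downstairs. Everything of substance will be packaged in Theorem~\ref{surf_vinberg}; the rest is classical convex analysis.

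In more detail: since $B$ is bounded in the proper metric space $(\O,d_\O)$, its closure $\overline B$ inside $\O$ is compact, so the continuous image $D_\O(\overline B)$ is a compact subset of the open sharp convex cone $\C_\O$. Put $K = \mathrm{Conv}_{\R^{d+1}}\bigl(D_\O(\overline B)\bigr)$; by convexity of $\C_\O$ one has $K \subset \C_\O$, so $K$ is a compact convex subset of $\R^{d+1}$ avoiding the origin. Let $c$ be the centroid of $K$ (the barycenter of $K$ for the uniform measure on its affine hull) and put $x_B = \PP(c) \in \O$, which is well-defined since $c \in K \subset \C_\O$. By Carathéodory, $c$ can be written as a convex combination $\sum_i t_i D_\O(x_i)$ with $x_i \in \overline B$, $t_i \geqslant 0$ and $\sum_i t_i = 1$; since each $D_\O(x_i)$ is a positive lift of $x_i$ to $\C_\O$, the projection $\PP(c)$ is a Hilbert convex combination of the $x_i$, hence $x_B$ lies in $\mathrm{Conv}(B)$, the convex hull of $B$ inside $\O$.

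For the equivariance, suppose $\g \in \Aut(\O)$ satisfies $\g(B) = B$. Then $\g(\overline B) = \overline B$, and the $\Aut(\O)$-equivariance of $D_\O$ gives $\g\bigl(D_\O(\overline B)\bigr) = D_\O(\overline B)$. Since $\g$ is linear, it sends convex hulls to convex hulls, so $\g(K) = K$; by affine equivariance of the centroid, the affine map $\g$ that stabilises the compact convex set $K$ must fix its centroid, so $\g(c) = c$. Therefore $\g(x_B) = \PP(\g(c)) = \PP(c) = x_B$, as required. The only genuinely non-trivial ingredient is the existence of the equivariant embedding $D_\O$, which is exactly Vinberg's Theorem~\ref{surf_vinberg}; this is the place where all the real work sits, and once it is in hand, the rest of the argument is essentially a one-line consequence of the affine equivariance of the Euclidean centroid.
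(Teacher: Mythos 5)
Your proof is correct and follows exactly the paper's own argument: lift $B$ to the cone via Vinberg's equivariant hypersurface, take the centroid of the convex hull there, and project back, using linearity of the $\Aut(\O)$-action to get the fixed point. The extra details you supply (compactness of $\overline{B}$, Carath\'eodory to place $x_B$ in $\mathrm{Conv}(B)$) are exactly the steps the paper leaves implicit.
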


\begin{proof}
Denote by $D:\O \rightarrow \Sigma$ the convex hypersurface given by Theorem \ref{surf_vinberg}. Consider $C=\overline{\mathrm{Conv}(D(B))}$. It is a bounded convex part of $\R^{d+1}$. The action of $\Aut(\O)$ on $\R^{d+1}$ is linear, so the projection on $\O$ of the center of mass of $C$ is the point $x_B$ we are looking for. 
\end{proof}

\subsubsection*{Existence of an invariant Riemannian metric}\label{metric_riema}

\begin{lemma}
Let $\O$ be a properly convex open set. There exists on $\O$ a Riemannian metric which is $\Aut(\O)$-invariant. In fact, one can define this metric from $X^{\bullet}$ so that it becomes $\PG$-equivariant.
\end{lemma}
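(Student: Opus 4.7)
The plan is to obtain the metric as the pullback of the Hessian of the logarithm of Vinberg's characteristic function via the hypersurface $D_{\O}$ of Theorem \ref{surf_vinberg}. Let $\varphi_{\O}(x) = \int_{\C_{\O}^{*}} e^{-f(x)}\,df$ be the characteristic function of the cone $\C_{\O} \subset \R^{d+1}$. A standard computation (H\"older's inequality applied to the integrand) shows that $\log \varphi_{\O}$ is a smooth strictly convex function on $\C_{\O}$, so $H_{\O} := \mathrm{Hess}(\log \varphi_{\O})$ is a positive-definite symmetric $(0,2)$-tensor, i.e.\ a Riemannian metric on the cone $\C_{\O}$.

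The next step is to check that $H_{\O}$ transforms well under linear maps between cones. If $\g \in \mathrm{GL}(\R^{d+1})$ sends $\C_{\O}$ onto $\C_{\O'}$, a change of variable $f \mapsto {}^{t}\!\g^{-1} f$ in the integral defining $\varphi_{\O'}$ gives $\varphi_{\O'}(\g x) = |\det \g|^{-1} \varphi_{\O}(x)$, hence
\[
\log \varphi_{\O'}(\g x) \;=\; \log \varphi_{\O}(x) \;-\; \log|\det \g|.
\]
The additive constant disappears upon taking the Hessian, so $\g^{*} H_{\O'} = H_{\O}$. In particular $H_{\O}$ is invariant under $\Aut(\O)$, and the whole assignment $\C_{\O} \mapsto H_{\O}$ is $\mathrm{GL}(\R^{d+1})$-equivariant.

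Finally, the metric $H_{\O}$ lives on the cone, and I want one on $\O$ itself. For this I use the Vinberg hypersurface $\Sigma_{\O} = D_{\O}(\O)$ provided by Theorem \ref{surf_vinberg}: it is a smooth section of the projection $\C_{\O} \to \O$, and it is $\PG$-equivariant when viewed from $X^{\bullet}$. Restricting $H_{\O}$ to the tangent bundle $T\Sigma_{\O}$ gives a positive-definite symmetric 2-form on $\Sigma_{\O}$ (the restriction of a Riemannian metric to the tangent subspace of a submanifold is always positive definite), and transporting it via the diffeomorphism $D_{\O} : \O \to \Sigma_{\O}$ yields the desired Riemannian metric on $\O$. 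Combining the equivariance of $D_{\O}$ with the equivariance of $H_{\O}$ gives the $\PG$-equivariance of the construction on $X^{\bullet}$. The only real bookkeeping point --- the place I expect to have to be careful --- is that $\varphi_{\O}$ is only covariant up to a multiplicative determinantal factor, whereas $D_{\O}$ is strictly $\PG$-equivariant; everything is reconciled by the observation that the logarithm turns the determinant factor into an additive constant which is killed by the Hessian, so the final metric genuinely depends only on $\O$ and not on any choice of lift to $\mathrm{SL}^{\pm}_{d+1}(\R)$.
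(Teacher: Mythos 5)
Your proof is correct and follows essentially the same route as the paper, which simply observes that the Vinberg hypersurface of Theorem \ref{surf_vinberg} is strictly convex, so its Hessian is positive definite and yields an invariant analytic Riemannian metric. Your version is a careful expansion of that one-liner (note that on the tangent space of a level set of $\varphi$ one has $\mathrm{Hess}(\log\varphi)=\varphi^{-1}\,\mathrm{Hess}(\varphi)$, so your metric agrees up to normalisation with the one the paper intends), and your equivariance bookkeeping, including the determinant factor being killed by the logarithm and the Hessian, is right.
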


\begin{proof}
Since the hypersurface of Theorem \ref{surf_vinberg} is strictly convex, its Hessian is definite positive at every point, therefore it defines an invariant analytic Riemannian metric.
\end{proof}

\subsubsection*{Existence of a convex locally finite fundamental domain}

\begin{lemma}\label{bisector}
Let $\O$ be a properly convex open set. There is a map $H$ from $\O\times \O$ to the space of hyperplanes intersecting $\O$ such that for every $x,y \in \O$ the hyperplane $H(x,y)$ separates $x$ from $y$ and $H$ is $\Aut(\O)$-equivariant, and such that if $y_n \to p \in \dO$ then $H(x,y_n)$ tends to a supporting hyperplane at $p$.
\end{lemma}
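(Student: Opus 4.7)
The plan is to define $H$ from the Vinberg hypersurface $\Sigma \subset \C_{\O}$ of Theorem~\ref{surf_vinberg} together with its ``polar'' map. Let $\varphi$ be Vinberg's characteristic function of $\C_{\O}$, so that $\psi := \log \varphi$ is strictly convex on $\C_{\O}$ (a Cauchy--Schwarz argument on the integrand shows the Hessian of $\psi$ is positive definite), satisfies $\psi(\lambda z) = \psi(z) - (d+1)\log\lambda$, and $\Sigma$ is a level set of $\varphi$. Write $D \colon \O \to \Sigma$ for the canonical section and, for $\hat z \in \C_{\O}$, set $\mu(\hat z) := -d\psi(\hat z) \in (\R^{d+1})^*$. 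By Euler's identity one has $\mu(\hat z)(\hat z) = d+1$, and a direct argument shows $\mu$ takes values in the dual cone $\C_{\O}^*$. For $x \neq y$ in $\O$, I would set
\[
\ell_{x,y} := \mu(D(x)) - \mu(D(y)), \qquad H(x,y) := \PP(\ker \ell_{x,y}).
\]

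The separation and equivariance properties are then immediate. The tangent-plane inequality for the strictly convex function $\psi$, combined with $\psi(D(x)) = \psi(D(y))$, yields $\mu(D(x))(D(y)) > d+1$ and, symmetrically, $\mu(D(y))(D(x)) > d+1$; hence $\ell_{x,y}(D(x)) < 0 < \ell_{x,y}(D(y))$, so $H(x,y)$ is a projective hyperplane meeting $\O$ and separating $x$ from $y$. A change-of-variables argument in the integral defining $\varphi$ shows that $\varphi$ is $\Aut(\O)$-invariant, so both $D$ (by Theorem~\ref{surf_vinberg}) and $\mu$ (through the contragredient action on $(\R^{d+1})^*$) are $\Aut(\O)$-equivariant, and therefore so is $H$.

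The delicate step is the boundary limit. Given $y_n \to p \in \dO$ with lift $\hat p \in \partial \C_{\O}$, the fact that $\Sigma$ is asymptotic to $\C_{\O}$ forces $D(y_n) \to \infty$ along the ray through $\hat p$; one writes $D(y_n) = \lambda_n \tilde y_n$ with $\lambda_n \to \infty$ and $\tilde y_n \to \hat p$. The key point to establish is that $|\mu(D(y_n))| \to \infty$ and that every projective accumulation point of $[\mu(D(y_n))]$ in $\PP^{d*}$ is the class of a linear form $f_p$ supporting $\C_{\O}$ at $\hat p$. Granting this, $\mu(D(x))$ stays bounded and is therefore dominated, so $[\ell_{x,y_n}]$ has the same projective accumulation points as $[-\mu(D(y_n))]$, and every accumulation point of $H(x,y_n)$ equals $\PP(\ker f_p)$, a supporting hyperplane of $\O$ at $p$.

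The main obstacle is exactly this analytic claim: that the projectivised gradient of a homogeneous convex barrier selects supporting functionals at boundary points. My strategy is to combine the $(-1)$-homogeneity $\mu(\lambda z) = \lambda^{-1} \mu(z)$ with the blow-up of $\psi$ at $\partial \C_{\O}$: since $\psi \to +\infty$ at the boundary, $d\psi$ must eventually point ``outwards'' in a quantitative sense, and careful bookkeeping---allowing subsequences at non-regular boundary points, which the phrasing ``tends to a supporting hyperplane'' permits---should give the required identification. The simplex case, where $\varphi(x) = (x_1 \cdots x_{d+1})^{-1}$ and $\mu(x) = (1/x_1, \ldots, 1/x_{d+1})$, is a tractable testing ground on which the entire picture can be checked by hand.
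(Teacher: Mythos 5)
Your construction is the same as the paper's, just written analytically: the paper takes the affine tangent hyperplanes $H_x$, $H_y$ to Vinberg's hypersurface $\Sigma$ at $D(x)$, $D(y)$ and sets $H(x,y)=\PP(\mathrm{Vect}(H_x\cap H_y))$, and since $H_x=\{z\,:\,\mu(D(x))(z)=d+1\}$, the linear span of $H_x\cap H_y$ is exactly $\ker\big(\mu(D(x))-\mu(D(y))\big)$, i.e.\ your $H(x,y)$. The paper leaves all three properties as an exercise; you actually prove two of them, and your arguments for separation (the strict tangent-plane inequality for $\psi=\log\varphi$ on the common level set) and for equivariance are correct.

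The gap is the one you yourself flag: the boundary limit is only a strategy, not a proof, and the single statement carrying all the weight --- that $|\mu(D(y_n))|\to\infty$ --- is never established. This is not a removable technicality: if along a subsequence $\mu(D(y_n))\to g$ with $g$ finite, then $\ell_{x,y_n}\to\mu(D(x))-g$, and since $g(\hat p)=\lim \mu(D(y_n))(\tilde y_n)=\lim (d+1)/\lambda_n=0$ while $\mu(D(x))(\hat p)>0$, the limit hyperplane would \emph{miss} $p$ and the lemma would fail for this construction. So the divergence of $|\mu(D(y_n))|$ is exactly equivalent to the conclusion, and your heuristic ``$d\psi$ must eventually point outwards in a quantitative sense'' does not yet rule out the finite-limit scenario (note that $\mu(D(y_n))=\lambda_n^{-1}\mu(\tilde y_n)$ is a ratio of two quantities both tending to infinity). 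One way to close it: the sublevel set $K=\{\varphi\leqslant c\}$ is convex with $\Sigma=\partial K\cap\C_{\O}$, and convexity of $\psi$ gives $\mu(D(y_n))(w)\geqslant d+1$ for all $w\in K$; a finite limit $g$ would then satisfy $g\geqslant d+1$ on $K$, whereas in fact $\inf_K g=0$ for every $g\in\overline{\C_{\O}^*}$ with $g(\hat p)=0$, because $s\,T(\hat p+su)=\big(e^{-c}\varphi(\hat p/s+u)\big)^{1/(d+1)}\to 0$ as $s\to 0$ (here $T(v)$ is the entry time of the ray through $v$ into $K$, and $\varphi(\hat p/s+u)=\int_{\C_{\O}^*}e^{-f(\hat p)/s-f(u)}df\to 0$ since $\{f(\hat p)=0\}$ has measure zero). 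Note this last step genuinely uses the integral formula for $\varphi$ and not merely the asymptoticity of $\Sigma$: the set $\C_{\O}\cap\{g\geqslant 1\}$ is convex and asymptotic to the cone but has $\inf g=1$. Once $|\mu(D(y_n))|\to\infty$ is known, your reduction of $[\ell_{x,y_n}]$ to $[\mu(D(y_n))]$ and the identification of the limit as a supporting functional at $\hat p$ do go through as you describe.
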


\begin{proof}
Consider the \emph{affine} tangent hyperplanes $H_x$ and $H_y$ at the point $D(x)$ and $D(y)$ of the convex hypersurface $\Sigma=D(\O)$. Set $H(x,y)=\PP(\mathrm{Vect}(H_x \cap H_y))$. It is an exercise to check that $H(x,y)$ does the job (see Figure \ref{bisector_fig}).
\begin{figure}[h!]
\centering
\includegraphics[width=7cm]{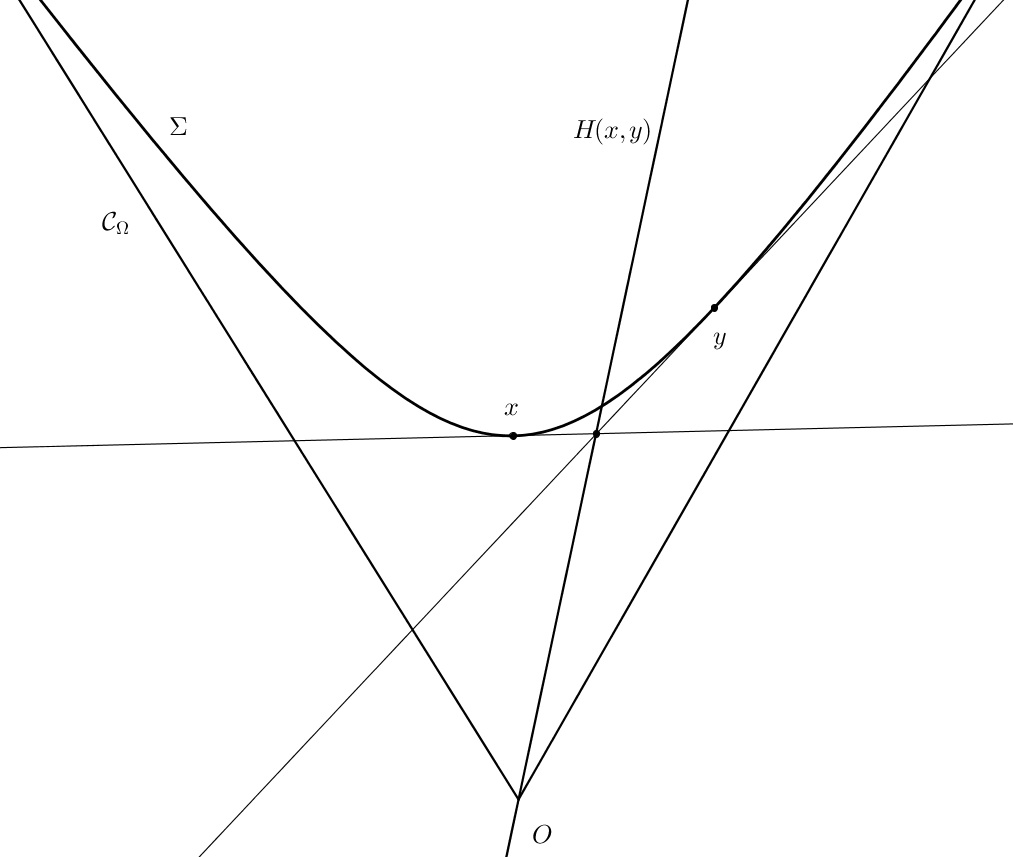}
\caption{Bisector} \label{bisector_fig}
\end{figure}
\end{proof}

\par{
The existence of convex locally finite fundamental domains for actions of discrete groups is very useful. For the hyperbolic space, the existence of fundamental domains is due to Dirichlet. A direct application of the Dirichlet techniques does not work in Hilbert Geometry since the bisector of two points is no longer a hyperplane. Indeed, in the case of Hilbert geometry the two connected component given by a bisector have no reason to be convex.
}
\\
\par{
Nevertheless, thanks to Lemma \ref{bisector}, Lee shows:
}

\begin{theorem}[Lee \cite{MR2712298} or \cite{Lee:2007fk}]
Let $\G$ be a discrete group of $\ss$ acting on a properly convex open set $\O$. There exists a locally finite\footnote{A fundamental domain is \emph{locally finite}\index{locally finite} when each compact subset of $\O$ intersects only a finite number of translates of the fundamental domain.} convex fundamental domain for the action of $\G$ on $\O$.
\end{theorem}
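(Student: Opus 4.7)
The strategy is to imitate the Dirichlet construction, but with the projective hyperplane $H(x,y)$ of Lemma \ref{bisector} playing the role of the metric bisector. First I would pick a basepoint $x_0\in\O$ whose stabilizer in $\G$ is trivial; since the $\G$-action is proper and any nontrivial finite stabilizer would fix a proper analytic submanifold (by Lemma \ref{lem_comp} its fixed set is a projective subspace), the set of such basepoints is residual and in particular nonempty. For every $\g\in\G\smallsetminus\{1\}$, set $H_\g:=H(x_0,\g x_0)$; since $H_\g$ strictly separates $x_0$ from $\g x_0$, let $D_\g^+$ be the closed half of $\O$ cut by $H_\g$ and containing $x_0$. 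The candidate fundamental domain is
$$D\;:=\;\bigcap_{\g\in\G\smallsetminus\{1\}} D_\g^+.$$
By construction $D$ is a (closed) intersection of half-spaces of $\O$, hence convex, and $x_0$ lies in its interior.

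The key $\G$-equivariance of $H$ gives, for every $\g\in\G$,
$$\g D\;=\;\bigcap_{h\in\G\smallsetminus\{\g\}}\bigl\{y\in\O:\ y\text{ and }\g x_0\text{ are on the same closed side of }H(\g x_0,h x_0)\bigr\}.$$
To show that $\bigcup_{\g\in\G}\g D=\O$ I would take $y\in\O$ and pick $\g_0\in\G$ minimizing $d_{\O}(\g x_0,y)$; such a minimizer exists because the action is proper and the orbit $\G x_0$ is discrete. Then I would verify, using the construction of $H(x,z)$ from the Vinberg hypersurface of Theorem \ref{surf_vinberg} (the hyperplane $H(x,z)$ is obtained as $\PP(\mathrm{Vect}(H_x\cap H_z))$ where $H_x,H_z$ are affine tangents to $\Sigma$), that the orientation of $y$ with respect to $H(x,z)$ is governed by the comparison of the Vinberg-convex functional at $x$ and $z$ evaluated at $y$; the point $\g_0 x_0$ being the closest orbit point to $y$ then forces $y$ to lie on the $\g_0 x_0$-side of every $H(\g_0 x_0, h x_0)$, i.e.\ $y\in\g_0 D$. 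The disjointness of interiors, $\mathrm{int}(\g D)\cap\mathrm{int}(D)=\varnothing$ for $\g\neq 1$, follows because a point in both interiors would be strictly on the $x_0$-side of $H(x_0,\g x_0)$ and simultaneously strictly on the $\g x_0$-side, contradicting the separation property of $H$.

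Finally, for local finiteness I would fix a compact set $K\subset\O$ and argue by contradiction: if infinitely many translates $\g_n D$ met $K$, pick $y_n\in\g_n D\cap K$ and, after extraction, $y_n\to y\in K$. Properness of the action implies $\g_n x_0\to\dO$ (no infinite subsequence of $\{\g_n x_0\}$ stays in a compact set). The convergence clause of Lemma \ref{bisector} then says $H(x_0,\g_n x_0)$ tends to a supporting hyperplane at some $p\in\dO$, and via the equivariance $H(\g_n x_0,h x_0)=\g_n H(x_0,\g_n^{-1}h x_0)$ the same holds for the walls bounding $\g_n D$ seen from $x_0$. This pushes the half-space $\g_n D_{\g_n^{-1}}^+$ off of $K$ eventually, yielding the desired contradiction.

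The routine parts are convexity and disjointness of interiors. The main obstacle is the tessellation property $\bigcup_{\g}\g D=\O$: one must translate the purely projective notion of ``same side of $H(x,z)$'' into a statement comparable with the metric $d_{\O}$, and this comparison is exactly where the construction of $H$ through the Vinberg hypersurface must be exploited. Once that dictionary is in place, local finiteness is a soft consequence of the boundary-asymptotic behavior of $H$ built into Lemma \ref{bisector}.
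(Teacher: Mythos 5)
The paper itself does not prove this theorem --- it only records Lemma \ref{bisector} and defers to Lee's thesis and to \cite{Marquis:2009kq} --- so your proposal must be judged on its own. Its architecture (intersect the half-spaces of $\O$ cut out by the walls $H(x_0,\g x_0)$, get convexity for free, get local finiteness from the boundary-convergence clause of Lemma \ref{bisector}) is exactly the intended one, and your convexity, disjointness-of-interiors and local-finiteness arguments are fine.

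The genuine gap is in the tessellation step, and it is precisely the point you flag as ``the main obstacle'': you choose $\g_0$ minimizing the \emph{Hilbert} distance $d_{\O}(\g x_0,y)$ and then assert that this forces $y$ onto the $\g_0x_0$-side of every wall. No such implication holds. For it to hold, the region $\{y: d_{\O}(x_0,y)\leqslant d_{\O}(\g x_0,y)\}$ would have to be contained in the $x_0$-half-space of $H(x_0,\g x_0)$, i.e.\ the hyperplane $H$ would have to refine the metric bisector; but the whole reason Dirichlet's construction fails here (as the paper notes just before the theorem) is that Hilbert bisectors are not hyperplanes, and in e.g.\ the triangle the metric bisector genuinely crosses any separating hyperplane, so there are points nearer to $x_0$ lying on the $\g x_0$-side of $H(x_0,\g x_0)$. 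The dictionary you hope for between ``side of $H(x,z)$'' and $d_{\O}$ does not exist. The correct move is to drop the metric entirely: the side of $H(x_0,\g x_0)$ on which $y$ lies is by construction the sign of $\ell_{x_0}(w)-\ell_{\g x_0}(w)$, where $\ell_{u}$ is the (suitably normalized) affine functional vanishing on the tangent plane to the Vinberg hypersurface $\Sigma$ at $D(u)$ and $w$ is a lift of $y$; so one should pick $\g_0$ minimizing $\g\mapsto \ell_{\g x_0}(w)$ over the orbit, after which $y\in\g_0 D$ holds \emph{by definition} of $D$. What then needs proving is that this infimum is attained, which follows from the asymptotic property of $\Sigma$ (Theorem \ref{surf_vinberg}): as $\g x_0\to\dO$ the tangent planes converge to supporting data at the boundary and $\ell_{\g x_0}(w)\to+\infty$, so the minimizing set is finite by properness of the action. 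With that substitution your sketch closes up; as written, the covering property is unproved and the proposed route to it would fail.
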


\par{
The method of Lee relies on the existence and geometry of affine spheres (cf. next paragraph). The reader can also find a proof which relies only on Vinberg's hypersurface in \cite{Marquis:2009kq}.
 }
 
\subsection{An alternative construction}
 
There is an another construction of similar convex hypersurfaces.

\begin{theorem}[Cheng-Yau, Calabi-Nirenberg]\label{surf_yau}
Let $\O$ be a properly convex open set of $\PP^d$. There exists a map $D_{\O}':\O \rightarrow \C_{\O}$ which defines a strictly convex embedded hyperbolic affine sphere\footnote{We refer to the survey \cite{MR2743442} of Loftin for a definition of an affine sphere and for references.} of $\R^{d+1}$ which is asymptotic to $\C_{\O}$ and this map is $\Aut(\O)$-equivariant. In fact, one can define this map from $X^{\bullet}$ to $\R^{d+1}$ so that it becomes $\PG$-equivariant.
\end{theorem}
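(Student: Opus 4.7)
The plan is to construct $\Sigma_{\O} := D'_{\O}(\O)$ as a level set of the Cheng--Yau solution to a Monge--Amp\`ere equation on the cone $\C_{\O}$. Concretely, the Cheng--Yau theorem (building on Calabi--Nirenberg) asserts that on the sharp convex open cone $\C_{\O} \subset \R^{d+1}$ there exists a unique strictly convex smooth function $u:\C_{\O} \to \R$ satisfying
\[
\det(D^2 u) = \left(\frac{-1}{u}\right)^{d+2}, \qquad u\big|_{\partial \C_{\O}} = 0, \qquad u \to -\infty \text{ at infinity in } \C_{\O}.
\]
One defines the hypersurface as the level set $\Sigma_{\O} = \{u = -1\}$, and sets $D'_{\O}(x)$ to be the unique intersection of the ray $\R_+^{*}\cdot \tilde{x}$ (where $\tilde{x}\in \C_{\O}$ lifts $x$) with $\Sigma_{\O}$.

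First I would establish existence and uniqueness of $u$. This is the hard step: Cheng--Yau carry it out by an exhaustion argument, solving a Dirichlet problem on an increasing sequence of smooth strictly convex subdomains of $\C_{\O}$ and obtaining uniform interior $\C^2$-estimates via a Pogorelov-type calculation adapted to the Monge--Amp\`ere equation above. Boundary behaviour is handled by comparison with barriers built from affine spheres asymptotic to simplicial cones (the explicit model case $\{x_1,\ldots,x_{d+1}>0\}$, where an affine sphere is given by $x_1\cdots x_{d+1}=1$). Strict convexity and smoothness of the limit come from the interior estimates.

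Next I would read off the geometric properties. Strict convexity and analyticity of $\Sigma_{\O}$ follow from the corresponding properties of $u$ (analyticity is a standard consequence of ellipticity of the fully nonlinear equation plus the Lewy--Morrey regularity theory). That $\Sigma_{\O}$ is a hyperbolic affine sphere with centre at the origin is the classical computation: the normalization of the equation is chosen so that the affine normal at every point of a level set $\{u = c\}$ is proportional to the radial vector, hence all affine normals pass through $0$, and the affine mean curvature is a negative constant. The asymptotic property follows because $u\to 0$ at $\partial \C_{\O}$ forces the level set $\{u=-1\}$ to stay in the interior while $u\to -\infty$ at infinity forces every radial ray to meet it.

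Finally I would obtain equivariance. For $\g \in \Aut(\O) \subset \ss$ one has $\g(\C_{\O})=\C_{\O}$ and $\det \g = 1$, so the pull-back $u\circ \g$ is another solution of the same Monge--Amp\`ere equation with the same boundary behaviour; uniqueness of the Cheng--Yau solution gives $u\circ \g = u$, hence $\g(\Sigma_{\O}) = \Sigma_{\O}$, which yields the $\Aut(\O)$-equivariance of $D'_{\O}$. To upgrade this to a $\PG$-equivariant map from $X^{\bullet}$, note that the construction of $u$ depends only on the cone $\C_{\O}$, and for $g \in \sss^{\pm}_{d+1}(\R)$ sending $\C_{\O}$ to $\C_{g\cdot\O}$ the equation transforms by a factor $|\det g|^{2/(d+2)}$; normalizing $u$ appropriately (or equivalently rescaling the level) makes the assignment $(\O,x)\mapsto D'_{\O}(x)$ equivariant under the full group $\PG$. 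The main obstacle throughout is the first step, i.e.\ the a priori estimates for the Monge--Amp\`ere equation near the (possibly very irregular) boundary $\partial \C_{\O}$; this is exactly what Cheng and Yau establish in their original papers, and for which we refer to the survey \cite{MR2743442}.
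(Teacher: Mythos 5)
First, a remark on the comparison you were asked for: the paper does not prove Theorem \ref{surf_yau} at all. It is quoted as a black box due to Cheng--Yau and Calabi--Nirenberg, with a pointer to Loftin's survey \cite{MR2743442} for definitions and references, and is only used as input for Theorem \ref{complex_moduli}. So your proposal cannot be checked against an argument in the text; it can only be checked against the argument in the literature that the theorem is citing. Measured that way, your sketch has the right skeleton: existence and uniqueness for a Monge--Amp\`ere problem attached to the cone (exhaustion by nice subdomains, Pogorelov-type interior $\C^2$ estimates, barriers from the simplicial model), identification of the resulting hypersurface as a hyperbolic affine sphere centred at $0$, and equivariance deduced from uniqueness of the solution. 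That last step is exactly right and is the same mechanism that makes Vinberg's hypersurface (Theorem \ref{surf_vinberg}) equivariant: for $\g$ of determinant $\pm 1$ the equation and the boundary data are preserved, so the solution is $\Aut(\O)$-invariant, and the $\PG$-equivariant statement on $X^{\bullet}$ follows from naturality of the construction.

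The one genuine imprecision is in the setup of the PDE, which is the heart of the construction. In the standard Cheng--Yau formulation the equation $\det(D^2u)=(-u)^{-(d+2)}$ with vanishing boundary values is posed on a bounded convex cross-section $\omega\subset\R^{d}$ of the cone (an affine copy of $\O$ itself), and the affine sphere is the \emph{radial graph} $\{\,(-1/u(x))\cdot(x,1)\ :\ x\in\omega\,\}\subset\C_{\O}$, which is asymptotic to $\partial\C_{\O}$ because $-1/u\to+\infty$ at $\partial\omega$. As you have written it, the equation lives on the $(d+1)$-dimensional cone $\C_{\O}$ but carries the exponent $d+2$ appropriate to the $d$-dimensional cross-section, and the assertion that the affine normals of the level sets $\{u=c\}$ are radial is not justified for that equation (a homogeneity count shows the equation as posed has no scaling-compatible solution, so the level sets are not dilates of one another and cannot all be affine spheres centred at $0$). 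This does not damage the architecture of your argument, but to make the sketch checkable you should either pose the problem on the cross-section and take the radial graph of $-1/u$, or use one of the genuinely conic reformulations from Loftin's survey, redoing in either case the homogeneity count that fixes the exponent. The analyticity remark is harmless but not needed: unlike Theorem \ref{surf_vinberg}, the statement here only claims a strictly convex embedded hypersurface.
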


The main application of this theorem is Theorem \ref{complex_moduli}.

\section{Zariski Closure}


\subsection{Definitions of Proximality}

\par{
A subgroup $\G$ of $\ss$ is \emph{proximal}\index{proximal}\index{proximal!group} if it contains a proximal element. 
}
\\
\par{
The \emph{action of a group $\G$ on $\PP^d$ is proximal}\index{proximal!action} if for every $x,y \in \PP^d$, there exists a $\g \in \G$ such that $\g(x)$ and $\g(y)$ are arbitrarily close.
}
\\
\par{
We give the definition of a \emph{proximal representation} of a semi-simple real Lie group in Subsection \ref{schotkky}.
}
\par{
These three definitions of proximality have the following link. When $\G$ is strongly irreducible,  the action of $\G$ on $\PP^d$ is proximal if and only if $\G$ is proximal. Moreover, if $G$ is the identity component of the Zariski closure of $\G$, then the representation $\rho:G \rightarrow \ss$ is proximal if and only if $G$ is proximal if and only if $\G$ is proximal.
}


\subsection{Positive proximality}

The following theorem characterizes irreducible subgroups of $\s{d+1}$ which preserve a properly convex open set. 

A proximal group $\G$ is \emph{positively proximal} when every proximal element is positively proximal.

\begin{theorem}[Benoist \cite{MR1767272}]\label{theo_pos_prox}
A strongly irreducible subgroup of $\s{d+1}$ preserves a properly convex open set if and only if $\G$ is positively biproximal. 
\end{theorem}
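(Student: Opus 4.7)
Suppose $\G$ preserves a properly convex open set $\O$. By Proposition \ref{classi_matrix} every element of $\G$ is positively bi-semi-proximal, so both $+\rho_\g^+$ and $+\rho_\g^-$ lie in the spectrum of $\g$. When $\g$ is proximal, $\lambda_1(\g) > \lambda_2(\g)$, so only one eigenvalue has modulus $\rho_\g^+$, and positive semi-proximality forces it to equal $+\rho_\g^+$; thus $\g$ is positively proximal. Applying the same argument to $\g^{-1}\in \G$ (which also preserves $\O$) shows that biproximality of $\g$ upgrades to positive biproximality, so $\G$ is positively biproximal.

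\textbf{The converse: the construction.} Assume $\G$ is strongly irreducible and positively biproximal. The plan is to produce $\O$ as the projectivization of the interior of a salient $\G$-invariant closed convex cone $C\subset\R^{d+1}$ built from canonical positive lifts of attracting eigenlines. For each proximal $\g\in\G$, positive proximality singles out a unique ray $\R_{>0}\cdot v_\g^+\subset \ker(\g-\rho_\g^+)$. Dually, the contragredient representation on $(\R^{d+1})^*$ is strongly irreducible and, since $\G$ is positively biproximal, itself positively proximal, furnishing canonical positive lifts $f_\g^+$ of the attracting fixed hyperplanes of $\g$. Set $C$ to be the closed convex cone generated by $\{v_\g^+ : \g \text{ proximal in } \G\}$. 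Since $\g\cdot v_h^+$ is a positive multiple of $v_{\g h\g^{-1}}^+$, the cone $C$ is $\G$-invariant; by strong irreducibility the attracting fixed points span $\R^{d+1}$, so $\PP(C)$ has non-empty interior as soon as $C$ is salient, and then $\O := \PP(\mathrm{int}(C))$ is the desired properly convex open set.

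\textbf{The main obstacle: salience of $C$.} This step reduces to the pairing inequality $f_\g^+(v_h^+)\geq 0$ for every pair of proximal elements $\g,h\in\G$. Granting it, a line $\R\cdot u\subset C$ would satisfy $f_\g^+(u)=0$ for every proximal $\g$, so $u$ would lie in the $\G$-invariant subspace $\bigcap_\g \ker f_\g^+$ (invariant because $\g\cdot f_h^+$ is a positive multiple of $f_{\g h\g^{-1}}^+$), which is trivial by strong irreducibility, forcing $u=0$. Proving the pairing inequality itself is the heart of the matter: the strategy is to analyze approximating sequences $\g^n h\g^{-n}$ whose attracting eigenvectors converge projectively to $v_\g^+$ under iteration and whose pairings with $f_\g^+$ are controlled by the dynamics of $\g$. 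Positive biproximality enters precisely here, ensuring that the canonical scalars relating $v_{\g^n h\g^{-n}}^+$ to $\g^n v_h^+$ stay positive so the limit does not produce a sign flip; without it an eigenvalue along the attracting or repelling direction could be negative, the lifts would oscillate in sign, and $C$ could contain a line.
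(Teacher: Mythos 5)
First, a point of reference: the paper does not actually prove this theorem --- it is quoted from Benoist \cite{MR1767272} without proof --- so your attempt can only be judged on its own terms. The forward direction is fine as far as it goes: Proposition \ref{classi_matrix} plus the observation that proximality leaves a single eigenvalue of maximal modulus does show that every proximal element of $\G$ is positively proximal, and likewise for inverses. But it is incomplete: with the paper's conventions (``a \emph{proximal} group is one that \emph{contains} a proximal element''), the assertion ``$\G$ is positively biproximal'' includes the assertion that $\G$ contains a biproximal element, and the fact that a strongly irreducible subgroup preserving a properly convex open set must contain a proximal element is itself a nontrivial theorem (Benoist proves it by producing a rank-one element in the closure of $\R_+^*\G$ inside $\mathrm{End}(\R^{d+1})$, using an extreme point of the invariant cone). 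You never address this.

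The converse has a more serious problem. You assert that ``positive proximality singles out a unique ray $\R_{>0}\cdot v_{\g}^+\subset\ker(\g-\rho_{\g}^+)$.'' It does not: $\g$ acts on its attracting eigenline by the positive scalar $\rho_{\g}^+$ and therefore preserves \emph{both} rays of that line, so there is no canonical lift $v_{\g}^+$, and the subsequent identity ``$\g\cdot v_h^+$ is a positive multiple of $v_{\g h\g^{-1}}^+$'' is meaningless until a coherent $\G$-equivariant choice of rays has been exhibited. Constructing such a coherent system of lifts --- equivalently, establishing your pairing inequality $f_{\g}^+(v_h^+)\geqslant 0$ --- is not a verification to be ``granted''; it is the entire content of the hard implication, and it is precisely the step you compress into a one-sentence ``strategy'' about the sequences $\g^n h\g^{-n}$. (A way to isolate the same bottleneck without the ill-defined lifts: take the closed convex cone generated by the $\G$-orbit of a single ray over one attracting fixed point; it is automatically invariant and spans $\R^{d+1}$, so either it is salient and you are done, or it equals $\R^{d+1}$ by irreducibility --- and excluding the second alternative using positive biproximality is again exactly the missing argument.) So the proposal correctly locates the difficulty and the role of the sign hypothesis, but does not prove the theorem.
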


If $G$ is the identity component of the Zariski closure of $\G$ then $G$ has no reason to be positively biproximal. The group $G$ is positively biproximal if and only if the representation $\rho:\G \rightarrow \ss$ is spherical (cf \textsection \ref{spherical}). This phenomenon is not exceptional: just take any divisible convex set $\O$ which is not an ellipsoid. Any group dividing it is Zariski dense in $\ss$ by Theorem \ref{thm_zari1} below and $\ss$ is not positively proximal.

Finally, we stress on the following fact about positively proximal groups.

\begin{theorem}
Suppose $\G$ is a strongly irreducible group which preserves a properly convex open set. Then there exists a unique closed $\G$-invariant minimal subset $\LG$ of $\PP^d$; hence there exist two properly convex open sets $\O_{min}$ and $\O_{max}$ preserved by $\G$ such that for every properly convex open set preserved by $\G$, we have $\O_{min} \subset \O \subset \O_{max}$.
\end{theorem}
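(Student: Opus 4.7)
The plan is to construct $\LG$ first from the dynamics of proximal elements, then build $\O_{min}$ as the open convex hull of $\LG$, and finally obtain $\O_{max}$ by passing to the dual.

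\textbf{Construction of $\LG$.} By Theorem \ref{theo_pos_prox}, $\G$ is positively biproximal; in particular it contains proximal elements. Let $\Lambda_0$ be the set of attracting fixed points $x_\g^+ \in \PP^d$ for $\g \in \G$ proximal, and set $\LG = \overline{\Lambda_0}$. The set $\Lambda_0$ is $\G$-invariant (since $x_{h\g h^{-1}}^+ = h \cdot x_\g^+$), so $\LG$ is a closed $\G$-invariant subset of $\PP^d$. To show $\LG$ is the unique closed minimal invariant subset, I would argue as follows. Let $F$ be any non-empty closed $\G$-invariant subset. Pick a proximal $\g \in \G$ with attracting fixed point $p = x_\g^+$ and repelling hyperplane $H_\g^-$. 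Strong irreducibility of $\G$ implies $F$ is not contained in any finite union of proper projective subspaces, so in particular $F \not\subset H_\g^-$; choose $x \in F \setminus H_\g^-$. Then $\g^n x \to p$, hence $p \in F$. Thus every closed invariant subset contains $\Lambda_0$, and so contains $\LG$; moreover the action on $\LG$ itself is minimal (apply the argument to any invariant closed subset of $\LG$), so $\LG$ is the unique minimal one.

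\textbf{Construction of $\O_{min}$.} Let $\O$ be any properly convex open set preserved by $\G$. For any proximal $\g \in \G$ and any $x \in \O$ not in the repelling hyperplane, $\g^n x \to x_\g^+$, so $x_\g^+ \in \overline{\O}$; hence $\Lambda_0 \subset \overline{\O}$ and therefore $\LG \subset \overline{\O}$. Fix an affine chart $\A$ with $\overline{\O} \subset \A$, and let $K$ denote the convex hull of $\LG$ taken in $\A$; this is a compact convex set contained in $\overline{\O}$, and its $\G$-invariance follows from that of $\LG$. Strong irreducibility forces $\LG$ to span $\PP^d$ (otherwise its projective span would be a proper invariant subspace), so $K$ has non-empty interior, and I define $\O_{min}$ to be this interior. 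It is a properly convex open set since $K \subset \overline{\O}$, and it is $\G$-invariant. For any $\G$-invariant properly convex open set $\O$, the inclusion $\LG \subset \overline{\O}$ together with convexity of $\overline{\O}$ gives $K \subset \overline{\O}$, hence $\O_{min} \subset \mathrm{int}(\overline{\O}) = \O$.

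\textbf{Construction of $\O_{max}$.} This is obtained by duality. The transpose--inverse group $\G^* = \{{}^t\g^{-1} : \g \in \G\}$ is again strongly irreducible, and it preserves the dual properly convex open set $\O^*$ (see \textsection\ref{duality1}). Applying the construction above to $\G^*$ acting on $\PP^{d\,*}$ yields a minimal $\G^*$-invariant properly convex open set $\O'_{min} \subset \PP^{d\,*}$. I set $\O_{max} := (\O'_{min})^*$. Since duality is an inclusion-reversing involution between $\G$- and $\G^*$-invariant properly convex open sets of $\PP^d$ and $\PP^{d\,*}$, any $\G$-invariant properly convex open $\O \subset \PP^d$ gives a $\G^*$-invariant $\O^* \subset \PP^{d\,*}$, and the minimality of $\O'_{min}$ yields $\O'_{min} \subset \O^*$, whence $\O \subset (\O'_{min})^* = \O_{max}$.

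\textbf{Main obstacle.} The only genuinely non-trivial input is the first paragraph: producing a proximal element with enough dynamics to prove minimality, and using strong irreducibility to avoid the repelling hyperplane. Everything else is formal once $\LG$ is in hand. The remaining care is to verify that $\LG$ is not contained in any projective hyperplane, so that $\mathrm{Conv}(\LG)$ genuinely has non-empty interior; this again reduces to strong irreducibility.
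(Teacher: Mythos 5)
The paper states this theorem without proof (it is Benoist's, from \cite{MR1767272}), so there is no in-paper argument to compare against; your overall strategy --- $\LG$ as the closure of the attracting fixed points of proximal elements, $\O_{min}$ as the interior of the convex hull of $\LG$, $\O_{max}$ as the dual of the minimal convex set of the dual group --- is exactly the standard one, and your first paragraph is essentially correct. (One phrasing quibble there: what you actually use is not that $F$ avoids every finite union of proper subspaces, but that the projective span of an invariant set is invariant, hence equals $\PP^d$ by irreducibility, so $F$ is not contained in the repelling hyperplane; that suffices.)

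The genuine gap is in the second paragraph, and it is not where you locate the main obstacle. Your $K=\mathrm{Conv}(\LG)$ is built from segments lying in $\overline{\O}$ for one fixed invariant convex set $\O$. For a different invariant properly convex open set $\O'$ you only know $\LG\subset\overline{\O'}$; the step ``$K\subset\overline{\O'}$'' requires that for every $p,q\in\LG$ the segment of the line $(pq)$ contained in $\overline{\O}$ be the same one contained in $\overline{\O'}$. Two points of $\PP^d$ bound two segments, and nothing in your argument excludes that $\overline{\O}$ contains one while $\overline{\O'}$ contains the other: for $p=x_{\g}^{+}$ and $q=x_{\g^{-1}}^{+}$, both arcs of the fixed projective line are $\g$-invariant because the extremal eigenvalues are positive, so the dynamics alone does not decide. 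Hence your $\O_{min}$ a priori depends on the reference $\O$, the inclusion $\O_{min}\subset\O'$ is unproved, and the same defect propagates to $\O_{max}$ through duality. The missing ingredient is Benoist's coherence lemma: positive proximality (Theorem \ref{theo_pos_prox}) is precisely what permits a $\G$-equivariant choice, unique up to a global sign, of a half-line of $\R^{d+1}$ above each attracting fixed point, so that the resulting closed cone $\tilde{\Lambda}_{\G}$ over $\LG$ is well defined, its convex hull is a sharp convex cone, and it sits (up to sign) inside the cone above \emph{every} invariant $\O$. With that lemma, $\O_{min}:=\PP\bigl(\mathrm{int}\,\mathrm{Conv}(\tilde{\Lambda}_{\G})\bigr)$ is canonical and the rest of your argument, including the dual construction of $\O_{max}$, goes through.
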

\par{
Therefore, if we start with a strongly irreducible group $\G$ preserving a properly convex open set, by taking its Zariski closure we get a reductive\footnote{A linear Lie group is \emph{reductive}\index{reductive} when it does not contain any non-trivial normal unipotent subgroup.} group $G$. We then get an irreducible representation $\rho:G \rightarrow \s{d+1}$ which is proximal. Since we assume that $\G \leqslant \ss$, we get that $G$ is in fact semi-simple\footnote{Indeed, since $G$ is reductive, we just need to show that the center of $G$ is discrete. Take any element $g$ in the center of $G$; $g$ has to preserve all the eigenspaces of all the proximal elements of $G$, hence $g$ is a homothety, so $g=\pm 1$ since $G \leqslant \ss$.}. The irreducible representations of a semi-simple group are completely classified. The next question is: what can we say about this representation ?
}
\\
\par{
Theorem \ref{schottky} gives a complete answer to this question. But we can say more in the case of a finite-covolume action.
}

\subsection{Cocompact and finite-covolume case}

The following theorem of Benoist completely describes the Zariski closure of a group $\G$ dividing a properly convex open set.

\begin{theorem}[Benoist \cite{MR2010735}]\label{thm_zari1}
Suppose that $\G$ divides an indecomposable properly convex open set which is not homogeneous. Then $\G$ is Zariski-dense.
\end{theorem}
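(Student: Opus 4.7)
Write $G$ for the Zariski closure of $\G$ in $\s{d+1}$ and $G^{0}$ for its identity component; the goal is $G=\s{d+1}$. By Vey's Theorem~\ref{theorem_vey}, indecomposability of $\O$ forces $\G$ to act strongly irreducibly on $V=\R^{d+1}$, and hence so does $G^{0}$. A Zariski-closed irreducible subgroup of $\sss(V)$ is reductive; irreducibility on $V$ confines the center of $G^{0}$ to the scalars, and since $G^{0}\subseteq\sss(V)$ these scalars lie in $\{\pm I\}$, so $G^{0}$ is semi-simple. Moreover, Theorem~\ref{theo_pos_prox} gives that $\G$ is positively biproximal, so $G^{0}$ contains proximal elements; by the equivalence recalled in \S\ref{schotkky} the inclusion $\rho:G^{0}\hookrightarrow\sss(V)$ is an irreducible proximal representation of the semi-simple real Lie group $G^{0}$, and (cf.\ \S\ref{spherical}) admits a unique minimal invariant properly convex open set $\O_{\min}$.

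I next show that $G^{0}$ preserves $\O$. Replace $\G$ by $\G_{0}:=\G\cap G^{0}$, which has finite index in $\G$, is Zariski-dense in the connected group $G^{0}$, and still divides $\O$. Since $\G_{0}$ acts cocompactly on $\O$, every point of $\dO$ is a limit of $\G_{0}$-orbits, so the $\G_{0}$-limit set equals $\dO$; by Zariski-density it coincides with the $G^{0}$-limit set, whose convex hull is $\O_{\min}$. Therefore $\O_{\min}=\O$, and $G^{0}$ acts on $\O$ by automorphisms.

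With $G^{0}\subseteq\Aut(\O)$, I analyze its orbits in $\O$ through the Vinberg convex hypersurface $\Sigma\subseteq\C_{\O}$ of Theorem~\ref{surf_vinberg} and its $\Aut(\O)$-equivariant Riemannian metric (\S\ref{metric_riema}). Each $G^{0}$-orbit is an analytic totally geodesic closed submanifold on which $G^{0}$ acts transitively by isometries. I then claim that some orbit is open, so by connectedness $G^{0}\cdot x=\O$ for every $x\in\O$, whence $\O$ is $G^{0}$-homogeneous; but then $\O$ is \emph{a fortiori} $\Aut(\O)$-homogeneous, contradicting the hypothesis. This contradiction yields $G^{0}=\s{d+1}$ and proves Zariski density. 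To establish the claim, one invokes the classification of proximal irreducible representations of semi-simple Lie groups (Theorem~\ref{schottky}) together with $\G_{0}$-cocompactness: on the short list of proper semi-simple $G^{0}\leqslant\s{d+1}$ carrying such representations compatibly with positivity (Theorem~\ref{theo_pos_prox}), a case-by-case analysis shows that an open $G^{0}$-orbit fills the minimal convex set $\O_{\min}=\O$.

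The main obstacle is this orbit-analysis claim: even granted that $G^{0}$ preserves $\O$ and acts isometrically for the Vinberg metric, forcing some orbit to be open is not formal, since a proper action of a connected Lie group on a connected manifold need not have open orbits. One genuinely needs the constraints imposed by positive biproximality together with Vinberg's classification of homogeneous convex cones, and this is where the non-homogeneity hypothesis of the theorem is ultimately used.
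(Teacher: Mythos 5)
Your argument breaks at the step ``I next show that $G^{0}$ preserves $\O$'', and the break is structural rather than technical. In the situation the theorem describes the conclusion is precisely $G^{0}=\s{d+1}$, and $\s{d+1}$ preserves no properly convex open set whatsoever; so an argument showing unconditionally that $G^{0}$ preserves $\O$ (and hence that $\O$ is homogeneous, a contradiction) would prove that the hypotheses of the theorem are never satisfied, which is absurd. The precise error is the appeal to $\O_{\min}$: the existence of a minimal invariant properly convex open set for $G^{0}$ is \emph{not} a consequence of proximality of the inclusion $G^{0}\hookrightarrow\s{d+1}$ --- it requires the representation to be spherical, i.e.\ $V^{K}\neq 0$ (\S\ref{spherical}), equivalently that $G^{0}$ itself be positively biproximal. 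Positive biproximality of $\G$ does not pass to the Zariski closure; the text says so explicitly just after Theorem~\ref{theo_pos_prox}, noting that for any divisible convex set which is not an ellipsoid the closure is $\s{d+1}$, which is not positively proximal. A secondary issue: your identification of the $\G_{0}$-limit set with $\dO$ uses strict convexity via Lemma~\ref{lemma_vey}; for a general indecomposable $\O$ only the extreme points of $\dO$ are accumulated by orbits, and the paper itself only sketches the strictly convex case.

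The paper's proof avoids this trap by running the dichotomy in $\PP^{d}$ rather than inside $\O$. One shows $\LG=\dO$ (strict convexity), that $G$ is semi-simple with $\rho:G\to\s{d+1}$ irreducible and proximal, and that the unique closed $G$-orbit $\Lambda_{G}\subset\PP^{d}$ (the orbit of the highest restricted weight line) contains $\LG=\dO$; since every orbit of the algebraic action is open in its closure, either $\Lambda_{G}=\dO$ or $\Lambda_{G}=\PP^{d}$. In the first case the maximal compact subgroup $K$ of $G$ acts transitively on $\dO$ while fixing a point $x\in\O$ and the dual point $x^{\star}$, hence preserves the John ellipsoid of $\O$ centered at $x$, which by transitivity on $\dO$ must equal $\O$ --- excluded since $\O$ is not homogeneous. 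In the second case $G$ acts transitively on $\PP^{d}$, and Benoist's classification of transitive proximal irreducible representations leaves only $G=\s{d+1}$ or $G=\mathrm{Sp}_{2d}(\R)$; the symplectic case is excluded because a group preserving a properly convex open set cannot preserve a symplectic form. Your final orbit-openness claim, which you yourself flag as the main obstacle, therefore never needs to be confronted: it sits downstream of a premise that fails exactly when the theorem's conclusion holds.
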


The following question is open:

\begin{oqu}\label{qu_zar}
Suppose that $\G$ quasi-divides an indecomposable convex set $\O$ which is not homogeneous. Is $\G$ Zariski-dense ?
\end{oqu}

The following theorem answers the question \ref{qu_zar}for quasi-divisible convex sets in the strictly convex case.

\begin{theorem}[Crampon-Marquis. \cite{Crampon:2012fk}]\label{thm_zari2}
Suppose that $\G$ quasi-divides a strictly convex open set which is not an ellispoid. Then $\G$ is Zariski-dense.
\end{theorem}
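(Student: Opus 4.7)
The plan is to adapt Benoist's proof of Theorem \ref{thm_zari1} from the divisible to the quasi-divisible setting: show that $\G$ is strongly irreducible, identify the identity component $H$ of its Zariski closure as a semisimple group carrying an irreducible, proximal, spherical representation admitting a strictly convex invariant open set, and conclude by the Vinberg--Koecher classification that $H = \ss$. Let $G$ denote the Zariski closure of $\G$ in $\ss$ and $H = G^0$ its identity component.

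The first step is the strong irreducibility of $\G$. The crucial input from the strictly convex finite-volume setting is that the limit set $\Lambda_{\G} \subset \partial\O$ equals the whole boundary; this follows from strict convexity together with the ``geometrically finite'' nature of finite-volume actions on strictly convex sets. Any $\G$-invariant proper projective subspace $V \subsetneq \PP^d$ would give a closed $\G$-invariant subset $V \cap \partial\O$ of the $\G$-minimal $\partial\O = \Lambda_{\G}$; minimality forces $V \cap \partial\O$ to be either empty or all of $\partial\O$. In the second case $V$ contains $\partial\O$ and hence $V = \PP^d$, a contradiction. The first case, where $V$ is disjoint from $\overline\O$, is eliminated by dualising: the $\C^1$ regularity of $\partial\O^*$ (Proposition \ref{prop_dual_classi_0}) and the duality of the finite-volume condition translate the hypothesis into a $\G$-invariant subspace meeting $\overline{\O^*}$, which reduces to the previous case.

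Once strong irreducibility is established, $H$ is a connected real algebraic group acting irreducibly on $\R^{d+1}$ and preserving $\O$. It is reductive because no normal unipotent subgroup can preserve a properly convex open set, and its centre acts by scalars by Schur, so by finiteness of the scalars in $\ss$ the group $H$ is semisimple. The inclusion $H \hookrightarrow \ss$ is then an irreducible representation which is proximal (using the hyperbolic elements of $\G$ produced by Proposition \ref{classi_qhyp}) and spherical in the sense of Section \ref{spherical}; this yields $\O_{min} \subset \O \subset \O_{max}$ with both $\O_{min}$ and $\O_{max}$ strictly convex and $H$-invariant. The Vinberg--Koecher classification of irreducible symmetric properly convex open sets (Section \ref{sec_sym}) shows that the only proper semisimple $H \subsetneq \ss$ admitting such a strictly convex invariant set forces $\O$ to be an ellipsoid with $H$ conjugate to $\SO$. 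Since $\O$ is not an ellipsoid by hypothesis, $H = \ss$ and $\G$ is Zariski-dense.

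The main obstacle is the strong irreducibility step. In the cocompact case Vey's theorem (Theorem \ref{theorem_vey}) directly provides the product decomposition of $\O$ forbidden by strict convexity, but Vey's theorem is open for quasi-divisible sets (cf.\ the open question following Theorem \ref{theorem_vey}). One must therefore really use the strict convex finite-volume geometry, in particular that the limit set fills the boundary; establishing $\Lambda_{\G} = \partial\O$ and handling the dual side cleanly is where the bulk of the work lies, as done in detail in \cite{Crampon:2012fk}. Once this is in hand, the semisimplicity and classification half of the argument is essentially identical to Benoist's treatment of the cocompact case.
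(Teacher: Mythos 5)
The first half of your argument---establishing $\LG=\dO$, deducing strong irreducibility from minimality of the boundary action, and then reductivity and semisimplicity of the identity component $H$ of the Zariski closure---is in line with the paper's sketch, which indeed notes that strict convexity is used only to get $\LG=\dO$ and that Vey's theorem is unavailable in the quasi-divisible setting. The genuine gap is in your final step: you assert that the inclusion $H\hookrightarrow\ss$ is a \emph{spherical} representation and hence that $H$ itself preserves strictly convex sets $\O_{min}\subset\O\subset\O_{max}$. Sphericity of the representation of $H$ is equivalent to $H$ (not merely $\G$) preserving a properly convex open set, and this does not follow from $\G\leqslant H$ preserving one: it is exactly the statement you are trying to refute when $H=\ss$, and it also fails for proper subgroups, since by Theorem \ref{schottky} a group such as $\mathrm{SO}_{p,q}(\R)$ with $p,q\geqslant 2$ in its standard representation contains Zariski-dense subgroups preserving properly convex open sets even though that representation is not spherical. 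So the dichotomy ``either $H=\ss$ or the representation is spherical'' is unjustified, and the Vinberg--Koecher classification of \emph{symmetric} convex sets cannot be invoked in the second branch anyway, since nothing forces $\O$ (or $\O_{min}$, $\O_{max}$) to be homogeneous, let alone symmetric.

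The paper closes this gap differently. Having the irreducible proximal representation $\rho:G\to\ss$, it considers the unique closed $G$-orbit $\Lambda_G$ in $\PP^d$ (the orbit of the highest restricted weight line) and observes that $\Lambda_G\supset\LG=\dO$; since $\Lambda_G$ is a closed orbit, either $\Lambda_G=\dO$ or $\Lambda_G=\PP^d$. In the first case a maximal compact subgroup $K$ of $G$ acts transitively on $\dO$ while fixing a point $x\in\O$ and the dual point $x^{\star}$, hence preserves the John ellipsoid of $\O$, forcing $\O$ to be an ellipsoid, contrary to hypothesis. In the second case Benoist's classification of transitive irreducible proximal actions on $\PP(V)$ (Lemma 3.9 of \cite{MR1767272}) gives $G=\ss$ or $G=\mathrm{Sp}_{2d}(\R)$, and the symplectic case is excluded because a group preserving a properly convex open set cannot preserve a symplectic form. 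You need this dichotomy, or some equivalent argument, to finish: as written your proof does not rule out proper semisimple Zariski closures whose representation is proximal but not spherical.
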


For results of this kind in the context of geometrically finite actions the reader is invited to read \cite{Crampon:2012fk}.

\subsection{A sketch proof when $\O$ is strictly convex}

We only sketch the proof when the convex is strictly convex. The actual techniques for the non-strictly convex case are different but the hypothesis of strict convexity already gives a nice feeling of the proof.

\begin{lemma}[Vey, \cite{MR0283720}]\label{lemma_vey}
Let $\G$ be a discrete subgroup of $\ss$ that divides a properly convex open set $\O$. Then, for every $x \in \O$, the convex hull of the orbit of $x$ is $\O$, i.e $\mathrm{Conv}(\G \cdot x) = \O$. In particular, if $\O$ is strictly convex then $\LG= \dO$.
\end{lemma}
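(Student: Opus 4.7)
The plan is to set $C=\mathrm{Conv}(\G\cdot x)\subset\O$ and to reduce the claim $C=\O$ to the assertion $\LG=\dO$, where $\LG$ denotes the set of accumulation points of $\G\cdot x$ in $\dO$. Because the action of $\G$ on $\O$ is proper and discrete, no orbit has an accumulation point inside $\O$, so the closure of $\G\cdot x$ in $\PP^d$ is exactly $(\G\cdot x)\cup\LG$. If $\LG=\dO$, then the closed convex hull of $\G\cdot x$ contains $\mathrm{Conv}(\dO)=\overline\O$ and hence equals $\overline\O$; for any convex set $C$ with $\overline{C}=\overline\O$ the identity $\mathrm{int}(C)=\mathrm{int}(\overline{C})=\O$ gives $C\supset\O$, so combined with $C\subset\O$ one concludes $C=\O$. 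This reduction also produces the ``in particular'' assertion: once $\LG=\dO$ is proved, the strictly convex case is just a special instance where every boundary point is extreme, so the limit set is unambiguously $\dO$.

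Next I would establish $\LG=\dO$ from the cocompactness hypothesis. Fix a compact set $K\subset\O$ with $\G\cdot K=\O$, pick $p\in\dO$, and choose $y_n\in\O$ with $y_n\to p$ in $\PP^d$. There exist $\g_n\in\G$ with $\g_n^{-1}y_n\in K$; after extracting a subsequence, $\g_n^{-1}y_n\to z\in K$, and because $\g_n$ acts by isometries of $d_\O$ one obtains $d_\O(y_n,\g_n z)=d_\O(\g_n^{-1}y_n,z)\to 0$. A further subsequence makes $\g_n z$ converge in $\overline\O$ to some $q$. Properness of the $\G$-action forces $q\in\dO$: the sequence $\g_n$ is unbounded in $\G$ (since $\g_n$ pulls $y_n$, which escapes to $\dO$, back into $K$), so $\G\cdot z$ cannot accumulate inside $\O$.

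The main obstacle, and the technical heart of the argument, is the final step: showing $p=q$. I would argue that two sequences in $\O$ converging in $\PP^d$ to distinct points of $\dO$ cannot be asymptotic in the Hilbert distance. Indeed, if $p\neq q$, then for large $n$ the chord through $y_n$ and $\g_n z$ tends to the chord $[p,q]\subset\overline\O$, so its intersection points $r_1^n,r_2^n$ with $\dO$ satisfy $r_1^n\to p$ and $r_2^n\to q$; in the cross-ratio defining $d_\O(y_n,\g_n z)$ the factors $|r_1^n\,y_n|$ and $|r_2^n\,\g_n z|$ therefore both tend to $0$, forcing $d_\O(y_n,\g_n z)\to+\infty$, in contradiction with the previous paragraph. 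Hence $p=q\in\LG$, and since $p\in\dO$ was arbitrary, $\LG=\dO$.
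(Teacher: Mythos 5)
Your strategy has a fatal structural flaw: you reduce $\mathrm{Conv}(\G\cdot x)=\O$ to the statement that the orbit of the \emph{fixed} point $x$ accumulates on all of $\dO$, and that statement is false in general. Take $\O$ a triangle and $\G$ the image of $\Z^2$ in the diagonal group $\{\mathrm{diag}(e^a,e^b,e^{-a-b})\}$: the orbit of a point accumulates only on the three vertices and on a countable subset of each open edge, yet the conclusion of the lemma still holds for the triangle. The precise place where your cocompactness argument breaks is that the point $z\in K$ you extract depends on $p$ (and on the subsequence); you prove that for every $p\in\dO$ there is \emph{some} $z=z(p)\in K$ with $p\in\overline{\G\cdot z}$, which is a triviality ($\G\cdot K=\O$), not the statement $p\in\overline{\G\cdot x}$. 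You cannot pass from $z$ to $x$ either: $d_{\O}(\g_n z,\g_n x)=d_{\O}(z,x)$ is only bounded, and in a non-strictly convex $\O$ two sequences at bounded Hilbert distance may converge to distinct boundary points (in the quadrant model of the triangle, $(1/n,1)$ and $(1/n,2)$ stay at constant distance $\tfrac12\ln 2$ but converge to $(0,1)$ and $(0,2)$). A secondary flaw: in your ``asymptotic sequences have the same limit'' step, the chord endpoints $r_1^n,r_2^n$ need \emph{not} converge to $p$ and $q$ when $[p,q]\subset\dO$; the conclusion of that sub-step happens to be true, but for a different reason (writing $d_{\O}(u,v)=\tfrac12\ln\bigl(\bigl(1+\tfrac{|uv|}{|au|}\bigr)\bigl(1+\tfrac{|uv|}{|bv|}\bigr)\bigr)$ with $|au|$ bounded forces $|uv|\to 0$).

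The repair — and the paper's actual argument — is to aim only at the \emph{extremal} points of $\dO$. Cocompactness gives $R_0$ such that every metric ball of radius $R_0$ meets $\G\cdot x$; if $p$ is extremal, then every neighbourhood $\U$ of $p$ is such that $\U\cap\O$ contains balls of $(\O,d_{\O})$ of radius $R_0$ (this is exactly where extremality is used, and exactly what fails at interior points of boundary segments), so $p\in\overline{\G\cdot x}$. Since $\overline{\O}$ is the closed convex hull of its extreme points, $\mathrm{Conv}(\G\cdot x)=\O$ follows. The ``in particular'' is then deduced in the opposite logical order from yours: if $\O$ is strictly convex and $\LG\subsetneq\dO$, then $\mathrm{Conv}(\LG)$ is a proper closed invariant convex subset of $\overline{\O}$, and the orbit of any of its points has convex hull contained in it, contradicting the first part. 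Your plan proves the strong statement first and specializes; the correct proof establishes the weak statement first and bootstraps to the strong one only under strict convexity.
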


\begin{proof}
The action is cocompact so there exists a number $R_0>0$ such that given any point $x \in \O$, the projection of the ball $B_x(R_0)$ of radius $R_0$ is $\Quo$; in other words the ball $B_x(R_0)$ meets every orbit. 

Let $p$ be an extremal point of $\dO$. We are going to show that $p \in \overline{\G \cdot x}$, and this will prove the first part of the lemma. Suppose that $p \notin \overline{\G \cdot x}$. Then there exists a neighbourhood $\U$ of $p$ such that $\U \cap \G \cdot x = \varnothing$; but since $p$ is extremal, the set $\U\cap \O$ contains balls of $(\O,d_{\O})$ of arbitrary size, contradicting the first paragraph.

Since $\dO$ is closed and $\G$-invariant we always have $\LG \subset \dO$. Now if $\O$ is strictly convex, suppose that $\LG \subsetneq \dO$. Then since $\O$ is strictly convex we get that $\mathrm{Conv}(\LG) \subsetneq \O$; in particular the convex hull of the orbit of any point of $\mathrm{Conv}(\LG)$ is not all of $\O$, contradicting the first part of this lemma. Hence, $\LG = \dO$ when $\O$ is strictly convex.
\end{proof}

\begin{proof}[Proof of \ref{thm_zari1} in the case where $\O$ is strictly convex]
Let $G$ be the identity component of the Zariski closure of $\G$. Since $\O$ is strictly convex, $\LG=\dO$ by Lemma \ref{lemma_vey}, $\G$ is strongly irreducible by Theorem \ref{theorem_vey} and positively proximal by Theorem \ref{theo_pos_prox}; hence the group $G$ is semi-simple and the representation $\rho:G \rightarrow \ss$ is irreducible and proximal.

Consider the limit set $\Lambda_{G}$. The limit set is an orbit of $G$ because the action of $G$ on $\PP^d$ is by projective transformations hence every orbit is open in its closure. Hence the limit set is the unique closed orbit of $G$ acting on $\PP^d$. It is the orbit of the line of highest restricted weight. Hence, $\Lambda_G \supset \LG=\partial \O$. There are two possibilities, $\Lambda_G=\dO$ or $\Lambda_G=\PP^d$.

In the first case, the maximal compact subgroup $K$ of $G$ acts also transitively on $\Lambda_G=\dO$. But $K$ fixes a point $x$ of $\O$ and a point $x^{\star}$ of $\O^*$ since $K$ is compact, hence $K$ preserves the John ellipsoid $\E$ of $\O$ centered at $x$ in the affine chart $\PP^d\smallsetminus x^{\star}$. As $K$ acts transitively on $\dO$, we get that $\O=\E$.

In the second case, the lemma below shows that either $G=\ss$ or $G=\mathrm{Sp}_{2d}(\R)$ and $\G$ preserves a symplectic form. But a group which preserves a properly convex open set cannot preserve a symplectic form (Corollary 3.5 of \cite{MR1767272}).
\end{proof}

\begin{lemma}[Benoist, Lemma 3.9 of \cite{MR1767272}]
Let $G$ be a linear semi-simple connected Lie group and $(\rho,V)$ a faithful irreducible and proximal representation. The action of $G$ on $\PP(V)$ is transitive if and only if
\begin{enumerate}
\item $G=\ss$ and $V=\R^{d+1}$ with $d \geqslant 1$ or
\item $G=\mathrm{Sp}_{2d}(\R)$ and $V=\R^{2d}$ with $d \geqslant 2$.
\end{enumerate}
\end{lemma}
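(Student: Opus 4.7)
The easy direction is straightforward. The group $\sss_{d+1}(\R)$ acts transitively on $\PP^d$ because any nonzero vector can be sent to any other by a linear map of determinant one (for $d \geq 1$). For $\mathrm{Sp}_{2d}(\R)$, Witt's theorem states that any nonzero vector in $\R^{2d}$ can be extended to a symplectic basis, so $\mathrm{Sp}_{2d}(\R)$ acts transitively on $\R^{2d}\smallsetminus\{0\}$ and a fortiori on $\PP^{2d-1}$.

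For the converse, assume $G$ is semi-simple with $(\rho,V)$ faithful, irreducible, proximal, and $G$ transitive on $\PP(V)$. Decomposing the semi-simple complexification into simple factors and using irreducibility, one reduces to the case where $G$ is simple (up to an isogeny absorbed by faithfulness). Since $\rho$ is proximal, the stabilizer $P$ of the highest restricted weight line $L_{\lambda_0}\in \PP(V)$ is a parabolic subgroup of $G$ (this is the equivalence quoted from Abels--Margulis--Soifer in the excerpt). Transitivity then yields $\PP(V)\cong G/P$. A connectedness argument on the fibers of the natural $\R^*_+$--bundle $V\smallsetminus\{0\}\to \PP(V)$ shows that $\chi_{\lambda_0}(P)\subset \R^*_+$ and hence $G$ acts transitively on $V\smallsetminus\{0\}$ whenever $\dim V\geqslant 2$.

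Now apply the Iwasawa decomposition $G=KAN$. Since $P\supset AN$, we have $G/P = K/(K\cap P)$, so the maximal compact subgroup $K$ of $G$ acts transitively on $\PP(V) \cong \R\PP^{\dim V-1}$, and in particular on the unit sphere $S(V)$ for a $K$--invariant Euclidean inner product. Invoke the classification of transitive actions of compact connected Lie groups on spheres (Borel, Montgomery--Samelson, Onishchik): the only possibilities for $K$ and the representation $V$ are, essentially, $SO(n)$ on $\R^n$, $U(n)$ on $\R^{2n}$, $Sp(n)\cdot U(1)$ or $Sp(n)\cdot Sp(1)$ on $\R^{4n}$, together with the three exceptional cases $G_2$ on $\R^7$, $\mathrm{Spin}(7)$ on $\R^8$, $\mathrm{Spin}(9)$ on $\R^{16}$.

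It remains to lift each compact candidate to a noncompact simple real form $G$ with an irreducible proximal faithful action on $V$. A compact $G$ is excluded immediately because then every element of $\rho(G)$ has eigenvalues of modulus one, so no proximal element exists. For the noncompact form with $K=SO(n)$, the group is $G=\sss_{n}(\R)$ and the standard representation is obviously irreducible, faithful and proximal; this is case (1). For the noncompact form with $K=U(n)$ the group is $G=\mathrm{Sp}_{2n}(\R)$, giving case (2). The remaining candidates are ruled out by checking proximality over $\R$: for instance, a real form with $K=SU(n)$ corresponds to $G=\sss_n(\C)$ viewed as a real group, whose standard representation on $\R^{2n}=\C^n$ has all real spectral radii attained with multiplicity at least two (each complex eigenvalue of maximal modulus gives a two--dimensional real invariant subspace), hence is not proximal. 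An analogous argument eliminates the quaternionic cases; for $G_2$, $\mathrm{Spin}(7)$, $\mathrm{Spin}(9)$ one checks that the relevant noncompact split or quasi--split real forms fail proximality of the highest restricted weight on the corresponding spherical representation, or else the representation is not the right dimension. The main obstacle is precisely this case--by--case verification that none of the exceptional transitive actions on spheres comes from an irreducible proximal real representation of a noncompact simple Lie group; this is carried out in Benoist's proof in \cite{MR1767272}.
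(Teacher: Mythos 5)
The paper does not prove this lemma: it is quoted verbatim as Lemma 3.9 of Benoist's \emph{Automorphismes des c\^ones convexes} \cite{MR1767272}, so there is no internal argument to compare yours against. Your strategy --- reduce to $G$ simple, identify $\PP(V)$ with $G/P$ where $P$ stabilizes the highest restricted weight line, deduce that the maximal compact $K$ acts transitively on the sphere $S(V)$, and invoke the Montgomery--Samelson/Borel classification of compact connected groups transitive on spheres --- is the standard route and is, in substance, Benoist's. Two of your soft steps are stated loosely: transitivity of $K$ on $S(V)$ does not follow ``in particular'' from transitivity on $\PP(V)$, but from an open-and-closed orbit argument on the connected double cover $S(V)\to\PP(V)$; and the inclusion $\chi_{\lambda_0}(P)\subset\R^*_+$ does not by itself give transitivity on $V\smallsetminus\{0\}$ (nor do you need it --- only $K$ transitive on $S(V)$ is used later).

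The genuine gap is that the proof stops exactly where the content of the lemma begins. Having reduced to the finite list of pairs $(K,\rho|_K)$, you must do two things. First, show that for $K=\mathrm{SO}(n)$ on $\R^n$ and $K=\mathrm{U}(n)$ on $\R^{2n}$ the \emph{only} noncompact simple $G$ with $K$ maximal compact and $\rho$ faithful, irreducible, proximal and extending $\rho|_K$ are $\mathrm{SL}_n(\R)$ and $\mathrm{Sp}_{2n}(\R)$ with their standard representations. This is not automatic from knowing $K$: for instance $\mathrm{SO}(n)$ is also the maximal compact of $\so{n}^0$, whose standard $(n+1)$-dimensional representation is faithful, irreducible and proximal; it is excluded only because its restriction to $K$ is $\R^n\oplus\R$, not the sphere-transitive representation --- so the identification of $G$ must use the representation data, e.g.\ via maximality of $\mathrm{SO}(n)$ (resp.\ $\mathrm{U}(n)$) among connected closed subgroups of $\mathrm{SL}_n(\R)$ (resp.\ the list of intermediate subgroups $\mathrm{U}(n)\subset H\subset\mathrm{SL}_{2n}(\R)$). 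Second, the candidates $\mathrm{Sp}(n)\cdot\mathrm{U}(1)$, $\mathrm{Sp}(n)\cdot\mathrm{Sp}(1)$, $G_2$, $\mathrm{Spin}(7)$, $\mathrm{Spin}(9)$ are dismissed with ``one checks'', but this check --- which noncompact simple real forms admit these as maximal compacts, in which real dimension the candidate representation lives, and why proximality over $\R$ then fails --- is precisely the case analysis that constitutes the lemma. Your treatment of $K=\mathrm{SU}(n)$ (eigenvalues of $\mathrm{SL}_n(\mathbb{C})$ acting on $\R^{2n}$ occur with real multiplicity at least two, killing proximality) is the model of what is required, and the same doubling argument disposes of $\mathrm{Sp}(n)$ on $\mathbb{H}^n$; for the five remaining candidates nothing is written down. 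As it stands the argument is a correct reduction together with a promissory note for the decisive classification step.
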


If one reads carefully the proof, one should remark that we only used the hypothesis ``$\O$ is strictly convex'' to get that $\LG=\dO$. So, in fact we have shown that:

\begin{theorem}
Let $\G$ be a discrete group of $\ss$ that preserves a properly convex open set $\O$. If $\G$ acts minimally on $\dO$ then $\G$ is Zariski-dense or $\O$ is an ellipsoid.
\end{theorem}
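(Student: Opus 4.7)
The plan is to replay the argument of the proof of Theorem~\ref{thm_zari1}, invoking the author's observation that strict convexity entered that proof only through the identity $\LG=\dO$; under the present minimality hypothesis, the same identity holds for free.

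First I dispose of the trivial cases. If $\G$ is finite, then its orbits on $\dO$ are finite, and minimality forces $\dO$ to be finite, which happens only when $\dim\O=1$; in that case $\O$ is a segment, i.e.\ a one-dimensional ellipsoid. The case $\dim\O=1$ is also immediate. So I may assume $\dim\O\geq 2$ and $\G$ infinite. The limit set $\LG$ is then a non-empty closed $\G$-invariant subset of $\dO$, and minimality gives $\LG=\dO$.

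Armed with $\LG=\dO$, I let $G$ denote the identity component of the Zariski closure of $\G$ in $\ss$ and follow the structural steps of the proof of Theorem~\ref{thm_zari1}. Since preservation of $\overline{\O}$ is a Zariski-closed condition, $G$ preserves $\O$ and acts on $\dO$. By the lower semicontinuity of orbit dimension for algebraic actions, the union $M$ of $G$-orbits of minimum dimension is closed in $\dO$; since $\G$ normalizes $G$ it permutes $G$-orbits while preserving dimension, so $M$ is $\G$-invariant, and minimality yields $M=\dO$. Thus every $G$-orbit in $\dO$ is closed and of the same dimension, and connectedness of $\dO$ (for $\dim\O\geq 2$) forces $\dO$ to be a single $G$-orbit. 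If $G$ has a fixed point $x\in\O$, then Lemma~\ref{lem_comp} gives $G$ compact; $G$ also fixes the dual point $x^{\star}\in\O^*$ and therefore preserves the John ellipsoid of $\O$ centered at $x$ in the chart $\PP^d\smallsetminus x^{\star}$, and transitivity on $\dO$ forces $\O$ to coincide with that ellipsoid. Otherwise $G$ is noncompact, and the remainder of the proof of Theorem~\ref{thm_zari1} applies: identifying $\Lambda_G$ with $\dO$ or $\PP^d$ forces $G=\ss$ via Benoist's Lemma~3.9 of \cite{MR1767272} (the symplectic alternative being excluded because preserving a properly convex set is incompatible with preserving a symplectic form), whence $\G$ is Zariski-dense.

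The main obstacle I anticipate is justifying that $G$ is semisimple with $G\hookrightarrow\ss$ an irreducible proximal representation — the ingredient which in Theorem~\ref{thm_zari1} came from strong irreducibility of $\G$ via Vey's decomposition (Theorem~\ref{theorem_vey}) together with Theorem~\ref{theo_pos_prox}. Without divisibility, Vey is unavailable, so I plan to derive strong irreducibility of $\G$ directly from minimality: any proper $\G$-invariant subspace $V\subsetneq\R^{d+1}$ with $\PP(V)\cap\overline{\O}\neq\varnothing$ produces a proper closed $\G$-invariant subset $\PP(V)\cap\dO$ of $\dO$, contradicting minimality. The residual case $\PP(V)\cap\overline{\O}=\varnothing$ is the delicate one; here I would dualize, observing that $\G$ then preserves the nonempty properly convex open subset $\PP(V^\perp)\cap\O^*$ of $\PP(V^\perp)$ in strictly smaller projective dimension, allowing an induction on $d$ to complete the argument.
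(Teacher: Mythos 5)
Your overall strategy --- rerun the proof of Theorem~\ref{thm_zari1} after observing that minimality hands you $\LG=\dO$ for free --- is exactly the paper's proof, which consists of the single remark that strict convexity entered that argument only through this identity; and you have correctly spotted the point the paper passes over in silence, namely that strong irreducibility of $\G$ came from Vey's Theorem~\ref{theorem_vey} and hence used divisibility, not strict convexity. But your execution of the middle of the argument contains a genuine error: the identity component $G$ of the Zariski closure of $\G$ does \emph{not} preserve $\O$ in general. Preserving $\overline{\C_{\O}}$ is a closed condition for the Lie topology, not for the Zariski topology --- indeed the whole content of Theorem~\ref{thm_zari1} is that $G$ is frequently all of $\ss$, which preserves no properly convex open set, so $\Aut(\O)$ cannot be Zariski closed. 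Your entire paragraph on $G$-orbits in $\dO$, the set $M$, and the conclusion that $\dO$ is a single $G$-orbit therefore rests on a false premise. The paper never needs $G$ to act on $\dO$: it uses that $\Lambda_G$, the unique closed $G$-orbit in $\PP^d$, is in particular a closed $\G$-invariant set and hence contains the unique minimal one, $\LG=\dO$, which forces $\Lambda_G=\dO$ or $\Lambda_G=\PP^d$.

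The correct dichotomy is that one, not ``$G$ fixes a point of $\O$ / $G$ is noncompact''. Your second branch fails already on the ellipsoid: for $\G$ a lattice in $\SO$ and $\O=\E_d$ one has $G=\SO$, noncompact and without fixed point in $\O$, yet $G\neq\ss$. Benoist's Lemma~3.9 of \cite{MR1767272} requires the action on $\PP^d$ to be \emph{transitive}, so it only applies when $\Lambda_G=\PP^d$; when $\Lambda_G=\dO$, the group $G$ does preserve $\O$ and one argues as in the paper, via the maximal compact subgroup $K$ of $G$, which acts transitively on $\Lambda_G=\dO$, fixes a point of $\O$ by Lemma~\ref{lem_comp}, and hence forces $\O$ to be its John ellipsoid. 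Finally, on the irreducibility gap you flagged: your treatment of an invariant subspace $V$ with $\PP(V)\cap\overline{\O}\neq\varnothing$ is fine, but the residual case is not closed by ``dualize and induct'', because minimality is a hypothesis on $\dO$ and does not transfer to $\partial\O^*$, let alone to the boundary of the lower-dimensional convex set $\PP(V^{\perp})\cap\O^*$; as written the induction has no usable hypothesis. That defect is shared by the paper's one-line proof, but your proposed repair does not yet close it.
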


This leads to the following question:

\begin{oqu}
Let $\G$ be a discrete group of $\ss$ that divides (or quasi-divides) an indecomposable properly convex open set $\O$ which is not homogeneous. Does $\G$ act minimally on $\dO$ ?
\end{oqu}

If the answer to this question is yes, then one gets an alternative proof of Theorem \ref{thm_zari1}. We remark that Benoist answers the last question in dimension 3 in \cite{MR2218481}. Namely, he shows that the action of any group $\G$ dividing an indecomposable properly convex open set is minimal on $\dO$.

\section{Gromov-hyperbolicity}

The notion of Gromov-hyperbolicity is a very powerful tool in geometric group theory and metric geometry. The goal of this part is to catch the link between Gromov-hyperbolicity and roundness of convex bodies.

A proper geodesic metric space $X$ is \emph{Gromov-hyperbolic}\index{Gromov-hyperbolic}\index{geodesic metric space!Gromov-hyperbolic} when there exists a number $\delta$ such that given any three points $x,y,z \in X$, and given any geodesics $[x,y]$, $[y,z]$ and $[z,x]$, the geodesic $[x,y]$ is included in the $\delta$-neighbourhood of $[y,z]\cup [z,x]$.

A group $\G$ of finite type is \emph{Gromov-hyperbolic} if its Cayley graph given by one of its finite generating sets\footnote{This property does not depend on the choice of the generating set.} is Gromov-hyperbolic for the word metric.

\begin{theorem}[Benoist \cite{MR2094116}]\label{thm_ghyp}
Let $\O$ be a divisible convex set divided by a group $\G$. Then the following are equivalent:
\begin{enumerate}
\item The metric space $(\O,d_{\O})$ is Gromov-hyperbolic.
\item The convex $\O$ is strictly convex.
\item The boundary of $\O$ is $\C^1$.
\item The group $\G$ is Gromov-hyperbolic.
\end{enumerate}
\end{theorem}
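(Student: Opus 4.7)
The proof has four parts: $(1) \Leftrightarrow (4)$ is a classical Milnor--\v{S}varc argument, $(1) \Rightarrow (2)$ is elementary, $(2) \Leftrightarrow (3)$ is a duality argument using $\O^*$, and $(2)+(3) \Rightarrow (1)$ is the main technical step.

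\emph{Step 1: $(1) \Leftrightarrow (4)$.} The metric $\d$ is proper and geodesic, and $\G$ acts by isometries on $(\O,\d)$ properly and cocompactly. The Milnor--\v{S}varc lemma yields a $\G$-equivariant quasi-isometry from $\G$ (with any word metric) to $(\O,\d)$, and Gromov-hyperbolicity is invariant under quasi-isometries of proper geodesic spaces.

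\emph{Step 2: $(1) \Rightarrow (2)$.} By contraposition, suppose $\dO$ contains a non-trivial segment $[a,b]$. Fix $x_0 \in \O$ and slice by the $2$-plane $P$ through $a,b,x_0$; the intersection $\omega := \O \cap P$ is a properly convex $2$-domain whose boundary still contains $[a,b]$, and the inclusion into $(\O,\d)$ is isometric. Two Hilbert geodesics of $\omega$ travelling from a neighbourhood of $a$ to a neighbourhood of $b$ along paths at slightly different ``heights'' above $[a,b]$ stay at bounded Hausdorff distance throughout (a direct cross-ratio computation near a boundary segment), producing a pair of bi-infinite asymptotic geodesics --- a configuration incompatible with $\delta$-thinness of triangles. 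Hence $(\O,\d)$ is not Gromov-hyperbolic; this step uses no divisibility.

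\emph{Step 3: $(2) \Leftrightarrow (3)$.} The contragredient action $\g \mapsto {}^t\g^{-1}$ realises $\G$ as a discrete subgroup of $\s{d+1}$ dividing the dual $\O^*$. By Proposition \ref{prop_dual_classi_0}, $\O$ is strictly convex if and only if $\dO^*$ is $\C^1$. Suppose $(3)$ fails for $\O$; then $\O^*$ is not strictly convex. Step 2 applied to $\O^*$ gives that $(\O^*,d_{\O^*})$ is not Gromov-hyperbolic; two applications of Step 1 (to $\O^*$ and then to $\O$) propagate this to $(\O,\d)$; Step 2 applied to $\O$ in contrapositive form gives that $\O$ is not strictly convex. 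Hence $\neg(3) \Rightarrow \neg(2)$, and the converse is symmetric.

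\emph{Step 4 (the main step): $(2)+(3) \Rightarrow (1)$.} This is the technical heart and the main obstacle. The plan is to prove uniform $\delta$-thinness of geodesic triangles by a compactness-contradiction argument. Suppose there are geodesic triangles $\Delta_n = [x_n,y_n,z_n] \subset \O$ with thinness defect $\delta(\Delta_n) \to \infty$. Lemma \ref{center} provides a centre $c_n$ of $\Delta_n$ inside $\O$; after translating by $\g_n \in \G$ we may assume $c_n$ lies in a fixed compact fundamental domain. Extracting a subsequence, $\g_n \Delta_n$ subconverges (as a pointed set) to a limit configuration $\Delta_\infty$ with finite basepoint $c_\infty \in \O$ and three (possibly ideal) endpoints in $\overline{\O}$. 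Roundness rules out every possible degeneration: strict convexity (2) prevents two limiting endpoints from collapsing onto a common segment of $\dO$, so the three ``sides'' through $c_\infty$ stay distinct; and $\C^1$-boundary (3) ensures that supporting hyperplanes at ideal vertices vary continuously, so the limits of the sides of $\g_n \Delta_n$ are again genuine geodesics of $\O$. Hence $\Delta_\infty$ is an honest (possibly ideal) geodesic triangle in a round Hilbert geometry, which has finite thinness defect (by direct inspection of the limiting Hilbert geometry near the boundary of a round $\O$), contradicting $\delta(\Delta_n) \to \infty$. The main difficulty is to make the ``no degeneration'' step uniform when one or more vertices of $\Delta_n$ escape to $\dO$: both (2) and (3) are jointly indispensable here, strict convexity to separate ideal endpoints and $\C^1$-boundary to control the supporting geometry at those endpoints.
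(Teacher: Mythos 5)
Your overall architecture (Milnor--\v{S}varc for $(1)\Leftrightarrow(4)$, elementary degeneration for $(1)\Rightarrow(2)$, duality, and a compactness argument for the hard direction) matches the paper's, but the logical cycle does not close, for two related reasons. First, your Step 3 contains an outright error: from ``$(\O,d_{\O})$ is not Gromov-hyperbolic'' you conclude ``$\O$ is not strictly convex'' by ``Step 2 in contrapositive form''. Step 2 proves $(1)\Rightarrow(2)$; its contrapositive is $\neg(2)\Rightarrow\neg(1)$, which points the wrong way. What you need there is $\neg(1)\Rightarrow\neg(2)$, i.e.\ the \emph{converse} of Step 2 --- precisely the hard implication $(2)\Rightarrow(1)$. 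Second, your Step 4 only establishes $(2)\wedge(3)\Rightarrow(1)$, and since Step 3 is broken you never obtain $(2)\Rightarrow(3)$ by any independent route (and it is not a general fact about convex bodies: the duality Proposition \ref{prop_dual_classi_0} relates strict convexity of $\O$ to $\C^1$-ness of $\dO^{*}$, not of $\dO$). The net effect is that statement $(2)$ is a dead end: none of your implications allows you to get from $(2)$ alone to any of $(1)$, $(3)$ or $(4)$, so the four conditions are not shown to be equivalent.

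The repair is exactly what the paper does: prove the hard step from strict convexity \emph{alone} (Theorem \ref{thm_ghyp_dur}). Your own limit argument in Step 4 in fact does not need $(3)$: if the renormalised triangles subconverge to a configuration with distinct ideal vertices in $\overline{\O}$, strict convexity already guarantees that the open segment joining two distinct boundary points lies in $\O$, so the limiting sides are genuine geodesics --- no control of supporting hyperplanes is required; the $\C^1$ hypothesis you declare ``indispensable'' is doing no work. Once you have $(2)\Rightarrow(1)$, your Step 3 becomes valid (the last inference is now the legitimate contrapositive of the hard step), and $(3)\Rightarrow(4)$ also follows the paper's route: $\dO$ of class $\C^1$ forces $\O^{*}$ to be strictly convex (Proposition \ref{prop_dual_classi}), $\G$ divides $\O^{*}$ (Proposition \ref{prop_dual_action} --- a fact you assert but should not treat as free), hence $(\O^{*},d_{\O^{*}})$ is Gromov-hyperbolic and so is $\G$. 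Two further remarks: the paper proves the hard step via Benz\'ecri's theorem and the closedness of the orbit $\PG\cdot\O$ (Corollary \ref{coro_ghyp_leretour}), whereas your renormalisation by $\g_n\in\G$ inside the fixed $\O$ is a legitimate shortcut in the divisible case; and in Step 2, ``a pair of bi-infinite asymptotic geodesics at bounded Hausdorff distance'' is not by itself incompatible with hyperbolicity --- the correct obstruction is the sequence of triangles with $\sup_{u\in[x_n,y_n]}d_{\O}(u,[x_n,z]\cup[z,y_n])\to\infty$ as in Proposition \ref{prop_ghyp_facile}.
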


Note that a similar statement is true by \cite{Cooper:2011fk} in the case of quasi-divisible convex sets. There is also a statement of this kind in \cite{Choi:2010fk} for non-compact quotients. And, finally there is a weaker statement of this kind for geometrically finite actions in \cite{Crampon:2012fk}. We will not review these results.

We shall present a rough proof of this theorem. 

\subsection{The first step}

\begin{proposition}[Benoist \cite{MR2094116} - Karlsson-Noskov \cite{MR1923418}]\label{prop_ghyp_facile}
Let $\O$ be a properly convex open set. If the metric space $(\O,d_{\O})$ is Gromov-hyperbolic then the convex $\O$ is strictly convex with $\C^1$ boundary.
\end{proposition}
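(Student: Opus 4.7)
I would prove the contrapositive: if $\O$ either fails to be strictly convex or fails to have $\mathcal{C}^1$ boundary, then $(\O, d_\O)$ is not Gromov-hyperbolic. The guiding principle is a reduction to dimension two. For any projective $2$-plane $P$ meeting $\O$, set $\omega := \O \cap P$; this is a $2$-dimensional properly convex open set, and the Hilbert distance on $\omega$ coincides with the restriction of $d_\O$, since $d_\O$ is defined via a cross-ratio along a straight line and any line in $P$ has the same boundary intersections whether viewed in $P$ or in $\PP^d$. Hence $(\omega, d_\omega) \hookrightarrow (\O, d_\O)$ is an isometric embedding whose straight-line geodesics remain geodesics in $\O$, and any obstruction to Gromov-hyperbolicity detected in a well-chosen plane section propagates to $\O$.

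Assume first that $[p, q] \subset \dO$ is a non-trivial segment, and take $P$ to be a plane containing $[p, q]$ and some interior point of $\O$. Fix two distinct interior points $r, r'$ of $]p, q[$, choose a basepoint $o' \in \omega$ off the line $pq$, and let $x_n \in [o', r)$, $y_n \in [o', r')$ with $x_n \to r$ and $y_n \to r'$. The chord $(x_n y_n)$ has direction tending to that of the segment, so it converges to the line supported by $[p, q]$, and by continuity its intersections with $\partial \omega$ tend to $p$ and $q$. Plugging into the cross-ratio formula,
$$
d_\omega(x_n, y_n) \;\longrightarrow\; \tfrac{1}{2}\log [p : r : r' : q] \;\in\; (0, +\infty),
$$
whereas $d_\omega(o', x_n)$ and $d_\omega(o', y_n)$ both tend to $+\infty$. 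Letting $r, r'$ sweep through $]p,q[$ produces, at arbitrarily large depth inside $\omega$, a family of isometrically embedded intervals of uniformly bounded length joining points whose Gromov products with $o'$ diverge: this is the normed-plane-type flat behaviour along the segment's direction that Karlsson--Noskov \cite{MR1923418} extract as a quantitative obstruction to the thin-triangle inequality.

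Assume now that $p \in \dO$ admits two distinct supporting hyperplanes $H_1 \neq H_2$. By Proposition \ref{prop_dual_classi_0} the dual $\O^*$ is not strictly convex, and a clean route is to apply the previous case to $\O^*$, granted one checks that Gromov-hyperbolicity is preserved under the duality $\O \leftrightarrow \O^*$ (for instance via the horofunction boundary description of Walsh \cite{MR2456635} or via an $\Aut(\O)$-equivariant quasi-isometry between the two Hilbert geometries). A more self-contained alternative is to pick a plane $P$ through $p$ with $H_1 \cap P \neq H_2 \cap P$, so that $\omega = \O \cap P$ has a genuine corner at $p$, and then to construct two geodesic rays in $\omega$ approaching $p$ tangentially along the two supporting lines; a Finsler estimate shows that their mutual Hilbert distance stays bounded below by a positive constant controlled by the opening of the corner, whereas in a $\delta$-hyperbolic space two geodesic rays sharing an endpoint at infinity must come uniformly close after finite time.

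The main obstacle I anticipate is making the flatness extraction of the first case rigorous enough to violate the thin-triangle condition outright, rather than merely exhibit a non-Hausdorff identification in the Gromov boundary: the upgrade from a single bounded distance $d_\omega(x_n, y_n)$ to a genuine quasi-isometrically embedded flat strip of arbitrary size requires careful bookkeeping, and is precisely the content of the Karlsson--Noskov estimates. The non-$\mathcal{C}^1$ case is similarly delicate when one insists on avoiding duality, since the corner behaviour is inherently dual to that of a boundary segment.
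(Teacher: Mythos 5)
Your reduction to a $2$-plane section and the degeneration of configurations toward the boundary defect is exactly the paper's skeleton, but both halves stop short of the decisive estimate. In the strict convexity half, the configuration you build does not contradict $\delta$-hyperbolicity: points $x_n\to r$, $y_n\to r'$ with $r,r'$ \emph{interior} to the segment give $d_\omega(x_n,y_n)$ bounded while $d_\omega(o',x_n),d_\omega(o',y_n)\to\infty$, which only says the Gromov product $(x_n\mid y_n)_{o'}$ diverges --- a phenomenon that occurs in any unbounded hyperbolic space. You correctly flag this as ``the main obstacle'', but it is not a technicality to be outsourced to Karlsson--Noskov: it is the whole content of the step. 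The paper instead sends $x_n$ and $y_n$ to the two distinct \emph{endpoints} of the maximal segment $s$ and shows $\sup_{u\in[x_n,y_n]}d_{\O}(u,[x_n,z]\cup[z,y_n])\to\infty$ for a fixed apex $z$, i.e.\ it violates the thin-triangle condition directly; your sweep of $r,r'$ through $]p,q[$ would have to be converted into exactly this statement, and no mechanism for doing so is given.

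In the $\C^1$ half, neither of your routes closes. The duality route needs that Gromov-hyperbolicity of $(\O,d_\O)$ passes to $(\O^*,d_{\O^*})$; in the paper this is only available much later, as a corollary of Benz\'ecri's theorem (Corollary \ref{coro_ghyp_leretour}), so it cannot be invoked here without importing that machinery. The direct route fails for the same reason as the first half: two geodesic rays converging to the corner $p$ and staying at distance bounded below is compatible with $\delta$-hyperbolicity unless you first identify the Gromov boundary with the topological boundary, which is not available (and is essentially what is at stake). The paper's proof even records why the naive comparison fails --- the lower bound $d_T(u,\cdot)\leqslant d_{\O}(u,\cdot)$ coming from the circumscribed triangle $T$ at the corner (Proposition \ref{compa}) is eventually constant, not divergent --- and the missing idea is to \emph{combine the two halves}: once $\O$ is known to be strictly convex, one can choose inscribed triangles $\Delta'_n$ whose sides tend to the sides of $T$, so that $\O\cap\Pi$ contains triangles approximating the flat geometry of the simplex and hence of unbounded thinness constant. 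That interplay between the strict convexity already proved and the corner at $p$ is absent from your proposal.
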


\begin{proof}
\par{
Suppose $\O$ is not strictly convex, and take a maximal non trivial segment $s \subset \dO$. Choose a plane $\Pi$ containing $s$ and intersecting $\O$, choose a sequence of points $x_n$ and $y_n$ in $\O\cap  \Pi$ converging to the different endpoints of $s$, and finally take any point $z \in \O\cap  \Pi$. It is an exercise to show that $\underset{u \in [x_n, y_n]}{\sup} d_{\O}(u,[x_n, z] \cup [z, y_n]) \to \infty$, which shows that $\O$ is not Gromov-hyperbolic (see Figure \ref{gh_strict_fig})
}
\begin{center}
\begin{figure}[h!]
  \centering
\includegraphics[width=7.5cm]{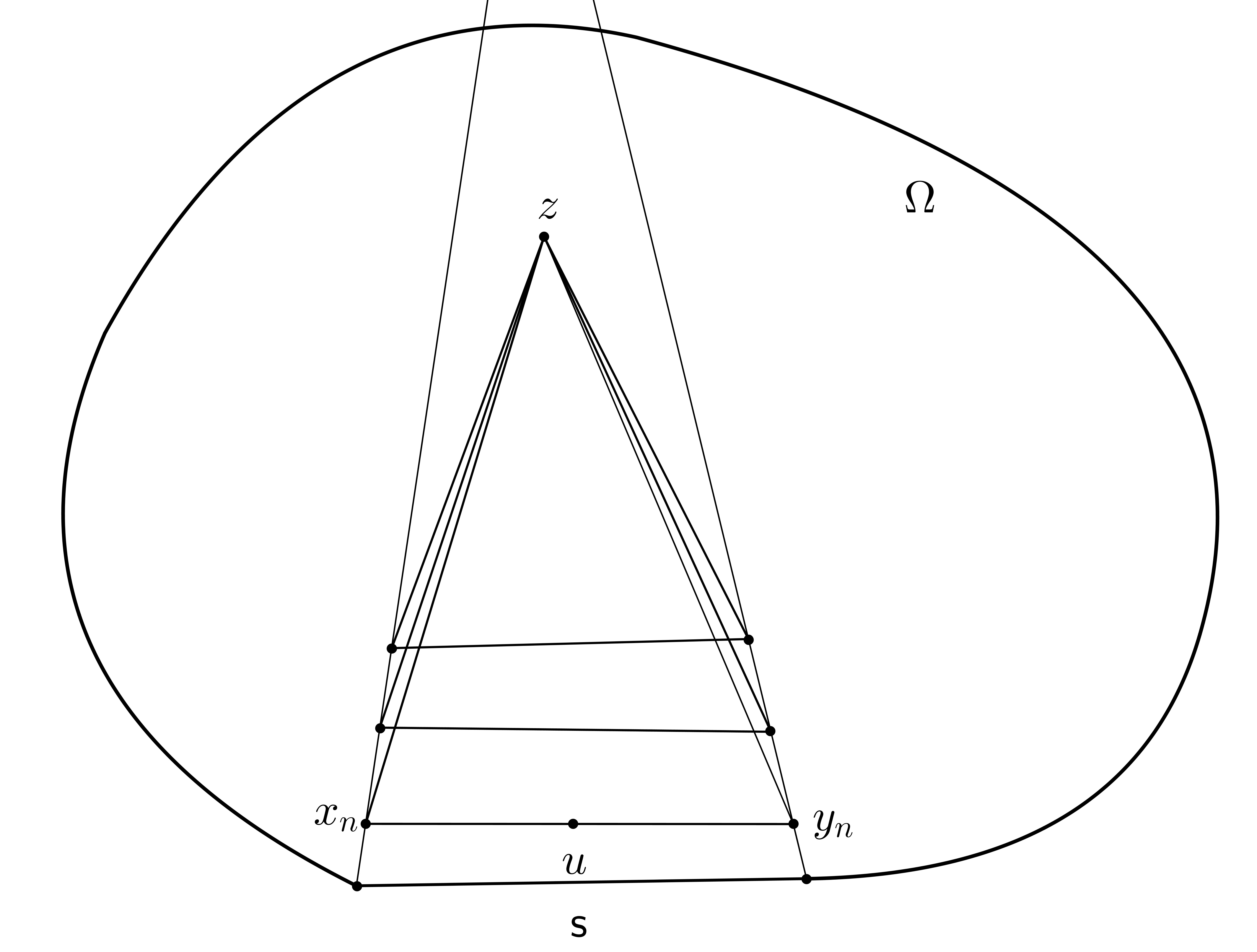}\
\caption{Strict convexity proof}
\label{gh_strict_fig}
\end{figure}
\end{center}

\par{
Suppose that $\dO$ is not $\C^1$ but Gromov-hyperbolic. Then there exist a point $z \in \dO$, a plane $\Pi \ni z$ such that $\O \cap \Pi \neq \varnothing$, a triangle $T$ of $\Pi$ containing $\O \cap \Pi$ such that $z$ is a vertex of $T$ and the two segments of $T$ issuing from $z$ are tangent to $\O \cap \Pi $ at $z$ (see Figure \ref{gh_C1_fig}).
}

\begin{center}
\begin{figure}[h!]
  \centering
\includegraphics[width=7cm]{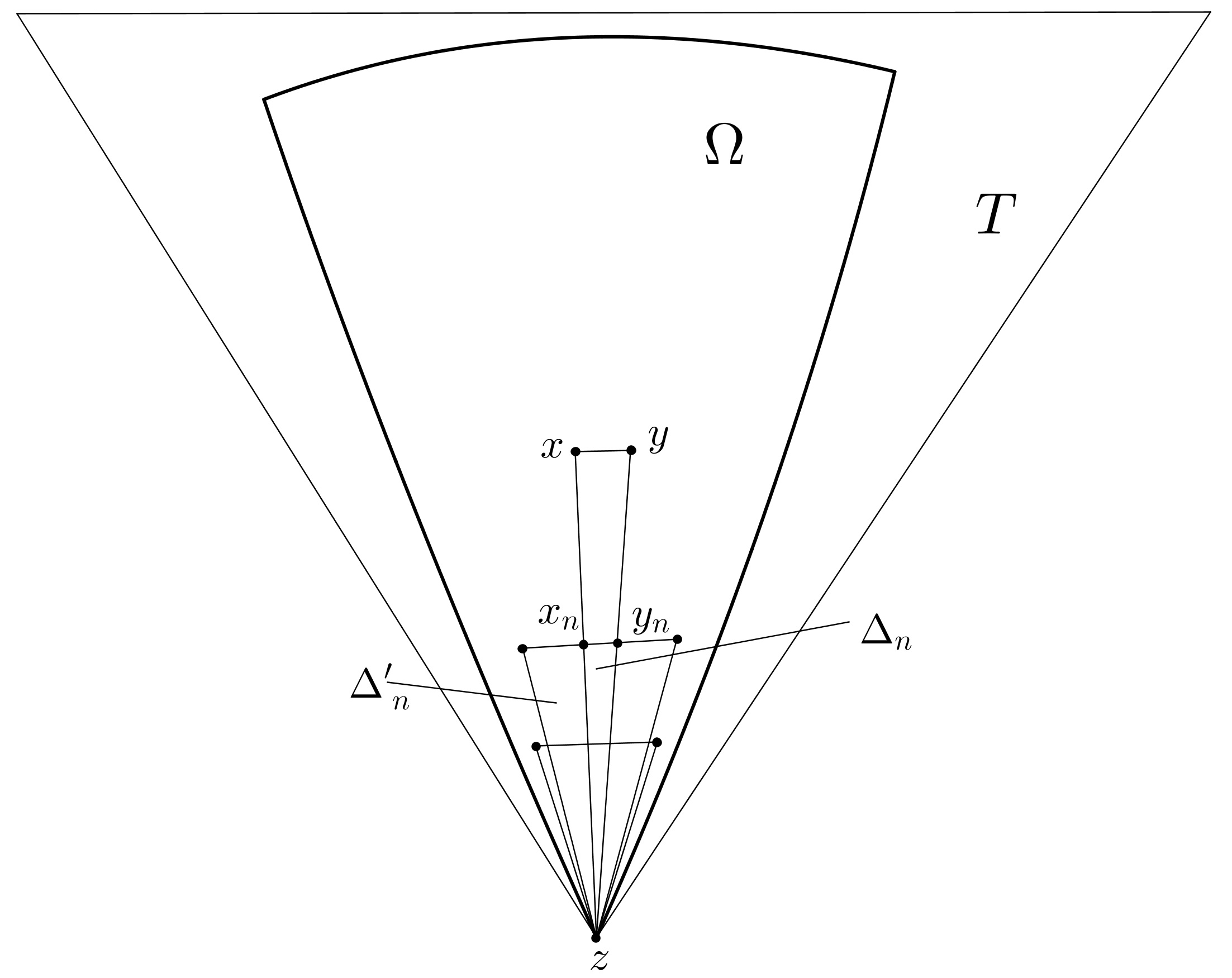}
\caption{$\C^1$ proof}
\label{gh_C1_fig}
\end{figure}
\end{center}

Now, choose two points $x$ and $y$ in $\O \cap \Pi$ and two sequences $x_n$ and $y_n$ of points of $[x,z[$ and $[y,z[$ converging to $z$. Consider the triangle $\Delta_n$ of vertices $x_n,y_n$ and $z$.

We want to show that $\underset{u \in [x_n,y_n]}{\sup} d_{\O}(u,[x_n,z] \cup [z,y_n]) \to \infty$. This needs some attention. One should remark that the comparison Theorem \ref{compa} shows that for any $u \in [x_n,y_n]$, we have $d_{T}(u,[x_n,z] \cup [z,y_n]) \leqslant d_{\O}(u,[x_n,z] \cup [z,y_n]) $. But the sequence $d_{T}(u,[x_n,z] \cup [z,y_n])$ is eventually constant and strictly positive. In particular, the sequence $d_{T}(u,[x_n,z] \cup [z,y_n])$ does go to $\infty$, unfortunately.

Nevertheless, one must remember that $\O$ must be strictly convex thanks to the previous paragraph. Hence, we can find a triangle $\Delta'_n$ whose sides tend to the side of $T$. Such a triangle contradicts Gromov-hyperbolicity.

\end{proof}

\subsection{The duality step}

We recall the following proposition.

\begin{proposition}\label{prop_dual_classi}
Let $\O$ be a properly convex open set. Then $\O$ is strictly convex if and only if $\dO^*$ is $\C^1$.
\end{proposition}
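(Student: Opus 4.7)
The strategy is to translate both sides of the equivalence into statements about supporting hyperplanes of $\O$ via the natural projective duality, and then use biduality to match them up. The main work is setting up the dictionary cleanly; after that each implication is short.

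First, I would recall the concrete description of $\O^*$: a point of $\PP^{d*}$ is a hyperplane of $\PP^d$, and $\overline{\O^*}$ consists of those hyperplanes $H$ with $H\cap \O=\varnothing$. So $H\in \dO^*$ if and only if $H$ is a supporting hyperplane of $\O$, i.e. $H\cap \O=\varnothing$ but $H\cap \dO\neq\varnothing$. Symmetrically, if $p\in \PP^d$, the set $p^\star=\{H\in \PP^{d*}\,|\, p\in H\}$ is a hyperplane of $\PP^{d*}$, and $p^\star$ supports $\O^*$ at $H\in \dO^*$ if and only if $p\in \overline{\O}$ and $p\in H$, i.e. $p$ is a contact point of $H$ with $\dO$.

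Second, I would use biduality: every supporting hyperplane of $\O^*$ at $H$ has the form $p^\star$ for some $p\in \overline{\O}$. This is exactly the statement $(\O^*)^*=\O$ applied to the hyperplane picture. Consequently, $\dO^*$ is $\C^1$ at $H$ (that is, $\O^*$ has a unique supporting hyperplane at $H$) if and only if the set of contact points $H\cap \dO$ reduces to a single point. Therefore the global statement ``$\dO^*$ is $\C^1$'' is equivalent to ``every supporting hyperplane of $\O$ touches $\dO$ at exactly one point''.

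Third, I would match this last property with strict convexity. If $\O$ is not strictly convex, pick a non-trivial segment $[a,b]\subset \dO$ and a point $m$ in its relative interior. Any supporting hyperplane $H$ at $m$ must contain all of $[a,b]$: otherwise one of the endpoints would lie strictly on the open side of $H$ containing $\O$, and then $m$ (an interior point of $[a,b]$) would lie in $\O$ by convexity, contradicting $m\in \dO$. So $H$ has at least two contact points, and $\dO^*$ fails to be $\C^1$ at $H$. Conversely, if some supporting hyperplane $H$ has two distinct contact points $p\neq q$, then $H\cap \overline{\O}$ is convex (as an intersection of convex sets), contains $p,q$, and is disjoint from $\O$ (since $H$ is supporting), hence $[p,q]\subset H\cap \overline{\O}\subset \dO$ and $\O$ is not strictly convex.

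The only real subtlety, and what I would treat most carefully, is the biduality step identifying supporting hyperplanes of $\O^*$ with points of $\overline{\O}$ via $p\mapsto p^\star$. Once that dictionary is in place the rest reduces to an elementary convex-geometry argument. Note also the pleasant consequence that the reverse implication (''$\O^*$ strictly convex iff $\dO$ is $\C^1$'') follows by applying the same argument to $\O^*$ and using $(\O^*)^*=\O$, so no separate proof is needed.
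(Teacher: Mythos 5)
Your proof is correct, and it is essentially the fully worked-out version of the duality argument the paper only gestures at (the paper gives no formal proof, just a two-line dimension-2 illustration of one direction of the dual statement). The dictionary ``points of $\dO^*$ = supporting hyperplanes of $\O$'', ``supporting hyperplanes of $\O^*$ at $H$ = contact points of $H$ via biduality'', followed by the elementary equivalence between strict convexity and uniqueness of contact points, is exactly the intended route, and you are right that the hyperplane-to-point direction comes for free from $(\O^*)^*=\O$.

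One local justification should be repaired. When you show that a supporting hyperplane $H$ at an interior point $m$ of a segment $[a,b]\subset\dO$ must contain all of $[a,b]$, you argue that otherwise ``$m$ would lie in $\O$ by convexity''. That conclusion does not follow: a point lying strictly on the open side of a supporting hyperplane need not belong to $\O$ (and indeed here $m\in\dO$ by hypothesis). The correct contradiction is more direct: $\overline{\O}$ lies in one closed half-space bounded by $H$, so if one endpoint of $[a,b]$ lay strictly off $H$ while the other lies in the closed half-space, every interior point of the segment, in particular $m$, would lie strictly off $H$, contradicting $m\in H$. With that one-line fix the argument is complete.
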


The following proposition is very simple once you know the notion of virtual cohomological dimension. The \emph{cohomological dimension} of a torsion-free group of finite type $\G$ is an integer $d_{\G}$ such that if $\G$ acts properly on $\R^d$ then $d\geqslant d_{\G}$ and the quotient $\Quotient{\R^d}{\G}$ is compact if and only if $d=d_{\G}$. The \emph{virtual cohomological dimension} of a virtually torsion-free\footnote{We recall that Selberg's lemma shows that every finite type subgroup of $\mathrm{GL}_m(\mathbb{C})$ is virtually torsion-free.} group is the cohomological dimension of any of its torsion-free finite-index subgroup. A reference is \cite{MR0422504}.

\begin{proposition}\label{prop_dual_action}
Let $\G$ be a discrete group of $\ss$ acting on a properly convex open set $\O$. Then the action of $\G$ on $\O$ is cocompact if and only if the action of $\G$ on $\O^*$ is cocompact.
\end{proposition}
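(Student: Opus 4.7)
The plan is to leverage the invariant of virtual cohomological dimension recalled just above the statement. I would proceed in three steps.

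First, by Selberg's lemma, $\G$ contains a torsion-free subgroup $\G'$ of finite index, and cocompactness of a proper action passes to and from finite-index subgroups via finite covers of the quotient. So I may assume $\G$ itself is torsion-free.

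Second, I observe that $\G$ acts on $\O^*$ by the contragredient: if $\g \in \Aut(\O)$ preserves $\C_{\O}$, then ${}^t \g^{-1}$ preserves the dual cone $\C_{\O}^*$ and hence $\O^* = \PP(\C_{\O}^*)$. This realises $\G$ as a discrete subgroup of $\Aut(\O^*)$, so by the first proposition of Section~1 (isometries of the Hilbert distance act properly) the action of $\G$ on $\O^*$ is proper, and it is free since $\G$ is torsion-free; similarly for the action on $\O$.

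Third, both $\O$ and $\O^*$ are properly convex open subsets of a $d$-dimensional projective space, hence both are homeomorphic to $\R^d$. Suppose now $\G$ acts cocompactly on $\O$. Then property (b) of the cohomological dimension, applied to the free proper cocompact action of $\G$ on $\O \simeq \R^d$, gives $d_{\G} = d$. Applying property (b) once more, this time to the free proper action of $\G$ on $\O^* \simeq \R^d$, forces the quotient $\Quotient{\O^*}{\G}$ to be compact. The converse direction follows by symmetry, using the canonical identification $\O^{**} = \O$ (together with the fact that the double contragredient is the identity).

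The only genuine obstacle is the passage from $\Aut(\O)$ to $\Aut(\O^*)$; this is handled by the contragredient representation and the definition of the dual convex. Once this is in place the argument is purely group-theoretic, resting entirely on the characterization of $d_{\G}$ as an invariant that detects cocompactness of any free proper action on $\R^d$. No geometric information about the shape of $\O$ or $\O^*$ is needed beyond their homeomorphism type.
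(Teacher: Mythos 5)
Your proof is correct and is exactly the argument the paper intends: the paper gives no written proof but introduces the (virtual) cohomological dimension and its properties a) and b) immediately before the statement precisely so that the reader carries out the reduction to a torsion-free finite-index subgroup, the contragredient action on $\O^*$, and the two applications of property b) that you spell out. The one point worth noting explicitly is that the finite-type hypothesis in the definition of $d_{\G}$ is supplied by the cocompactness assumption (Milnor--\v{S}varc), which your argument implicitly uses.
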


We grab the opportunity to mention the following open question:

\begin{oqu}
Let $\G$ be a discrete group of $\ss$ acting on a properly convex open set $\O$. Is it true that the action of $\G$ on $\O$ is of finite covolume if and only if the action of $\G$ on $\O^*$ is of finite covolume ?
\end{oqu}

\par{
It is known that the answer is yes in dimension 2 from \cite{Marquis:2009kq} and also yes in any dimension if you assume that the convex set is strictly convex or with $\C^1$ boundary by \cite{Cooper:2011fk} and \cite{Crampon:2012fk}.
}

\subsection{The key lemma}

\begin{theorem}\label{thm_ghyp_dur}
Suppose that $\O$ is divisible and that $\O$ is strictly convex. Then $\O$ is Gromov-hyperbolic.
\end{theorem}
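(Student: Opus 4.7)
The plan is to argue by contradiction using a normalization-limit procedure that exploits the cocompactness of $\G$ on $\O$. Suppose $(\O, d_\O)$ is not Gromov-hyperbolic. Then for every integer $n$ there exist a geodesic triangle $\Delta_n=[x_n,y_n,z_n]$ in $\O$ and a point $u_n\in[x_n,y_n]$ with
\[
d_\O(u_n,\,[x_n,z_n]\cup[z_n,y_n])\,\geq\, n.
\]
Fix a compact fundamental domain $K\subset\O$ for the $\G$-action. Replacing each $\Delta_n$ by a suitable $\G$-translate and passing to a subsequence, I may assume $u_n\to u_\infty\in K$ and that $x_n,y_n,z_n$ converge in $\overline\O$ to points $x_\infty,y_\infty,z_\infty$.

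The first observation is that the three limit vertices lie in $\dO$. Indeed, if $d_\O(u_n,x_n)$ stayed bounded then $u_n$ would be within bounded distance of $\{x_n\}\subset[x_n,z_n]$, contradicting the assumption; so $x_\infty,y_\infty\in\dO$ and likewise $d_\O(u_n,z_n)\geq n\to\infty$ gives $z_\infty\in\dO$. Strict convexity of $\O$ then ensures that these three points are pairwise distinct: otherwise the limit geodesic $[x_n,y_n]$, which passes through the interior point $u_\infty\in K$, would degenerate to a point or lie in $\dO$, absurd. Because $\dO$ contains no non-trivial segment, the projective segment joining any two of $x_\infty,y_\infty,z_\infty$ has interior entirely inside $\O$, and the geodesics $[x_n,y_n]$, $[y_n,z_n]$, $[z_n,x_n]$ converge uniformly on compact subsets to the three corresponding bi-infinite geodesics of $\O$ — an \emph{ideal triangle}.

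The heart of the argument, and what I expect to be the main obstacle, is showing that such an ideal triangle in a strictly convex $\O$ is uniformly thin — that is, $d_\O(u_\infty,\,[x_\infty,z_\infty]\cup[z_\infty,y_\infty])$ is finite. I would reduce this to a $2$-dimensional statement by restricting to the projective plane $\Pi$ spanned by $x_\infty, y_\infty, z_\infty$: the section $\omega=\O\cap\Pi$ is a strictly convex open subset of $\Pi$, and by the monotonicity of the Hilbert distance (Proposition~\ref{compa}) we have $d_\O\leq d_\omega$ on $\omega$, so it suffices to bound the thinness of ideal triangles in $\omega$. In dimension two, an explicit cross-ratio computation shows that the $d_\omega$-distance from an interior point of one ideal side to the union of the other two ideal sides is bounded: the absence of segments in $\partial\omega$ near $z_\infty$ forces the two sides issuing from $z_\infty$ to diverge fast enough that the cross-ratios controlling $d_\omega$ to the opposite side remain finite.

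Combining these ingredients yields a uniform constant $M$ depending only on the ideal configuration such that $d_\O(u_\infty,[x_\infty,z_\infty]\cup[z_\infty,y_\infty])\leq M$. Because the sides of $\Delta_n$ converge uniformly on compact subsets to the ideal sides, and $u_n\to u_\infty$, we obtain $d_\O(u_n,[x_n,z_n]\cup[z_n,y_n])\leq M+1$ for large $n$, contradicting the assumed lower bound $n$. Hence $(\O,d_\O)$ is Gromov-hyperbolic.
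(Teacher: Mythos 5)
Your proof is correct, and it takes a genuinely different route from the paper's. The paper deduces Theorem \ref{thm_ghyp_dur} from a more general statement (Theorem \ref{super_ghyp} and Corollary \ref{coro_ghyp_leretour}): it renormalizes the degenerating triangles by letting all of $\PG$ act on the space $X^{\bullet}$ of pointed convex bodies, invokes Benz\'ecri's theorem \ref{benzecri} to extract a limit $(\O_{\infty},u_{\infty})$, and needs the additional fact that the $\PG$-orbit of a divisible convex set is closed in $X$ to guarantee that the limit $\O_{\infty}$ is still strictly convex. You instead renormalize by the dividing group $\G$ itself, which keeps the convex set fixed and lands $u_{\infty}$ inside $\O$; this is more elementary (no Benz\'ecri, no orbit-closedness) and is perfectly adequate for the statement at hand. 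What the paper's detour buys is the stronger output: a uniform hyperbolicity constant over any closed $\PG$-invariant family of strictly convex sets, a characterization of Gromov-hyperbolicity by orbit closures that requires no group action at all, and the duality corollary. The endgame --- a non-degenerate limit triangle with vertices on the boundary of a strictly convex set cannot have infinite size --- is the same in both arguments.

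Two smaller comments. First, the step you single out as ``the main obstacle'' is in fact immediate and needs no cross-ratio computation or reduction to dimension $2$: once strict convexity forces the open segments $(x_\infty,z_\infty)$ and $(z_\infty,y_\infty)$ to lie in $\O$ (an interior point of a segment of $\overline{\O}$ lying on $\dO$ would drag the whole segment into a supporting hyperplane, producing a non-trivial segment in $\dO$), the quantity $d_{\O}\big(u_\infty,(x_\infty,z_\infty)\cup(z_\infty,y_\infty)\big)$ is the distance from a point of $\O$ to a non-empty subset of $\O$, hence finite: pick any point $w$ on one of these open sides and set $M=d_{\O}(u_\infty,w)$; then $w_n=tx_n+(1-t)z_n\in[x_n,z_n]$ converges to $w$ and $d_{\O}(u_n,w_n)\to M$, which already contradicts the lower bound $n$. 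Second, your distinctness argument only treats the degeneration $x_\infty=y_\infty$; the cases $z_\infty=x_\infty$ or $z_\infty=y_\infty$ need a separate word. They are harmless: since $x_\infty\neq y_\infty$, at least one of the two sides $[x_n,z_n]$, $[z_n,y_n]$ has endpoints converging to distinct boundary points, and applying the observation above to that side bounds $d_{\O}(u_n,[x_n,z_n]\cup[z_n,y_n])$ whether or not $z_\infty$ coincides with another vertex.
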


This theorem is the heart of the proof of Theorem \ref{thm_ghyp}. We postpone its proof until \ref{Ghyp_leretour} where we will show a more general theorem (Corollary \ref{coro_ghyp_leretour}).

\begin{proof}[Proof of Theorem \ref{thm_ghyp} assuming Theorem \ref{thm_ghyp_dur}]
\par{
$1) \Rightarrow 2)$ and $1) \Rightarrow 3)$ are the content of Proposition \ref{prop_ghyp_facile}.
}
\\
\par{
 $2) \Rightarrow 1)$ is the content of Theorem \ref{thm_ghyp_dur}.
}
\\
\par{ 
The equivalence $1) \Leftrightarrow 4)$ is a consequence of the fact that since $\G$ divides $\O$, $\G$ with the word metric is quasi-isometric to $(\O,d_{\O})$ and Gromov-hyperbolicity is invariant by quasi-isometry.
}
\\
\par{
Let us show that $3) \Rightarrow 4)$ to finish the proof. The properly convex open set $\O^*$ dual to $\O$ is strictly convex by Proposition \ref{prop_dual_classi} and the action of $\G$ on $\O^*$ is cocompact by Proposition \ref{prop_dual_action}. Therefore, by Theorem \ref{thm_ghyp_dur} the metric space $(\O^*,d_{\O^*})$ is Gromov-hyperbolic. But the group $\G$ acts by isometries and cocompactly on $(\O^*,d_{\O^*})$, hence the group $\G$ is Gromov-hyperbolic.
}
\end{proof}

Theorem \ref{thm_ghyp} has a fascinating corollary:

\begin{corollary}
Suppose a group $\G$ divides two properly convex open sets $\O$ and $\O'$. Then $\O$ is strictly convex if and only if $\O'$ is strictly convex.
\end{corollary}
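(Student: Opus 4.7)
The plan is to observe that Theorem \ref{thm_ghyp} turns a geometric property of the divided convex set (strict convexity of $\O$) into a purely group-theoretic property of the dividing group (Gromov-hyperbolicity of $\G$), and the latter depends on nothing but the abstract group $\G$. So once Theorem \ref{thm_ghyp} is in hand the corollary is immediate: we apply it twice, once to the pair $(\G,\O)$ and once to the pair $(\G,\O')$, and then transit through the common intermediate statement ``$\G$ is Gromov-hyperbolic''.

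More precisely, since $\G$ divides $\O$, the group $\G$ is a discrete subgroup of $\s{d+1}$ acting cocompactly by isometries on the proper geodesic metric space $(\O,d_{\O})$; by the Svarc–Milnor lemma $\G$ is finitely generated and its Cayley graph (for any finite generating set) is quasi-isometric to $(\O,d_{\O})$. The equivalence $(2)\Leftrightarrow(4)$ of Theorem \ref{thm_ghyp} then gives:
\begin{equation*}
\O \text{ is strictly convex} \iff \G \text{ is Gromov-hyperbolic}.
\end{equation*}
Exactly the same reasoning applied to the action of $\G$ on $\O'$ yields
\begin{equation*}
\O' \text{ is strictly convex} \iff \G \text{ is Gromov-hyperbolic}.
\end{equation*}
Combining the two equivalences gives $\O$ strictly convex $\iff$ $\O'$ strictly convex, which is the claim.

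The only thing to check is that the right-hand sides of these two equivalences really are the same statement, i.e.\ that Gromov-hyperbolicity of $\G$ as it appears in Theorem \ref{thm_ghyp} is indeed intrinsic to the abstract group and independent of the particular cocompact action chosen to detect it; this is standard, as Gromov-hyperbolicity of a finitely generated group is a quasi-isometry invariant and does not depend on the choice of finite generating set. The genuine mathematical content of the corollary therefore lies entirely inside Theorem \ref{thm_ghyp} (and, through its proof, inside Theorem \ref{thm_ghyp_dur}); the step from that theorem to this corollary is a short formal deduction and carries no further obstacle.
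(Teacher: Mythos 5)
Your proof is correct and takes essentially the same route the paper intends: the corollary is stated there without proof as an immediate consequence of Theorem \ref{thm_ghyp}, and the intended deduction is precisely your application of the equivalence $2)\Leftrightarrow 4)$ to each of the two actions, passing through the intrinsic statement ``$\G$ is Gromov-hyperbolic''. Nothing further is needed.
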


We also want to stress that Theorem \ref{thm_ghyp} is the only way known by the author to show that a divisible convex set\footnote{which is not an ellipsoid.} of dimension at least 3 is strictly convex. Hence, the actual proof of the existence of a strictly convex divisible convex set relies on this theorem.

We shall see in subsection \ref{rigidity_round} that Gromov-hyperbolicity also implies some regularity for the boundary of $\O$.

\section{Moduli spaces}

\subsection{A naturality statement}

Let $\G$ be a group of finite type and $d \geqslant 2$ an integer. The set of homomorphisms $\Hom(\G,\ss)$ can be identified with a Zariski closed subspace of $\ss^N$, where $N$ is the number of generators of $\G$. We put on it the topology induced from $\Hom(\G,\ss)$. We denote by $\beta_{\G}$ the subspace of representations $\rho$ of $\G$ in $\ss$ which divide a non-empty properly convex open set $\O_{\rho}$ of $\PP^d$.

The following striking theorem shows that convex projective structures are very natural.

\begin{theorem}[Koszul - Benoist] \label{natur_stat}
Suppose $M$ is a compact manifold of dimension $d$ and $\G=\pi_1(M)$ does not contain an infinite nilpotent normal subgroup. Then the space $\beta_{\G}$ is a union of connected components of the space $\Hom(\G,\ss)$.
\end{theorem}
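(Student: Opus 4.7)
The plan is to show that $\beta_{\Gamma}$ is both open and closed in $\Hom(\Gamma,\ss)$, the two parts being due respectively to Koszul and Benoist. Only closedness will need the hypothesis on $\pi_1(M)$.

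First I would establish openness. Fix $\rho_0 \in \beta_{\Gamma}$ dividing $\Omega_0$, and set $N_0 := \Omega_0 / \rho_0(\Gamma)$: this is a compact convex projective manifold with fundamental group $\Gamma$ and holonomy $\rho_0$. By the Ehresmann--Thurston principle applied to $N_0$, every $\rho$ close enough to $\rho_0$ is the holonomy of a projective structure on $N_0$, whose developing map $D_\rho : \widetilde{N_0} \to \PP^d$ depends continuously on $\rho$. Koszul's openness theorem then states that convexity is an open condition on projective structures: if $D_{\rho_0}$ is a diffeomorphism onto a properly convex open set of $\PP^d$, the same is true of $D_\rho$ for $\rho$ close enough to $\rho_0$. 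In particular such a $\rho$ lies in $\beta_{\Gamma}$, which gives openness.

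For closedness, let $\rho_n \to \rho$ with each $\rho_n \in \beta_{\Gamma}$ dividing some $\Omega_n$. Extracting a subsequence and using the compactness of the space of closed subsets of $\PP^d$ for the Hausdorff topology, $\overline{\Omega_n}$ converges to a closed convex subset $K \subset \PP^d$, which is preserved by $\rho(\Gamma)$ by continuity. Granted that $\Omega := \operatorname{int}(K)$ is non-empty and properly convex, one can pass the Hilbert structures to the limit (using the monotonicity of the Hilbert metric in Proposition \ref{compa}, and a uniform bound on the Hilbert diameter of a fundamental domain for $\rho_n(\Gamma)$ acting on $\Omega_n$, equal to the diameter of $\Omega_n / \rho_n(\Gamma)$) and deduce that $\rho(\Gamma)$ acts properly discontinuously and cocompactly on $\Omega$. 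Hence $\rho \in \beta_{\Gamma}$.

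The main obstacle, and the reason for the hypothesis on $\pi_1(M)$, is to rule out a degeneration of $K$: either $K$ has empty interior, or $K$ contains a projective line, in which case $\Omega$ fails to be properly convex. In either situation $\rho(\Gamma)$ preserves a non-trivial proper projective subspace $W \subsetneq \PP^d$ (the support of $K$ in the first case, the span of the line directions contained in $K$ in the second). Iterating the analysis on the induced actions on $W$ and on $\PP^d / W$, and using Proposition \ref{classi_matrix} to constrain the Jordan structure of the elements of $\rho(\Gamma)$ along the resulting $\rho(\Gamma)$-invariant flag, one extracts a non-trivial normal unipotent subgroup of $\rho(\Gamma)$. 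Its preimage in $\Gamma$ is a normal nilpotent subgroup of $\Gamma$, and a cohomological dimension argument (each $\Omega_n/\rho_n(\Gamma)$ is a compact $K(\Gamma,1)$ of dimension $d$, whereas a degenerate limit would force $\Gamma$ to split into pieces acting on things of strictly smaller dimension) shows that this subgroup must be infinite. This contradicts the hypothesis and completes the proof.
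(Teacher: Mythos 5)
Your openness argument coincides with the paper's: it is Koszul's theorem (Theorem \ref{thm_kosz}) together with the Ehresmann--Thurston principle, and that part is fine.

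The closedness part has a genuine gap, located exactly at the hard point of the theorem. Your plan is: if the Hausdorff limit $K$ of the $\overline{\O_n}$ degenerates, then $\rho(\G)$ preserves a proper projective subspace, and from the resulting invariant flag one ``extracts a non-trivial normal unipotent subgroup of $\rho(\G)$''. That implication is false. A discrete faithful representation preserving a flag lies in the parabolic stabilizer of that flag, but its intersection with the unipotent radical of the parabolic can perfectly well be trivial (think of a direct sum of two irreducible representations, all of whose elements are semi-simple). So reducibility of the limit does not contradict the hypothesis that $\G$ has no infinite nilpotent normal subgroup, and no Jordan-form bookkeeping via Proposition \ref{classi_matrix} will manufacture the required normal subgroup. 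The statement that is actually true --- a group \emph{dividing} a properly convex open set has no infinite nilpotent normal subgroup if and only if it is strongly irreducible (Benoist, Corollary 2.13 of \cite{MR2195260}, quoted as a Remark in the text) --- cannot be invoked here, because the limit $\rho$ is not yet known to divide anything. The paper only carries out closedness in dimension $2$, where degeneration is excluded by the Choi--Goldman trace lemma (Lemma \ref{trace}): every element of $\s{3}$ preserving a properly convex open set has trace $\geqslant 3$, a condition that is closed under limits, whereas a reducible limit would confine $\rho([\G,\G])$ to a copy of $\s{2}$ in which Zariski-density forces a negative trace. In higher dimensions closedness is the main theorem of \cite{MR2195260}, and the paper explicitly warns that ``this strategy does not work in high dimensions and the machinery to show the closedness is highly more involved''. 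You also omit the Zassenhaus step (Proposition \ref{limi_dis_faith}), which is where the no-infinite-nilpotent-normal-subgroup hypothesis first enters: it guarantees that the limit $\rho$ is discrete and faithful, without which the properness and cocompactness of the limiting action cannot even be set up.
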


We will see in the next paragraph that Koszul showed the ``open'' part of this theorem in \cite{MR0239529}. The ``closed'' part has been shown by Choi and Goldman when $d=2$ in \cite{MR1145415}, by Kim when $d=3$ and $\G$ is a uniform lattice of $\so{3}$ in \cite{MR1913941} and finally Benoist showed the general case in \cite{MR2195260} (which is the hard step of the proof). One can also find a version of this theorem in the finite volume context in dimension 2 in \cite{MR2740643}.

\begin{remark}
In \cite{MR2195260} (Corollary 2.13), Benoist showed that if $\G$ divides a properly convex open set then the group $\G$ does not contain an infinite nilpotent normal subgroup if and only if $\G$ is strongly irreducible.
\end{remark}

\subsection{A sketch of proof of the closedness}

The proof of the fact that $\beta_{\G}$ is closed is quite hard in the general case. In dimension 2 we can give a simpler proof due to Choi and Goldman \cite{MR1145415}. The following proposition is a corollary of a classical theorem of Zassenhaus (\cite{64.0961.06}):

\begin{proposition}\label{limi_dis_faith}
Let $\G$ be a discrete group that does not contain an infinite nilpotent normal subgroup. Then any limit of a sequence of discrete and faithful representations $\rho_n: \G \to \ss$ is also discrete and faithful.
\end{proposition}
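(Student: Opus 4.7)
The plan is to invoke the Kazhdan--Margulis--Zassenhaus theorem: there is a neighborhood $U$ of the identity in $\ss$ and an integer $c$, both depending only on $\ss$, such that any discrete subgroup of $\ss$ generated by elements of $U$ is nilpotent of class at most $c$. Writing $\rho$ for the pointwise limit of the discrete faithful representations $\rho_n$, I would argue by contradiction, exhibiting an infinite nilpotent normal subgroup of $\G$ whenever $\rho$ fails to be faithful or to be discrete, which would contradict the hypothesis on $\G$.

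For faithfulness, suppose $\g\in\ker(\rho)$ is nontrivial, so that $\rho_n(\g)\neq 1$ while $\rho_n(\g)\to 1$. A short computation with the exponential map of $\ss$ shows that for each integer $k$ a sufficiently small neighborhood of $1$ contains no nontrivial $k$-torsion element; applied to $\rho_n(\g)$ this forces $\g$ to have infinite order, so its normal closure $N$ in $\G$ is infinite. For every finite $F\subset N$ one has $\rho(F)=\{1\}$, hence $\rho_n(F)\subset U$ for $n$ large; since $\rho_n(\G)$ is discrete, Zassenhaus gives that $\langle \rho_n(F)\rangle$ is nilpotent of class at most $c$, and faithfulness of $\rho_n$ transfers this to $\langle F\rangle\leqslant \G$. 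Letting $F$ range over finite subsets of $N$ shows that $N$ itself is nilpotent of class at most $c$, yielding the forbidden normal subgroup.

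Assuming faithfulness, I would treat discreteness along similar lines. If $\rho(\G)$ is not discrete, let $L^{\circ}$ be the identity component of $\overline{\rho(\G)}$; it is a nontrivial connected Lie subgroup of $\ss$, normal in $\overline{\rho(\G)}$, so that $N:=\rho^{-1}(L^{\circ})$ is normal in $\G$. Density of $\rho(\G)$ in $\overline{\rho(\G)}$ makes $\rho(\G)\cap L^{\circ}$ dense in $L^{\circ}$, hence infinite, and since $\rho$ is faithful, $N$ is infinite. It then suffices to prove that $L^{\circ}$ is nilpotent of class at most $c$: fix a symmetric open $V\subset U$ containing $1$, and for $x_1,\ldots,x_k\in L^{\circ}\cap V$, combine density of $\rho(\G)$ in $\overline{\rho(\G)}$ with pointwise convergence $\rho_n\to\rho$ via a diagonal argument to produce $y_{i,n}\in \rho_n(\G)\cap V$ with $y_{i,n}\to x_i$; Zassenhaus applied inside the discrete group $\rho_n(\G)$ gives that $\langle y_{1,n},\ldots,y_{k,n}\rangle$ is nilpotent of class at most $c$, and since the class-$c$ commutator identity is a closed condition on $\ss^k$, passing to the limit transfers nilpotency of class $\leqslant c$ to $\langle x_1,\ldots,x_k\rangle$. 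Because a connected Lie group is generated by any neighborhood of $1$, this propagates to $L^{\circ}$, and hence $N$ is an infinite normal nilpotent subgroup of $\G$, contradicting the hypothesis.

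The main obstacle will be this second step: Zassenhaus's theorem is strictly about \emph{discrete} subgroups, whereas the object one needs to control, $L^{\circ}$, is a priori non-discrete, so one is forced to approximate it by the discrete groups $\rho_n(\G)$ and pass to a limit. What saves the argument is that nilpotency of class $\leqslant c$ is a closed algebraic condition on any fixed number of generators, hence preserved under the pointwise convergence $\rho_n\to\rho$; the remainder of the proof is a routine assemblage of the exponential-map torsion lemma, Zassenhaus, and the normality of the subgroups extracted in each case.
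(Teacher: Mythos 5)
Your argument is correct and is precisely the route the paper has in mind: the text presents the proposition as a corollary of Zassenhaus's theorem and defers the details to the cited references of Goldman--Millson and Quint, whose proofs run along the same lines as yours (no small torsion near the identity to force infinite order in the kernel, then the Zassenhaus neighbourhood with its uniform bound on the nilpotency class applied both to the kernel and to the identity component of the closure of the image). The two technical points you single out --- the uniformity of the nilpotency class $c$ over all discrete subgroups and the closedness of the class-$c$ law on $k$-tuples of generators --- are exactly what makes the passage to the limit legitimate, so there is no gap.
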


One can find a proof of this proposition in \cite{MR972343} or \cite{MR2648674}. This theorem explains the beginning of the story.

\begin{lemma}[Choi-Goldman \cite{MR1145415}]\label{irreducible}
Let $\G$ be a discrete group such that $\G$ does not admit any infinite nilpotent normal subgroup. Let $(\O_n,\rho_n)$ be a sequence where $\O_n$ is a properly convex open set of $\PP^d$ and $\rho_n$ a sequence of discrete and faithful representations, $\rho_n: \G \to \ss$, such that $\rho_n(\G)$ is a subgroup of $\Aut(\O_n)$. Suppose that $\rho_n \to \rho_{\infty} \in \Hom(\G,\ss)$. If $\rho_{\infty}$ is irreducible, then $\rho_{\infty}$ preserves a properly convex open set.
\end{lemma}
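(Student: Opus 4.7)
The plan is to take Hausdorff limits of both the sequence $\overline{\O_n}$ in $\PP^d$ and of the dual sequence $\overline{\O_n^*}$ in $\PP^{d*}$, then to use the irreducibility of $\rho_\infty$ on each to produce a properly convex open set, ruling out the only remaining degenerate case by a general position argument on hyperplanes. Since the hyperspace of closed subsets of $\PP^d$ is compact for the Hausdorff topology and Hausdorff limits of convex sets are convex, I may after extracting a common subsequence assume $\overline{\O_n} \to C_1$ and $\overline{\O_n^*} \to C_2$, both closed and convex in their respective projective spaces. Because $\rho_n(\g)$ preserves $\overline{\O_n}$ and $\rho_n \to \rho_\infty$, continuity of the action yields $\rho_\infty(\g) C_1 = C_1$; dually, ${}^t\rho_\infty(\g)^{-1} C_2 = C_2$ for every $\g \in \G$.

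The key observation I would record is that a non-empty closed convex proper subset $C \subsetneq \PP^d$ invariant under an irreducible representation automatically has non-empty interior. Indeed, by the very definition of projective convexity, such a $C$ lies in an affine chart, so it is compact and properly convex, and its projective span is a non-empty invariant projective subspace which by irreducibility of the representation must be all of $\PP^d$. Hence $\mathrm{int}(C)$ is a properly convex open subset preserved by the representation. Applied to $C_1$, this produces the desired set whenever $C_1 \neq \PP^d$. Applied to $C_2$ for the (also irreducible) dual representation $\g \mapsto {}^t\rho_\infty(\g)^{-1}$, and then taking the dual in $\PP^d$ of the resulting properly convex open set in $\PP^{d*}$, it produces the desired set whenever $C_2 \neq \PP^{d*}$.

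The only remaining case, and the main obstacle, is $C_1 = \PP^d$ together with $C_2 = \PP^{d*}$. To rule this out I would use a general position argument. Choose $d+1$ hyperplanes $H^1, \ldots, H^{d+1}$ of $\PP^d$ in general position; since $C_2 = \PP^{d*}$ is all of dual space, each $H^i$ is a limit of hyperplanes $H_n^i \in \overline{\O_n^*}$, and by the very definition of $\overline{\O_n^*}$ one has $H_n^i \cap \O_n = \varnothing$. For $n$ large the $d+1$ hyperplanes $H_n^1, \ldots, H_n^{d+1}$ remain in general position, so $\PP^d \smallsetminus \bigcup_{i} H_n^i$ decomposes into finitely many (namely $2^d$) open projective simplices; since $\O_n$ is connected and avoids each $H_n^i$, it lies in one of them. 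Passing to a further subsequence by pigeonhole on the combinatorial type of this containing simplex, I may assume those simplices Hausdorff-converge to a single bounded open simplex of $\PP^d \smallsetminus \bigcup_i H^i$, so $C_1 = \lim \overline{\O_n}$ lies in the closure of that simplex, a proper subset of $\PP^d$ — contradicting $C_1 = \PP^d$. Hence at least one of $C_1 \neq \PP^d$ or $C_2 \neq \PP^{d*}$ holds, and the preceding paragraph delivers the invariant properly convex open set.

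The hard part is this final general position step: without also considering the dual convex sets one cannot exclude $\overline{\O_n} \to \PP^d$, and the combinatorics of the complement of $d+1$ generic hyperplanes is precisely what converts ``too large a dual limit'' into ``too small a primal limit''. The hypothesis that $\G$ has no infinite nilpotent normal subgroup, via Proposition \ref{limi_dis_faith}, ensures that $\rho_\infty$ is discrete and faithful; but the argument above in fact only needs the irreducibility of $\rho_\infty$ already assumed in the statement.
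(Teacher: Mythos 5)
Your overall architecture is sensible, and the dual/general-position argument you use to exclude $\overline{\O_n}\to\PP^d$ is correct and is a genuinely different device from the paper's; but the ``key observation'' on which every other case rests has a real gap. You assert that Hausdorff limits of convex sets are convex and that, by the definition of projective convexity, a proper closed convex subset $C\subsetneq\PP^d$ lies in an affine chart. These two assertions cannot both be invoked: with the paper's definition of convexity (either $\PP^d$ or contained in a chart), the class of closed convex sets is \emph{not} closed under Hausdorff limits. For example, the ellipses $x^2/n^2+n^2y^2<1$ in an affine chart of $\PP^2$ have closures converging to a full projective line, which is a proper subset of $\PP^2$ contained in no affine chart (this instance your span-and-irreducibility step would still catch, since the span is proper). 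Worse, the properly convex open sets $\PP\big(\{v\in\R^4:\ v_1>\varepsilon_n\|v\|,\ v_2>\varepsilon_n\|v\|\}\big)$ with $\varepsilon_n\to 0$ have closures converging to $\PP\big(\{v_1\geqslant 0,\ v_2\geqslant 0\}\smallsetminus\{0\}\big)$: this limit is a proper subset of $\PP^3$ whose span is everything, it is contained in no affine chart, and its interior is the projectivization of the non-sharp cone $\{v_1>0,\ v_2>0\}$, an open convex set that is \emph{not} properly convex. Your argument would certify this interior as properly convex; nothing you wrote excludes these intermediate degenerations sitting strictly between ``properly convex limit'' and ``limit equal to $\PP^d$'', and they are exactly the crux of the lemma.

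The paper closes this hole by passing to the double cover $\S^d$, where the limit $Q$ of the lifted closures is genuinely convex (contained in a closed hemisphere) and where the trichotomy is clean: if $Q$ has empty interior its span is a proper invariant subspace, and if $Q$ is not properly convex then $Q\cap(-Q)$ is non-empty and spans a non-trivial \emph{proper} invariant subspace (the lineality space of the limit cone), both contradicting irreducibility. You could instead repair your own primal--dual scheme by observing that if the cone over $C_1$ has a non-trivial lineality space $L$, then every hyperplane in $\overline{\O_n^*}$ limits onto a functional vanishing on $L$, so $C_2$ is contained in $\PP(L^{\perp})$ and its span is a proper invariant subspace for the dual representation; but as written that step is absent, and without it (or the spherical lift) the proof does not go through.
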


\begin{proof}
Endow the space of closed subset of $\PP^d$ with the Hausdorff topology. The subspace of closed convex subsets of $\PP^d$ is closed for this topology, therefore it is compact. Consider an accumulation point $K$ of the sequence $(\overline{\O_n})_{n \in \N}$ in this space. The convex set $K$ is preserved by $\rho_{\infty}$.

We have to show that $K$ has non-empty interior and is properly convex. To do that, consider the 2-fold cover $\S^d$ of $\PP^d$ and rather than taking an accumulation point $K$ of $(\overline{\O_n} )_{n \in \N}$ in $\PP^d$, take it in $\S^d$ and call it $Q$. We have three possibilities for $Q$:

\begin{enumerate}
\item $Q$ has empty interior;
\item $Q$ is not properly convex;
\item $Q$ is properly convex and has a non-empty interior.
\end{enumerate}

Suppose $Q$ has empty-interior. As $Q$ is a convex subset of $\S^d$, this implies that $Q$ spans a non-trivial subspace of $\R^{d+1}$ which is preserved by $\rho_{\infty}$. This is absurd since $\rho_{\infty}$ is irreducible. Suppose now that $Q$ is not properly convex. Then $Q \cap -Q \neq \varnothing$ spans a non-trivial subspace of $\R^{d+1}$. This is again absurd.

Therefore $Q$ is properly convex and has non-empty interior.
\end{proof}

In dimension 2, using lemma \ref{trace},  one can show easily that every accumulation point of $\beta_{\G}$ is an irreducible representation, hence the preceding lemma shows that $\beta_{\G}$ is closed. Unfortunately, this strategy does not work in high dimensions and the machinery to show closedness is highly more involved.

\begin{lemma}\label{trace}
Every Zariski-dense subgroup of $\s{2}$ contains an element with a negative trace. Every element $\g$ of $\s{3}$ preserving a properly convex open set is such that $\mathrm{Tr}(\g) \geqslant 3$.
\end{lemma}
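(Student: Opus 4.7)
For Part 1, I argue by contradiction: suppose every $\gamma\in\Gamma$ satisfies $\mathrm{Tr}(\gamma)\geq 0$, and produce a commutator of strictly negative trace. Zariski density prevents $\Gamma$ from lying inside any proper algebraic subgroup of $\s{2}$, so its image in $\mathrm{PSL}_2(\R)$ is non-elementary; the limit set $\Lambda\subset\PP^1$ is therefore infinite and perfect, and the attracting fixed points of hyperbolic elements of $\Gamma$ are dense in $\Lambda$. Choosing four interleaved points of $\Lambda$ and approximating each by such an attracting fixed point produces two hyperbolic elements $g,h\in\Gamma$ whose axes in $\mathbb{H}^2$ cross transversally. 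The trace assumption together with hyperbolicity forces $\mathrm{Tr}(g),\mathrm{Tr}(h)>2$, so the eigenvalues of $g$ and of $h$ are positive. After conjugating in $\s{2}$ put $g=\mathrm{diag}(\lambda,\lambda^{-1})$ with $\lambda>1$ and write the matrix of $h$ in this basis with entries $a,b,c,d$; a direct computation using $ad-bc=1$ gives
\[
\mathrm{Tr}([g^n,h]) \;=\; 2ad \;-\; 2\cosh(2n\log\lambda)\,bc.
\]
In these coordinates the fixed points of $h$ on $\PP^1=\R\cup\{\infty\}$ have product $-b/c$, so the two axes cross precisely when $bc>0$. Under that sign the trace tends to $-\infty$ as $n\to+\infty$, contradicting $[g^n,h]\in\Gamma$.

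For Part 2, the plan is to combine Proposition \ref{classi_matrix} with the AM-GM inequality. By that proposition $\gamma$ is positively bi-semi-proximal, so $\rho_\gamma^+$ and $\rho_\gamma^-$ are positive real eigenvalues of $\gamma$. A complex eigenvalue of a real $3\times 3$ matrix would come with its conjugate and leave only one real eigenvalue, inconsistent with two distinct real eigenvalues $\rho_\gamma^+ > \rho_\gamma^-$; hence in the proximal case all three eigenvalues are real, and I write the third as $\alpha$. The determinant identity $\rho_\gamma^+\,\alpha\,\rho_\gamma^-=1$ together with $\rho_\gamma^\pm>0$ then forces $\alpha>0$. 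Applying AM-GM to three positive reals of product $1$ yields
\[
\mathrm{Tr}(\gamma) \;=\; \rho_\gamma^+ + \alpha + \rho_\gamma^- \;\geq\; 3\sqrt[3]{\rho_\gamma^+\,\alpha\,\rho_\gamma^-} \;=\; 3,
\]
with equality iff every eigenvalue equals $1$.

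The main obstacle lies in Part 1: the condition $bc>0$ is not Zariski-closed, so the existence of two hyperbolic elements of $\Gamma$ with crossing axes cannot be extracted from Zariski density by purely algebraic arguments, and one must invoke the dynamics of the non-elementary action of $\Gamma$ on $\mathbb{H}^2$ (density of axes of hyperbolic elements among the geodesics with endpoints in the perfect limit set). In Part 2 the delicate point is the degenerate case $\rho_\gamma^+=\rho_\gamma^-$: by the remark following the definition of positive semi-proximality every eigenvalue modulus then equals $1$, and patterns such as $\{1,-1,-1\}$ or $\{1,e^{\pm i\theta}\}$ preserve an ellipsoid yet have trace strictly less than $3$. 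These are ruled out in the intended application, where an element of a discrete group dividing a strictly convex properly convex open set of $\PP^2$ is forced by Proposition \ref{classi_qhyp} to be hyperbolic in the Hilbert metric and hence strictly proximal, giving $\rho_\gamma^+>\rho_\gamma^-$.
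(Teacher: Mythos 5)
The paper does not actually prove this lemma: the remark that follows it defers the first assertion to Choi--Goldman \cite{MR1145415} and declares the second a ``trivial consequence'' of Proposition \ref{classi_matrix}. Your proposal therefore supplies arguments where the paper supplies citations, and both halves are essentially sound. For the first assertion, the commutator computation checks out (at $n=0$ it returns $2(ad-bc)=2$, and $bc>0$ is indeed the condition that the fixed points of $h$ separate $0$ from $\infty$), so the real content is the existence in $\Gamma$ of two hyperbolic elements with transverse axes. What you need there is density of the \emph{pairs} $(\gamma^+,\gamma^-)$ in $\Lambda\times\Lambda$ minus the diagonal, not merely density of attracting fixed points; your closing paragraph names the correct fact, but the phrase ``approximating each of the four points by an attracting fixed point'' does not by itself control both endpoints of an axis, so that step should be stated (or proved, via the usual $g^nh^n$ ping-pong) explicitly. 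You should also dispose of the non-discrete case: a Zariski-dense subgroup of $\s{2}$ is either discrete or dense in the usual topology, since the identity component of its closure is normalized by a Zariski-dense set and hence normal; in the dense case the crossing axes are immediate.

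For the second assertion, your AM--GM argument in the case $\rho_\gamma^+>\rho_\gamma^-$ is exactly the intended ``trivial consequence'', and your remark about the degenerate case is a genuine and worthwhile catch: as literally stated the second sentence of the lemma is false, since $\mathrm{diag}(1,-1,-1)$, or any rotation $\mathrm{diag}(R_\theta,1)\in\so{2}$, preserves an ellipsoid and has trace $1+2\cos\theta<3$, and Proposition \ref{classi_matrix} does not exclude such elements. The inequality holds only for non-elliptic elements (parabolics of $\s{3}$ are regular unipotent by the classification, hence of trace exactly $3$), which is all that is used in the proof of closedness in dimension $2$, where the representations are discrete, faithful and torsion-free so that no nontrivial element is elliptic. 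So your proof is correct where the lemma is correct, and correctly identifies where it is not.
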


\begin{remark}
The second assertion of this lemma is a trivial consequence of Proposition \ref{classi_matrix}. The first part is a lemma of \cite{MR1145415}.
\end{remark}

\begin{proof}[Proof of the closedness in dimension 2 assuming Lemma \ref{trace}]
Take a sequence $\rho_n \in \beta_{\G}$ which converges to a representation $\rho$. Suppose $\rho$ is not irreducible. Then up to conjugation and transposition, the image of $\rho$ restricted to $[\G,\G]$ is included in the subgroup of elements $\s{3}$ which fix every point of a line of $\R^3$ and preserve a supplementary plane (this subgroup is of course isomorphic to $\s{2}$). Hence, Lemma \ref{trace} shows that there exists an element $\g \in \G$ such that $\mathrm{Tr}(\rho_{\infty}(\g))< 1$. But Lemma \ref{trace} shows that for every element $\g$, we have $\mathrm{Tr}(\rho_n(\g)) \geqslant 3$. Hence, the representation $\rho$ is irreducible.

Therefore by Lemma \ref{irreducible}, the representation $\rho$ preserves a properly convex open set $\O$. The action of $\G$ on $\O$ is proper using the Hilbert metric and the quotient is compact since the cohomological dimension of $\G$ is 2.
\end{proof}

\subsection{The openness}

Let $M$ be a manifold. A \emph{projective manifold}\footnote{All projective structures are assume to be flat along this chapter. Here is an alternative definition: two torsion free connections on a manifold $M$ are \emph{projectively equivalent} if they have the same geodesics, up to parametrizations. A class of projectively equivalent connections defines a \emph{projective structure} on $M$. A projective structure is \emph{flat} if every point has a neighbourhood on which the projective structure is given by a flat torsion-free connection.}\index{projective manifold} is a manifold with a \, $(\PP^d,\mathrm{PGL}_{d+1}(\R))$-structure\footnote{See \cite{MR1435975} or \cite{MR957518} for the definition of a $(G,X)$-structure.}. A \emph{marked projective structure}\index{marked projective structure} on $M$ is a homeomorphism $h:M \to \mathcal{M}$ where $\mathcal{M}$ is a projective manifold. A marked projective structure is \emph{convex}\index{marked projective structure!convex} when the projective manifold $\mathcal{M}$ is convex, e.g. the quotient of a properly convex open set $\O$ by a discrete subgroup $\G$ of $\Aut(\O)$. Two marked projective structures $h,h': M \to \mathcal{M}, \mathcal{M}'$ are \emph{equivalent}\index{marked projective structure!equivalent} if there exists an isomorphism $i$ of $(\PP^d,\mathrm{PGL}_{d+1}(\R))$-structures between $\mathcal{M}$ and $\mathcal{M}'$ and the homeomorphism $h'^{-1}\circ i \circ h:M \to M$ is isotopic to the identity.

We denote by $\PP(M)$ the space of marked projective structures on $M$ and by $\beta(M)$ the subspace of convex projective structures. There is a natural topology on $\PP(M)$, see \cite{MR957518} for details.

\begin{theorem}[Kozsul, \cite{MR0239529}]\label{thm_kosz}
Let $M$ be a compact manifold. The subspace $\beta(M)$ is open in $\PP(M)$.
\end{theorem}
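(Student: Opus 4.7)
The plan is to use Koszul's characterization of convex projective structures via the existence of a ``hyperbolic'' closed $1$-form on the associated cone bundle, together with the stability of this condition under small perturbations of the projective structure.

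First, I would reformulate projective structures as radiant flat affine structures. A projective structure on $M$ lifts canonically to a flat affine structure on the principal $\R^*_+$-bundle $N \to M$ (the cone bundle), whose developing map takes values in $\R^{d+1} \setminus \{0\}$ and is equivariant for the radial action. Under this correspondence, the projective structure on $M$ is convex (i.e., lies in $\beta(M)$) if and only if the developing image of the universal cover of $N$ is a sharp convex open cone $\C \subset \R^{d+1}$ on which the affine holonomy acts properly discontinuously. Thus it suffices to show that, within the space of radiant flat affine structures on the compact manifold $N$, those whose developing image is a sharp convex cone form an open subset.

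Next I would invoke Koszul's criterion: a compact radiant affine manifold $N$ has developing image a sharp convex open cone if and only if it admits a closed $1$-form $\alpha$ whose covariant derivative $\nabla \alpha$ (with respect to the flat affine connection) is positive definite. The forward direction uses Vinberg's characteristic function $\varphi(x) = \int_{\C^*} e^{-f(x)} df$: the $1$-form $\alpha = d\log \varphi$ is $\Aut(\C)$-invariant on $\C$, has positive definite Hessian (by strict convexity of $\log \varphi$), and descends to $N$. The converse is the substantive half: integrating $\alpha$ on the universal cover produces a strictly convex potential $F$, and one checks using compactness of $N$ and properness of the holonomy that the sublevel sets of $F$ exhaust a sharp convex cone which must coincide with the developing image.

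Granted this criterion, openness of $\beta(M)$ is a perturbation argument. Fix $p_0 \in \beta(M)$ with Koszul form $\alpha_0$ on the cone bundle $N$, and let $\nabla^{p_0}$ be the corresponding flat affine connection. A nearby projective structure $p$ yields a flat affine connection $\nabla^p$ which is $C^\infty$-close to $\nabla^{p_0}$, and (since $N$ is compact, so $H^1(N;\R)$ is finite-dimensional) one can represent the same cohomology class by a $\nabla^p$-closed form $\alpha_p$ that is $C^1$-close to $\alpha_0$. Positive-definiteness of $\nabla^p \alpha_p$ is an open condition in the $C^1$-topology and holds strictly at $(\nabla^{p_0}, \alpha_0)$, so it persists for $p$ near $p_0$. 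Koszul's criterion then places $p$ in $\beta(M)$, proving the openness.

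The main obstacle is the converse half of Koszul's criterion, i.e., showing that the mere existence of a closed $1$-form with positive definite covariant derivative forces the developing image of $\widetilde N$ to be a sharp convex cone. This step requires genuine convex-geometric input: one must combine the strict convexity of the integrated potential $F$ with properness of the holonomy action and compactness of the quotient (via a Benz\'ecri-type compactness argument on pointed convex hypersurfaces) to rule out degeneracies such as the image being all of $\R^{d+1}$ or containing an affine line. Once this converse is in hand, the openness statement itself is a soft perturbation argument as sketched above.
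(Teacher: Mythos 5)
Your argument is correct in outline, but it follows Koszul's original route rather than the one sketched in the paper. The paper presents Labourie's formalisation: a projective structure on $M$ is encoded by a \emph{good pair} $(\nabla,h)$ --- a torsion-free volume-preserving connection together with a symmetric $2$-tensor $h$ such that the associated connection $\nabla^h$ on $TM\times\R$ is flat --- and convexity of the structure is shown to be equivalent to positive-definiteness of $h$. Since the positivity of a tensor on a compact manifold is manifestly open, openness of $\beta(M)$ is immediate once the dictionary is in place; all the convex-geometric work is hidden in the equivalence ``convex $\Leftrightarrow h>0$''. You instead lift to the radiant flat affine structure on the cone bundle and invoke Koszul's criterion (existence of a closed $1$-form $\alpha$ with $\nabla\alpha$ positive definite), then perturb. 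The two approaches hinge on essentially the same piece of affine differential geometry --- Labourie's $h$ plays exactly the role of your Hessian $\nabla\alpha$ --- so neither is more elementary; yours is closer to the original reference, while Labourie's repackaging makes the openness step a one-line observation. Two small points on your version: first, the total space of a principal $\R^*_+$-bundle over $M$ is not compact, so to run both Koszul's criterion and the perturbation argument you must pass to a compact quotient (e.g.\ divide the fibres by a discrete scaling subgroup $\lambda^{\Z}$, using that $d\log\varphi$ is scale-invariant); second, the cohomological step is superfluous, since closedness of a $1$-form is independent of the (torsion-free) connection --- the \emph{same} form $\alpha_0$ remains closed for the perturbed structure, and only the openness of the condition $\nabla^p\alpha_0>0$ on a compact manifold is needed.
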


One can find a proof of this theorem in \cite{MR0239529}, but also in a lecture of Benoist \cite{CoursDeBenoist} or in a paper of Labourie \cite{MR2402597}. In the last reference, Labourie did not state the theorem but his Theorem 3.2.1 implies Theorem \ref{thm_kosz}.

The formalism of Labourie is the following: a projective structure on a manifold $M$ is equivalent to the data of a torsion-free connexion $\nabla$ and a symmetric 2-tensor $h$ satisfying a compatibility condition. Namely, let $\nabla^h$ be a connexion on the vector bundle $TM\times \R \to M$ given by the formula: $$\nabla_X^h(Z,\lambda)=\big(\nabla_X Z+\lambda X, L_X(\lambda)+h(Z,X)\big)$$ where $X,Z$ are vector fields on $M$, $\lambda$ is a real-valued function on $M$ and $L_X \lambda$ is the derivation of $\lambda$ along $X$. We say that the pair $(\nabla,h)$ is \emph{good} if $\nabla$ preserves a volume and $\nabla^h$ is flat.

Labourie shows that every good pair defines a projective structure and that every projective structure defines a good pair. Finally, Labourie shows that a projective structure is convex if and only if the symmetric 2-tensor $h$ of the good pair is definite positive. Hence, he shows that being convex is an open condition.

\subsection{Description of the topology for the surface}

Let $\Sigma$ be a compact surface with a finite number of punctures. We denote by $\beta(\Sigma)$ and $\mathrm{Hyp}(\Sigma)$ the moduli spaces of marked convex projective structures and hyperbolic structures on $\Sigma$,  and by $\beta_f(\Sigma)$ and $\mathrm{Hyp}_f(\Sigma)$ the finite-volume ones (for the Busemann volume).

In dimension 2, we can give three descriptions of the moduli space $\beta(\Sigma)$. The first one comes from Fenchel-Nielsen coordinates, the second one comes from Fock-Goncharov coordinates on Higher Teichm\"uller space and finally the third one was initiated by Labourie and Loftin independently.

\begin{theorem}[Goldman]
Suppose that $\Sigma$ is a compact surface with negative Euler characteristic $\chi$. Then the space $\beta(\Sigma)$ is a ball of dimension $-8\chi$.
\end{theorem}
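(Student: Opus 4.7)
The plan is to introduce explicit global coordinates on $\beta(\Sigma)$ adapted to a pants decomposition of $\Sigma$, extending the Fenchel--Nielsen picture from hyperbolic to convex projective structures in the manner of Goldman. Write $-\chi=2g-2$, fix an orientation of $\Sigma$ and a maximal system of $M=3g-3$ disjoint essential simple closed curves cutting $\Sigma$ into $N=2g-2$ pairs of pants $P_1,\ldots,P_N$.

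First I would analyse the building block: the moduli space of marked convex projective structures on a single pair of pants $P$ with hyperbolic boundary holonomy. For such a structure the holonomy $A_i\in\s{3}$ around each boundary component preserves a properly convex open set and has positive translation length; by Propositions \ref{classi_matrix} and \ref{classi_qhyp} each $A_i$ is then positively biproximal with three distinct positive eigenvalues of product $1$, so its conjugacy class in $\s{3}$ is encoded by two real parameters. Since $\pi_1(P)$ is free of rank $2$, a dimension count gives $\dim\mathrm{Hom}(\pi_1(P),\s{3})/\s{3}=2\cdot 8-8=8$, and after fixing the three boundary conjugacy classes (eliminating $6$ parameters) we are left with $2$ internal shape parameters per pants. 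The key technical input, due to Goldman, is that the remaining conjugacy classes of convex triples $(A_1,A_2,A_3)$ with $A_1A_2A_3=1$ form a set diffeomorphic to $\R^2$: each pants therefore contributes $6+2=8$ real parameters, all free.

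Next I would implement the gluing. Along each interior curve $c$ the two adjacent pants share a common hyperbolic conjugacy class (encoded by a single pair of length parameters), and the freedom of conjugating one side against the other introduces a $2$-parameter twist, reflecting the $2$-dimensional centralizer of a regular hyperbolic element in $\s{3}$. Each interior curve therefore contributes $2$ length $+\ 2$ twist $=4$ parameters and each pants contributes $2$ internal shape parameters, for a total dimension of
\begin{equation*}
4M+2N=4(3g-3)+2(2g-2)=16g-16=-8\chi.
\end{equation*}

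It remains to check that the resulting map $\Phi\colon\beta(\Sigma)\to\R^{-8\chi}$ is a homeomorphism onto all of $\R^{-8\chi}$. Openness of the image is an instance of Theorem \ref{thm_kosz}, and injectivity follows because the parameters determine the holonomy of a full generating set of $\pi_1(\Sigma)$ up to conjugation; the closedness of $\beta(\Sigma)$ inside $\mathrm{Hom}(\pi_1(\Sigma),\ss)/\ss$ provided by Theorem \ref{natur_stat} then ensures that no parameter accumulates at a boundary, so $\Phi$ is proper and surjective. The main obstacle is the pair-of-pants step: showing that the set of conjugacy classes of convex triples with prescribed positively biproximal boundary conjugacy classes is diffeomorphic to $\R^2$. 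This is the technical heart of Goldman's argument; it requires an explicit slice for the $\s{3}$-conjugation action on the relative representation variety, together with a direct verification that convexity is preserved across the whole $2$-parameter family. Once that is established, the ball structure of $\beta(\Sigma)$ is immediate since each of the $-8\chi$ coordinates varies freely over $\R$.
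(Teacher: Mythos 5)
The paper gives no proof of this theorem: it is a survey statement attributed to Goldman, with the one-line indication that he proves it ``by extending Fenchel--Nielsen coordinates on Teichm\"uller space''. Your sketch is exactly that strategy, and your bookkeeping is correct: the conjugacy class of a positive hyperbolic element of $\s{3}$ is cut out by two eigenvalue ratios, the centralizer of such an element is the $2$-dimensional diagonal torus (hence two twist-type parameters, Goldman's twist and bulge, per gluing curve), and $4(3g-3)+2(2g-2)=16g-16=-8\chi$. You also correctly isolate the technical heart, namely that the relative character variety of convex triples in a pair of pants with prescribed boundary invariants is a $2$-cell; that is the bulk of Goldman's paper and cannot be recovered from the dimension count $2\cdot 8-8-6=2$ alone, since the count only gives the expected dimension of a smooth point.

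The one step that would not survive scrutiny is your properness/surjectivity argument. Knowing from Theorem \ref{natur_stat} that $\beta(\Sigma)$ is open and closed in the character variety does not prevent a sequence of convex structures whose $\Phi$-coordinates converge in $\R^{-8\chi}$ from having holonomies that diverge in $\Hom(\pi_1(\Sigma),\s{3})$; closedness in the character variety says nothing about closedness of the image of $\Phi$ in coordinate space, so the deduction ``no parameter accumulates at a boundary'' is circular. (It is also anachronistic: the closedness of $\beta(\Sigma)$ is the Choi--Goldman theorem, proved after and partly by means of the parametrization you are trying to establish.) Goldman instead proves surjectivity constructively: for every parameter tuple he assembles convex structures on the pants and glues them, and the substantive point is a convexity-of-gluing theorem showing that a projective structure obtained by identifying convex pants along principal geodesic boundaries is again convex. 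That gluing lemma, together with the continuity of the resulting inverse map $\R^{-8\chi}\to\beta(\Sigma)$, is what makes $\Phi$ a bijection onto a cell; you would need to add it to close the argument.
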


Goldman shows this theorem for compact surfaces in \cite{MR1053346}\footnote{which is by the way a very nice door to the world of convex projective geometry.} by extending Fenchel-Nielsen coordinates on Teichm\"uller space. Choi and Goldman extend this theorem to compact 2-orbifold in \cite{MR2170138}. Finally, the author of this chapter extends Goldman's theorem to the case of finite-volume surfaces in \cite{MR2740643}.

In a different time by completely different methods, Fock-Goncharov find a system of coordinates for higher Teichm\"uller space described in \cite{MR2233852}. Since the space $\beta(\Sigma)$ is the second simplest higher Teichm\"uller space (after Teichm\"uller space itself), their results give another system of coordinates on $\beta(\Sigma)$. The situation of $\s{3}$ is much simpler than the situation of a general semi-simple Lie group, and there is a specific article dealing with Fock-Goncharov coordinates in the context of $\beta(\Sigma)$, that the reader will be happy to read: \cite{MR2304317}. Note that the Fock-Goncharov coordinates are nice enough to describe very simply and efficiently $\beta_f(\Sigma)$ in $\beta(\Sigma)$.

This leads us to the last system of coordinates:

\begin{theorem}[Labourie \cite{MR2402597} or Loftin \cite{MR1828223} (compact case), Benoist-Hulin (finite volume case \cite{ben_hulin})]\label{complex_moduli}
There exists a fibration $\beta_f(\Sigma) \to \mathrm{Hyp}_f(\Sigma)$ which is equivariant with respect to the mapping class group of $\Sigma$.
\end{theorem}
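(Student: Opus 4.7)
The plan is to use the Cheng--Yau affine sphere construction (Theorem \ref{surf_yau}) as the bridge between convex projective and hyperbolic structures, following the strategy of Labourie and Loftin in the compact case and Benoist--Hulin in the finite volume case. The map will send a convex projective structure to the conformal (hence hyperbolic) structure underlying the Blaschke metric of the equivariant affine sphere, and the fiber over a point of $\mathrm{Hyp}_f(\Sigma)$ will turn out to be the space of holomorphic cubic differentials with suitable growth at the cusps.

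The first step is: given a marked finite volume convex projective structure $h:\Sigma\to\Quo$, lift $\O$ to the universal cover and apply Theorem \ref{surf_yau} to obtain a $\PG$-equivariant hyperbolic affine sphere $D'_{\O}:\O\to\C_{\O}$ asymptotic to $\C_{\O}$. Classical affine differential geometry associates to such a sphere two canonical tensors on $\O$: the Blaschke (second fundamental form) metric $g_{\O}$, which is a smooth Riemannian metric, and the Pick cubic form $C_{\O}$. By $\Aut(\O)$-equivariance of the affine sphere, both objects descend to a Riemannian metric $g$ and a cubic form $C$ on $\Sigma$. The conformal class of $g$ determines a Riemann surface structure on $\Sigma$, and $C$ becomes a holomorphic cubic differential with respect to this complex structure, because the structural equations for the affine sphere (the Gauss--Codazzi equations, equivalently Wang's equation $\Delta_g u + 4\|C\|_g^2 e^{-2u} - 2e^u + \kappa_g = 0$ up to normalisation) force $\bar\partial C = 0$.

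The second step is to combine uniformisation with control at the cusps. In the compact case uniformisation immediately produces the unique hyperbolic metric in the conformal class of $g$, giving the fibration $\beta(\Sigma)\to\mathrm{Hyp}(\Sigma)$; the fibre is the vector space of holomorphic cubic differentials, of dimension $-5\chi(\Sigma)$, matching Goldman's count $\dim\beta(\Sigma)=-8\chi(\Sigma) = -3\chi(\Sigma)+(-5\chi(\Sigma))$. In the finite volume case one has to show that the Blaschke metric is complete with a cusp conformal structure at each puncture (so that the Riemann surface is of finite type and uniformisation applies) and that the cubic differential has at worst a pole of the order allowed by finite volume of the projective structure. This is done by analyzing the affine sphere inside the cone over a neighbourhood of a cusp, where the convex set is well understood (its automorphism contains a unipotent element of maximal Jordan block size, by the cusp classification used implicitly in the earlier parts of the survey), and by applying the Cheng--Yau asymptotic estimates to show that $g$ is asymptotic to a cusp hyperbolic metric and $C$ to a cubic differential with controlled growth.

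The mapping class group equivariance will be automatic: every construction above is canonical (the affine sphere, the Blaschke metric, the Pick form, uniformisation), so the mapping class group acts compatibly on source and target, and the map is equivariant. The hard part, and the reason the finite volume case lagged the compact one, is the analytic step controlling the affine sphere near the cusps: one must prove completeness of the Blaschke metric at punctures, the removable/controlled singularity of the cubic differential, and an inverse construction solving Wang's equation on a punctured Riemann surface with prescribed cubic differential data. These are the PDE/a~priori estimates carried out in Benoist--Hulin \cite{ben_hulin}; with them in hand, assembling the fibration from the canonical affine sphere correspondence is straightforward.
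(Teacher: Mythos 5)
The paper gives no proof of this theorem --- it is a cited survey statement --- but it does single out Theorem \ref{surf_yau} (the equivariant hyperbolic affine sphere) as its main input, and your proposal develops exactly that route: Blaschke metric, Pick cubic form, holomorphicity from the structure equations, uniformisation, and the cusp analysis of Benoist--Hulin, with the dimension count $-8\chi=-3\chi+(-5\chi)$ correctly matching Goldman's theorem. Your outline is a faithful summary of the Labourie--Loftin--Benoist--Hulin argument; the genuinely hard analytic steps (completeness of the Blaschke metric at the punctures, pole order of the cubic differential, and the inverse construction via Wang's equation) are correctly identified and appropriately deferred to \cite{ben_hulin}.
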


\subsection{Description of the topology for 3-orbifolds}

\subsubsection{Description of the topology for the Coxeter Polyhedron.}

There is an another context where we can describe the topology of $\beta_{\G}$. It is the case of certain Coxeter groups in dimension 3.

Take a polyhedron $\GG$, label each of its edges $e$ by a number $\theta_e \in ]0,\frac{\pi}{2}]$ and consider the space $\beta_{\GG}$ of marked\footnote{This means that you keep track of a numbering of the faces of $P$.} polyhedra $P$ of $\PP^3$ with a reflection $\sigma_s$ across each face $s$ such that for every two faces $s,t$ of $P$ sharing an edge $e= s \cap t$, the product of two reflections $\sigma_s$ and $\sigma_t$ is conjugate to a rotation of angle $\theta_e$.

We define the quantity $d(\GG) = e^+-3$ where $e^+$ is the number of edges of $\GG$ not labelled $\frac{\pi}{2}$.

We need an assumption on the shape of $P$ to get a theorem. Roughly speaking, the assumption means that $P$ can be obtained by induction from a tetrahedron by a very simple process.

This process is called ``\'ecimer'' in french. The english translation of this word is ``to pollard'' i.e cutting the head of a tree. An \emph{ecimahedron } is a polyhedron obtained from the tetrahedron by the following process: see Figure \ref{ecim}. 

\begin{center}
\begin{figure}[h!]
\centering
\includegraphics[width=7cm]{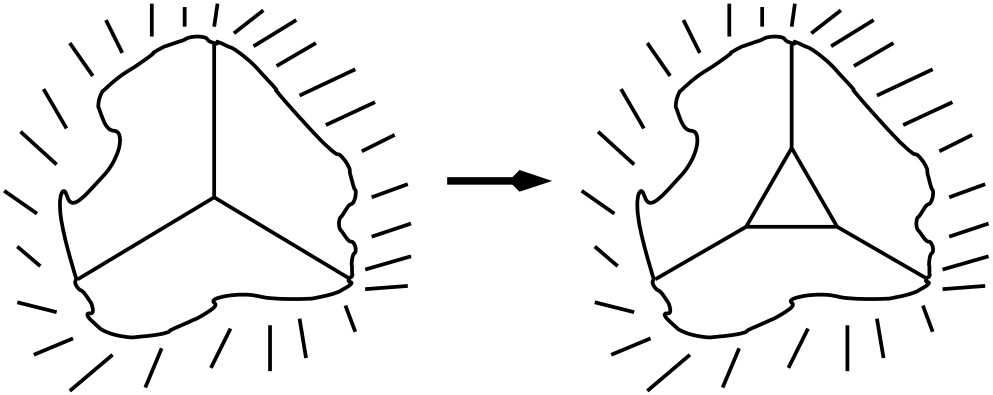}
\caption{Ecimation or truncation\label{ecim}}
\end{figure}
\end{center}

A \emph{bad 3-circuit} of $\GG$ is a sequence of three faces $r,s,t$ that intersect each other but such that the edges $e = r \cap s$, $f= s \cap t$ and $g = t \cap s$ do not intersect and such that $\theta_e +\theta_f+\theta_g \geqslant \pi$ and one of the angles $\theta_e , \theta_f,\theta_g$ is equal to $\frac{\pi}{2}$.

\begin{theorem}[Marquis. \cite{MR2660566}]\label{ecima}
Suppose $\GG$ is an ecimahedron without bad 3-circuits, which is not a prism and such that $d(\GG) > 0$. Then the space $\beta_{\GG}$ is a finite union of balls of dimension $d(\GG)$. Moreover, if $\GG$ is a hyperbolic polyhedron, then $\beta_{\GG}$ is connected.
\end{theorem}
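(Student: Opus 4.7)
The plan is to parametrise $\beta_{\GG}$ via Vinberg's projective Cartan matrix construction and then proceed by induction on the number of vertex truncations needed to obtain $\GG$ from a tetrahedron. A marked Coxeter polyhedron of combinatorial type $\GG$ in $\PP^3$ realising the prescribed dihedral angles is encoded, up to projective equivalence, by a family of pairs $(v_i,f_i)_{i=1}^{F}$, one per face, where $F$ is the number of faces, $f_i\in(\R^4)^*$ defines the supporting hyperplane of face $i$, $v_i\in\R^4$ is the associated root so that $\sigma_i(x)=x-f_i(x)v_i$ is the corresponding reflection, and the Cartan entries $a_{ij}:=f_i(v_j)$ satisfy $a_{ij}a_{ji}=4\cos^2\theta_e$ for each edge $e=i\cap j$ labelled $\theta_e$. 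After taking the quotient by the gauge action $(v_i,f_i)\mapsto(\lambda_iv_i,\lambda_i^{-1}f_i)$ and by the natural $\PG$-action, $\beta_{\GG}$ becomes a semi-algebraic set whose dimension can be read directly from the combinatorics: each non-right-angled edge contributes one free ratio parameter, each right-angled edge forces one of $a_{ij},a_{ji}$ to vanish and contributes none, and the remaining constraints coming from non-adjacent face pairs are pinned down by the ecimahedron structure, producing the expected count $d(\GG)=e^+-3$.

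The base case is a direct analysis of the $4\times4$ Cartan matrix of the tetrahedron modulo the three-dimensional diagonal gauge, showing that the moduli space of projective Coxeter tetrahedra with prescribed dihedral angles is a finite union of balls of the expected dimension. For the inductive step, write $\GG$ as obtained from an ecimahedron $\GG_0$ by truncating a single vertex $v$; the three new edges of the truncating face are labelled $\tfrac{\pi}{2}$, so $e^+(\GG)=e^+(\GG_0)$ and $d(\GG)=d(\GG_0)$. I would construct the natural map $\Phi:\beta_{\GG}\to\beta_{\GG_0}$ that extends the three faces of $\GG$ adjacent to the truncating face until they meet at a point, recovering $v$, and then show that $\Phi$ is a homeomorphism onto an open subset. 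Injectivity follows from the fact that the truncating plane, being simultaneously orthogonal to the three adjacent faces of $\GG_0$, is uniquely determined by the Cartan data of $\GG_0$ at $v$; for surjectivity one must verify that the resulting plane actually cuts out a convex polyhedron of combinatorial type $\GG$, and this is where the ``no bad 3-circuit'' and ``not a prism'' hypotheses enter: they rule out the degenerate configurations in which the three orthogonality conditions either fail to determine a well-placed plane or force an incorrect incidence pattern among faces.

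For the connectedness claim in the hyperbolic case, the hyperbolic Coxeter polyhedron provides a distinguished point $\rho_0\in\beta_{\GG}$, and the connected component of $\rho_0$ in the representation variety of the reflection group of $\GG$ is both open (by Koszul's Theorem \ref{thm_kosz}) and closed (by the Koszul--Benoist naturality statement, Theorem \ref{natur_stat}, applied to this group, which under the stated hypotheses contains no infinite nilpotent normal subgroup), so it exhausts $\beta_{\GG}$, which together with the inductive description forces $\beta_{\GG}$ to be a single ball. The main obstacle throughout is the truncation step: controlling the combinatorial type of the polyhedron along deformations, that is, ruling out spurious face incidences and preserving convexity as the Cartan parameters vary, is exactly what the ``ecimahedron without bad 3-circuits'' hypothesis is tailored to manage, and carrying out this verification carefully at each truncation is the technical heart of the proof.
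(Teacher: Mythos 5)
A preliminary remark: this survey does not prove Theorem \ref{ecima}; it only states it with a pointer to \cite{MR2660566}, so the comparison is with the argument of that paper. Your skeleton is essentially the right one and is what is done there: encode a mirror polyhedron by Vinberg's data $(v_i,f_i)$ and the Cartan matrix $(f_i(v_j))$, quotient by the diagonal gauge and by $\PG$, and induct on the number of truncations needed to build $\GG$ from the tetrahedron, the new triangular face being the polar of the truncated vertex and carrying no new moduli because its three new edges are right-angled. Two of your steps, however, have genuine gaps. First, the dimension count and the identification of the image of $\Phi$. The naive count ``one ratio per non-right edge modulo the gauge'' gives $e^+-(F-1)$, with $F$ the number of faces, not $e^+-3$; the discrepancy is carried by the Cartan entries of \emph{non-adjacent} pairs of faces, which are genuine unknowns subject to Vinberg's inequalities and are only ``pinned down'' once one proves, at each truncation, that the linear form of the new face is a prescribed combination of the three forms meeting at the truncated vertex. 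Relatedly, knowing that $\Phi:\beta_{\GG}\to\beta_{\GG_0}$ is a homeomorphism onto an open subset does not close the induction: an open subset of a ball need not be a union of balls. One must identify the image exactly --- it is cut out by the condition that the $3\times 3$ Cartan submatrix at the vertex be of negative type --- and show that this locus is again a finite union of balls while controlling its number of components; this is where the prismatic $3$-circuits and the ``no bad $3$-circuit'' hypothesis genuinely enter, and it is the technical heart that your sketch defers.

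The more serious gap is the connectedness argument. Theorem \ref{natur_stat} is stated for a compact \emph{manifold} $M$ whose fundamental group has no infinite nilpotent normal subgroup; here the group is a Coxeter group with torsion, the quotient is a reflection orbifold, and no orbifold version of the Benoist closedness theorem is available in this text. Worse, even granting openness and closedness of $\beta_{\GG}$ inside the representation space, the conclusion would only be that $\beta_{\GG}$ is a \emph{union of connected components} of $\Hom(W,\ss)$; this says nothing about $\beta_{\GG}$ being connected unless you already know that the relevant part of $\Hom(W,\ss)$ is connected, which is precisely what is to be proved --- the argument as written is circular. In \cite{MR2660566} the connectedness in the hyperbolic case is obtained instead from the explicit description produced by the induction: the connected components of $\beta_{\GG}$ are enumerated combinatorially in terms of the $3$-circuits of $\GG$, and Andreev's realizability conditions force this number to equal one when $\GG$ admits a hyperbolic realization.
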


The number of connected components of $\beta_{\GG}$ can be computed but we refer the reader to \cite{MR2660566} for a statement.

The expression ``$\GG$ is an hyperbolic polyhedron'' means that there exists a hyperbolic polyhedron which realizes $\GG$. Andreev's theorem perfectly answers this question (see \cite{MR0259734} or \cite{MR2336832}).

The assumption that $\GG$ is not a prism is here just to simplify the statement. The case of the prism can be worked out very easily.

We also mention that Choi shows in \cite{MR2247648} that under an ``ordonnability'' condition, $\beta(\GG)$ is a manifold of dimension $d(\GG)$. Choi, Hodgson and Lee in \cite{Choi:2010ys} and Choi and Lee in \cite{Choi:2012kx} study the regularity of the points corresponding to the hyperbolic structure in $\beta(\GG)$.

\subsubsection{Description of the topology for 3-manifolds}

Heusener and Porti show the ``orthogonal'' theorem to Theorem \ref{ecima}. Namely, in practice, Theorem \ref{ecima} shows that one can find an infinite number of hyperbolic polyhedra such that their moduli space of convex projective structures is of dimension as big as you wish. Heusener and Porti show:

\begin{theorem}[Heusener and Porti,\cite{MR2860986}]\label{heusener}
There exist an infinite number of compact hyperbolic 3-manifolds such that $\beta(M)$ is a singleton.
\end{theorem}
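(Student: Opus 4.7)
The plan is to combine the Koszul--Benoist naturality theorem (Theorem \ref{natur_stat}) with an infinitesimal rigidity argument and a Dehn-filling construction. First I would reduce the problem to a cohomological statement: if $M$ is a compact hyperbolic $3$-manifold with fundamental group $\Gamma$, the hyperbolic holonomy $\rho_0\colon\Gamma\to\so{3}\hookrightarrow\s{4}$ provides a point of $\beta(M)$. Since $\Gamma$ is the fundamental group of a closed aspherical manifold of negative curvature, it is word-hyperbolic and in particular contains no infinite nilpotent normal subgroup, so Theorem \ref{natur_stat} applies: $\beta_{\Gamma}$ is a union of components of $\Hom(\Gamma,\s{4})$. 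Hence it suffices to exhibit infinitely many $M$ for which $[\rho_0]$ is isolated in the $\s{4}$-character variety.

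Next I would reduce isolation to an infinitesimal statement. The Zariski tangent space at $[\rho_0]$ to the character variety is $H^1(\Gamma,\mathfrak{sl}_4(\R))$, with $\Gamma$ acting via $\mathrm{Ad}\circ\rho_0$. Decomposing $\mathfrak{sl}_4(\R)=\mathfrak{so}_{3,1}(\R)\oplus V$ as an $\so{3}$-module, where $V$ denotes the symmetric complement, yields
$$H^1(\Gamma,\mathfrak{sl}_4(\R))=H^1(\Gamma,\mathfrak{so}_{3,1}(\R))\oplus H^1(\Gamma,V).$$
The first summand vanishes by Weil--Calabi--Garland local rigidity (equivalently, a consequence of Mostow rigidity in the tangential form), so everything reduces to showing the \emph{infinitesimal projective rigidity} condition $H^1(\Gamma,V)=0$; together with standard slice/Kuranishi arguments this forces $[\rho_0]$ to be an isolated reduced point of the character variety, and then Theorem \ref{natur_stat} gives $\beta(M)=\{[\rho_0]\}$.

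To produce infinitely many examples I would use Thurston's hyperbolic Dehn surgery. Start with a one-cusped complete hyperbolic $3$-manifold $N$ that is infinitesimally projectively rigid relative to its cusp, in a precise sense that controls how the cohomology class restricts to the peripheral torus. For all but finitely many slopes $p/q$ the filled manifold $N_{p/q}$ is hyperbolic; using a Mayer--Vietoris (or long-exact-sequence) decomposition associated with the filling $N=N_{p/q}\setminus K$, the cohomology $H^1(\pi_1(N_{p/q}),V)$ can be controlled by the kernel of a slope-dependent map built from the restriction of cocycles to the peripheral subgroup. For all slopes outside a finite exceptional set, this kernel vanishes, which yields $H^1(\pi_1(N_{p/q}),V)=0$ and hence projective rigidity of $N_{p/q}$. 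Since infinitely many slopes produce pairwise non-homeomorphic hyperbolic manifolds, this provides the required infinite family.

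The main obstacle is producing a starting cusped example with the correct peripheral behaviour and then verifying that the slope-dependent linear algebra generically kills $H^1(\Gamma_{p/q},V)$: this is a delicate mix of the Thurston deformation theory (parametrising the hyperbolic fillings by the holonomy of a meridian) with an explicit identification of $V$ as an $\so{3}$-module and of its peripheral cohomology. Once one such $N$ is exhibited (Heusener--Porti verify this for explicit arithmetic or $2$-bridge knot complements), the Dehn filling machinery automatically delivers infinitely many closed hyperbolic examples, completing the proof.
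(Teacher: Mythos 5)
The survey states Theorem \ref{heusener} purely as a citation to Heusener--Porti and supplies no proof, so there is no internal argument to compare yours against; I can only measure your reconstruction against the cited paper. On that score your strategy is essentially the right one and is indeed theirs: decompose $\mathfrak{sl}_4(\R)=\mathfrak{so}_{3,1}(\R)\oplus V$ as an $\so{3}$-module with $V$ the complementary submodule of traceless symmetric endomorphisms, kill $H^1(\Gamma,\mathfrak{so}_{3,1}(\R))$ by Calabi--Weil local rigidity, take $H^1(\Gamma,V)=0$ as the definition of infinitesimal projective rigidity, transfer this property from a cusped manifold that is rigid relative to its cusp to its Dehn fillings, and invoke Theorem \ref{natur_stat} to convert the infinitesimal statement into one about $\beta(M)$. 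One minor inaccuracy: the analytic argument in the Dehn-filling step controls the bad slopes only by the vanishing locus of an analytic function on the Dehn-surgery space, which may still contain infinitely many lattice points; the published result therefore yields \emph{infinitely many} good slopes rather than ``all slopes outside a finite exceptional set''. This does not damage the conclusion, since infinitely many slopes already produce infinitely many pairwise non-homeomorphic closed hyperbolic manifolds.

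The genuine gap is in your final step. The vanishing $H^1(\Gamma,\mathfrak{sl}_4(\R))=0$ makes the conjugation orbit of $\rho_0$ open in $\Hom(\Gamma,\s{4})$, and since $\rho_0$ is irreducible (hence semisimple) the orbit is also closed, so the connected component of $\Hom(\Gamma,\s{4})$ containing $\rho_0$ reduces to this single orbit. Theorem \ref{natur_stat} then tells you only that the connected component of $\beta(M)$ containing the hyperbolic structure is a singleton. To conclude that $\beta(M)$ itself is a singleton you must exclude convex projective structures on $M$ whose holonomy lies in a \emph{different} connected component of $\Hom(\Gamma,\s{4})$, and neither infinitesimal rigidity nor Theorem \ref{natur_stat} does this: the theorem says $\beta_{\Gamma}$ is a \emph{union} of components, not that it is connected. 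You should either supply an argument for the connectedness of $\beta(M)$ (which, unlike the surface case, is not known in general for closed hyperbolic $3$-manifolds) or weaken the conclusion to what the infinitesimal method actually delivers, namely that the hyperbolic structure is an isolated point of $\beta(M)$.
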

\par{
Cooper, Long and Thistlethwaite compute explicitly, using a numerical program, the dimension of $\beta(M)$ for $M$ a compact hyperbolic manifold, using an explicit presentation and a list of 3-manifolds in \cite{MR2264468}.
}
\\
\par{
It would be very interesting to find a topological obstruction to the existence of a deformation. And also a topological criterion for the smoothness of the hyperbolic point in $\beta(M)$, as started by Choi, Hodgson and Lee. We also want to mention that very recently Ballas obtained  in \cite{Ballas:2012fk} a version of Theorem \ref{heusener} in the context of finite-volume convex manifolds.
}
\section{Rigidity}

For us a rigidity result is a theorem that says: ``If a convex set have some regularity plus a big isometry group then this convex has to belong to this list''. We give three precise statements. The interested reader can also read the chapter \cite{Guo} of this handbook.

\subsection{For strongly convex bodies}

We say that a properly convex open set is \emph{strongly convex}\index{strongly convex}\index{convex!strongly convex} when its boundary is $\C^2$ with positive Hessian. Colbois and Verovic show  in \cite{MR2057242} that a strongly convex body of $\PP^d$ endowed with its Hilbert metric is bi-lipschitz equivalent to the hyperbolic space of dimension $d$.

\begin{theorem}[Sasaki \cite{MR1165117}, Soci\'e-M\'ethou \cite{MR1981171}]
Let $\O$ be a strongly convex open set. If the group $\Aut(\O)$ is not compact, then $\O$ is an ellipsoid.
\end{theorem}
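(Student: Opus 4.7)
The plan is to combine Benz\'ecri's compactness theorem with an osculating-quadric analysis at a boundary point of accumulation. Since $\Aut(\O)$ is a closed non-compact subgroup of $\ss$ acting properly on $\O$ by Hilbert isometries, for any basepoint $x_0 \in \O$ the orbit $\Aut(\O)\cdot x_0$ cannot be relatively compact in $\O$. So, after extracting a subsequence, one finds $\g_n \in \Aut(\O)$ with $\g_n(x_0) \to p \in \dO$. Benz\'ecri's theorem asserts that $\PG$ acts properly and cocompactly on $X^{\bullet}$. Accordingly, one picks $h_n \in \PG$ such that the pointed convex set $\bigl(h_n(\O),\, h_n(\g_n(x_0))\bigr)$ stays in a fixed compact subset of $X^{\bullet}$ and, after a further extraction, converges to some $(\O_\infty, x_\infty) \in X^{\bullet}$. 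Because $\g_n(\O)=\O$, we have $h_n(\O)=(h_n\g_n)(\O)$, and properness of the $\PG$-action forces the orbit of $(\O, x_0)$ to be closed; hence $\O_\infty$ is projectively equivalent to $\O$.

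The second step is to identify $\O_\infty$ using strong convexity. Work in an affine chart with $p=0$ and the supporting hyperplane to $\O$ at $p$ equal to $\{x_{d+1}=0\}$. The $\C^2$ positivity of the Hessian says that near $p$ the boundary is the graph $x_{d+1}=\tfrac{1}{2}Q(x')+o(|x'|^2)$ for some positive definite quadratic form $Q$ on $\R^d$. I would take the $h_n$ of the previous step to be (up to a bounded projective adjustment) the anisotropic dilation $(x',x_{d+1}) \mapsto (x'/\sqrt{t_n},\, x_{d+1}/t_n)$, where $t_n$ is the height of $\g_n(x_0)$ over the tangent hyperplane at $p$; this dilation preserves the osculating quadric and crushes the $o(|x'|^2)$ error to zero. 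Therefore $h_n(\O)$ converges in $X^{\bullet}$ to the paraboloid $\mathcal{P}=\{x_{d+1}>\tfrac{1}{2}Q(x')\}$, which is projectively equivalent to an ellipsoid (send its point at infinity to a finite point of $\PP^d$). Combined with the previous step, $\O$ is projectively equivalent to an ellipsoid, hence \emph{is} an ellipsoid.

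The delicate point will be aligning the abstract Benz\'ecri renormalizations with the concrete osculating-quadric rescalings, and verifying convergence to the paraboloid in the full Hausdorff topology of $\PP^d$ (not merely locally near $p$). One must control the direction in which $\g_n(x_0)$ approaches $p$, which should be ``non-tangential'' in a sense compatible with the anisotropic rescaling; this is where the Hilbert-isometry property of the $\g_n$ and the positive definite Hessian really enter. Weaker regularity (merely $\C^1$) would provide no second-order model, and a degenerate Hessian would cause the rescaled limit to fail to be properly convex, contradicting the compactness of $\PG \backslash X^{\bullet}$; thus the hypothesis \emph{strongly convex} is used in an essential way.
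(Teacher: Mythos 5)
The paper does not actually prove this theorem: it only records that Sasaki and Podest\`a prove it via the geometry of affine spheres, and that Soci\'e-M\'ethou proves it by an elementary dynamical argument (she extracts an infinite-order automorphism fixing a point $p\in\dO$ and squeezes $\O$ between osculating ellipsoids at $p$ that the dynamics forces to coincide with $\O$). Your route --- Benz\'ecri properness plus a second-order blow-up at an accumulation point of an orbit --- is therefore a genuinely different, third approach, and its skeleton is sound: since $\PG$ acts properly on $X^{\bullet}$ the orbit $\PG\cdot(\O,x_0)$ is closed, the pairs $(h_n\g_n)(\O,x_0)=(h_n(\O),h_n(\g_n(x_0)))$ lie in that orbit, so any limit of them \emph{in} $X^{\bullet}$ is projectively equivalent to $(\O,x_0)$; exhibiting such a limit whose first coordinate is an ellipsoid finishes the proof. (Note that once such convergence is established, only the properness half of Benz\'ecri's theorem is used; the cocompactness plays no real role.)

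The gap sits exactly at the step you defer. With $h_n$ the anisotropic dilation based at $p$ and scale $t_n$ equal to the height of $\g_n(x_0)$ over the tangent hyperplane at $p$, you do get $h_n(\O)\to\P=\{x_{d+1}>\tfrac12 Q(x')\}$, but you have not shown that $h_n(\g_n(x_0))$ converges to a point of $\P$ rather than of $\partial\P$. Writing $\g_n(x_0)=(x_n',t_n)$, membership in $\O$ only yields $\tfrac12 Q\big(x_n'/\sqrt{t_n}\big)<1+o(1)$; if $\tfrac12 Q\big(x_n'/\sqrt{t_n}\big)\to 1$ --- a ``parabolically tangential'' approach to $p$, which nothing in your argument excludes --- then the pairs $(h_n(\O),h_n(\g_n(x_0)))$ do \emph{not} converge in $X^{\bullet}$ and the closed-orbit argument gives nothing. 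I also do not see how the fact that the $\g_n$ are Hilbert isometries would rule this out, as you suggest it should. The clean repair is to re-center the blow-up at a moving boundary point: let $p_n\in\dO$ be the point of $\dO$ lying below $\g_n(x_0)$ in the direction transverse to the tangent hyperplane at $p$, so that $\g_n(x_0)=p_n+\epsilon_n e_{d+1}$ with $\epsilon_n>0$, and perform the anisotropic dilation in coordinates adapted to the tangent hyperplane at $p_n$ with scale $\epsilon_n$. The approach to $p_n$ is then transverse by construction, the rescaled point converges to $(0,1)$ in the interior of the limiting paraboloid, and the uniform modulus of continuity of the positive definite Hessian on the compact hypersurface $\dO$ guarantees Hausdorff convergence of $h_n(\overline{\O})$ to the closure of the paraboloid in $\PP^d$. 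This is also where the hypothesis is used in full: you need $\C^2$ regularity with definite Hessian on a whole neighbourhood of $p$ in $\dO$ (which ``strongly convex'' provides), not merely an osculating quadric at the single point $p$. With that modification your argument closes.
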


\par{
Sasaki proves this theorem in \cite{MR1165117} in the case where the boundary is $\C^{\infty}$ and $d \geqslant 4$. Podest\`a gives  in \cite{MR1164539} a detailed proof of Sasaki's theorem with some refinements. Finally Soci\'e-Methou gives  in \cite{MR1981171} the proof in full generality. In fact, Soci\'e-Methou shows a more precise statement; she shows that \emph{if a properly convex open set admits an infinite order automorphism which fixes a point $p\in \dO$, then if $\dO$ admits an osculatory ellipsoid at $p$ then $\O$ is an ellipsoid}.
}
\\
\par{
We remark that the technique of Sasaki-Podest\`a and Soci\'e-Methou are completely different. The first one uses geometry of affine spheres and the second one uses only elementary techniques.
}

\subsection{For round convex bodies}\label{rigidity_round}

The following theorem shows that the boundary of a quasi-divisible convex open set cannot be too regular unless it is an ellipsoid.

\begin{theorem}[Benoist (Divisible) \cite{MR2094116}, Crampon-Marquis (Quasi-divisible) \cite{Mickael:2012fk}]
Let $\O$ be a quasi-divisible strictly convex open set. Then the following are equivalent:
\begin{enumerate}
\item The convex set $\O$ is an ellipsoid.
\item The regularity of the boundary of $\O$ is $\C^{1+\varepsilon}$ for every $0 \leqslant \varepsilon < 1$.
\item The boundary of $\O$ is $\beta$-convex for every $\beta > 2$.
\end{enumerate}
\end{theorem}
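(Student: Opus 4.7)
The easy implications $1) \Rightarrow 2)$ and $1) \Rightarrow 3)$ are immediate: ellipsoids are real-analytic (hence $\C^{1+\varepsilon}$ for every $\varepsilon < 1$) and their boundary is locally quadratic (hence $2$-convex, and a fortiori $\beta$-convex for every $\beta > 2$). For the hard direction $2) \Rightarrow 1)$, I would argue by contrapositive. Assume $\O$ is not an ellipsoid. By Theorem \ref{thm_zari2}, the group $\G$ quasi-dividing $\O$ is Zariski-dense in $\ss$. By Benoist's density theorem for Jordan projections of Zariski-dense subgroups of a semi-simple real Lie group, the limit cone of $\G$ has non-empty interior in the positive Weyl chamber of $\ss$, so I can select a hyperbolic element $\g \in \G$ whose eigenvalue moduli $\lambda_1 > \lambda_2 \geqslant \cdots \geqslant \lambda_d > \lambda_{d+1}$ satisfy the (open) condition $\lambda_2^2 < \lambda_1 \lambda_{d+1}$, i.e.\ $\mu_2 < (\mu_1 + \mu_{d+1})/2$ in terms of the Jordan projection.

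Since $\O$ is strictly convex, $\g$ is positively biproximal, so $p^+_\g := \PP(\ker(\g - \lambda_1))$ is a single point of $\dO$ with a unique supporting hyperplane $H$. Choose an affine chart realizing $p^+_\g$ as the origin $0$ and $H$ as $\{x_{d+1} = 0\}$, with $\g$ put in real Jordan form aligned with the coordinate axes. Near $0$ the boundary $\dO$ is the graph $x_{d+1} = f(y_2, \ldots, y_d)$ of a strictly convex non-negative function $f$ with $f(0) = 0$ and $Df(0) = 0$, and the invariance $\g(\dO) = \dO$ gives the self-similarity
\begin{equation*}
f\!\left(\tfrac{\lambda_2}{\lambda_1} y_2, \ldots, \tfrac{\lambda_d}{\lambda_1} y_d\right) \,=\, \tfrac{\lambda_{d+1}}{\lambda_1}\, f(y_2, \ldots, y_d).
\end{equation*}
Picking $y_0$ in the real $\lambda_2$-eigendirection, strict convexity forces $f(y_0) > 0$; iterating the functional equation yields $f((\lambda_2/\lambda_1)^n y_0) = (\lambda_{d+1}/\lambda_1)^n f(y_0)$. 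A hypothetical $\C^{1+\varepsilon}$ estimate $|f(y)| \leqslant C|y|^{1+\varepsilon}$ near $0$ then forces, after taking logarithms and letting $n \to \infty$,
\begin{equation*}
1+\varepsilon \,\leqslant\, \frac{\log(\lambda_1/\lambda_{d+1})}{\log(\lambda_1/\lambda_2)}.
\end{equation*}
The choice $\lambda_2^2 < \lambda_1 \lambda_{d+1}$ makes the right-hand side strictly less than $2$, hence $\varepsilon < 1$, contradicting hypothesis 2.

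For $3) \Rightarrow 1)$ I would appeal to duality. By Proposition \ref{prop_dual_classi_0} the dual $\O^*$ is strictly convex, and in the strictly convex setting finite covolume is known to be preserved by duality, so $\O^*$ is quasi-divided by the contragredient of $\G$. Under this duality the ``convexity exponent at $p \in \dO$'' and the ``$\C^{1+\alpha}$ regularity exponent at the dual point of $\dO^*$'' are related by $\beta = 1 + 1/\alpha$, so hypothesis 3 (for all $\beta > 2$) is equivalent to hypothesis 2 applied to $\O^*$ (for all $\varepsilon < 1$); the case already proven forces $\O^*$, and hence $\O$, to be an ellipsoid. The main obstacle is the existence step: one must realize the strict inequality $\lambda_2^2 < \lambda_1\lambda_{d+1}$ genuinely inside $\G$ and not merely inside the Zariski closure, which is exactly why Benoist's density theorem for Jordan projections (rather than plain Zariski density) is the essential input; a minor bookkeeping nuisance is the case where $\lambda_2$ is the modulus of a complex pair of eigenvalues, which one handles by replacing the diagonal form by a real block form and running the same scaling argument on the invariant real $2$-plane.
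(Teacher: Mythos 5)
The paper itself does not prove this theorem; it only states it with references to Benoist and Crampon--Marquis, so I am judging your argument against the known proofs. Your architecture is the right one (self-similarity of $\dO$ at the fixed points of a biproximal $\g$ converts boundary regularity into spectral inequalities, which one then contradicts using the limit cone of the Jordan projections), and your implications $1)\Rightarrow 2),3)$ and the duality reduction of $3)$ to $2)$ are fine. But there is a genuine gap in $2)\Rightarrow 1)$, in two places. First, you read off the $\C^{1+\varepsilon}$ constraint along the $\lambda_2$-eigendirection. That direction gives the \emph{weakest} of the available constraints: along the $\lambda_i$-eigendirection the scaling relation gives $f(te_i)\asymp |t|^{\alpha_i}$ with $\alpha_i=\log(\lambda_1/\lambda_{d+1})/\log(\lambda_1/\lambda_i)$, and since an upper bound $f\leqslant C|y|^{1+\varepsilon}$ must hold in \emph{every} direction, the binding condition is $1+\varepsilon\leqslant \min_i\alpha_i=\alpha_d$, not $\alpha_2=\max_i\alpha_i$. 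Second, and more seriously, the element you need --- some $\g\in\G$ with $\lambda_2^2<\lambda_1\lambda_{d+1}$ --- does \emph{not} follow from Zariski-density plus the non-emptiness of the interior of the limit cone. Writing $\mu_i=\log\lambda_i$, your condition is the open half-space $\mu_1+\mu_{d+1}>2\mu_2$; for $d\geqslant 3$ the complementary region $\{2\mu_d\leqslant\mu_1+\mu_{d+1}\leqslant 2\mu_2\}$ contains points in the interior of the Weyl chamber fixed by the opposition involution (e.g.\ $(3,1,-1,-3)$ in $\mathrm{SL}_4(\mathbb{R})$), hence contains closed convex $\iota$-invariant cones with non-empty interior, and by Benoist's realization theorem these are limit cones of Zariski-dense subgroups. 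So the input you cite cannot produce the element you want. (Even for $d=2$ you must use the $\iota$-invariance of the limit cone, i.e.\ apply the constraint to $\g^{-1}$ as well; non-empty interior alone does not rule out the limit cone lying in $\{\mu_2\geqslant 0\}$.)

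The repair is to run your scaling argument in the \emph{most} contracted tangent direction: hypothesis $2)$ at $p^+_{\g}$ then forces $\lambda_d^2\geqslant\lambda_1\lambda_{d+1}$, i.e.\ $\mu_1+\mu_{d+1}\leqslant 2\mu_d$, for every biproximal $\g$. Applying the same inequality to $\g^{-1}$ (whose attracting point is $p^-_{\g}$ and whose Jordan projection is $\iota(\mu)$) yields $\mu_1+\mu_{d+1}\geqslant 2\mu_2$. Since $\mu_2\geqslant\mu_d$, the two together force $\mu_2=\cdots=\mu_d=(\mu_1+\mu_{d+1})/2$ for every element of $\G$, confining the limit cone to a line; this \emph{does} contradict the non-empty interior guaranteed by Zariski-density (Theorem \ref{thm_zari2}), for every $d\geqslant 2$. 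Symmetrically, for $3)\Rightarrow 1)$ done directly (without dualizing), the $\lambda_2$-direction you chose is the correct binding one for the lower bound $f\geqslant c|y|^{\beta}$. Your remark about complex $\lambda_2$ and Jordan blocks is a genuine but harmless issue (polynomial corrections disappear after taking $\tfrac{1}{n}\log$), and the rest of the reduction --- positivity of $f$ off the origin by strict convexity, uniqueness and $\g$-invariance of the supporting hyperplane, preservation of finite covolume under duality --- is correctly handled.
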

\par{
Let $\alpha >0$ and consider the map $\phi:\R^d \rightarrow \R$, $x \mapsto |x|^{\alpha}$, where $|\cdot|$ is the canonical Euclidean norm. The image of $\phi$ defines a properly convex open subset $\E_{\alpha}$ of $\PP^d$ which is analytic outside the origin and infinity. A point $p \in \dO$ is of \emph{class $\C^{1+\varepsilon}$}\index{convex!class $\C^{1+\varepsilon}$} (resp. \emph{$\beta$-convex}\index{convex!$\beta$-convex}) if and only if one can find an image of $\E_{\varepsilon}$ (resp. $\E_{\beta}$) by a projective transformation inside (resp. outside) $\O$ and such that the point origin of $\E_{\varepsilon}$ (resp. $\E_{\beta}$) is sent to $p$.
}
\\
\par{
We recall that a quasi-divisible strictly convex open set $\O$ is Gromov-hyperbolic, therefore there exists an $\varepsilon > 0$ and a $\beta > 2$ such that the boundary of $\O$ is $\C^{1+\varepsilon}$ and $\beta$-convex (\cite{MR2010741}). The reader should find more details about this theorem in the chapter ''The geodesic flow on Finsler and Hilbert geometries'' in this handbook.
}

\subsection{For any convex bodies}

The following statement is just a reformulation of Theorem \ref{thm_zari1}. We stress that this statement can be seen as a rigidity theorem.

\begin{theorem}[Benoist \cite{MR2010735}]
Let $\O$ be a divisible convex open set. Then the following are equivalent:
\begin{enumerate}
\item The group $\Aut(\O)$ is not Zariski-dense.
\item The convex set $\O$ is symmetric.
\end{enumerate}
\end{theorem}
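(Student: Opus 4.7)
The plan is to deduce this equivalence as essentially a direct reformulation of Theorem \ref{thm_zari1}, once we translate between the dividing group $\G$ and the full group $\Aut(\O)$. Throughout I would take $\O$ to be indecomposable (if Vey's Theorem \ref{theorem_vey} splits $\O$ as $\omega_1 \otimes \cdots \otimes \omega_r$, one applies the argument to each indecomposable factor $\omega_i$, noting that an $\otimes$-product of symmetric convex sets is symmetric and that $\Aut(\O)$ preserves the decomposition of $\R^{d+1}$ and is therefore automatically non-Zariski-dense).

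For the direction $(2) \Rightarrow (1)$, if $\O$ is symmetric then by the Koecher classification recalled in Section \ref{sec_sym}, the identity component of $\Aut(\O)$ is (in the indecomposable case) one of the semi-simple groups $\so{d}$, $\mathrm{SL}_m(\mathbb{K})$ for $\mathbb{K} = \R, \mathbb{C}, \mathbb{H}$, or a real form of $E_6$, acting by the spherical representation preserving $\O$. In each case $\Aut(\O)$ is a proper real algebraic subgroup of $\ss$; being algebraic it equals its own Zariski closure, which is strictly smaller than $\ss$.

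For the direction $(1) \Rightarrow (2)$, I would pick a discrete subgroup $\G$ of $\Aut(\O)$ that divides $\O$. Since $\G \subseteq \Aut(\O)$, the inclusion $\overline{\G}^Z \subseteq \overline{\Aut(\O)}^Z$ combined with the hypothesis that $\Aut(\O)$ is not Zariski-dense in $\ss$ forces $\G$ itself to be non-Zariski-dense. Applying the contrapositive of Theorem \ref{thm_zari1} to the indecomposable divisible convex $\O$ gives that $\O$ is homogeneous. The proposition in Section \ref{sec_hom} — which combines Vinberg's classification of homogeneous properly convex open sets with the observation that the existence of the lattice $\G$ in $\Aut(\O)$ forces $\Aut(\O)$ to be unimodular — then yields that $\O$ is symmetric, as required.

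The main content of the argument is entirely hidden inside Theorem \ref{thm_zari1}, whose proof (sketched earlier in the strictly convex case) rests on the positive proximality criterion of Theorem \ref{theo_pos_prox}, Vey's Lemma \ref{lemma_vey} identifying $\LG$ with $\dO$, and Benoist's classification of irreducible proximal semi-simple real Lie groups acting transitively on projective space. The non-strictly convex case of \ref{thm_zari1} is substantially harder and requires a finer analysis of the limit set. The step carried out here — upgrading the conclusion from the dividing subgroup $\G$ to the ambient group $\Aut(\O)$ — is by contrast completely formal.
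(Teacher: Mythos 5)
Your handling of the indecomposable case is exactly the argument the paper intends: the survey gives no proof of this theorem beyond the remark that it ``is just a reformulation of Theorem \ref{thm_zari1}'', and your two directions --- the Koecher classification of Section \ref{sec_sym} for $(2)\Rightarrow(1)$, and for $(1)\Rightarrow(2)$ the passage from $\Aut(\O)$ to a dividing subgroup $\G$, the contrapositive of Theorem \ref{thm_zari1} to obtain homogeneity, and the unimodularity argument of Section \ref{sec_hom} to upgrade homogeneous to symmetric --- are precisely the intended chain of reductions. That part is correct and complete.

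The parenthetical reduction to the indecomposable case, however, has a genuine gap, and in fact it exposes that the statement needs the indecomposability hypothesis of Theorem \ref{thm_zari1}, which the restatement silently drops. If $\O=\omega_1\otimes\cdots\otimes\omega_r$ with $r\geqslant 2$, then, as you observe, $\Aut(\O)$ preserves the decomposition $V_1\oplus\cdots\oplus V_r$, so condition $(1)$ holds automatically; but condition $(2)$ need not. Concretely, let $\omega\subset\PP^2$ be a strictly convex divisible set which is not an ellipsoid (Theorem \ref{exis_quasi}), divided by $\G_0$; it is not symmetric, since a strictly convex symmetric set is an ellipsoid, and $\Aut(\omega)$ is discrete. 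Let $\O=\omega\otimes\{\mathrm{pt}\}\subset\PP^3$ be the cone over $\omega$. It is divided by $\G_0\times\Z$ (with $\Z$ a lattice in the radial $\R$-factor), its automorphism group preserves the support of the unique $2$-dimensional face $\overline{\omega}$ and hence lies in a proper parabolic, so it is not Zariski-dense; yet $\O$ is not symmetric --- it is not even homogeneous, since $\Aut(\O)$ has dimension $1$. Thus for decomposable $\O$ the two conditions are not equivalent, and ``applying the argument to each factor'' cannot recover symmetry of the $\omega_i$ from the vacuously true condition $(1)$. The repair is not to fix the reduction but to restore the hypothesis: the theorem should be read as a statement about \emph{indecomposable} divisible convex sets, where your argument goes through.
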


\section{Benz\'ecri's theorem}

Let $X^{\bullet}=\{(\O,x) \,|\, \O \textrm{ is a properly convex open set of } \PP^d \textrm{ and } x \in \O\}$. The group $\PG$ acts naturally on $X^{\bullet}$. The following theorem is fundamental in the study of Hilbert geometry.

\begin{theorem}[Benz\'ecri's Theorem]\label{benzecri}
The action of $\PG$ on $X^{\bullet}$ is proper and cocompact.
\end{theorem}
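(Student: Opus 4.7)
The plan is to establish cocompactness and properness separately.

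For cocompactness I exhibit a compact $K \subset X^{\bullet}$ meeting every $\PG$-orbit. Fix a standard affine chart $\A \cong \R^d$ of $\PP^d$, let $B(r)$ denote its Euclidean ball of radius $r$ at the origin, and set $K = \{(\O, 0) \in X^{\bullet} \mid B(1) \subset \O \subset B(d^{3/2})\}$. The set $K$ is compact in the Hausdorff topology by Blaschke's selection theorem, being a family of convex bodies trapped between two fixed balls.

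To see that every orbit meets $K$, I normalize $(\O, x) \in X^{\bullet}$ in two stages. The dual point $x^{\star} \in \O^*$ from Lemma \ref{premier_dual} corresponds to a hyperplane $H_x$ of $\PP^d$ disjoint from $\overline{\O}$, so $\A_x := \PP^d \smallsetminus H_x$ is an affine chart containing $\overline{\O}$ as a bounded convex body; the center-of-mass construction underlying Lemma \ref{centerofmass} ensures that $x$ is the centroid of $\O$ in this chart. By the classical centroid inequality for convex bodies, the distances $R_\pm(u)$ from $x$ to $\partial \O$ in opposite directions $\pm u$ satisfy $R_+(u) \leq d\,R_-(u)$, so the centrally symmetric body $\tilde \O := \O \cap (2x - \O)$ is comparable to $\O$ up to a factor $d$. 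Now apply John's theorem to $\tilde \O$: by symmetry the John ellipsoid $E$ is centered at $x$ and satisfies $E \subset \tilde \O \subset \sqrt{d}\,E$. Since $\PG$ acts transitively on pairs (affine chart, ellipsoid in it) with compact stabilizer $\mathrm{O}(d)$, one may pick $g \in \PG$ with $g(\A_x) = \A$ and $g(E) = B(1)$; since the restriction of $g$ to $\A_x$ is then affine, this forces $g(x) = 0$. Then $B(1) \subset g(\O)$ directly, while combining the John bound with the centroid bound gives $g(\O) \subset B(d^{3/2})$, so $g(\O, x) \in K$.

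For properness I argue that if $(\O_n, x_n) \to (\O, x)$ and $g_n(\O_n, x_n) \to (\O', x')$ in $X^{\bullet}$, then $g_n$ has a convergent subsequence in $\PG$. Choose $d+2$ points $p_0, \ldots, p_{d+1} \in \O$ in general position; by Hausdorff convergence of open sets, $p_i \in \O_n$ for large $n$, so $g_n(p_i) \in \O_n' := g_n(\O_n)$. Since $\overline{\O_n'} \to \overline{\O'}$ in a fixed compact neighborhood of $\PP^d$, the sequences $g_n(p_i)$ remain bounded and, up to extraction, converge to points $q_i \in \overline{\O'}$. If the $q_i$ form a projective frame, then $g_n$ converges to the unique element of $\PG$ mapping $(p_i)$ to $(q_i)$.

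The main obstacle is ruling out the degenerate case where the $q_i$ fail to be in general position. In that case $g_n$ must escape every compact of $\PG$, so the Cartan decomposition $g_n = k_n a_n \ell_n$ has the extreme singular values of $a_n$ with unbounded ratio. After further extraction $k_n \to k$ and $\ell_n \to \ell$, and the $a_n$-dynamics on $\PP^d$ acquires proper attracting and repelling subspaces $V^+, V^-$ with $a_n \cdot y \to V^+$ for every $y \notin V^-$. Because $\O_n$ is open, $\O_n \smallsetminus \ell_n^{-1}(V^-)$ is open dense in $\O_n$, and its images under $g_n$ concentrate in $k(V^+)$; Hausdorff-passing to the limit then forces $\overline{\O'} \subset k(V^+)$, a proper projective subspace, contradicting the openness of $\O'$ in $\PP^d$. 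Hence the $q_i$ do form a projective frame and $g_n$ converges. The delicate steps are the centroid property of $x$ in the dual chart (which relies on the precise construction of $^{\star}$ in Lemma \ref{centerofmass} rather than its bare statement) and the attracting-subspace dynamics that excludes divergence of $g_n$.
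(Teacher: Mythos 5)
Your cocompactness argument is sound: normalizing by the chart of Lemma \ref{centerofmass} in which $x$ is the centroid, symmetrizing via the Minkowski--Radon centroid inequality and then applying John's theorem to the symmetrized body traps every orbit in a fixed Blaschke-compact family. This is a mild variant of the paper's route, which instead quotes Lemma \ref{john} directly ($\E \subset \O \subset d\E$ for the John ellipsoid in the centroid chart, with the better constant $d$) and, more importantly, packages \emph{both} halves of the theorem into a single statement: $(\O,x)\mapsto(\E,x)$ is a continuous, proper, $\PG$-equivariant fibration of $X^{\bullet}$ over the homogeneous space $S^{\bullet}$ of pointed ellipsoids, and properness and cocompactness are both inherited from $S^{\bullet}$ through a proper equivariant surjection.

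The properness half of your proposal has a genuine gap. Observe that your argument never uses the marked points: it only invokes $\overline{\O_n}\to\overline{\O}$ and $\overline{g_n(\O_n)}\to\overline{\O'}$, and makes no use of $x_n\to x\in\O$ or $g_n(x_n)\to x'\in\O'$. But the action of $\PG$ on the space of properly convex open sets \emph{without} marked points is not proper --- take $\O_n=\O'=\O$ an ellipsoid or a triangle and $g_n$ an unbounded sequence in $\Aut(\O)$ --- so any argument that ignores the base points must fail somewhere. The failure is the step ``its images under $g_n$ concentrate in $k(V^+)$; Hausdorff-passing to the limit then forces $\overline{\O'}\subset k(V^+)$''. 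The convergence of $g_n(y)$ into $k(V^+)$ is only locally uniform on compact subsets of the complement of $\ell^{-1}(V^-)$; points of $\O_n$ approaching $\ell_n^{-1}(V^-)$ have images that need not accumulate on $k(V^+)$, and the Hausdorff limit of $\overline{g_n(\O_n)}$ is governed precisely by those points, so density of $\O_n\smallsetminus\ell_n^{-1}(V^-)$ buys you nothing. Concretely, for the triangle $T=\{[x:y:z]\,:\,x,y,z>0\}$ and $g_n=\mathrm{diag}(n,1,1/n)$ one has $g_n(y)\to[1:0:0]$ for every $y\in T$, yet $g_n(T)=T$ does not Hausdorff-converge to the point $[1:0:0]$. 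To repair the argument you must actually use the hypothesis that $x'$ lies in the \emph{open} set $\O'$; for instance, show that $g_n$ carries the normalized pointed ellipsoid of $(\O_n,x_n)$ to that of $(g_n\O_n,g_nx_n)$ and that both families stay in compact subsets of $S^{\bullet}$, which is exactly the paper's reduction of properness on $X^{\bullet}$ to properness of the transitive action on $S^{\bullet}$.
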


Roughly speaking, even if the action of $\PG$ on $X^{\bullet}$ is not homogeneous the quotient space is compact; therefore it means that we can find some homogeneity in the local geometric properties of Hilbert geometry. Precise examples of this rough statement can be found in Subsection \ref{appli_ben} of the present text.

\subsection{The proof}

To prove Theorem \ref{benzecri}, we will need two lemmas. The first one is very classical, the second one is less classical.

\subsubsection{John's ellispoid}\label{sub_john}\index{John's ellipsoid}

\begin{lemma}\label{john}
Given a bounded convex subset $\O$ of the affine space $\R^d$ with center of mass at the origin there exists a unique ellipsoid $\E$ with center of mass at the origin which is included in $\O$ and of maximal volume. Moreover, we have $\E \subset \O \subset d\E$.
\end{lemma}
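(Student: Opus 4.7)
I propose to establish the three assertions (existence, uniqueness, and the containment $\O \subset d\E$) separately, by parametrising ellipsoids centered at $0$ by positive definite symmetric matrices $M$ via $\E_M = \{x \in \R^d \,:\, x^{T} M^{-1} x \leq 1\}$; the Lebesgue volume of such an ellipsoid equals $\omega_d \sqrt{\det M}$. The condition $\E_M \subset \O$ translates, through the support function of $\O$, into the family of linear inequalities $h^{T} M h \leq 1$, one for each supporting hyperplane $\{x\,:\, h\cdot x = 1\}$ of $\O$.

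For existence, I would note that the feasible set in $M$ is closed and, since $\O$ is bounded, the eigenvalues of $M$ are uniformly bounded above; compactness plus continuity of $\det$ then produces a maximiser. For uniqueness, I would observe that the feasible set is an intersection of affine half-spaces in the space of symmetric matrices (hence convex), and that $M \mapsto \log \det M$ is strictly concave on the positive definite cone. If two distinct maximisers $M_1 \neq M_2$ existed, then $(M_1+M_2)/2$ would still be feasible with strictly larger log-determinant, contradicting maximality.

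For the containment $\O \subset d\E$, I would apply the linear map sending $\E$ to the unit ball $B$ (which preserves the origin and hence the centroid hypothesis on $\O$), reducing the statement to: if $B$ is the maximum-volume centered-at-$0$ ellipsoid in a convex body $\O$ whose centroid is $0$, then $\O \subset dB$. A Lagrange multiplier argument at the maximiser yields a decomposition $I = \sum_i \mu_i p_i p_i^{T}$, where the $p_i \in \partial B \cap \partial \O$ are contact points (unit vectors which, by tangency of $\partial B$ and $\partial \O$, coincide with the outer normals of $\O$ at those points), $\mu_i > 0$ and $\sum_i \mu_i = d$. For $x \in \O$ one then has $p_i \cdot x \leq 1$; the missing lower bound is supplied by Minkowski's centroid lemma: since $0$ is the centroid of $\O$, one has $-\O/d \subseteq \O$, and therefore $p_i \cdot x \geq -d$. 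Setting $t_i = p_i \cdot x$, the identity $\|x\|^{2} = \sum_i \mu_i t_i^{2}$ (obtained from $x^{T} I x$) combined with $-d \leq t_i \leq 1$ must then be turned into the estimate $\|x\| \leq d$.

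The main obstacle is exactly this last step. Unlike the classical John theorem, the KKT system for the centered problem does not automatically give the ``centre condition'' $\sum_i \mu_i p_i = 0$, so the naive bound $t_i^{2}\leq d^{2}$ only yields $\|x\|\leq d^{3/2}$. To recover the sharp constant $d$ one has to carry out a more delicate argument which balances the one-sided constraint $t_i \leq 1$ against the opposite bound $t_i \geq -d$ supplied by the centroid hypothesis; this is precisely the place where the assumption that $\O$ has centroid at $0$ enters in an essential way, and it is where all the content of the lemma is concentrated.
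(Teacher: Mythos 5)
The paper never actually proves this lemma --- it is invoked as ``very classical'' and used only inside the proof of Benz\'ecri's theorem --- so your proposal has to be judged on its own terms rather than against an argument in the text. The existence and uniqueness halves are correct and standard: parametrising origin-centred ellipsoids by positive definite symmetric matrices, the feasible region is a convex, compact set, and strict concavity of $\log\det$ on the positive definite cone forces a unique maximiser. No objection there, beyond noting that the contact-point decomposition $I=\sum_i\mu_i p_ip_i^{T}$ you later invoke is asserted rather than derived (it needs a small separation argument in the space of symmetric matrices: if $I$ were not in the closed convex cone spanned by the $p_ip_i^{T}$, one could perturb $M$ feasibly while increasing the determinant).

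The genuine gap is the inclusion $\O\subset d\E$, which is the only substantive assertion of the lemma, and which you explicitly leave unproved. Your own diagnosis is accurate, and it is worth saying why no ``more delicate balancing'' of the same data can close it: from the constraints $\sum_i\mu_i=d$ and $-d\le t_i\le 1$ alone, the value $\sum_i\mu_i t_i^2=d^3$ is attainable in the abstract optimisation (all $t_i=-d$ is compatible with $\bigl\|\sum_i\mu_i p_i\bigr\|\le\sqrt d$), so $\|x\|\le d^{3/2}$ is the best these numbers can give. The centred problem genuinely lacks the barycentric condition $\sum_i\mu_i p_i=0$ that powers the classical John argument, and the Minkowski--Radon consequence $-\tfrac1d\O\subset\O$ is too weak a substitute: the sharp constant $d$ --- which is attained by the simplex, whose centred maximal ellipsoid is its inball and whose circumradius is $d$ times its inradius --- has to come from using the centroid hypothesis as a global integral condition on $\O$, not merely through the pointwise bound $t_i\ge -d$. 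Two mitigating remarks. First, the inclusion $\O\subset d^{3/2}\E$ that your argument essentially does establish is entirely sufficient for the one place the lemma is used, namely the compactness of the fibres of the map $(\O,x)\mapsto(\E,x)$ in the proof of Theorem \ref{benzecri}, where any constant depending only on $d$ works; so your proof, completed, would support the paper even though it does not prove the lemma as stated. Second, if you want the constant $d$ itself, the usual route is not the KKT system but a direct volume-increase argument (assume a point of $\O$ lies outside $d\E$ and exhibit a centred ellipsoid of strictly larger volume inside $\O$); be warned that feeding in only $\E$, the far point and its Minkowski antipode again yields $d^{3/2}$, so even there the full strength of the centroid condition is needed.
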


\subsubsection{Duality}\label{duality2}
$\,$\\
\par{
The following lemma can sound strange to the reader not used to jump from the projective world to the affine world and vice versa; nevertheless once we understand it, the lemma should sound right; but its proof needs some analytic tools.
}

\begin{lemma}\label{centerofmass}
Given a properly convex open set $\O$ of $\PP^d$ and a point $x \in \O$, there exists a unique hyperplane $x^\star \in \O^*$ such that $x$ is the center of mass of $\O$ viewed in the affine chart $\PP^d \smallsetminus x^{\star}$. Moreover, the map $X^{\bullet}\rightarrow (\PP^d)^*$ defined by $(\O,x) \mapsto x^\star$ is continuous, $\PG$-equivariant and the map $\O \rightarrow \O^*$ given by $x \mapsto x^\star$ is analytic.
\end{lemma}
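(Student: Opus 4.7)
The plan is to characterize $x^\star$ as the unique minimizer of a strictly convex variational problem and then read off continuity, equivariance, and analyticity from that characterization.

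\textbf{Set-up.} Fix an affine chart $\A_0$ containing $\overline{\O}$ and lift $x$ to $\tilde{x}=(x,1)\in \C_{\O}\subset \R^{d+1}$. A hyperplane $H\in \O^*$ corresponds to a ray $\R_{>0}\cdot f$ with $f\in \C_{\O}^*$; normalizing so that $f(\tilde{x})=1$ and writing $f=(\alpha,\beta)$, we have $h(\zeta):=\alpha\cdot\zeta+\beta>0$ on $\overline{\O}$ and $h(x)=1$. The projective change of chart $\A_0\to \A_f:=\PP^d\smallsetminus H$ sends $\zeta\mapsto \zeta/h(\zeta)$ with Jacobian $\beta/h(\zeta)^{d+1}$, so the Lebesgue centroid of $\O$ in $\A_f$ equals $\int_{\O}\zeta\, h^{-(d+2)}\,d\zeta\,/\int_{\O}h^{-(d+1)}\,d\zeta$, while the image of $x$ in $\A_f$ is $x/h(x)=x$. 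Hence the required $f$ must satisfy
\[
\int_{\O}(\zeta-x)\,h^{-(d+2)}\,d\zeta=0\qquad\text{and}\qquad \int_{\O}h^{-(d+1)}\,d\zeta=\int_{\O}h^{-(d+2)}\,d\zeta.
\]

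\textbf{The variational functional.} I would introduce $W\colon P_{\tilde{x}}\to \R$ on the bounded open convex set $P_{\tilde{x}}:=\C_{\O}^*\cap \{f(\tilde{x})=1\}$, defined by
\[
W(f)=\mathrm{vol}_{d+1}\bigl(\{y\in \C_{\O}:f(y)\le 1\}\bigr)=\frac{1}{d+1}\int_{\O}h(\zeta)^{-(d+1)}\,d\zeta,
\]
where the second equality follows from cone coordinates. Since $h$ is linear in $f$ and $t\mapsto t^{-(d+1)}$ is strictly convex on $\R_{>0}$, the function $W$ is real-analytic and strictly convex on $P_{\tilde{x}}$. As $f\to \partial P_{\tilde{x}}$, one has $h(\zeta_0)\to 0$ for some $\zeta_0\in \overline{\O}$, at which point $h^{-(d+1)}$ becomes non-integrable near $\zeta_0$, forcing $W(f)\to +\infty$. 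Hence $W$ attains a unique interior minimum $f_*$, and I define $x^\star:=\PP(f_*)\in \O^*$.

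\textbf{Verification of the centroid condition and regularity.} The Lagrange condition at $f_*$ is precisely $\int_{\O}(\zeta-x)h^{-(d+2)}\,d\zeta=0$, which is the first equation above. For the second, I expand $h(\zeta)-1=\alpha\cdot(\zeta-x)$ (using $h(x)=1$), giving
\[
\int_{\O}h^{-(d+1)}\,d\zeta-\int_{\O}h^{-(d+2)}\,d\zeta=\alpha\cdot\int_{\O}(\zeta-x)h^{-(d+2)}\,d\zeta=0
\]
at $f_*$. Uniqueness of $x^\star$ then follows from strict convexity of $W$. Continuity of $(\O,x)\mapsto x^\star$ on $X^{\bullet}$ follows from the continuous dependence of $W$ on $(\O,\tilde{x})$ in the Hausdorff topology, since the minimizer of a strictly convex function blowing up on the boundary depends continuously on the data. $\PG$-equivariance is automatic, as $W$ is defined canonically from $(\O,\tilde{x})$ and transforms by the contragredient action. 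Analyticity of $x\mapsto x^\star$ at fixed $\O$ follows from the implicit function theorem applied to the critical equation $\nabla_f W(f;x)=\lambda(x,1)$: both the equation and the constraint are jointly real-analytic in $(f,x)$, and the Hessian of $W$ is positive definite at $f_*$ by strict convexity, so $f_*=f_*(x)$ depends analytically on $x$.

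\textbf{Main obstacle.} The only non-obvious step is reconciling the Lagrange critical equation --- which at first sight gives an $h^{-(d+2)}$-\emph{weighted} centroid equal to $x$ --- with the actual Lebesgue centroid condition in $\A_f$. The reconciliation rests on the self-consistency identity $\int_{\O}h^{-(d+1)}=\int_{\O}h^{-(d+2)}$ at $f_*$, which is itself a consequence of the Lagrange equation combined with the normalization $h(x)=1$.
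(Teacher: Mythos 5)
Your proof is correct, and it takes a genuinely different route from the paper. The paper's proof is a two-line reduction to Vinberg's theorem: it invokes, as a black box, the equivariant diffeomorphism $\varphi_*:\C^*\to\C$ sending $\psi$ to the center of mass of the cross-section $\C_\psi(d+1)$, and simply sets $x^\star=[\varphi_*^{-1}(u)]$ for a lift $u$ of $x$; existence, uniqueness, equivariance and analyticity are all inherited from the cited theorem. You instead reprove the needed part of that theorem from scratch by a constrained variational argument: your functional $W(f)=\frac{1}{d+1}\int_\O h^{-(d+1)}$ is (up to a Gamma-factor) exactly Vinberg's characteristic function $\int_{\C_\O}e^{-f(y)}\,dy$ of the dual cone, so your critical-point equation on the slice $\{f(\tilde x)=1\}$ is precisely the condition $\varphi_*(f)\in\R_{>0}\tilde x$, but you work with the homogeneous power-law version and a Lagrange multiplier rather than the logarithmic gradient. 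Your computations check out: the Jacobian $c/h^{d+1}$ of the chart change gives the weighted-centroid formula, the Lagrange condition is equivalent to the centroid condition once you observe the self-consistency identity $\int h^{-(d+1)}=\int h^{-(d+2)}$ (which, as you note, follows from the Euler-type expansion $h-1=\alpha\cdot(\zeta-x)$), coercivity at $\partial P_{\tilde x}$ follows from Fatou since $h_0^{-(d+1)}$ is non-integrable when $h_0$ vanishes on $\overline\O$, and strict convexity plus the implicit function theorem deliver uniqueness and analyticity. What your approach buys is self-containedness and an explicit mechanism for the continuity and analyticity claims (which the paper does not really address beyond the citation); what it costs is that the continuity of the minimizer under Hausdorff perturbation of $(\O,x)$ — where the domain $P_{\tilde x}$ itself moves — is asserted rather than proved, though the standard compactness argument (minimizers cannot escape to $\partial P_{\tilde x}$ because $W_n$ blows up there locally uniformly while being bounded at a fixed competitor) fills this in routinely and is no less detailed than the paper's own treatment.
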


We first give a
\begin{proof}[Proof of Benz\'ecri's Theorem assuming Lemmas \ref{john} and \ref{centerofmass}]
\par{
Consider the sp\-ace $S^{\bullet} =\{(\E,x) \,|\, \E \textrm{ is an ellipsoid of } \PP^d \textrm{ and } x \in \E\}$. The action of $\PG$ on $S^{\bullet}$ is transitive with stabilizer $\mathrm{PSO}_{d}(\R)$, a compact subgroup of $\PG$, so $S^{\bullet}$ is a $\PG$-homogeneous space on which $\PG$ acts properly.
}
\\
\par{
We are going to define a fiber bundle of $X^{\bullet}$ over $S^{\bullet}$ with compact fiber which is  $\PG$-equivariant. Namely, the map $\varphi:X^{\bullet}\rightarrow S^{\bullet}$ which associates to the pair $(\O,x)$ the pair $(\E,x)$ where $\E$ is the John ellipsoid of $\O$ viewed in the affine chart $\PP^d \smallsetminus x^\star$. The map $\varphi$ is continuous and well-defined, thanks to Lemma \ref{centerofmass}.
}
\\
\par{
Lemma \ref{john} shows that the fibers of this map are compact and this map is clearly equivariant. Since the action of $\PG$ on $S^{\bullet}$ is proper and cocompact, it follows that the action of $\PG$ on $X^{\bullet}$ is proper and cocompact.
}
\end{proof}

\subsubsection{A sketch of the proof of Lemma \ref{centerofmass}.}

\par{
In \cite{MR0158414} Vinberg introduces a natural diffeomorphism between a sharp convex cone and its dual. Let $\C$ be a sharp convex cone of $\R^{d+1}$. The map
}

$$
\begin{array}{cccc}
\varphi_*: & \C^* & \rightarrow & \C \\
                      & \psi & \mapsto & \textrm{Center of mass}(\C_\psi (d+1))
\end{array}
$$
$\textrm{ where } \C_{\psi}(d+1) = \{ u \in \C \,|\, \psi(u)  = d+1\}$, is an \textit{equivariant diffeomorphism}.

For a proof of this statement, the reader can consult \cite{MR0158414} but also \cite{NoteGoldman}. We give a geometric presentation of this diffeomorphism but in practice an analytic presentation is needed to understand it correctly.

\begin{proof}[Proof of Lemma \ref{centerofmass}]
\par{
Let $\O$ be a properly convex open set and $x$ a point in $\O$. Consider $\C$ and $\C^*$, the cone above $\O$ and $\O^*$ respectively, and the diffeomorphism $\varphi_* : \C^* \rightarrow \C$.
}
\\
\par{
Take a point $u$ in $\C$ such that $[u]=x$. Let $\psi \in \C^*$ be the linear form defined by $\psi = \varphi_*^{-1}(u)$. Then $\psi(u)=d+1$ and $u$ is the center of mass of $\C_\psi (d+1)$. Therefore, $x$ is the center of mass of $\O$ in the affine chart $\PP^d \smallsetminus \psi^{-1}(0)$. In other words, $x^\star=[\psi] \in \O^*$ is the point we are looking for.
}
\end{proof}

\subsection{Natural things are equivalent}\label{appli_ben}

\subsubsection{Definitions}$\,$\\

We denote by  $X$ the following space:
$$X=\{\O \,|\, \O \textrm{ is a properly convex open subset of } \PP^d \}.$$

For us, a \emph{projective volume}\index{projective volume}\index{projective!volume}\index{volume!projective} is a map $\mu$ from $X$ which associates to a properly convex open set $\O$ an absolutely continuous measure $\mu_{\O}$ on $\O$ with respect to Lebesgue measure. A \emph{projective metric}\index{projective metric}\index{projective!metric}\index{metric!projective} is a map $F$ from $X^{\bullet}$ which associates to a pair $(\O,x) \in X^{\bullet}$ a norm $F_{\O}(x)$ on the tangent space $T_x \O$ of $\O$ at $x$. Each of these notions is said to be \emph{natural}\index{natural}\index{projective!natural}\index{projective metric!natural}\index{projective volume!natural} when it is invariant by $\PG$ and continuous.

The two basic examples are the Hilbert distance which gives rise to the Busemann volume and the Holmes-Thompson volume. We give a brief definition of both volume in the introduction. We will denote by Hil any measure, distance, norm, etc... coming from the Hilbert distance.

\subsubsection{Volume}

\begin{proposition}
In every dimension $d$, given any natural projective volume $\mu$, there exist two constants $0<a_d<b_d$ such that for any properly convex open set $\O$ we have $a_d \mu_{\O} \leqslant \mu^{\textrm{Hil}}_{\O} \leqslant b_d \mu_{\O}$.
\end{proposition}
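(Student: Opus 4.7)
The plan is to reduce the measure inequality to a pointwise comparison of densities and then invoke Benzécri's theorem (Theorem \ref{benzecri}) to obtain uniform bounds.

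Both $\mu_\O$ and $\mu^{\mathrm{Hil}}_\O$ are absolutely continuous with respect to Lebesgue, so in any affine chart $\A \supset \overline{\O}$ I may write $\mu_\O = f_\O \, d\mathrm{Leb}$ and $\mu^{\mathrm{Hil}}_\O = f^{\mathrm{Hil}}_\O \, d\mathrm{Leb}$ for nonnegative densities. The target inequality $a_d \mu_\O \leqslant \mu^{\mathrm{Hil}}_\O \leqslant b_d \mu_\O$ is equivalent to $a_d f_\O(x) \leqslant f^{\mathrm{Hil}}_\O(x) \leqslant b_d f_\O(x)$ almost everywhere. I therefore consider the ratio
$$\phi : X^{\bullet} \to \R_{\geqslant 0}, \qquad \phi(\O, x) = \frac{f^{\mathrm{Hil}}_\O(x)}{f_\O(x)},$$
defined wherever $f_\O(x) \neq 0$.

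Next I would verify that $\phi$ is $\PG$-invariant and continuous. For invariance: if $g \in \PG$ sends $\O$ to $\O'$, naturality of $\mu$ gives $g_* \mu_\O = \mu_{\O'}$, and the change of variables formula translates this into the density transformation rule $f_{\O'}(g(x)) \, |\det(dg_x)| = f_\O(x)$. The same rule holds for $f^{\mathrm{Hil}}$ because $\mu^{\mathrm{Hil}}$ is likewise $\PG$-equivariant. The Jacobian cancels in the ratio, so $\phi(g\O, g(x)) = \phi(\O, x)$. Continuity of $\phi$ follows from the continuity condition built into the definition of a natural projective volume, applied to both $\mu$ and $\mu^{\mathrm{Hil}}$.

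With $\phi$ continuous and $\PG$-invariant, Benzécri's Theorem \ref{benzecri} tells us the quotient $\PG \backslash X^{\bullet}$ is compact, so $\phi$ descends to a continuous function on a compact space; it attains a maximum $b_d < \infty$ and a minimum $a_d$, and I set these as the desired constants.

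The main obstacle is ensuring that $\phi$ is everywhere defined and that $a_d > 0$, i.e.\ that $f_\O$ is strictly positive (both densities being a priori only nonnegative). Here I use naturality once more: the set $\{(\O, x) \in X^{\bullet} : f_\O(x) > 0\}$ is $\PG$-saturated and, by continuity, open. Its image in the compact quotient $\PG \backslash X^{\bullet}$ is then a clopen set, hence empty or everything; since the measure $\mu$ is nonzero this image must be everything, so $f_\O > 0$ on $\O$ for every properly convex open $\O$. The same argument applied to $\mu^{\mathrm{Hil}}$ (whose positivity is visible from the explicit Finsler formula for $F_\O$) gives $f^{\mathrm{Hil}}_\O > 0$, so $\phi$ takes values in a compact subinterval of $(0, \infty)$ and we may choose $0 < a_d < b_d$ as claimed.
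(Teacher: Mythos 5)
Your argument is essentially the paper's own proof: form the ratio of the two densities as a function on $X^{\bullet}$, check that it is continuous and $\PG$-invariant, and apply Benz\'ecri's theorem to conclude that it attains a (positive) minimum and a (finite) maximum. The one caveat concerns your final positivity paragraph: the image in the quotient of an open $\PG$-saturated subset of $X^{\bullet}$ is open but not automatically closed, so the ``clopen, hence everything'' step is unjustified; strict positivity of the density of a natural projective volume really has to be read into the definition (the statement fails for, say, the zero measure), and the paper simply asserts that the ratio takes values in $\R^*_+$ without further comment.
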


\begin{proof}
Given a properly convex open set $\O$, let us denote by $f_{\O}$ (resp. $g_{\O}$) the density of $\mu_{\O}$ (resp. $\mu^{\textrm{Hil}}_{\O}$) with respect to Lebesgue measure. We get a map from $X^{\bullet}$ to $\R^*_+$ given by $(\O,x) \mapsto \frac{f_{\O}(x)}{g_{\O}(x)}$. This map is continuous since $\mu_{\O}$ and $\mu^{\mathrm{Hil}}_{\O}$ are absolutely continuous. This map is also $\PG$ invariant. So, Theorem \ref{benzecri} shows that this map attains its maximum and its minimum which are two strictly positive constants.
\end{proof}

The two following corollaries are now trivial. We just recall some definitions for the convenience of the reader. Given a projective volume $\mu$, a properly convex open set $\O$ is said to be \emph{$\mu$-quasi-divisible} if there exists a discrete subgroup $\G$ of $\Aut(\O)$ such that $\mu\Big(\Quo\Big) < \infty$.

\begin{corollary}
If $\mu$ and $\mu'$ are two natural projective volumes then any properly convex open set $\O$ is $\mu$-quasi-divisible if and only if it is $\mu'$-quasi-divisible.
\end{corollary}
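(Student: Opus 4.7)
The plan is to leverage the preceding proposition twice. Applied to $\mu$ and to $\mu'$, it yields constants $0<a_d<b_d$ and $0<a'_d<b'_d$, depending only on the dimension, such that for every properly convex open set $\O$ we have $a_d\mu_{\O}\leqslant \mu^{\mathrm{Hil}}_{\O}\leqslant b_d\mu_{\O}$ and $a'_d\mu'_{\O}\leqslant \mu^{\mathrm{Hil}}_{\O}\leqslant b'_d\mu'_{\O}$. Combining these two chains, the two volumes $\mu_{\O}$ and $\mu'_{\O}$ are bilipschitz comparable on $\O$ with constants depending only on $d$.

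The second step is to push this comparison down to the quotient. Let $\G$ be a discrete subgroup of $\Aut(\O)$. Since $\mu$ and $\mu'$ are natural they are in particular $\PG$-invariant, hence $\Aut(\O)$-invariant, and so they descend to well-defined measures on $\Quo$. The comparison of densities on $\O$ transfers verbatim to the quotient: for any fundamental domain $\mathcal{F}\subset \O$ of $\G$, one has $\mu(\mathcal{F}) \leqslant \tfrac{b_d}{a'_d}\,\mu'(\mathcal{F})$ and symmetrically, so $\mu(\Quo)<\infty$ if and only if $\mu'(\Quo)<\infty$.

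Thus the existence of a discrete $\G\leqslant \Aut(\O)$ with $\mu(\Quo)<\infty$ is equivalent to the existence of a discrete $\G\leqslant \Aut(\O)$ with $\mu'(\Quo)<\infty$, which is the desired equivalence. There is essentially no obstacle here: the entire content is packaged in the previous proposition, whose proof in turn uses Benzécri's compactness theorem; the corollary is just the observation that $\Aut(\O)$-invariance of the natural volumes allows the universal comparison on $\O$ to be transferred to any quotient by a discrete subgroup.
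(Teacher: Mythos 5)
Your argument is correct and is exactly the intended one: the paper simply declares this corollary ``now trivial'' after the comparison proposition, and your write-up spells out that triviality (comparison of $\mu$ and $\mu'$ through $\mu^{\mathrm{Hil}}$ with dimensional constants, then descent to the quotient by $\PG$-invariance, with the same group $\G$ witnessing both finiteness conditions). No gaps.
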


\par{
This corollary justifies the usual definition of quasi-divisible convex set which uses Busemann volume.
}
\\
\par{
Given a projective volume $\mu$, the  \emph{$\mu$-sup-volume entropy} of $\O$ is the quantity $$\underset{R \to \infty}{\overline{\lim}} \frac{\log(\mu(B(x,R)))}{R},$$ where $x$ is any point\footnote{This quantity does not depend on $x$.} of $\O$ and $B(x,R)$ is the ball of radius $R$ of $(\O,d^{\mathrm{Hil}}_{\O})$.
}

\begin{corollary}
If $\mu$ and $\mu'$ are two natural projective volumes then for any properly convex open set $\O$ the $\mu-$sup-volume entropy and the $\mu'-$sup-volume entropy coincide.
\end{corollary}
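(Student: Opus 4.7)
The plan is to derive the corollary as a direct consequence of the previous proposition, using the fact that the sup-volume entropy is defined with respect to the \emph{same} Hilbert balls $B(x,R)$ and only the measure of these balls changes.

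First I would observe that the previous proposition, applied to $\mu$ and $\mu'$ separately, gives constants $0<a_d<b_d$ and $0<a'_d<b'_d$ such that for every properly convex open $\O$ we have
$$a_d\,\mu_{\O}\leqslant \mu^{\mathrm{Hil}}_{\O}\leqslant b_d\,\mu_{\O}\qquad\textrm{and}\qquad a'_d\,\mu'_{\O}\leqslant \mu^{\mathrm{Hil}}_{\O}\leqslant b'_d\,\mu'_{\O}.$$
Combining these two chains of inequalities yields a single pair of constants $0<c_d<C_d$, depending only on the dimension and on the two volumes, such that $c_d\,\mu'_{\O}\leqslant \mu_{\O}\leqslant C_d\,\mu'_{\O}$ as measures on $\O$. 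In particular, for every Hilbert ball $B(x,R)\subset \O$,
$$c_d\,\mu'(B(x,R))\leqslant \mu(B(x,R))\leqslant C_d\,\mu'(B(x,R)).$$

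Next I would take the logarithm and divide by $R$. Since $c_d$ and $C_d$ do not depend on $R$, we get
$$\frac{\log\mu'(B(x,R))}{R}+\frac{\log c_d}{R}\leqslant \frac{\log\mu(B(x,R))}{R}\leqslant \frac{\log\mu'(B(x,R))}{R}+\frac{\log C_d}{R},$$
and the two additive terms $\tfrac{\log c_d}{R}$ and $\tfrac{\log C_d}{R}$ tend to $0$ as $R\to\infty$. Passing to $\overline{\lim}_{R\to\infty}$ gives the equality of the $\mu$- and $\mu'$-sup-volume entropies of $\O$.

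There is really no obstacle here: the whole point is that Benz\'ecri's theorem (through the previous proposition) reduces any natural projective volume to the Hilbert one up to a multiplicative constant, and multiplicative constants are invisible in exponential growth rates. The only mild subtlety to double-check is that the proof of the previous proposition applies equally to $\mu'$, which is immediate since the argument (continuity and $\PG$-invariance of the density ratio on $X^\bullet$, then compactness of $\PG\backslash X^\bullet$) uses only the two properties defining naturality.
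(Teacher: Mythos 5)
Your argument is correct and is exactly the one the paper intends: the text simply declares this corollary ``trivial'' after the comparison proposition, and the intended reasoning is precisely your chain of inequalities $c_d\,\mu'_{\O}\leqslant \mu_{\O}\leqslant C_d\,\mu'_{\O}$ applied to Hilbert balls, followed by taking $\log$, dividing by $R$, and passing to the upper limit. Nothing is missing.
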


Of course, the same result is true if we change the supremum into an infimum.

\subsubsection{Metric}

\begin{proposition}
In every dimension $d$, given any natural projective metric $F$ there exist two constants $0<a_d<b_d$ such that for any properly convex open set $\O$ and for any point in $x$, we have $a_d F_{\O}(x) \leqslant F^{Hil}_{\O}(x) \leqslant b_d F_{\O}(x)$.
\end{proposition}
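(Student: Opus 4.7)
The plan is to mimic the proof given just above for natural projective volumes, the only new point being that a metric depends on a tangent direction, so we must enlarge $X^{\bullet}$ by one extra coordinate before invoking Benzécri's theorem.

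Concretely, I would first introduce the \emph{Hilbert unit tangent bundle}
$$S^{\bullet} = \{(\O,x,v)\mid (\O,x)\in X^{\bullet},\ v\in T_x\O,\ F^{\mathrm{Hil}}_{\O}(x)(v)=1\}.$$
Since the Hilbert Finsler norm $F^{\mathrm{Hil}}_{\O}(x)$ varies continuously with $(\O,x)\in X^{\bullet}$ and its unit sphere is compact (the Finsler ball $B^{\mathrm{Tang}}_x(1)$ introduced in the definition of the Busemann volume is bounded in $T_x\O$), the projection $S^{\bullet}\to X^{\bullet}$ is a continuous fibration with compact fibers. The group $\PG$ acts on $S^{\bullet}$ by its natural action on pairs $(\O,x)$ together with its differential on tangent vectors, and this action is clearly $\PG$-equivariant over $X^{\bullet}$.

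Next, I would invoke Benzécri's theorem (Theorem~\ref{benzecri}): the action of $\PG$ on $X^{\bullet}$ is proper and cocompact. Combined with the compact fibers of $S^{\bullet}\to X^{\bullet}$, this gives that the action of $\PG$ on $S^{\bullet}$ is again proper and cocompact. Now consider the map
$$\Phi:S^{\bullet}\longrightarrow \R^*_+,\qquad (\O,x,v)\longmapsto F_{\O}(x)(v).$$
By assumption $F$ is a natural projective metric, hence $\Phi$ is continuous and $\PG$-invariant; it therefore descends to a continuous positive function on the compact quotient $S^{\bullet}/\PG$, and thus attains a strictly positive minimum $m_d$ and a finite maximum $M_d$.

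Finally, for an arbitrary $(\O,x)\in X^{\bullet}$ and nonzero $v\in T_x\O$, apply the bounds to the normalised vector $v/F^{\mathrm{Hil}}_{\O}(x)(v)$ and use the one-homogeneity of both norms to obtain
$$m_d\, F^{\mathrm{Hil}}_{\O}(x)(v)\leqslant F_{\O}(x)(v)\leqslant M_d\, F^{\mathrm{Hil}}_{\O}(x)(v);$$
setting $a_d=1/M_d$ and $b_d=1/m_d$ yields the stated double inequality. The only step that requires a second thought is the reduction from a norm comparison to a scalar comparison, which is solved by passing to the Hilbert unit sphere bundle; once this is set up, everything else is a verbatim transcription of the volume proof, so there is no serious obstacle.
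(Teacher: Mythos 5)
Your proof is correct and follows essentially the same route as the paper: the paper introduces exactly the same unit tangent bundle (denoted $X^\divideontimes$ there), observes that the $\PG$-action on it is proper and cocompact as a fiber bundle over $X^{\bullet}$ with compact fibers, and evaluates the continuous $\PG$-invariant function $(\O,x,v)\mapsto F_{\O}(x)(v)$ on the compact quotient. The only addition is that you spell out the final homogeneity step, which the paper leaves as ``straightforward''; note also that your symbol $S^{\bullet}$ clashes with the paper's notation for the space of pointed ellipsoids, so a different name would be preferable.
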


\begin{proof}
We denote by $F^{Hil}$ the Hilbert metric and we introduce a slightly bigger space than $X^{\bullet}$. We take
$$X^\divideontimes= \{(\O,x,v) \,|\, (\O,x) \in X^{\bullet},\, v\in T_x\O \textrm{ and } F^{Hil}_{\O}(x)(v)=1\}.$$
The action of $\PG$ on $X^\divideontimes$ is again proper and cocompact since $X^\divideontimes$ is a $\PG$-equivariant fiber bundle over $X^{\bullet}$ with compact fiber.

Now the following map is continuous, $\PG$ invariant and takes strictly positive value: $(\O,x,v) \in X^\divideontimes \mapsto  F_{\O}(x)(v)$. The conclusion is straightforward.
\end{proof}

\begin{corollary}
If $\d$ and $\d'$ are two distances coming from natural projective metrics then for any properly convex open set $\O$, the metric spaces $(\O,d_{\O})$ and $(\O,d'_{\O})$ are bi-Lipschitz equivalent through the identity map.
\end{corollary}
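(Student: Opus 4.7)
The plan is to reduce everything to the preceding proposition by using the Hilbert metric as a common reference. Let $F$ and $F'$ be the natural projective metrics giving rise to the distances $d$ and $d'$ respectively. Applying the proposition to each of $F$ and $F'$, I obtain dimension-dependent constants $0 < a_d < b_d$ and $0 < a'_d < b'_d$ such that for every properly convex open set $\O$ and every $(x,v) \in T\O$,
$$a_d \, F_{\O}(x)(v) \leqslant F^{\mathrm{Hil}}_{\O}(x)(v) \leqslant b_d \, F_{\O}(x)(v),$$
and similarly with $F'$ in place of $F$. Combining these two double inequalities yields constants $A_d = a'_d/b_d$ and $B_d = b'_d/a_d$ with
$$A_d \, F_{\O}(x)(v) \leqslant F'_{\O}(x)(v) \leqslant B_d \, F_{\O}(x)(v)$$
for every properly convex open $\O$ and every tangent vector $v$.

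Next I transfer this pointwise comparison of Finsler norms to a comparison of the induced distances. The distance $d_{\O}$ associated to a continuous Finsler metric $F_{\O}$ is by definition the infimum of the Finsler lengths $\ell_{F}(\gamma) = \int_0^1 F_{\O}(\gamma(t))(\dot\gamma(t))\,dt$ taken over all piecewise $\C^1$ paths $\gamma$ joining two given points of $\O$ (and the same for $F'_{\O}$, $d'_{\O}$). The pointwise bi-Lipschitz inequality above integrates directly along any admissible path, giving $A_d \, \ell_F(\gamma) \leqslant \ell_{F'}(\gamma) \leqslant B_d \, \ell_F(\gamma)$. Passing to the infimum over $\gamma$ on both sides yields
$$A_d \, d_{\O}(x,y) \leqslant d'_{\O}(x,y) \leqslant B_d \, d_{\O}(x,y)$$
for every $x,y \in \O$, which is exactly the claim that the identity $(\O, d_{\O}) \to (\O, d'_{\O})$ is bi-Lipschitz, with constants depending only on $d$.

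There is essentially no obstacle here: the entire content lies in the preceding proposition (whose proof invokes Benz\'ecri's theorem to produce the dimension-dependent bounds), and the only routine verification is the passage from an infinitesimal inequality between Finsler norms to a global inequality between the associated path-length distances. The mild point worth flagging is that one should check that the natural projective metrics in question indeed induce their distances via path length (so that integrating the pointwise bound is meaningful), but this is built into the setup of Finsler geometry used throughout the chapter.
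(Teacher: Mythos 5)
Your argument is correct and is exactly the intended one: the paper leaves this corollary as an immediate consequence of the preceding proposition, and your chain (compare each Finsler norm to the Hilbert norm pointwise via Benz\'ecri, combine, integrate along paths, pass to the infimum over paths) is the standard way to make that explicit. Only a bookkeeping slip: combining $a_d F_{\O} \leqslant F^{\mathrm{Hil}}_{\O} \leqslant b_d F_{\O}$ with $a'_d F'_{\O} \leqslant F^{\mathrm{Hil}}_{\O} \leqslant b'_d F'_{\O}$ yields $(a_d/b'_d)\,F_{\O} \leqslant F'_{\O} \leqslant (b_d/a'_d)\,F_{\O}$ rather than the constants $a'_d/b_d$ and $b'_d/a_d$ you wrote down; this does not affect the conclusion, since all that matters is the existence of positive dimension-dependent constants.
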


We remark that there is no reason for which the volume entropy of two bi-Lipschitz spaces is the same. We also remark that if a convex set is Gromov-hyperbolic for the Hilbert distance, it is Gromov-hyperbolic for all natural distances.

\subsubsection{Curvature.}

It is hard to give a meaning to the sentence ``Hilbert geometries are non-positively curved'' since the Hilbert distance is a Finsler metric and not a Riemannian metric. A Hilbert geometry which is CAT(0) is an ellipsoid, see \cite{MR1456516}. 

Lemma \ref{metric_riema} allows to construct two natural \emph{Riemmanian} metrics on a Hilbert geometry, $g_{vin}$ and $g_{aff}$. The first one is construct thanks to Vinberg hypersurface (Theorem \ref{surf_vinberg}) and the second one thanks to affine hypersurface (Theorem \ref{surf_yau}).

\begin{proposition}
In any dimension, there exist two numbers $\kappa_1 \leqslant \kappa_2$ such that for any properly convex open set, any point $x \in \O$ and any plane $\Pi$ containing $x$, the sectional curvature of $g_{vin}$ and $g_{aff}$ at $(x,\Pi)$ is between $\kappa_1$ and $\kappa_2$.
\end{proposition}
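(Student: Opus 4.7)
The plan is to follow the template of the two preceding propositions on volumes and metrics: we enlarge $X^{\bullet}$ by adding the data needed to evaluate sectional curvature, observe that this enlargement is a $\PG$-equivariant fiber bundle with compact fiber, and then invoke Benz\'ecri's theorem to get a compact quotient on which the sectional curvature function is continuous and thus bounded. Concretely, I would introduce the space
\[
X^{\sharp} \;=\; \bigl\{(\O,x,\Pi) \,\big|\, (\O,x)\in X^{\bullet},\ \Pi\subset T_x\O \text{ is a } 2\text{-plane}\bigr\},
\]
with its natural topology. The forgetful map $X^{\sharp}\to X^{\bullet}$ is a $\PG$-equivariant fiber bundle with typical fiber the Grassmannian $\mathrm{Gr}_2(\R^d)$, which is compact. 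By Benz\'ecri's Theorem \ref{benzecri}, the action of $\PG$ on $X^{\bullet}$ is proper and cocompact, hence the same holds for its action on $X^{\sharp}$, so the quotient $\PG\backslash X^{\sharp}$ is a compact Hausdorff space.

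The next step is to check that the two assignments
\[
\kappa^{vin},\ \kappa^{aff}\ :\ X^{\sharp}\longrightarrow \R,\qquad (\O,x,\Pi)\longmapsto \mathrm{Sect}_{(x,\Pi)}\!\bigl(g_{\bullet,\O}\bigr),
\]
are $\PG$-invariant and continuous. Invariance is exactly the naturality of $g_{vin}$ and $g_{aff}$ established in \ref{metric_riema}. Continuity reduces to the fact that sectional curvature depends continuously on the $2$-jet of the metric together with the chosen plane, so one must argue that, as $(\O,x)$ varies in the Hausdorff topology, the Vinberg hypersurface (Theorem \ref{surf_vinberg}) and the Cheng--Yau affine sphere (Theorem \ref{surf_yau}) vary $C^2$-continuously with their ambient data. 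Once this is granted, both maps descend to continuous functions on the compact space $\PG\backslash X^{\sharp}$, so they attain finite bounds $\kappa_1^{vin}\leqslant \kappa_2^{vin}$ and $\kappa_1^{aff}\leqslant \kappa_2^{aff}$; setting $\kappa_1=\min(\kappa_1^{vin},\kappa_1^{aff})$ and $\kappa_2=\max(\kappa_2^{vin},\kappa_2^{aff})$ finishes the proof.

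The main obstacle is precisely the $C^2$-continuous dependence of the two hypersurfaces on $\O$. For $g_{vin}$ this is comparatively harmless: the defining function $\varphi(x)=\int_{\C^{*}_{\O}} e^{-f(x)}\,df$ is given by an explicit integral, and its first and second derivatives in $x$ are integrals with kernels that depend continuously on $(\O,x)$ in Hausdorff topology, so the induced Hessian metric varies continuously. For $g_{aff}$ one must appeal to the Cheng--Yau regularity theory to ensure that the affine sphere and its Blaschke metric depend continuously in the $C^2$-topology on the cone $\C_{\O}$; this is technically heavier but is built into the theory of affine spheres and is in any case a standard input. Modulo this continuity statement, the argument is a direct copy of the two preceding proofs in Subsection \ref{appli_ben}.
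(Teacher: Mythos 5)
Your argument is exactly the one the paper intends: the proposition is stated without proof precisely because it follows the template of the two preceding propositions in Subsection \ref{appli_ben} (extend $X^{\bullet}$ by a compact fiber --- here the Grassmannian of $2$-planes --- and apply Benz\'ecri's theorem to a continuous $\PG$-invariant function). The only imprecision is cosmetic: since the sectional curvature involves the $2$-jet of the metric and the metrics are Hessians of the defining potentials, you need the potentials to vary continuously in $C^4$ rather than the hypersurfaces in $C^2$, but this is harmless because both the Vinberg characteristic function and the Cheng--Yau affine sphere are analytic and depend continuously on $\O$ together with all their derivatives.
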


The triangle gives an example where the curvature is constant and equal to $0$. The ellipsoid is an example where the curvature is constant equal to $-1$.

It would have been good if $\kappa_2 \leqslant 0$. Unfortunately, Tsuji shows in \cite{MR688133} that there exists a properly convex open set $\O$, a point $x$ and a plane $\Pi$ such that the sectional curvature of $g_{aff}$ at $(x,\Pi)$ is strictly positive. Since the example of Tsuji is a homogeneous properly convex open set, we stress that $g_{vin} = g_{aff}$ for this example.

Hence, Hilbert geometry cannot be put in the world of non-positively curved manifolds using a Vinberg hypersurface or an affine sphere.

Finally, we note that Calabi shows in \cite{MR0365607} that the Ricci curvature of $g_{aff}$ is always non-positive.

\subsection{Two not-two-lines applications of Benz\'ecri's theorem}

\subsubsection{The Zassenhaus-Kazhdan-Margulis lemma in Hilbert Geometry}

The use of the Margulis constant has proved to be very useful in the study of manifolds of non-positive curvature. The following lemma says that this tool is also available in Hilbert geometry.

\begin{theorem}[Choi \cite{MR1405450}, Crampon-Marquis \cite{Crampon:2011fk}, Cooper-Long-Tillmann \cite{Cooper:2011fk}]
In any dimension $d$, there exists a constant $\varepsilon_d$ such that for every properly convex open set $\O$, for every point $x \in \O$, for every real number $0 < \varepsilon < \varepsilon_d$ and for every discrete subgroup $\G$ of $\Aut(\O)$, the subgroup $\G_{\varepsilon}(x)$ generated by the set $\{ \g \in \G \,|\, d_{\O}(x,\g x) \leqslant \varepsilon \}$ is virtually nilpotent.
\end{theorem}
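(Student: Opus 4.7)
The plan is to combine the classical Zassenhaus-Kazhdan-Margulis lemma for the Lie group $\PG$ with Benz\'ecri's Theorem \ref{benzecri} in order to reduce everything to a uniform estimate on a compact fundamental domain of $X^{\bullet}$. The classical ingredient provides an open neighbourhood $V$ of the identity in $\PG$ such that, for every discrete subgroup $\Delta \leqslant \PG$, the subgroup $\langle \Delta \cap V \rangle$ is contained in a connected nilpotent Lie subgroup of $\PG$, and is in particular virtually nilpotent; this input uses nothing about the convex structure.

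By Benz\'ecri's theorem the action of $\PG$ on $X^{\bullet}$ is proper and cocompact, so I fix once and for all a compact subset $K \subset X^{\bullet}$ meeting every $\PG$-orbit. For an arbitrary pair $(\O,x)$ I pick $g \in \PG$ with $g\cdot(\O,x) \in K$: conjugation by $g$ sends $\G \leqslant \Aut(\O)$ to a discrete subgroup of $\Aut(g\O)$, transports the Hilbert displacement via $d_{\O}(y,z)=d_{g\O}(gy,gz)$, and preserves virtual nilpotency. It is therefore enough to produce a single $\varepsilon_d > 0$ that works uniformly for all pairs $(\O,x) \in K$.

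The central uniformity step I would establish is the following claim: there exists $\varepsilon_d > 0$ such that, for every $(\O,x) \in K$ and every $h \in \PG$ with $h\O = \O$ and $d_{\O}(x,hx) \leqslant \varepsilon_d$, one can write $h = ku$ with $k \in \Stab_{\Aut(\O)}(x)$ and $u \in V$. For a fixed pair $(\O,x)$ the compact sets $\{h \in \PG \mid h\O = \O,\ d_{\O}(x,hx) \leqslant \delta\}$ shrink as $\delta \to 0$ to the compact stabilizer $\Stab_{\Aut(\O)}(x)$, because the action of $\Aut(\O)$ on $\O$ is proper; so such a $\delta(\O,x) > 0$ exists pointwise. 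Compactness of $K$ together with continuity of the map $(\O,x,h) \mapsto d_{\O}(x,hx)$ on $X^{\bullet} \times \PG$ then yields a positive uniform lower bound $\varepsilon_d = \inf_{K}\delta(\O,x) > 0$.

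Given $0 < \varepsilon < \varepsilon_d$, I finish as follows. Properness of the $\G$-action on $\O$ implies that $F := \G \cap \Stab_{\Aut(\O)}(x)$ is finite. Writing each generator $\g$ of $\G_{\varepsilon}(x)$ as $\g = k_{\g} u_{\g}$ with $k_{\g} \in \Stab_{\Aut(\O)}(x)$ and $u_{\g} \in V$, and applying the classical Kazhdan-Margulis commutator trick (after shrinking $V$ to a neighbourhood invariant under conjugation by the compact stabilizer, which is possible since the stabilizer is compact), a finite-index subgroup of $\G_{\varepsilon}(x)$ lies inside $\langle \G \cap V\rangle$, which is virtually nilpotent by Zassenhaus; hence $\G_{\varepsilon}(x)$ itself is virtually nilpotent.

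The hard part will be the uniformity step: short elements for the Hilbert displacement at $x$ need not be close to the identity in $\PG$, only to the compact stabilizer $\Stab_{\Aut(\O)}(x)$, and one must control this stabilizer uniformly over the whole moduli space $X^{\bullet}/\PG$. This is precisely what Benz\'ecri's theorem makes possible, and is the decisive ingredient that transplants the classical Zassenhaus-Kazhdan-Margulis statement into the Finsler framework of Hilbert geometry.
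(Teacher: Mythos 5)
Your proposal is correct and follows essentially the same route as the paper: reduce via Benz\'ecri's theorem to a compact fundamental domain in $X^{\bullet}$, use properness plus compactness to show that elements with small Hilbert displacement at $x$ decompose as (element of the compact stabilizer of $x$) times (element of a Zassenhaus neighbourhood of the identity in $\PG$), and bound the index of $\langle \G \cap V\rangle$ in $\G_{\varepsilon}(x)$ by covering the stabilizer by translates of $V$. Your ``central uniformity step'' is precisely the paper's Lemma \ref{lem_mar2}, and the rest matches the paper's sketch.
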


The following lemma of Zassenhaus is the starting point:

\begin{lemma}[Zassenhaus \cite{64.0961.06}]
Given a Lie group $G$, there exists a neighbourhood $\,\U$ of $e$ such that for any discrete group $\G$ of $G$, the subgroup $\G_{\U}$ generated by the set $\{ \g \in \G \,|\, \g \in \U \}$ is nilpotent.
\end{lemma}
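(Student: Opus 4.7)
The plan is to use the Baker--Campbell--Hausdorff (BCH) formula in exponential coordinates near the identity of $G$, combined with discreteness of $\G$, to force a contracting behaviour on commutators that eventually runs below the ``minimum gap'' of a discrete set.

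\textbf{Step 1 (commutator estimate).} Fix a norm $\|\cdot\|$ on the Lie algebra $\mathfrak{g}$ of $G$ such that $\|[X,Y]\|\le \|X\|\|Y\|$ (obtainable after rescaling by bilinearity of the bracket). BCH provides $r>0$ and a constant $C>0$ with $\exp$ a diffeomorphism on the ball $B(0,r)\subset\mathfrak{g}$ and
$$\log[\exp X,\exp Y] = [X,Y] + \text{higher order terms}, \qquad \|\log[\exp X,\exp Y]\| \le C\|X\|\|Y\|$$
for all $X,Y\in B(0,r)$, since the BCH expansion of a group commutator starts at the quadratic term $[X,Y]$. Choose $\U := \exp(B(0,r_0))$ with $r_0<\min(r,\,1/(2C))$ and $\overline\U$ compact. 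Then for any $g=\exp X,\,h=\exp Y$ in $\U$ one has $[g,h]\in\U$ and
$$\|\log[g,h]\|\le \tfrac12\min(\|X\|,\|Y\|).$$

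\textbf{Step 2 (a central element from a shortest vector).} Let $\G\leq G$ be discrete and set $H:=\G_\U=\langle \G\cap\U\rangle$. Since $\overline\U$ is compact and $\G$ discrete, $\G\cap\overline\U$ is finite. If $\G\cap\U=\{e\}$ there is nothing to prove; otherwise pick $\g_0\in \G\cap\U$ of \emph{minimal} positive exp-norm. For any generator $g\in\G\cap\U$, Step 1 gives $[\g_0,g]\in \G\cap\U$ with exp-norm strictly smaller than $\|\log\g_0\|$. Minimality then forces $[\g_0,g]=e$, so $\g_0$ commutes with every generator, hence lies in the centre $Z(H)$.

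\textbf{Step 3 (induction on $\dim G$).} The base case $\dim G=0$ is immediate since discrete subgroups are then finite. For the induction step, the centraliser $Z_G(\g_0)$ is a closed Lie subgroup containing $H$. If $\dim Z_G(\g_0)<\dim G$, apply the inductive hypothesis to $Z_G(\g_0)$ with a sufficiently small inherited neighborhood of $e$ to conclude $H$ is nilpotent. Otherwise $\g_0$ is central in the identity component $G^0$; then $\overline{\langle \g_0\rangle}$ is a closed abelian subgroup of $G^0$ (handling finite-order $\g_0$ by passing to a power), and the quotient $G':=G^0/\overline{\langle\g_0\rangle}$ is a Lie group of smaller dimension in which the image of $\G\cap G^0$ is discrete. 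Apply the inductive hypothesis to obtain nilpotence of $H/\langle\g_0\rangle$, and lift: a central cyclic extension of a nilpotent group is nilpotent, so $H$ is nilpotent.

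\textbf{Main obstacle.} The commutator estimate of Step 1 and the ``shortest-vector gives a central element'' trick of Step 2 are straightforward. The delicate step is the induction, where one must verify that the neighborhood $\U$ selected for $G$ remains compatible with the inductively chosen neighborhoods of the smaller Lie groups $Z_G(\g_0)$ or $G'$ (so that the same $\U$ works uniformly for all discrete $\G$, rather than being chosen $\G$-dependently), and that the passage to quotient by $\overline{\langle\g_0\rangle}$ remains inside the category of Lie groups. These are handled by a careful choice of a decreasing family of neighborhoods indexed by a numerical invariant (e.g.\ dimension together with a measure of failure of closedness of one-parameter subgroups), ensuring both BCH estimates and discreteness are preserved throughout the induction.
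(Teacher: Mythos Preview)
The paper does not give a proof of this lemma: it is simply quoted as a classical result of Zassenhaus and used as a black box in the discussion of the Margulis constant. So there is no ``paper's own proof'' to compare against, and your attempt must be judged on its own.

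Your Steps 1 and 2 are the standard opening and are correct: the BCH contraction $\|\log[g,h]\|\le \tfrac12\min(\|\log g\|,\|\log h\|)$ on a small enough $\U$, together with discreteness and compactness of $\overline{\U}$, makes $S:=\G\cap\U$ finite, symmetric, closed under commutators, and yields a nontrivial central element $\g_0$ of $H=\langle S\rangle$ as the element of minimal positive exp-norm.

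Step 3, however, is where the argument goes wrong---and you yourself flag it. Inducting on $\dim G$ forces you to pass to $Z_G(\g_0)$ or to a quotient, both of which depend on $\g_0$ and hence on $\G$; the inductive neighbourhood you would need there is then $\G$-dependent, destroying the uniformity of $\U$ that the statement demands. Your final paragraph gestures at ``a decreasing family of neighbourhoods indexed by a numerical invariant'' but does not construct one, and in the quotient case the image of $\G$ need not be discrete (e.g.\ if $\overline{\langle\g_0\rangle}$ is a torus meeting $\G$ badly). This is a genuine gap.

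The fix is to abandon the induction on $\dim G$ entirely and instead iterate Step~2 internally. List $S=\{e=s_0,s_1,\dots,s_k\}$ in increasing exp-norm. The contraction estimate gives $[s_i,s_j]\in S$ with strictly smaller exp-norm than either factor, so $[s_i,s_j]\in\{s_0,\dots,s_{\min(i,j)-1}\}$. Setting $N_i:=\langle s_0,\dots,s_i\rangle$, one checks from the commutator identities that each $N_i$ is normal in $H$ and that $N_i/N_{i-1}$ is central in $H/N_{i-1}$. This is a central series of length $|S|$, so $H$ is nilpotent (of class at most $|S|-1$). No passage to a smaller Lie group is needed, and $\U$ is manifestly independent of $\G$.
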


First let us ``virtually'' prove the lemma in the case where $\O$ is an ellipsoid $\E$. Since the action of $\Aut(\E)$ on $\E$ is transitive and since the Margulis constant is a number depending only on the geometry of the space at the level of points, we just have to prove it for one point. We choose a point $O \in \E$ and we have to show that if $\varepsilon$ is small enough then every discrete group of $\Aut(\E)$ generated by an element moving $O$ a distance less that $\varepsilon$ is virtually nilpotent.

The Zassenhaus lemma gives us a neighbourhood $\U$ of $e$ in $\Aut(\E)$ such that for any discrete subgroup $\G$ of $G$, the subgroup $\G_{\U}$ generated by $\{ \g \in \G \,|\, \g \in \U \}$ is nilpotent. The open set $\mathcal{O}=\{ \g \in \Aut(\E) \,|\, d_{\E}(O,\g(O)) < \varepsilon \}$ is contained in the open set $\Stab_O \cdot \U$ if $\varepsilon$ is small enough.

What remain is that $\G_{\U}$ is of finite index $N$ in the group $\G_{\varepsilon}$ generated by $\G \cap \mathcal{O}$. For this we have to show that $N$ is less than the number of translates of $\U$ by $\Stab_O$ needed to cover entirely $\Stab_O$.

In the case of Hilbert geometry, we have to replace the fact that the action of $\Aut(\E)$ on $\E$ is transitive by Benz\'ecri's theorem \ref{benzecri}. The proof of Benz\'ecri's theorem gives us an explicit and simple compact space $D$ of $X^{\bullet}$ such that $\PG \cdot D = X^{\bullet}$. Namely, choose any affine chart $\A$, any point $O\in \A$, any scalar product on $\A$, denote by $B$ the unit ball of $\A$ and set:
$$
D=\{ (\O,O) \,|\, O \textrm{ is the center of mass of } \O \textrm{ in } \A \textrm{ and } B \subset \O \subset dB\},
$$
\noindent where $dB$ is the image of $B$ by the homothety of ratio $d$ and center at $O$ in $\A$.
We need to link the Hilbert distance with the topology on $\PG$. This is done by the following two lemmas:

\begin{lemma}\label{lem_mar1}
For every $\varepsilon > 0$, there exists a $\delta > 0$ such that for every  $(\O,O) \in D$ and every element $\g \in \Aut(\O)$, we have:
$$d_{\O}(O, \g(O)) \leqslant \varepsilon \Longrightarrow d_{\PG}(1,\g) \leqslant \delta.$$
\end{lemma}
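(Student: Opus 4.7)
The plan is to obtain Lemma~\ref{lem_mar1} as a direct consequence of the properness half of Benz\'ecri's theorem (Theorem~\ref{benzecri}) applied to the compact piece $D \subset X^{\bullet}$ defined just above the lemma.

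First I would check that $D$ is compact in $X^{\bullet}$. The second coordinate is the fixed origin $O$ of $\A$, and the first coordinate ranges over properly convex open sets $\O$ with $B \subset \O \subset dB$ whose center of mass in $\A$ is $O$. The sandwich condition $B \subset \O \subset dB$ is closed in the Hausdorff topology on compact subsets of $\A$ and forces every Hausdorff limit to be a properly convex open set (non-empty interior from $B$, boundedness from $dB$); the center-of-mass map is continuous in this topology; hence $D$ is closed and bounded, so compact.

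Next, for a given $\varepsilon > 0$, I would introduce
$$E_{\varepsilon} = \big\{(\O, y) \in X^{\bullet} \,:\, (\O, O) \in D,\ d_{\O}(O, y) \leqslant \varepsilon \big\}$$
and show that its closure is a compact subset of $X^{\bullet}$. Using the comparison Proposition~\ref{compa} and the inclusion $\O \subset dB$, one has $B_{\O}(O, \varepsilon) \subset B_{dB}(O, \varepsilon)$, and the latter is compact and strictly contained in $dB$, because $(dB, d_{dB})$ is just hyperbolic space and Hilbert distance blows up at $\partial(dB)$. Thus the second coordinate in $E_{\varepsilon}$ lives in a fixed compact subset of the open set $dB \subset \A$, and the first coordinate lives in the compact set $D$. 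A Hausdorff limit $(\O_n, y_n) \to (\O_{\infty}, y_{\infty})$ of elements of $E_{\varepsilon}$ satisfies $d_{\O_{\infty}}(O, y_{\infty}) \leqslant \varepsilon$ by continuity of Hilbert distance in the Hausdorff topology on convex bodies staying sandwiched between $B$ and $dB$, so $y_{\infty} \in \O_{\infty}$, ensuring the limit remains in $X^{\bullet}$.

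The hypothesis of the lemma is exactly that $\g\cdot(\O,O) = (\O, \g(O)) \in E_{\varepsilon}$. Benz\'ecri's theorem says the $\PG$-action on $X^{\bullet}$ is proper, so the set
$$K_{\varepsilon} = \big\{ g \in \PG \,:\, g \cdot D \,\cap\, \overline{E_{\varepsilon}} \neq \varnothing \big\}$$
is a compact subset of $\PG$. Every $\g$ satisfying the hypothesis belongs to $K_{\varepsilon}$, so it lies within distance $\delta := \sup_{g \in K_{\varepsilon}} d_{\PG}(1, g) < \infty$ from the identity, which gives the required uniform $\delta = \delta(\varepsilon)$. The only mildly delicate point, and the one to check carefully, is that $\overline{E_{\varepsilon}}$ really sits inside $X^{\bullet}$ rather than escaping to the boundary stratum where $y$ meets $\partial \O$; once that compactness is secured, the properness of Benz\'ecri's action does all the work.
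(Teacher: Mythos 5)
The paper itself gives no proof of Lemma~\ref{lem_mar1}: it is stated bare, with the details of the whole Margulis-lemma argument deferred to \cite{Cooper:2011fk} and \cite{Crampon:2011fk}. Your architecture --- rewrite the hypothesis as $\g\cdot(\O,O)=(\O,\g(O))\in E_{\varepsilon}$ and apply the properness half of Benz\'ecri's theorem to the pair of compact sets $D$ and $\overline{E_{\varepsilon}}$ --- is exactly the natural one, and the compactness of $D$ and the final properness step are fine.

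The one step that does not hold up as written is the verification that $\overline{E_{\varepsilon}}$ stays inside $X^{\bullet}$, which you yourself single out as the delicate point. The comparison $d_{dB}\leqslant d_{\O}$ only confines $y$ to a compact subset of $dB$; that is the wrong boundary, since $\partial \O$ can pass through any point of $dB\smallsetminus B$, so this gives no lower bound on the distance from $y$ to $\partial\O$. And the sentence ``$d_{\O_{\infty}}(O,y_{\infty})\leqslant\varepsilon$ by continuity of the Hilbert distance, so $y_{\infty}\in\O_{\infty}$'' is circular: the quantity $d_{\O_{\infty}}(O,y_{\infty})$ is only defined once you already know $y_{\infty}\in\O_{\infty}$. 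The correct input is the \emph{lower} inclusion $B\subset\O$. If $q$ is the exit point of the ray from $O$ through $y$, the cross-ratio formula gives $d_{\O}(O,y)\geqslant \frac{1}{2}\ln\big(|Oq|/|yq|\big)$ with $|Oq|\geqslant 1$, so $d_{\O}(O,y)\leqslant\varepsilon$ forces $|yq|\geqslant e^{-2\varepsilon}$; since $|Oq|\leqslant d$ and $\mathrm{Conv}(B\cup\{q\})\subset\overline{\O}$, convexity then places an open Euclidean ball of radius $e^{-2\varepsilon}/d$ centred at $y$ inside $\O$, uniformly over $(\O,O)\in D$. This uniform margin passes to Hausdorff limits and yields $y_{\infty}\in\O_{\infty}$, after which your properness argument closes the proof.
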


\begin{lemma}\label{lem_mar2}
For every $\varepsilon > 0$, there exists a $\delta > 0$ such that for every $(\O,O) \in D$ and every element $\g \in \Aut(\O)$, we have:
$$d_{\O}(O, \g \cdot O) \leqslant \delta \Longrightarrow d_{\PG}(\Stab_{\O}(O),\g) \leqslant \varepsilon.$$
\end{lemma}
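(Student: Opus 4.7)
The plan is a contradiction argument resting on Benz\'ecri's compactness of the slice $D$ and on Lemma \ref{lem_mar1}. Suppose the statement fails: fix $\varepsilon > 0$ and, for each $n \geq 1$, choose $(\O_n, O) \in D$ and $\gamma_n \in \Aut(\O_n)$ with $d_{\O_n}(O, \gamma_n O) \leq 1/n$ while $d_{\PG}(\gamma_n, \Stab_{\O_n}(O)) > \varepsilon$. Since $D$ is compact in $X^{\bullet}$ (the key property behind the proof of Theorem \ref{benzecri}), after extraction $\O_n \to \O_\infty$ for some $(\O_\infty, O) \in D$. Lemma \ref{lem_mar1}, applied with threshold $1$, confines the $\gamma_n$ to a compact subset of $\PG$, so after a further extraction $\gamma_n \to \gamma_\infty$. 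Continuity of the $\PG$-action on closed convex subsets of $\PP^d$ gives $\gamma_\infty(\O_\infty) = \O_\infty$, and $d_{\O_n}(O, \gamma_n O)\to 0$ combined with the compatibility of the Hilbert and projective topologies yields $\gamma_n(O) \to O$, hence $\gamma_\infty(O) = O$. Therefore $\gamma_\infty \in \Stab_{\O_\infty}(O)$.

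To reach a contradiction, I aim to exhibit for each large $n$ an element $h_n \in \Stab_{\O_n}(O)$ with $d_{\PG}(\gamma_n, h_n) \to 0$: this will violate $d_{\PG}(\gamma_n, \Stab_{\O_n}(O)) > \varepsilon$. The strategy is to use local sections of the orbit maps. Each evaluation map $\mathrm{ev}_n : \Aut(\O_n) \to \Aut(\O_n) \cdot O$, $\gamma \mapsto \gamma(O)$, is a smooth principal $\Stab_{\O_n}(O)$-bundle over its image, so the implicit function theorem produces smooth local sections $s_n$ defined on a neighbourhood of $O$ with $s_n(O) = 1$. Writing $p_n := \gamma_n(O) \to O$, one then has $\gamma_n = s_n(p_n)\,h_n$ for a unique $h_n \in \Stab_{\O_n}(O)$, whence $d_{\PG}(\gamma_n, h_n) = d_{\PG}(s_n(p_n), 1)$.

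The main obstacle will be to guarantee that the sections $s_n$ and their domains of definition do not degenerate as $n$ varies. This is where a further appeal to Benz\'ecri compactness is essential: I would build $s_n$ by a recipe that is continuous in $(\O, O) \in D$, using the equivariant Vinberg hypersurface of Theorem \ref{surf_vinberg} (or equivalently the John ellipsoid of Lemma \ref{john} together with the duality map of Lemma \ref{centerofmass}) to produce canonical local parametrisations of $\Aut(\O_n)$ near $\Stab_{\O_n}(O)$. Compactness of $D$ then yields a uniform modulus of continuity for $p \mapsto s_n(p)$ in a fixed neighbourhood of $O$, so $p_n \to O$ forces $s_n(p_n) \to 1$ in $\PG$, giving $d_{\PG}(\gamma_n, h_n) \to 0$ and the desired contradiction.
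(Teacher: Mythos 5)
The paper itself does not prove this lemma --- it is stated and the details are deferred to \cite{Cooper:2011fk} and \cite{Crampon:2011fk} --- so your argument has to stand on its own. Its first half does: compactness of the Benz\'ecri slice $D$, Lemma \ref{lem_mar1} and properness give, after extraction, $\O_n\to\O_\infty$ in $D$ and $\g_n\to\g_\infty$ with $\g_\infty\in\Aut(\O_\infty)$ and $\g_\infty(O)=O$. The gap is in the second half, where you must convert this into elements $h_n\in\Stab_{\O_n}(O)$ with $d_{\PG}(\g_n,h_n)\to 0$. Local sections $s_n$ of the orbit map exist for each fixed $n$, but they cannot be chosen ``continuously in $(\O,O)\in D$'': the group $\Aut(\O)$, the orbit $\Aut(\O)\cdot O$ and the stabiliser $\Stab_{\O}(O)$ do not vary continuously with $\O$ --- and in particular not lower semi-continuously, which is what your argument needs --- since they can collapse in dimension under arbitrarily small perturbations of $\O$ (from $\so{d}$ down to a finite or trivial group). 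The Vinberg hypersurface and the John ellipsoid are functions of $\O$ alone; they cannot detect $\Aut(\O)$ and hence cannot furnish the claimed canonical parametrisations or a uniform modulus of continuity. The contradiction is therefore never reached.

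This is not a repairable technicality: with $\Stab_{\O}(O)=\{\g\in\Aut(\O):\g(O)=O\}$ the statement itself fails. In dimension $2$, let $\Omega$ be a strongly convex, non-elliptical smooth perturbation of a disc $B_0$ centred at $O$, invariant under the order-$4$ rotation $R$ about $O$ and generic enough that $\Aut(\Omega)=\langle R\rangle$ (its automorphism group is compact by the rigidity theorem of Soci\'e-M\'ethou \cite{MR1981171}, hence conjugate into $\mathrm{SO}_2$, hence finite). For a small hyperbolic $g_t\in\Aut(B_0)$, the set $\O_t=g_t(\Omega)$ has $\Aut(\O_t)=\langle R_t\rangle$ with $R_t=g_tRg_t^{-1}$ fixing only $x_t=g_t(O)$ in $\overline{\O_t}$; since $g_t$ does not preserve the chart $\A$, Lemma \ref{centerofmass} forces the centre of mass of $\O_t$ in $\A$ to differ from $x_t$. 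Renormalising that centre of mass to $O$ produces pairs in $D$ with $\Stab_{\O_t}(O)=\{1\}$, while $d_{\O_t}(O,R_tO)\leqslant 2\,d_{\O_t}(O,x_t)\to 0$ and $R_t\to R\neq 1$, so that $d_{\PG}(\Stab_{\O_t}(O),R_t)\to d_{\PG}(1,R)>0$. What is true --- and what the cited references prove and the Margulis-lemma argument actually uses --- is the same implication with $\Stab_{\O}(O)$ replaced by the stabiliser in $\PG$ of the pair consisting of $O$ and the John ellipsoid of $\O$ at $O$: a conjugate of the orthogonal group by a uniformly bounded element, containing $\Stab_{\O}(O)$. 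For that target your compactness argument closes at once, because this compact group \emph{does} depend continuously on $\O\in D$ and $\g_\infty$ fixes $O$ and preserves the limiting John ellipsoid. I recommend proving that corrected statement instead.
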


The rest of the proof is like in hyperbolic geometry. The details can be found in \cite{Cooper:2011fk} or \cite{Crampon:2011fk}; the strategy is the same and can be traced back to the two-dimensional proof of \cite{MR1405450}.

\subsubsection{A characterisation of Gromov-hyperbolicity using the closure of orbits under $\PG$}\label{Ghyp_leretour}$\,$\\

Recall that $X$ is the space of properly convex open subsets of $\PP^d$ endowed with the Hausdorff topology. For every $\delta > 0$, we denote by $X_{\delta}$ the space of properly convex open subsets which are $\delta$-Gromov-hyperbolic for the Hilbert distance.

\begin{theorem}[Benoist \cite{MR2010741}]$\,$\label{super_ghyp}
\begin{enumerate}
\item The space $X_{\delta}$ is closed.
\item Conversely, for every closed subset $F$ of $X$ which is $\PG$-invariant and contains only strictly convex open sets, there exists a constant $\delta$ such that $F \subset X_{\delta}$.
\end{enumerate}
\end{theorem}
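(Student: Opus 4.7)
For assertion (1), my plan is to use the continuity of the Hilbert distance on interior points: if $\O_n \to \O_\infty$ in the Hausdorff topology and $p_n \to p$, $q_n \to q$ with $p, q$ interior to $\O_\infty$, then the cross-ratio formula gives $d_{\O_n}(p_n,q_n) \to d_{\O_\infty}(p,q)$. Since $\delta$-hyperbolicity of a geodesic metric space admits a reformulation via the four-point condition on the Gromov product, it can be checked by a condition on four-tuples of interior points that does not mention geodesics. This condition is preserved under pointwise convergence of distances, so $X_\delta$ is closed in $X$.

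For assertion (2), my plan is a contradiction argument that combines Benz\'ecri's Theorem \ref{benzecri} with the strict convexity of members of $F$. If $F$ lies in no $X_\delta$, then for each $n$ I can find $\O_n \in F$, points $x_n,y_n,z_n \in \O_n$, and $u_n$ on the segment $[x_n,y_n]$ with $d_{\O_n}(u_n, [x_n,z_n] \cup [y_n,z_n]) \geqslant n$. The set $F^{\bullet}=\{(\O,x) \in X^{\bullet} \,|\, \O \in F\}$ is closed and $\PG$-invariant, so its image in $X^{\bullet}/\PG$ is a closed subset of the compact space given by Theorem \ref{benzecri}, hence compact. Applying suitable projective transformations, I may arrange that each $(\O_n,u_n)$ lies in a fixed compact fundamental domain, and extract a subsequence $(\O_n,u_n) \to (\O_\infty, u_\infty)$ with $\O_\infty \in F$ (thus strictly convex) and $u_\infty$ interior to $\O_\infty$.

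Passing to a further subsequence, let $x_n \to x_\infty$, $y_n \to y_\infty$, $z_n \to z_\infty$ in $\overline{\O_\infty}$. Since $u_\infty$ lies in the interior of the segment $[x_\infty,y_\infty]$, one has $x_\infty \neq y_\infty$, so at least one of $x_\infty, y_\infty$ differs from $z_\infty$; say $x_\infty \neq z_\infty$. By strict convexity of $\O_\infty$, the open segment joining $x_\infty$ to $z_\infty$ lies inside $\O_\infty$; pick a point $m$ on it and a sequence $m_n \in [x_n,z_n]$ with $m_n \to m$. The continuity of the Hilbert distance on interior points then forces $d_{\O_n}(u_n,m_n) \to d_{\O_\infty}(u_\infty,m) < \infty$, contradicting $d_{\O_n}(u_n,m_n) \geqslant n$.

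The main obstacle in the plan is verifying the continuity of the Hilbert distance under Hausdorff convergence of the ambient convex body, in a form strong enough to compare distances in different Hilbert geometries simultaneously; this is elementary but must be handled carefully because the cross-ratio becomes singular at the boundary. Once that lemma is available, Benz\'ecri's theorem supplies the compactness needed to extract a limiting convex body, and strict convexity of the limit rules out the only scenario in which the hyperbolicity constant could blow up, by forcing segments between distinct boundary points of $\O_\infty$ to meet the interior.
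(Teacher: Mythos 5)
Your treatment of assertion (2) is essentially the paper's proof: negate the conclusion to get triangles of unbounded size in a sequence $\O_n\in F$, use Benz\'ecri's Theorem \ref{benzecri} together with the $\PG$-invariance and closedness of $F$ to normalise $(\O_n,u_n)$ into a sequence converging to some $(\O_\infty,u_\infty)$ with $\O_\infty\in F$ strictly convex, and then let strict convexity force one side of the limit triangle to pass through the interior of $\O_\infty$, which bounds $d_{\O_n}(u_n,[x_n,z_n]\cup[z_n,y_n])$ and gives the contradiction. One small point: you assert $x_\infty\neq y_\infty$ without justification; it is not automatic, but if $x_\infty=y_\infty$ then both coincide with $u_\infty\in\O_\infty$ and $d_{\O_n}(u_n,x_n)\to 0$ already contradicts the blow-up, so the case is harmless and should simply be dispatched.

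For assertion (1) you take a genuinely different route. The paper argues in two steps: first it shows that the limit $\O_\infty$ must be strictly convex, since a non-trivial segment in $\partial\O_\infty$ would produce, as in the proof of Proposition \ref{prop_ghyp_facile}, triangles in $\O_\infty$ of arbitrarily large size, which for $n$ large lie in $\O_n$ with size converging to their size in $\O_\infty$, contradicting $\O_n\in X_\delta$; then, strict convexity making geodesics unique and equal to segments, it applies the same approximation argument to a hypothetical segment-triangle of $\O_\infty$ of size exceeding $\delta$. You instead pass to the four-point (Gromov product) formulation of hyperbolicity, which only involves distances between interior points and is therefore manifestly preserved under the pointwise convergence $d_{\O_n}\to d_{\O_\infty}$. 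This is cleaner in that it sidesteps entirely the question of which curves are geodesics in the limit, but it proves a slightly different statement: the thin-triangle constant and the four-point constant agree only up to a universal multiplicative factor, so your argument shows that the closure of $X_\delta$ is contained in $X_{C\delta}$ for a universal $C$, not that $X_\delta$ itself is closed. That weaker form is all that is used in Corollary \ref{coro_ghyp_leretour} and elsewhere in the text, but to obtain statement (1) literally you would need the paper's two-step argument, in which establishing strict convexity of the limit first is exactly what allows the thin-triangle condition, with its original constant $\delta$, to be tested on segment triangles and passed to the limit.
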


\begin{proof}
\par{
We only sketch the proof. We start by showing the first point. Take a sequence $\O_n \in X_{\delta}$ converging to a properly convex open set $\O_{\infty}$. We have to show that $\O_{\infty}$ is $\delta$-hyperbolic; we first show that it is strictly convex.
}
\\
\par{
Suppose $\O_{\infty}$ is not strictly convex. Then there exists a maximal non-trivial segment $[x_{\infty},y_{\infty}] \subset \O_{\infty}$. Therefore, we can find (see the proof of Lemma \ref{prop_ghyp_facile}) a sequence of triangles $z_n,x_n,y_n$ included in $\O_{\infty}$ whose ``size'' in $\O_{\infty}$ tends to infinity. If $n$ is large enough these triangles are in fact included in $\O_n$ and so their size in $\O_n$ should be less than $\delta$. Passing to the limit we get a contradiction.
}
\\
\par{
To show that $\O_{\infty}$ is Gromov-hyperbolic, we do almost the same thing. Suppose it is not in $X_{\delta}$. Take a triangle of $\O_{\infty}$ whose size is more than $\delta$. Since $\O_{\infty}$ is strictly convex, the side of such a triangle is a segment, and one can conclude like in the previous paragraph.
}
\\
\par{
Now for the second affirmation. Suppose such a $\delta$ does not exist. Then we can find a sequence of triangles $T_n$ whose size goes to infinity in a sequence of properly convex open sets $\O_n$. Precisely, this means that there exists a sequence $x_n,y_n,z_n,u_n$ of points of $\O_n$ such that $u_n \in [x_n,y_n]$ and the quantity $d_{\O_n}(u_n,[x_n,z_n]\cup [z_n,y_n])$ tends to infinity. Using Benz\'ecri's theorem, we can assume that the sequence $(\O_n,u_n)$ converges to $(\O_{\infty},u_{\infty})$. Since $F$ is closed, we get that $\O_{\infty}$ is strictly convex. One can also assume that the triangle $x_n,y_n,z_n$ converges to a triangle of $\overline{\O_{\infty}}$, which cannot be degenerate since its size is infinite. But the strict convexity of $\O_{\infty}$ contradicts the fact that the size of the limit triangle is infinite.
}
\end{proof}

\begin{corollary}\label{coro_ghyp_leretour}
Let $\O$ be a properly convex open set of $\PP^d$. The following properties are equivalent:
\begin{enumerate}
\item The metric space $(\O,d_{\O})$ is Gromov-hyperbolic.
\item The closure of the orbit $\PG \cdot \O$ in $X$ contains only strictly convex properly convex open sets.
\item The closure of the orbit $\PG \cdot \O$ in $X$ contains only properly convex open sets with $\C^1$ boundary.
\end{enumerate}
\end{corollary}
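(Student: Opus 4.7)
The plan is to prove $(1) \Rightarrow (2)$ and $(1) \Rightarrow (3)$ in one stroke, then $(2) \Rightarrow (1)$, and finally obtain $(3) \Rightarrow (1)$ via projective duality.

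For $(1) \Rightarrow (2), (3)$, assume $(\O, d_\O)$ is $\delta$-Gromov-hyperbolic, i.e.\ $\O \in X_\delta$. Every $g \in \PG$ acts by a projective isomorphism of Hilbert geometries $(\O, d_\O) \to (g\O, d_{g\O})$, so the whole orbit $\PG\cdot\O$ lies in $X_\delta$. Theorem \ref{super_ghyp}$(1)$ says $X_\delta$ is closed, hence $\overline{\PG\cdot\O} \subset X_\delta$. Every element of $X_\delta$ is Gromov-hyperbolic and so, by Proposition \ref{prop_ghyp_facile}, is strictly convex with $\C^1$-boundary. This gives both $(2)$ and $(3)$.

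For $(2) \Rightarrow (1)$, set $F = \overline{\PG \cdot \O}$. This is a closed $\PG$-invariant subset of $X$ containing $\O$, and by hypothesis every element of $F$ is strictly convex. Theorem \ref{super_ghyp}$(2)$ provides a constant $\delta$ such that $F \subset X_\delta$; in particular $\O \in X_\delta$.

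For $(3) \Rightarrow (1)$, I would argue via the dual. Set $F = \overline{\PG\cdot\O}$ as before, and consider $F^{*} = \{\O'^{*} : \O'\in F\} \subset X^{*}$, where $X^{*}$ denotes the analogous space of properly convex open subsets of $\PP^{d*}$. Since duality is a $\PG$-equivariant continuous involution between $X$ and $X^{*}$ (cf.\ Lemma \ref{premier_dual} and the discussion of duality), $F^{*}$ is closed and $\PG$-invariant. By hypothesis every $\O'\in F$ has $\C^{1}$-boundary, so by Proposition \ref{prop_dual_classi} every element of $F^{*}$ is strictly convex. Applying the dual statement of Theorem \ref{super_ghyp}$(2)$ to $F^{*}$ yields $\delta$ with $F^{*}\subset X^{*}_{\delta}$. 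Every $\O'^{*}\in F^{*}$ is therefore Gromov-hyperbolic, hence strictly convex with $\C^{1}$-boundary by Proposition \ref{prop_ghyp_facile}; dualizing once more, every $\O'\in F$ is strictly convex. We are now in the situation of $(2) \Rightarrow (1)$: $F$ is a closed $\PG$-invariant set of strictly convex open sets, so Theorem \ref{super_ghyp}$(2)$ gives $F\subset X_{\delta'}$ for some $\delta'$, and in particular $\O$ is Gromov-hyperbolic.

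The main obstacle is the implication $(3) \Rightarrow (1)$, which is not accessible by a direct application of Theorem \ref{super_ghyp}$(2)$ (whose hypothesis is strict convexity, not $\C^{1}$-regularity). The key is that the duality map is simultaneously $\PG$-equivariant, continuous, and swaps the two regularity properties; this allows us to upgrade "$\C^1$-boundary in the closure of the orbit" to "strictly convex in the closure of the orbit" and then reduce to the case already handled.
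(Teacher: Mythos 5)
Your proof of the equivalence $(1)\Leftrightarrow(2)$ is exactly the paper's argument: the orbit sits in $X_\delta$, which is closed by Theorem \ref{super_ghyp}$(1)$, giving $(1)\Rightarrow(2)$ via Proposition \ref{prop_ghyp_facile}; and Theorem \ref{super_ghyp}$(2)$ applied to the closed $\PG$-invariant set $F=\overline{\PG\cdot\O}$ gives $(2)\Rightarrow(1)$. The paper explicitly proves only this equivalence and leaves $(3)$ to the reader; your duality argument for $(3)\Rightarrow(1)$ --- transporting ``$\C^1$ throughout $\overline{\PG\cdot\O}$'' to ``strictly convex throughout $\overline{\PG\cdot\O^*}$'' via Proposition \ref{prop_dual_classi} and the continuity and equivariance of duality, then reducing to case $(2)$ --- is a correct and welcome completion of the omitted part (the only point worth making explicit is that the continuity of $\O\mapsto\O^*$ on $X$ follows from the pointed statement of Lemma \ref{premier_dual}).
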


\begin{proof}
We just do $1) \Leftrightarrow 2)$.
\par{
If the closure $F$ of the orbit $\PG \cdot \O$ in $X$ contains only strictly convex properly convex open sets, since $F$ is closed and $\PG$-invariant, Theorem \ref{super_ghyp} shows that all the elements of $F$ are in fact Gromov-hyperbolic, hence $\O$ is Gromov-hyperbolic.
}
\\
\par{
Now, if $\O$ is Gromov-hyperbolic, there is a $\delta > 0$ such that $\O \in X_{\delta}$. This space is a closed and $\PG$-invariant subset of $X$, hence the closure of $\PG \cdot \O$ is included in $X_{\delta}$. Hence Proposition \ref{prop_ghyp_facile} follows.
}
\end{proof}

This corollary has a number of nice corollaries. The first one is an easy consequence, using Proposition \ref{prop_dual_classi}.

\begin{corollary}
The metric space  $(\O,d_{\O})$ is Gromov-hyperbolic if and only if  $(\O^*,d_{\O^*})$ is Gromov-hyperbolic.
\end{corollary}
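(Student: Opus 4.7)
The plan is to reduce the corollary to Corollary \ref{coro_ghyp_leretour}, using duality to interchange the roles of strict convexity and $\C^1$-regularity of the boundary. The three inputs are Corollary \ref{coro_ghyp_leretour}, Proposition \ref{prop_dual_classi}, and Lemma \ref{premier_dual}.

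First I would establish that the duality operation $\O \mapsto \O^*$ defines a $\PG$-equivariant homeomorphism between $X$ and the analogous space $X^*$ of properly convex open subsets of $\PP^{d*}$, equipped with the Hausdorff topology. Continuity and $\PG$-equivariance follow from Lemma \ref{premier_dual} by composition with the projection $X^{\bullet} \to X$: the assignment $(\O,x)\mapsto \O^*$ does not depend on the base point $x$, so the continuous map $X^{\bullet} \to X^*$ obtained by composing $^{\star}$ with the forgetful projection $X^{\bullet *} \to X^*$ factors through a continuous map $X \to X^*$. Since $^*$ is an involution, it is a homeomorphism. As a consequence, for every $\O \in X$ the dual of the orbit closure equals the orbit closure of the dual: $(\overline{\PG \cdot \O})^* = \overline{\PG \cdot \O^*}$.

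Now suppose $(\O,d_{\O})$ is Gromov-hyperbolic. Corollary \ref{coro_ghyp_leretour} then guarantees that every element of the closure $\overline{\PG\cdot \O} \subset X$ has $\C^1$ boundary. Applying Proposition \ref{prop_dual_classi} pointwise (the dual of a convex body with $\C^1$ boundary is strictly convex), every element of $\overline{\PG\cdot \O^*} = (\overline{\PG\cdot \O})^*$ is strictly convex. Invoking Corollary \ref{coro_ghyp_leretour} a second time, this time in the projective space $\PP^{d*}$, we conclude that $(\O^*,d_{\O^*})$ is Gromov-hyperbolic. The reverse implication is immediate from $(\O^*)^* = \O$.

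I do not anticipate any real obstacle: the only nontrivial bookkeeping is checking that $\O \mapsto \O^*$ is continuous for the Hausdorff topology and commutes with taking orbit closures, which is essentially the content of Lemma \ref{premier_dual}. Everything else is a direct concatenation of the two previously established equivalences.
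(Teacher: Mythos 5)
Your proof is correct and follows essentially the same route as the paper, which simply notes that the corollary is an easy consequence of Corollary \ref{coro_ghyp_leretour} together with Proposition \ref{prop_dual_classi}. You have merely made explicit the bookkeeping the paper leaves implicit, namely that duality is a continuous $\PG$-equivariant involution (via Lemma \ref{premier_dual}) and hence exchanges orbit closures, after which the two characterisations (strict convexity and $\C^1$ boundary) in Corollary \ref{coro_ghyp_leretour} transfer to the dual exactly as you describe.
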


The second one is the conclusion of Theorem \ref{thm_ghyp} that we have left so far.

\begin{corollary}
A divisible convex set which is strictly convex is Gromov-hyperbolic.
\end{corollary}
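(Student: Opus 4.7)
The plan is to verify the second condition of Corollary \ref{coro_ghyp_leretour}: I must show that every properly convex open set in the closure of $\PG \cdot \O$ inside $X$ is strictly convex. Exploiting divisibility, I will in fact prove the stronger statement that this closure consists only of $\PG$-translates of $\O$, so strict convexity comes for free from that of $\O$.

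Take any $\O_\infty \in \overline{\PG \cdot \O}$ and write $\O_n = g_n \O \to \O_\infty$ for a sequence $g_n \in \PG$. Fix a discrete group $\G \leqslant \Aut(\O)$ with compact quotient $\Quo$, together with a compact fundamental domain $D \subset \O$ for the $\G$-action. Since $\O_\infty$ is nonempty and open, I pick $p_\infty \in \O_\infty$ and points $p_n \in \O_n$ with $p_n \to p_\infty$. Then $g_n^{-1} p_n \in \O$, so there exists $\gamma_n \in \G$ with $z_n := \gamma_n g_n^{-1} p_n \in D$. Setting $h_n := g_n \gamma_n^{-1}$ gives $h_n \O = \O_n$ (because $\gamma_n \in \Aut(\O)$) and $h_n z_n = p_n$, while the base points $z_n$ remain trapped in the compact set $D$.

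Now I invoke Benzécri's Theorem \ref{benzecri}. The pairs $(\O, z_n)$ lie in the compact subset $\{\O\} \times D \subset X^{\bullet}$, and their $h_n$-translates $(\O_n, p_n)$ converge to $(\O_\infty, p_\infty) \in X^{\bullet}$. Properness of the $\PG$-action on $X^{\bullet}$ then forces the sequence $(h_n)$ to be relatively compact in $\PG$; extracting a subsequence, $h_n \to h_\infty \in \PG$ and consequently $\O_\infty = h_\infty \O$. Hence $\O_\infty$ is projectively equivalent to $\O$ and so strictly convex, and Corollary \ref{coro_ghyp_leretour} yields that $(\O, d_\O)$ is Gromov-hyperbolic.

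The only real subtlety is the substitution $g_n \mapsto h_n = g_n \gamma_n^{-1}$: a priori the $g_n$ may escape to infinity in $\PG$, in which case Benzécri's properness statement applied to them directly would give no information. Divisibility is precisely what allows this escape to be absorbed into the $\G$-factor, reducing the problem to the compact regime where properness bites.
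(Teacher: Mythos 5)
Your proof is correct and follows essentially the same route as the paper: the paper also deduces the corollary from Corollary \ref{coro_ghyp_leretour} by showing that the orbit $\PG \cdot \O$ is closed in $X$, combining the properness of the $\PG$-action on $X^{\bullet}$ (Benz\'ecri) with the compact set $K \subset \O$ satisfying $\G \cdot K = \O$ supplied by divisibility. Your explicit sequence argument with the substitution $g_n \mapsto h_n = g_n\gamma_n^{-1}$ is just an unpacked version of that same reduction.
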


To show this corollary, one just needs to know the following proposition which is a direct consequence of Benz\'ecri's theorem.

\begin{proposition}
Let $\O$ be a divisible convex set. Then the orbit $\PG \cdot \O$ is closed in $X$.
\end{proposition}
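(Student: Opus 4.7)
The strategy is to combine cocompactness of the $\G$-action on $\O$ with the properness of the $\PG$-action on $X^{\bullet}$ (Benz\'ecri's theorem \ref{benzecri}). Suppose $g_n \in \PG$ and $g_n \cdot \O \to \O_{\infty}$ in the Hausdorff topology, with $\O_{\infty}$ a properly convex open set. The goal is to extract a subsequence so that, after correcting by elements of $\G$ (which do not change $g_n\cdot\O$), the sequence $g_n$ converges in $\PG$.

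First I would fix an auxiliary compact set $K \subset \O$ such that $\G \cdot K = \O$; such a $K$ exists because $\Quo$ is compact, so one lifts a compact set surjecting onto $\Quo$. Next, pick any point $y \in \O_{\infty}$. Hausdorff convergence of open convex sets implies $y \in g_n \cdot \O$ for $n$ large, hence $g_n^{-1}(y) \in \O$. By the choice of $K$, there exist $\gamma_n \in \G$ with $x_n := \gamma_n^{-1} g_n^{-1}(y) \in K$. Replacing $g_n$ by $g_n \gamma_n$ (which leaves $g_n \cdot \O$ unchanged since $\gamma_n \in \Aut(\O)$), we may assume $g_n(x_n) = y$ with $x_n \in K$. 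Extracting a subsequence, $x_n \to x_{\infty} \in K \subset \O$.

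Now I would view everything inside $X^{\bullet}$. The pairs $(\O, x_n)$ converge to $(\O, x_{\infty}) \in X^{\bullet}$, while $g_n \cdot (\O, x_n) = (g_n \cdot \O, y)$ converges to $(\O_{\infty}, y) \in X^{\bullet}$. Thus $g_n$ sends a convergent sequence in $X^{\bullet}$ to a convergent sequence in $X^{\bullet}$. By the properness half of Benz\'ecri's theorem, $\{g_n\}$ is relatively compact in $\PG$, so a subsequence satisfies $g_n \to g_{\infty}$. By continuity of the $\PG$-action on $X$, we conclude $\O_{\infty} = g_{\infty} \cdot \O \in \PG \cdot \O$, which proves that the orbit is closed.

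The only delicate step is the setup ensuring that both $(\O, x_n)$ and $(g_n\cdot\O, y)$ stay in a compact region of $X^{\bullet}$ so that Benz\'ecri's properness applies; this is precisely what cocompactness of $\G$ provides via the correction $\gamma_n$, so the argument is essentially forced once one has the idea of using $X^{\bullet}$ rather than $X$.
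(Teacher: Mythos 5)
Your proof is correct and follows essentially the same route as the paper: both arguments lift the problem to $X^{\bullet}$, use cocompactness to produce a compact $K\subset\O$ with $\G\cdot K=\O$ and normalise by elements of $\G$, and then invoke the properness half of Benz\'ecri's theorem. The paper packages this as the set-theoretic identity $\PG\cdot(\O,\O)=\PG\cdot(\O,K)$ (a saturation of a compact set under a proper action, hence closed), while you unwind the same idea into a sequential compactness argument; the two are interchangeable.
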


\begin{proof}
Benz\'ecri's theorem shows that the action of $\PG$ on $X^{\bullet}$ is proper, hence the orbit $\PG \cdot (\O,x)$ is closed for every point $x \in \O$. Now, we remark that the orbit $\PG \cdot \O$ is closed in $X$ if and only if the union $\PG \cdot (\O,\O) :=\underset{x \in \O}{\bigcup} \PG \cdot (\O,x)$ is closed in $X^{\bullet}$. Now, since there is a group $\G$ which divides $\O$, one can find a compact subset $K$ of $\O$ such that $\G \cdot K = \O$. Hence the conclusion follows from this computation:

$$\PG \cdot (\O,\O) =\underset{x \in K}{\bigcup} \underset{\g \in \G}{\bigcup}  \PG \cdot (\O,\g(K)) = \underset{x \in K}{\bigcup} \PG \cdot (\O,K).$$
\end{proof}

\section{The isometry group of a properly convex open set}

\subsection{The questions}
We basically know almost nothing about the group of isometries $\Isom(\O)$ of the metric space $(\O,d_{\O})$.

\begin{oqu}[Raised by de la Harpe in \cite{MR1238518}]
Is $\Isom(\O)$ a Lie group ? Is $\Aut(\O)$ always a finite-index subgroup of $\Isom(\O)$ ? If yes, does this index admit a bound $N_d$ depending only of the dimension ? Does $N_d=2$ ? 
\end{oqu}

The group $\Isom(\O)$ is a locally compact group for the uniform convergence on compact subsets by the Arzel\'a-Ascoli theorem.
We shall see that the answer to the first question is yes and it is already in the literature. The other questions are open and a positive answer would mean that the study of $\Aut(\O)$ or $\Isom(\O)$ is the same from a group-geometrical point of view.

The first question is a corollary of a wide open conjecture:

\begin{conjecture}[Hilbert-Smith]
A locally compact group acting effectively on a connected manifold is a Lie group.
\end{conjecture}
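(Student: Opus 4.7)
The plan is to attack the Hilbert--Smith conjecture through the classical Montgomery--Zippin--Gleason reduction, which converts the problem into a statement about $p$-adic group actions on manifolds. The strategy splits into an algebraic reduction, which is essentially formal, followed by a topological obstruction that is the genuinely difficult step and the reason the conjecture remains open.

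First I would invoke the solution to Hilbert's fifth problem: a locally compact topological group $G$ is a Lie group if and only if it has no small subgroups (i.e., some neighbourhood of the identity contains no nontrivial subgroup). Combined with the Gleason--Yamabe theorem, every locally compact group has an open subgroup realised as an inverse limit of Lie groups, and the failure of the NSS property is detected by this inverse limit. A standard extraction then shows: if $G$ is not a Lie group, $G$ must contain a closed subgroup topologically isomorphic to the $p$-adic integers $\Z_p$ for some prime $p$. Assuming $G$ acts effectively on a connected manifold $M$, the restricted action of $\Z_p$ on $M$ is still effective (any kernel would be an open subgroup of $\Z_p$, but $\Z_p$ has no nontrivial open subgroup of infinite index giving a quotient acting effectively --- more carefully, kernels in $\Z_p$ are of the form $p^k \Z_p$, and one passes to the quotient). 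Hence it suffices to prove: \emph{$\Z_p$ cannot act effectively by homeomorphisms on a connected topological manifold.}

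Second, I would attack this reduced statement using Smith theory and cohomological dimension. The key input is a theorem of C.-T.~Yang: if $\Z_p$ acts effectively on an $n$-manifold $M$, then the integral cohomological dimension of the orbit space satisfies $\dim_{\Z}(M/\Z_p) = n+2$. On the other hand, Newman's theorem forbids arbitrarily small orbits, so each orbit is genuinely a Cantor set $\Z_p$, and classical dimension theory for the orbit map suggests $\dim(M/\Z_p)$ should be comparable to $n$ in the Lebesgue covering sense. The desired contradiction is that the orbit space would simultaneously have small covering dimension (bounded in terms of $n$) and cohomological dimension equal to $n+2$.

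The main obstacle, and the precise point where the argument breaks in general, is bridging covering dimension with integral cohomological dimension for arbitrary compact metric spaces. The Alexandroff problem asked whether the two notions coincide for compact metric spaces, and Dranishnikov constructed compacta with $\dim_{\Z} < \dim$, so the naive implication fails. Completing the plan therefore requires either (a) a structural argument ruling out Dranishnikov-type pathologies for the specific orbit space $M/\Z_p$ of a free $\Z_p$-quotient of a manifold, or (b) a route bypassing dimension theory entirely via extra regularity. This is exactly where all known partial progress has concentrated --- Repov\v{s}--\v{S}\v{c}epin in the Lipschitz case, Martin in the quasi-conformal case, Pardon in dimension three --- each adding regularity or low-dimensional control that sidesteps the dimension-theoretic wall. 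I expect this to be the hard step, and I would not claim to resolve it; the merit of the plan is to isolate it as the sole remaining obstruction.
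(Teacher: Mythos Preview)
The statement you are attempting is presented in the paper as an open \emph{conjecture}, not a theorem; the paper gives no proof and explicitly records that it is known only in dimensions $1$, $2$, and $3$ (the last due to Pardon), together with conditional results under extra regularity (Bochner--Montgomery for smooth actions, Repov\u{s}--\u{S}\u{c}epin for Lipschitz actions, Maleshich for H\"older actions). So there is no proof in the paper to compare your proposal against.

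Your outline is an accurate account of the classical reduction: the Gleason--Yamabe/NSS machinery reduces the conjecture to excluding effective $\Z_p$-actions on manifolds, and Yang's cohomological-dimension identity $\dim_{\Z}(M/\Z_p)=n+2$ is the intended contradiction. You have also correctly located the genuine obstruction---the failure, in general, to compare covering dimension with integral cohomological dimension for the orbit space---and you openly state you do not resolve it. That is honest, but it means your proposal is a survey of the known strategy rather than a proof. The paper agrees with you implicitly: its proof of the Repov\u{s}--\u{S}\u{c}epin theorem (the case relevant to Hilbert geometries) bypasses exactly this gap by using the Lipschitz hypothesis to bound the Hausdorff dimension of the quotient, giving the chain $\dim(M)=\dim_H(M)\geqslant \dim_H(M/\Z_p)\geqslant \dim(M/\Z_p)\geqslant \dim_{\Z}(M/\Z_p)=\dim(M)+2$. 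Absent that extra regularity, no one currently knows how to close the loop, and neither do you.
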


This conjecture is known to be true in dimension $d=1,2$ and $3$. For the proof in dimension $1$ and $2$, see \cite{MR0073104}, or Theorem 4.7 of \cite{MR1876932} for a proof in dimension 1. The proof in dimension $3$ is very recent and due to Pardon in \cite{Pardon:2011fk} in 2011.

The first article around this conjecture is due to Bochner and Montgomery \cite{MR0018187}, who proved it for smooth actions. Using a theorem of Yang (\cite{MR0120310}), Repov\u{s} and \u{S}\u{c}epin showed in \cite{MR1464908} that the Hilbert-Smith conjecture is true when the action is by bi-Lipschitz homeomorphism with respect to a Riemannian metric. There is also a proof of Maleshich in \cite{MR1480156} for H\"older actions. For a quick survey on this conjecture, we recommend the article of Pardon \cite{Pardon:2011fk}. The author of the present chapter is not an expert in this area and this paragraph does not claim to be an introduction to the Hilbert-Smith conjecture.

The proof of Repov\u{s} and \u{S}\u{c}epin works for Finsler metrics since it uses only the fact that the Hausdorff dimension of a manifold with respect to a Riemannian metric is the dimension of the manifold. We sketch the argument for the reader. We want to show that:

\begin{theorem}[Repov\u{s} and \u{S}\u{c}epin, \cite{MR1464908}]
A locally compact group acting effectively by Lipshitz homeomorphism on a Finsler manifold is a Lie group. In particular, the group $\Isom(\O,d_{\O})$ is a Lie group.
\end{theorem}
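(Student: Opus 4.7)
The plan is to invoke the structure theory of locally compact groups together with Yang's dimension inequality, reducing the problem to a dimension calculation that exploits the Lipschitz hypothesis.

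First, I would recall the Gleason--Yamabe theorem: a locally compact group $G$ is a Lie group if and only if it contains no small subgroups, i.e.\ there is a neighbourhood of the identity containing no non-trivial subgroup. By the Montgomery--Zippin approximation theorem, if $G$ is not a Lie group then arbitrarily small neighbourhoods of the identity contain a non-trivial compact subgroup, and a standard argument reduces the existence of small subgroups to the existence of a closed subgroup isomorphic to the $p$-adic integers $\Z_p$ for some prime $p$. Thus to prove $G$ is Lie, it suffices to show that $\Z_p$ cannot act effectively on the manifold $M$ by Lipschitz homeomorphisms.

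Next, I would invoke Yang's theorem (\cite{MR0120310}): if $\Z_p$ acts effectively on a connected $n$-manifold $M$, then the integral cohomological dimension of the orbit space $M/\Z_p$ is $n+2$, and in particular the Hausdorff dimension of $M/\Z_p$ (with respect to any compatible metric) is at least $n+2$. On the other hand, the Hausdorff dimension of $M$ itself, computed with respect to the distance induced by the Finsler metric, equals the topological dimension $n$, because locally the Finsler distance is bi-Lipschitz equivalent to a Euclidean norm on an affine chart. Now the quotient map $M\to M/\Z_p$ is Lipschitz when one puts on $M/\Z_p$ the induced quotient pseudometric, because $\Z_p$ acts by Lipschitz homeomorphisms and is compact, so one can locally bound the Lipschitz constants uniformly. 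A Lipschitz surjection does not increase Hausdorff dimension, so $\dim_H(M/\Z_p)\leqslant n$, contradicting Yang's $n+2$ lower bound. Hence no such $\Z_p$-action exists, and $G$ is a Lie group.

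Finally, for the application to $\Isom(\O,d_{\O})$, I would first note that $\Isom(\O,d_{\O})$ is locally compact in the compact-open topology by Arzel\`a--Ascoli, and it acts effectively and by $1$-Lipschitz homeomorphisms on the Finsler manifold $(\O,F_{\O})$ (isometries for the Hilbert distance are $1$-Lipschitz for $d_{\O}$, which is bi-Lipschitz locally equivalent to the Finsler distance). Applying the general statement concludes the proof. The main obstacle is the appeal to Yang's theorem and the Montgomery--Zippin/Gleason--Yamabe machinery: the delicate point is ensuring the Lipschitz control is uniform enough on compact sets to make the quotient map genuinely Lipschitz on a neighbourhood of each point, so that the Hausdorff dimension argument applies. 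Once this is set up, the contradiction with Yang's bound is automatic.
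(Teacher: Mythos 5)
Your argument is essentially the paper's: reduce via the standard Hilbert--Smith machinery to excluding an effective $\Z_p$-action, then derive a contradiction from the chain $\dim(M)=\dim_H(M)\geqslant \dim_H\big(\Quotient{M}{\Z_p}\big)\geqslant\dim\big(\Quotient{M}{\Z_p}\big)\geqslant\dim_{\Z}\big(\Quotient{M}{\Z_p}\big)=\dim(M)+2$, the last equality being Yang's theorem. The one point you flag as delicate --- getting uniform Lipschitz control so that the quotient map does not increase Hausdorff dimension --- is handled in the paper by first passing to a copy of $\Z_p$ acting with bounded Lipschitz constants and then averaging the metric over the Haar measure of the compact group $\Z_p$, so that the action becomes isometric and the quotient map is literally distance non-increasing.
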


\begin{proof}
A classical and very useful reduction (see for example \cite{MR1641899}) of the Hilbert-Smith conjecture shows that we only have to show that the group $\Z_p$ of $p$-adic integers cannot act on $M$. Since every neighbourhood of the identity of $\Z_p$ contains a copy of $\Z_p$ we can assume that $\Z_p$ acts by Lipshitz homeomorphisms with bounded Lipschitz constant. Now, since $\Z_p$ is compact, we can assume that this action is in fact by isometries by averaging the metric using the Haar measure of $\Z_p$.

Now came the key argument of Repov\u{s} and \u{S}\u{c}epin. We have the following inequality where $\dim$, $\dim_H$ and $\dim_{\Z}$ mean respectively the topological dimension, the Hausdorff dimension and the homological dimension.

$$
\begin{array}{ccccccccc}
\dim(M) & = & \dim_H(M) &  \geqslant & \dim_H  \Big(\Quotient{M}{\Z_p}\Big)\\
\\
              & \geqslant & \dim \Big(\Quotient{M}{\Z_p}\Big) & \geqslant & \dim_{\Z}\Big(\Quotient{M}{\Z_p}\Big) & = &\dim(M)+2
\end{array}
$$

The first equality comes from the fact that the Hausdorff dimension of a Finsler manifold is equal to its topological dimension\footnote{The topological dimension of a compact topological space $X$ is the smallest integer $n$ such that every finite open cover $\mathcal{A}$ of $X$ admits a finite open cover $\mathcal{B}$ of $X$ which refines $\mathcal{A}$ in which no point of $X$ is included in more than $n+1$ elements of $\mathcal{B}$. If no such minimal integer $n$ exists, the space is said to be of infinite topological dimension. The topological dimension of a non-compact locally compact Hausdorff metric space is the supremum of the topological dimensions of its compact subspaces.}. The first inequality comes from the fact that since the action is by isometries the quotient map is a distance non-increasing map. The second and third inequalities come from the fact that we always have $\dim_H (X) \geqslant \dim(X) \geqslant \dim_{\Z}(X)$\footnote{For the definition of the homological dimension we refer to \cite{MR0120310}. For the inequalities, we refer to \cite{MR2321254} for the first and \cite{MR0120310} for the second.} when $X$ is a locally compact Hausdorff metric space. The last equality is the main result of Yang in \cite{MR0120310}.
\end{proof}

\subsection{The knowledge}

Since in this context subgroups of finite-index are important, we think that the group $\Aut^{\pm}(\O)$ of automorphisms of $\O$ of determinant $\pm 1$ is more adapted to the situation.

\begin{proposition}[de la Harpe in \cite{MR1238518}]
Let $\O$ be a properly convex open set. Suppose that the metric space $(\O,d_{\O})$ is uniquely geodesic. Then $\Aut^{\pm}(\O) = \Isom(\O)$.
\end{proposition}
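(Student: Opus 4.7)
The inclusion $\Aut^{\pm}(\O) \subseteq \Isom(\O)$ is already known (it is the content of the first proposition of this chapter, applied to $\Aut^{\pm}$). The plan is to establish the reverse inclusion by showing that any $\phi \in \Isom(\O)$ preserves collinearity, and then to invoke the fundamental theorem of projective geometry.

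First, I would use the fact (Proposition \ref{seg_geo}) that every projective segment $[x,y] \cap \O$ is a geodesic of $(\O, d_{\O})$. Combined with the unique geodesicity hypothesis, this says that projective segments are the \emph{only} geodesics in $\O$. An isometry $\phi$ necessarily sends a geodesic between two points to the (unique) geodesic between their images, so $\phi$ maps any projective segment contained in $\O$ to a projective segment contained in $\O$. Equivalently, if $x, y, z \in \O$ are collinear in $\PP^d$, then $\phi(x), \phi(y), \phi(z)$ are again collinear. Note also that $\phi$ is a homeomorphism of $\O$, since the Hilbert topology coincides with the topology induced by $\PP^d$ and $\phi^{-1}$ is likewise an isometry.

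Next I would upgrade this collinearity preservation to a global projective statement. A continuous bijection of a nonempty open subset of $\PP^d$ ($d \geqslant 2$) onto another such subset that sends collinear triples to collinear triples is the restriction of an element of $\PG$: this is a standard consequence of the fundamental theorem of projective geometry (the continuity rules out the wild, nonlinear automorphisms of $\R$ that one would otherwise have to exclude, and one can recover $\phi$ on a full affine chart by extending along projective lines from a small open piece). Therefore there exists $\tilde{\phi} \in \PG$ with $\tilde{\phi}|_{\O} = \phi$; since $\phi(\O) = \O$ we have $\tilde{\phi} \in \mathrm{Coll}^{\pm}(\O)$, and by the identification $\pi: \Aut^{\pm}(\O) \xrightarrow{\sim} \mathrm{Coll}^{\pm}(\O)$ recalled in Section~1 this gives a unique lift in $\Aut^{\pm}(\O)$ inducing $\phi$. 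In dimension $d=1$ the statement is degenerate but can be handled directly, since $\O$ is then a segment and $\Isom(\O) = \Aut^{\pm}(\O) \cong \R \rtimes \Z/2$.

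The main obstacle is the passage from ``preserves collinearity on the open set $\O$'' to ``restriction of a projective transformation''. The classical fundamental theorem of projective geometry is stated for collineations of all of $\PP^d$, and a semilinear map a priori allows a field automorphism of $\R$; the two subtleties are (i) local-to-global extension, which is done by choosing a generic point of $\O$ and using pencils of lines through it to define $\tilde\phi$ on an affine chart, checking that the extension is independent of choices, and (ii) ruling out exotic field automorphisms, which follows from the continuity of $\phi$ since the only continuous field automorphism of $\R$ is the identity. Once these two points are settled the rest of the argument is formal.
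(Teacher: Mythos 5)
Your proposal is correct and follows exactly the paper's own argument: unique geodesicity forces every isometry to send projective segments to projective segments, and the fundamental theorem of projective geometry then shows the isometry is a projective transformation. The extra care you take with the local-to-global extension and with continuity ruling out wild field automorphisms of $\R$ is a legitimate filling-in of details that the paper's two-line proof leaves implicit.
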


\begin{proof}
If the only geodesics are the segments this implies that the image of a segment by an isometry $g$ is a segment. Therefore the fundamental theorem of projective geometry implies that $g$ is a projective transformation.
\end{proof}

The following proposition is a nice criterion for uniqueness of geodesics $(\O,d_{\O})$.

\begin{proposition}[de la Harpe \cite{MR1238518}]\label{seg_geo}
Let $\O$ be a properly convex open set. Then the metric space $(\O,d_{\O})$ is uniquely geodesic if and only if for every plane $\Pi$ intersecting $\O$, the boundary of the 2-dimensional convex set $\O \cap \Pi$ contains at most one maximal segment.
\end{proposition}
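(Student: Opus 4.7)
The plan is to reduce to the two-dimensional case and then analyze when the triangle inequality $d_\omega(x,y) \leq d_\omega(x,z) + d_\omega(z,y)$ becomes an equality with $z \notin [x,y]$. First note that if $\Pi$ is any plane meeting $\O$, then $d_\O$ and the 2D Hilbert distance $d_\omega$ of $\omega := \O \cap \Pi$ coincide on $\omega$, because the cross-ratio defining $d_\O(x,y)$ for $x,y \in \omega$ involves only the line $(xy) \subset \Pi$ and its boundary hits in $\partial\omega = \partial\O \cap \Pi$. Hence $\O$ is uniquely geodesic if and only if every 2D slice $\omega$ is.

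The heart of the proof is a 2D criterion. Given three non-collinear points $x,y,z$ in $\omega$, label the six boundary points as $(xy) \cap \partial\omega = \{p,q\}$ with $x$ between $p$ and $y$, $(xz) \cap \partial\omega = \{p_1,q_1\}$ with $x$ between $p_1$ and $z$, and $(zy) \cap \partial\omega = \{p_2,q_2\}$ with $z$ between $p_2$ and $y$. The criterion claims that $d_\omega(x,y) = d_\omega(x,z) + d_\omega(z,y)$ if and only if both triples $\{p,p_1,p_2\}$ and $\{q,q_1,q_2\}$ are collinear. The easy direction is Menelaus' theorem applied to the triangle $\Delta xzy$ with the two transversals, yielding $\tfrac{|py|}{|px|} = \tfrac{|p_1 z|}{|p_1 x|}\cdot \tfrac{|p_2 y|}{|p_2 z|}$ and $\tfrac{|qx|}{|qy|} = \tfrac{|q_1 x|}{|q_1 z|}\cdot\tfrac{|q_2 z|}{|q_2 y|}$; multiplying these and taking $\tfrac{1}{2}\log$ produces the Hilbert equality. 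The converse direction uses the Funk decomposition $d_\omega = \tfrac{1}{2}(F + F^*)$, where $F(u,v) = \log \tfrac{|u\,q(u,v)|}{|v\,q(u,v)|}$ is the Funk function (with $q(u,v)$ the boundary point beyond $v$ on the ray from $u$ through $v$) and $F^*(u,v) := F(v,u)$; since $F$ and $F^*$ each satisfy their own triangle inequality, Hilbert equality forces both Funk equalities, and each Funk equality, after algebraic rearrangement, is exactly the Menelaus identity for one of the two triples, forcing the respective collinearity by the converse of Menelaus' theorem.

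Granting the criterion, both directions of the proposition follow. For the direction ``at most one maximal segment per slice $\Rightarrow$ uniquely geodesic'', argue contrapositively: a non-straight geodesic from $x$ to $y$ produces a point $z \notin [x,y]$ with Hilbert equality, so in the plane through $x,y,z$ the criterion produces two triples of collinear boundary points. A line meeting the interior of $\omega$ hits $\partial\omega$ in exactly two points, so three collinear boundary points must lie on a single segment of $\partial\omega$; and the two resulting segments are disjoint, because $p$ and $q$ are separated by the interior of $\omega$ along the line $(xy)$. This yields two disjoint maximal segments in $\partial(\O\cap\Pi)$, contradicting the hypothesis. For the other direction, given two disjoint maximal segments $s_1, s_2 \subset \partial(\O\cap\Pi)$, pick three distinct points $p, p_1, p_2$ in the relative interior of $s_1$ and three distinct points $q,q_1,q_2$ in the relative interior of $s_2$. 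For generic choices the three lines $(pq)$, $(p_1 q_1)$, $(p_2 q_2)$ have pairwise intersections $x,y,z \in \omega$ with the correct ``beyond-vertex'' ordering of the nine boundary hits, and the criterion supplies a geodesic $x \to z \to y$ distinct from the segment $[x,y]$.

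The most delicate step is the converse direction of the 2D criterion: one has to verify that $F$ and $F^*$ individually obey the triangle inequality, so that Hilbert equality really forces both Funk equalities simultaneously, and then to apply the converse of Menelaus' theorem in its signed form so as to distinguish the collinear (Menelaus) configuration from the concurrent (Ceva) one. Convexity of $\omega$ is what makes this clean: it forces each boundary hit to lie beyond the appropriate vertex of $\Delta xzy$, so the three signed ratios in each triple share the same sign and only the Menelaus configuration can produce the required identity.
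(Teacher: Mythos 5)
The paper itself gives no proof of this proposition --- it is stated with a citation to de la Harpe --- so there is nothing in the text to compare your argument with; I can only assess it on its own terms. Your outline is essentially the classical proof and its main steps are correct: the Hilbert metric of $\O$ restricts on a planar slice $\omega=\O\cap\Pi$ to the Hilbert metric of $\omega$, so everything reduces to dimension $2$; the additivity $d_\omega(x,y)=d_\omega(x,z)+d_\omega(z,y)$ for non-collinear points splits, via $d_\omega=\tfrac12(F+F^*)$ and the Funk triangle inequality, into two Menelaus identities, each equivalent (by the signed converse of Menelaus, all three points being external to their sides) to the collinearity of one triple of boundary points; and three \emph{distinct} collinear points of $\partial\omega$ span a nondegenerate segment contained in $\partial\omega$. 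You should record explicitly that the points of each triple are pairwise distinct --- two of them coinciding would force two of the lines $(xy),(xz),(zy)$ to meet on $\partial\omega$, whereas they meet only at the interior points $x,y,z$ --- since otherwise a ``collinear triple'' could degenerate and produce no segment.

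The one step that is genuinely underspecified is the converse construction, where ``for generic choices'' is doing too much work. Whether the three chords $(pq)$, $(p_1q_1)$, $(p_2q_2)$ pairwise cross inside $\omega$ is not a generic condition: it holds precisely when the two triples are taken in matching orders along $s_1$ and $s_2$, and one must then also check that the labelling of the three crossing points as $x,y,z$ places each triple of exit points entirely on one of the two segments (a wrong labelling mixes the triples and the criterion fails for that path). Rather than sorting out this combinatorics, it is cleaner to argue by perturbation: choose a chord joining the relative interiors of $s_1$ and $s_2$, let $x,y$ be two of its interior points, and take $z$ off the line $(xy)$ but close to a point of the open segment $(x,y)$. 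Since the exit point of the ray from $u$ through $v$ depends continuously on $(u,v)$, the four new exit points land in the relative interiors of $s_1$ and $s_2$ in the required pattern, both collinearity conditions hold automatically because all six boundary points then lie on the two lines carrying $s_1$ and $s_2$, and the Menelaus direction of your criterion yields the equality. With that repair the proof is complete.
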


We stress the following corollary:

\begin{corollary}
If $\O$ is strictly convex, then $\Isom(\O)=\Aut^{\pm}(\O)$.
\end{corollary}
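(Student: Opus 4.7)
The plan is to chain together the two immediately preceding results: the criterion for unique geodesicity (Proposition \ref{seg_geo}) and the identification $\Isom(\O) = \Aut^{\pm}(\O)$ under unique geodesicity (the preceding proposition of de la Harpe). So the task reduces to verifying the hypothesis of Proposition \ref{seg_geo} in the strictly convex case.

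First I would observe that for any plane $\Pi$ meeting $\O$, the boundary of the $2$-dimensional convex set $\O \cap \Pi$ (taken inside $\Pi$) is contained in $(\partial \O \cap \Pi) \cup (\partial \Pi \cap \overline{\O})$, but since $\Pi$ is a projective plane in $\PP^d$ it has no boundary in $\PP^d$; thus $\partial(\O \cap \Pi) \subset \partial \O$. Now by the very definition of strict convexity, $\partial \O$ contains no non-trivial segment, so a fortiori $\partial(\O \cap \Pi)$ contains no non-trivial segment. In particular it contains at most one maximal segment (namely zero of them).

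Therefore the hypothesis of Proposition \ref{seg_geo} is trivially satisfied, so $(\O, d_\O)$ is uniquely geodesic. Applying de la Harpe's proposition yields $\Isom(\O) = \Aut^{\pm}(\O)$, which is the desired equality.

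The only potential subtlety — and thus the one point I would double-check — is the set-theoretic inclusion $\partial(\O \cap \Pi) \subset \partial \O$ when working in projective space; one must take $\Pi$ to be a projective plane (so $\Pi$ itself is closed in $\PP^d$ without boundary), which is the natural reading of Proposition \ref{seg_geo}. Beyond that, no computation is needed — the argument is purely a combination of the two preceding statements with the definition of strict convexity.
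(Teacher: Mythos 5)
Your argument is correct and is exactly the intended route: the paper leaves this corollary without proof precisely because it follows by combining strict convexity with Proposition \ref{seg_geo} (no plane section of $\O$ has a segment in its boundary, since $\partial(\O\cap\Pi)\subset\dO$) and then applying de la Harpe's proposition on uniquely geodesic Hilbert geometries. Your side remark about the inclusion $\partial(\O\cap\Pi)\subset\dO$ is the right point to check, and it holds for the reason you give.
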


\begin{theorem}[de la Harpe \cite{MR1238518} for dimension 2, general case Lemmens-Walsh \cite{MR2819195}]
Suppose that $\O$ is a simplex. Then $\Aut^{\pm}(\O)$ is of index two in $\Isom(\O)$.
\end{theorem}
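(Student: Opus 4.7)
The plan is to realize $(\O,d_\O)$ explicitly as a normed vector space, then apply the Mazur--Ulam theorem to reduce the computation of $\Isom(\O)$ to the linear isometries of the norm, which can be determined combinatorially. Write $\O = S_d = \{[x_0:\cdots:x_d]\in\PP^d : x_i>0\}$. The classical Birkhoff formula gives
$$d_\O(x,y)=\tfrac12\Bigl(\max_i\log\tfrac{x_i}{y_i}-\min_i\log\tfrac{x_i}{y_i}\Bigr),$$
so the coordinatewise logarithm $L\colon[x_0:\cdots:x_d]\mapsto(\log x_0,\ldots,\log x_d)\bmod\R\mathbf 1$ is an isometry from $(\O,d_\O)$ onto the normed vector space $(V,\|\cdot\|)$, where $V=\R^{d+1}/\R\mathbf 1$ and $\|u\|=\tfrac12(\max_iu_i-\min_iu_i)$. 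Under $L$, the positive diagonal subgroup $D_d\subset\Aut^\pm(S_d)$ becomes the full translation group of $V$, while the permutation subgroup $\mathfrak S_{d+1}\subset\Aut^\pm(S_d)$ acts as the obvious permutation of coordinates. By the Mazur--Ulam theorem every isometry of $V$ is affine, so $\Isom(V)=V\rtimes G$ where $G\le\mathrm{GL}(V)$ is the linear isometry group of $\|\cdot\|$; since $D_d$ already accounts for all translations, the claim $[\Isom(\O):\Aut^\pm(\O)]=2$ reduces to showing $[G:\mathfrak S_{d+1}]=2$. (I assume throughout $d\ge2$; the $1$-simplex is an interval, for which the two groups coincide.)

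The core step is determining $G$. The unit ball $B=\{u\in V:\|u\|\le1\}$ is a centrally symmetric polytope whose extreme points are precisely the $d(d+1)$ roots $v_{ij}=e_i-e_j$, $i\ne j$: such a vertex is the unique sum-zero representative in $B$ simultaneously attaining $u_i=1$ and $u_j=-1$. Every $g\in G$ permutes this vertex set, so one may write $g(v_{ij})=v_{\phi(i,j)}$ for a self-map $\phi$ of the ordered pairs with distinct entries. Applying $g$ to the identity $v_{ij}+v_{jk}=v_{ik}$ (for distinct $i,j,k$) and using that a sum $(e_a-e_b)+(e_c-e_d)$ of two roots is again a root only when $b=c$ or $a=d$, I get that for each triple one of the two compatibility conditions $\phi(i,j)_2=\phi(j,k)_1$ or $\phi(i,j)_1=\phi(j,k)_2$ must hold. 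In the first case the common value is forced to depend only on $j$, yielding a single permutation $\pi\in\mathfrak S_{d+1}$ with $\phi(i,j)=(\pi(i),\pi(j))$; in the second case, the same propagation yields $\phi(i,j)=(\pi(j),\pi(i))$ for some $\pi$. Hence $G=\mathfrak S_{d+1}\sqcup(-\mathrm{Id}_V)\mathfrak S_{d+1}=\mathfrak S_{d+1}\times\langle-\mathrm{Id}_V\rangle$, and for $d\ge2$ the central involution $-\mathrm{Id}_V$ is not a coordinate permutation (no permutation matrix acts as $-\mathrm{Id}$ on $V$ once $\dim V\ge2$), so $[G:\mathfrak S_{d+1}]=2$.

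To close the loop, I would identify the extra coset on the simplex side. The involution $\iota=L^{-1}\circ(-\mathrm{Id}_V)\circ L$ is the multiplicative inversion $[x_0:\cdots:x_d]\mapsto[1/x_0:\cdots:1/x_d]$, equivalently the Cremona map $[x]\mapsto\bigl[\prod_{j\ne 0}x_j:\cdots:\prod_{j\ne d}x_j\bigr]$ of degree $d$, which for $d\ge2$ is not a collineation of $\PP^d$; hence $\iota\in\Isom(\O)\setminus\Aut^\pm(S_d)$. Combining with the Mazur--Ulam splitting yields $\Isom(\O)=\Aut^\pm(S_d)\sqcup\iota\cdot\Aut^\pm(S_d)$, as claimed. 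The main obstacle is the combinatorial identification of $G$ in the second paragraph; the argument must be executed carefully, and one should note that the conclusion genuinely fails for $d=1$, where $-\mathrm{Id}_V$ coincides with the unique nontrivial coordinate permutation.
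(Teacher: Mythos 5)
Your overall strategy --- logarithmic coordinates, Mazur--Ulam, then a combinatorial determination of the linear isometry group $G$ of the norm $\|u\|=\tfrac12(\max_i u_i-\min_i u_i)$ on $V=\R^{d+1}/\R\mathbf{1}$ --- is the same as the paper's, except that the paper only carries it out for the triangle ($d=2$), identifying the unit ball as a hexagon and comparing dihedral groups of orders $12$ and $6$; you attempt the general case, which is what Lemmens--Walsh actually prove. However, your core step rests on a false claim: the extreme points of the unit ball $B=\{u:\max_i u_i-\min_i u_i\le 2\}$ are \emph{not} the roots $e_i-e_j$. The point $e_1-e_2=(1,-1,0,\dots,0)$ saturates only the single constraint $u_1-u_2\le 2$, so it lies in the relative interior of a facet; explicitly $e_1-e_2=\tfrac12\big((1+\epsilon,-1+\epsilon,-2\epsilon,0,\dots)+(1-\epsilon,-1-\epsilon,2\epsilon,0,\dots)\big)$ exhibits it as a nontrivial midpoint of two points of $B$. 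The actual vertices of $B$ are the $2^{d+1}-2$ points $2\mathbf{1}_S\bmod\R\mathbf{1}$ for $\varnothing\neq S\subsetneq\{0,\dots,d\}$ (for $d=3$, $B$ is a rhombic dodecahedron, not the root polytope $\mathrm{conv}\{e_i-e_j\}$, which is a cuboctahedron). Your uniqueness justification also fails for $d\ge 3$: the points $(1,-1,t,-t,0,\dots)$ all lie in $B$ with $u_1=1$, $u_2=-1$. So the assertion that every $g\in G$ permutes $\{v_{ij}\}$ is not established by what you wrote.

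That assertion is nevertheless true, because the roots are the extreme points of the \emph{dual} unit ball: since $\|u\|=\max_{i\neq j}\tfrac12\langle e_i-e_j,u\rangle$, the dual ball is $\mathrm{conv}\{\tfrac12(e_i-e_j)\}$ and each half-root is an exposed point of it. As $g\mapsto g^{*}$ identifies $G$ with the linear isometry group of the dual norm, $g^{*}$ permutes the roots, and your third-paragraph combinatorics (which, modulo the usual care in checking that the ``direct versus reversed'' dichotomy is consistent across all triples, is the standard computation $\mathrm{Aut}(A_d)=\mathfrak{S}_{d+1}\times\{\pm 1\}$ for $d\ge 2$) then applies to $g^{*}$; pulling back along the adjoint gives $G=\mathfrak{S}_{d+1}\times\langle-\mathrm{Id}\rangle$ and the rest of your proof, including the identification of the extra coset with the Cremona involution and the caveat at $d=1$, goes through. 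Alternatively, run the combinatorics directly on the true vertex set $\{2\mathbf{1}_S\}$ and show the induced permutation of subsets comes from a permutation of $\{0,\dots,d\}$ possibly composed with $S\mapsto S^{c}$. One of these repairs is genuinely needed: as written, the second paragraph is the step that fails, and the error is invisible only at $d=2$, where the ball and its dual are both hexagons with the same symmetry group.
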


\begin{proof}
We give a rough proof in the case of a triangle $\O$. A nice picture will show you that the balls of the triangle are hexagons. Moreover, the group $\R^2$ acts simply transitively on the triangle $\O$. The induced map is an isometry from the normed vector space $\R^2$ with the norm given by the regular hexagon to $\O$ with the Hilbert distance.

Now, every isometry of a normed vector space is affine. So the group $\Isom(\O)$ has $12$ connected components since the group $\Quotient{\Isom(\O)}{\Isom^0(\O)}$ is isomorphic to the dihedral group of the regular hexagon, where $\Isom^0(\O)$ is the identity component of $\Isom(\O)$. But the group $\Aut^{\pm}(\O)$ has $6$ connected components and the group $\Quotient{\Aut^{\pm}(\O)}{Aut^{\pm 0}(\O)}$ is isomorphic to the dihedral group of the triangle.
\end{proof}

\begin{theorem}[Molnar complex case \cite{MR2529894}, general case Bosch\'e \cite{Bosche:2012uq}]
Suppose that $\O$ is a non-strictly convex symmetric properly convex open set. Then $\Aut^{\pm}(\O)$ is of index two in $\Isom(\O)$.
\end{theorem}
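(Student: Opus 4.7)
The plan is to reduce to Koecher's classification, exhibit one explicit ``extra'' Hilbert isometry coming from the Jordan inverse, and then show that nothing further arises.

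\emph{Reduction to the indecomposable case.} By the classification of symmetric properly convex open sets recalled in Subsection \ref{sec_sym}, an indecomposable non-strictly-convex symmetric $\O$ is, up to projective equivalence, the projectivization of the positive cone of $m\times m$ Hermitian matrices over $\mathbb{K}\in\{\R,\mathbb{C},\mathbb{H}\}$ with $m\geqslant 3$, or the exceptional Albert cone associated to $E_{6(-26)}$. A decomposable symmetric $\O$ splits as $\O_1\otimes\cdots\otimes\O_r$; any Hilbert isometry permutes the maximal indecomposable faces of $\overline{\O}$, and combined with the product form of $\Aut^{\pm}$ recorded in Subsection \ref{examples} this reduces the problem to each indecomposable factor separately. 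So I assume $\O$ is one of the four models above; then $\C_{\O}$ is the positive cone of a simple Euclidean Jordan algebra $V$, and is self-dual under the trace form.

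\emph{The extra isometry.} The Jordan inverse $v\mapsto v^{-1}$ on $V$ is homogeneous of degree $-1$ and preserves $\C_{\O}$, so it descends to an involution $\sigma:\O\to\O$; on the matrix models, $\sigma([M])=[M^{-1}]$. Using the spectral form of the Hilbert distance on a symmetric cone,
\[
d_{\O}([A],[B])=\tfrac{1}{2}\bigl(\log\lambda_{\max}(A^{-1}B)-\log\lambda_{\min}(A^{-1}B)\bigr),
\]
together with the fact that the spectrum of $(A^{-1})^{-1}(B^{-1})=AB^{-1}$ is the reciprocal of the spectrum of $A^{-1}B$, one gets $d_{\O}(\sigma[A],\sigma[B])=d_{\O}([A],[B])$, so $\sigma\in\Isom(\O)$. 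The Jordan inverse is not affine on $V$, so $\sigma$ is not the projectivization of any linear map, whence $\sigma\notin\Aut^{\pm}(\O)$ and $[\Isom(\O):\Aut^{\pm}(\O)]\geqslant 2$.

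\emph{The index is at most two.} Given $\phi\in\Isom(\O)$, the plan is to produce $g\in\Aut^{\pm}(\O)$ with $\phi=g$ or $\phi=\sigma g$. First I would extend $\phi$ to a homeomorphism of $\overline{\O}$ that preserves the face lattice of $\dO$, using Walsh's description of the horofunction boundary of $(\O,d_{\O})$ and his characterisation of faces via Busemann points. In the symmetric case this face lattice is canonically isomorphic to the spherical Tits building of $G=\Aut(\O)^{\circ}$, through the bijection between proper faces of $\overline{\O}$ and proper parabolic subgroups of $G$. By the Tits--Mostow rigidity of spherical buildings of rank $\geqslant 2$, every combinatorial self-map of this building is either type-preserving and realised by conjugation in $G\subset\Aut^{\pm}(\O)$, or it involves the unique non-trivial diagram automorphism, which in each of our four cases is realised on faces precisely by $\sigma$ (for $S_m^{++}$ one checks explicitly that $\sigma$ swaps the rank-$k$ face stratum with the rank-$(m-k)$ one, i.e.\ the $A_{m-1}$ flip). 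Hence, after replacing $\phi$ by $\sigma\phi$ if necessary, $\phi$ acts on the building as some $g\in\Aut^{\pm}(\O)$; then $g^{-1}\phi$ fixes the face lattice pointwise, and applying the fundamental theorem of projective geometry to the projective lines on which $g^{-1}\phi$ acts as a unique-geodesic Hilbert isometry forces $g^{-1}\phi=\mathrm{id}$.

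The hard part is the first ingredient: that a Hilbert isometry, a priori only a homeomorphism, extends continuously to $\overline{\O}$ in a way that respects the combinatorial structure of $\dO$. This requires the horofunction compactification together with some analytic input from the Vinberg hypersurface or the Cheng--Yau affine sphere to control the boundary behaviour of $\phi$. This is the analytic heart of Molnár's argument in the complex case and of Bosché's argument in general.
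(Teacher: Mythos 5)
The survey does not actually prove this theorem: it is quoted from Moln\'ar and Bosch\'e, and the surrounding text only exhibits the extra isometry $M\mapsto {}^tM^{-1}$ and records that $\Isom(\O)$ is a semi-direct product of $\Aut^{\pm}(\O)$ with a cyclic group of order two. So your proposal must stand on its own, and as written it has genuine gaps in both halves.

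For the lower bound, your justification that $\sigma\notin\Aut^{\pm}(\O)$ --- ``the Jordan inverse is not affine on $V$, so $\sigma$ is not the projectivization of any linear map'' --- is a non sequitur. The inverse is non-affine on \emph{every} symmetric cone, including the Lorentz cone, where the induced map on $\PP(V)$ \emph{is} projectively linear ($[(\lambda,w)]\mapsto[(\lambda,-w)]$); and indeed the theorem fails for the ellipsoid, where $\Isom=\Aut^{\pm}$. A correct argument must invoke the non-strict-convexity hypothesis (rank $\geqslant 3$), e.g.\ by noting that $\sigma$ sends sequences converging to an extreme point to sequences accumulating on a maximal proper face, so it cannot extend continuously to $\overline{\O}$ as any projective transformation would. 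For the upper bound, the step you defer --- that a Hilbert isometry of a non-strictly-convex $\O$ extends to $\overline{\O}$ (or to the horofunction compactification) so as to induce an automorphism of the face lattice --- is precisely the content of Moln\'ar's and Bosch\'e's work, not a lemma one can borrow before ``finishing'' with building rigidity; as it stands the hard half of the theorem is assumed rather than proved. Two further steps in that half also need repair: the reduction to indecomposable factors is not automatic, since the Hilbert metric of $\O_1\otimes\O_2$ is not a product metric and an isometry has no a priori reason to respect the splitting (the simplex, a $\otimes$-product of points, is handled in the literature by a normed-space argument, not factorwise); and the final claim that $g^{-1}\phi$ fixing the face lattice forces $g^{-1}\phi=\mathrm{id}$ via the fundamental theorem of projective geometry is unjustified here, because $(\O,d_{\O})$ is not uniquely geodesic (Proposition \ref{seg_geo}), so fixing the endpoints of a projective chord does not yet force the chord to be preserved.
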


In the case where $\O$ is the simplex given by the equation $$\O=\{ [x_1:x_2:\dots : x_{d+1}] \,|\, x_i > 0\}$$
\noindent then an example of non-linear automorphisms is given by $$[x_1:x_2:\dots : x_{d+1}]  \mapsto [x_1^{-1}:x_2^{-1}:\dots : x_{d+1}^{-1}].$$ \noindent In the case where $\O$ is a non-strictly convex symmetric space, then $\O$ can be described as the projectivisation of a convex cone of symmetric definite positive matrices (or of positive Hermitian symmetric complex matrices, or the analogue with the quaternion or the octonion), and the non-linear automorphism is given by $M \mapsto ^t M^{-1}$ (or analogous).

In both cases, the non-linear automorphisms are polynomial automorphisms of the real projective space, the group $\Aut^{\pm}(\O)$ is normal in $\Isom(\O)$, even better, $\Isom(\O)$ is a semi-direct product of $\Aut^{\pm}(\O)$ with a cyclic group of order $2$.

Finally, Lemmens and Walsh show the following theorem:

\begin{theorem}[Lemmens and Walsh, \cite{MR2819195}]
Suppose that $\O$ is a polyhedron which is not a simplex. Then $\Isom(\O)=\Aut^{\pm}(\O)$.
\end{theorem}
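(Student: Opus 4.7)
The plan is to extend every isometry to the closure $\overline{\O}$, show that it must permute the vertices of the polyhedron, and then use the extra rigidity coming from the fact that $\O$ has at least $d+2$ vertices in general position (as noted in the text after Example \ref{presen_simpl}) to conclude that the only such isometries are the projective ones. Let $g \in \Isom(\O)$; the goal is to prove $g \in \Aut^{\pm}(\O)$.

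First, I would establish a continuous extension $\overline{g}\colon \overline{\O}\to \overline{\O}$. An isometry of a proper geodesic metric space acts on its horofunction boundary, and Walsh's description (\cite{MR2456635}, cited earlier) identifies, in the polyhedral case, a natural $\Isom(\O)$-equivariant surjection from the horofunction boundary onto $\partial \O$ which collapses each set of horofunctions ``based'' at a common face of $\overline{\O}$ to that face. Consequently $\overline{g}$ preserves the face stratification of $\partial\O$, and in particular permutes the (finite) vertex set $V$ of $\overline{\O}$.

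Second, since $\O$ is not a simplex, $|V|\geqslant d+2$, and $V$ contains $d+2$ points in projectively general position. The induced permutation $\sigma_g\in\mathrm{Sym}(V)$ is, a priori, only constrained to preserve the face lattice of $\O$; but every such combinatorial symmetry is realized by an element of $\Aut^{\pm}(\O)$ (the polyhedron is determined up to collineation by its vertex set, and any face-lattice automorphism extends uniquely to a projective automorphism because $d+2$ vertices in general position form a projective frame). Therefore there exists $\g\in \Aut^{\pm}(\O)$ with $\sigma_\g=\sigma_g$, and after replacing $g$ by $\g^{-1}g$ I may assume $g$ fixes every vertex of $\O$.

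Third, I would show that an isometry fixing every vertex is the identity. Pick $v_0,\dots,v_{d+1}\in V$ forming a projective frame. For each pair $v_i,v_j$, the straight line $(v_iv_j)$ meets $\O$ in an open segment $\ell_{ij}$ whose endpoints in $\partial\O$ are the vertices $v_i,v_j$ themselves; the restriction of $d_\O$ to $\ell_{ij}$ is the one-dimensional Hilbert metric, which is uniquely geodesic. Since $g$ fixes the two endpoints and is an isometry, it fixes $\ell_{ij}$ pointwise. Varying $i,j$, $g$ fixes $d+2$ lines in general position; together with fixing their boundary traces, this pins down $g$ on any affine chart containing $\overline{\O}$ and forces $g$ to coincide with the unique projective transformation (the identity) acting this way, via the cross-ratio formula for $d_\O$ restricted to any plane $\Pi$ spanned by such lines (cf.\ Proposition \ref{seg_geo} applied inside $\O\cap \Pi$).

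The main obstacle is the third step: concretely matching ``fixes every vertex'' with ``is the identity'' without assuming a priori that $g$ is projective. The delicate point is that $(\O,d_\O)$ is \emph{not} uniquely geodesic for a polyhedron, so one cannot directly invoke the argument used in the strictly convex case; instead one must isolate \emph{straight} geodesics among all geodesics by a purely metric characterization (for example, as those geodesics lying in the intersection of $\O$ with a two-plane that meets $\partial\O$ in only one maximal boundary segment per side), and this is where Lemmens--Walsh's refined analysis of the horofunction boundary of a polyhedral Hilbert geometry does the real work.
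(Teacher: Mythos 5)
The paper does not actually prove this theorem: it is quoted from Lemmens--Walsh \cite{MR2819195} with no proof supplied, so there is no internal argument to compare yours against. Judged on its own terms, your proposal has the right overall flavour --- the genuine proof does go through Walsh's horofunction boundary --- but two of your three steps contain real gaps.

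The fatal one is in your second step. It is false that every face-lattice automorphism of a polytope is realized by a projective transformation: a generic convex pentagon in $\PP^2$ has combinatorial symmetry group of order $10$ but trivial projective automorphism group, since a collineation is already determined by the images of four vertices in general position and generically fails to send the fifth one to the required place. Indeed, the paper's own remark after the simplex example --- that $\Aut^{\pm}(\O)$ is \emph{finite} for any non-simplex polyhedron precisely because $d+2$ vertices in general position rigidify a collineation --- points in the opposite direction from what you assert. What you actually need is that the specific permutation $\sigma_g$ induced by the isometry $g$ is realized by a collineation preserving $\O$, and establishing that is essentially the whole content of the theorem; deducing it from the false general statement makes the reduction circular. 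The second gap is the one you flag yourself in step three: in a polyhedral Hilbert geometry two boundary points are typically joined by infinitely many geodesics, so knowing that $g$ fixes the two ideal endpoints of the straight segment $\ell_{ij}$ does not force $g(\ell_{ij})=\ell_{ij}$, and no metric characterization of the straight geodesics is supplied to repair this. Both gaps sit exactly where Lemmens--Walsh do their real work: they identify the parts of the horofunction boundary and compute the detour metric on each part, and it is this finer, intrinsically metric invariant --- not the face lattice alone --- that constrains the induced permutation enough to force $g$ to be projective.
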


Lemmens and Walsh conjecture that $\Isom(\O) \neq \Aut^{\pm}(\O)$ if and only if $\O$ is a simplex or is symmetric but not an ellipsoid. For a srvey on the isometry group of Hilbert geometry, see \cite{CW}.

\bibliographystyle{abbrv}

\end{document}